\date{September 3, 2024}
\numberwithin{equation}{section}
\newtheorem{Theorem}{Theorem}[section]
\newtheorem{Thm}[Theorem]{Theorem}
\newtheorem{Fact}[Theorem]{Fact}
\newtheorem{Corollary}[Theorem]{Corollary}
\newtheorem{Lemma}[Theorem]{Lemma}
\newtheorem{Proposition}[Theorem]{Proposition}
\newtheorem{Prop}[Theorem]{Proposition}
\theoremstyle{remark}
\newtheorem{Remark}[Theorem]{Remark}
\newtheorem{Rmk}[Theorem]{Remark}
\theoremstyle{definition}
\newtheorem{Definition}[Theorem]{Definition}
\newtheorem{Example}[Theorem]{Example}
\newtheorem{Exa}[Theorem]{Example}
\newtheorem*{acknowledgements}{Acknowledgements}
\newcommand{\inner}[2]{\left\langle{#1},{#2}\right\rangle_L}
\newcommand{\Inner}[2]{\left\langle{#1},{#2}\right\rangle}
\newcommand{\R}{{\mathbb R}}
\newcommand{\C}{{\mathbb C}}
\newcommand{\E}{\mathbb{E}}
\newcommand{\Lo}{\mathbb{L}}
\newcommand{\mc}[1]{{\mathcal #1}}
\newcommand{\mb}[1]{{\mathbf #1}}
\newcommand{\pmt}[1]{{\begin{pmatrix} #1  \end{pmatrix}}}
\renewcommand{\phi}{\varphi}
\renewcommand{\epsilon}{\varepsilon}
\newcommand{\op}[1]{{\operatorname{ #1}}}
\renewcommand{\phi}{\varphi}
\title{
Cuspidal edges and generalized cuspidal edges
in the Lorentz-Minkowski $3$-space
}
\author{T.~Fukui}
\address[Toshizumi Fukui]{
Department of Mathematics,
Saitama University,
255, Shimo-Okubo,
Saitama, 338-8570,
Japan
}
\email{tfukui@rimath.saitama-u.ac.jp}
\author{R.~Kinoshita}%{Ryosuke Knosita}
\address[Ryosuke Kinosita]{
   Department of Mathematical and Computing Sciences,
   Tokyo Institute of Technology,
   2-12-1-W8-34, O-okayama, Meguro-ku,
   Tokyo 152-8552, Japan.
}
\email{kinosita.r.math@gmail.com}
\author{D.~Pei}
\address[Donghe Pei]{
School of Mathematics and Statics,
Northeast Normal University,
Changchun 130024 Jilin
China
}
\email{peidh340@nenu.edu.cn}
\author{M.~Umehara}%{Masaaki Umehara}
\address[Masaaki Umehara]{%
   Department of Mathematical and Computing Sciences,
   Tokyo Institute of Technology,
   2-12-1-W8-34, O-okayama, Meguro-ku,
   Tokyo 152-8552, Japan.
}
\email{umehara@is.titech.ac.jp}
\author{H.~Yu}
\address[Haiou Yu]{
School of Mathematics,
Jilin University of Finance and Economics,
Changchun 130117 Jilin,
China
}
\email{yuhaiou@jlufe.edu.cn}
\keywords{cuspidal edges, mean curvature}
\subjclass[2020]{
Primary 53A55, %Differential invariants (local theory), geometric objects
Secondary 
53B30,
%Local differential geometry of Lorentz metrics, indefinite metrics 
57R45.
%Singularities of differentiable mappings in differential topology
}
\thanks{The first and the fourth
authors were supported 
by Grant-in-Aid for Scientific Research 
(C) 19K03486 and
(B) 21H00981, respectively, 
from Japan Society for the Promotion of Science.}
\begin{document}

\maketitle

\begin{abstract}
It is well-known that every cuspidal edge in the 
Euclidean space $\E^3$ cannot have a bounded mean curvature function. 
On the other hand, in the Lorentz-Minkowski space $\Lo^3$, 
zero mean curvature surfaces admit cuspidal edges. 
One natural question is to ask when a cuspidal edge has 
bounded mean curvature in $\Lo^3$.
We show that such a phenomenon  
occurs only when the image of the singular set is 
a light-like curve in $\Lo^3$.
Moreover, we also investigate the behavior of principal 
curvatures in this case as well as other possible cases.  
In this paper, almost all calculations are 
given for generalized cuspidal edges as well as for cuspidal edges.
We define the \lq\lq order" 
at each generalized cuspidal edge singular point
is introduced. 
As nice classes of zero-mean curvature surfaces in $\Lo^3$,
``maxfaces" and ``minfaces" are known, and
generalized cuspidal edge singular points
on maxfaces and minfaces are of order $4$.
One of the important results is that
the generalized cuspidal 
edges of order $4$ exhibit a quite similar behaviors as those on
maxfaces and minfaces.
\end{abstract}

\setcounter{tocdepth}{1}\tableofcontents

\section*{Introduction}
We denote the Euclidean $3$-space by 
$(\E^3;x,y,z)$ and the Lorentz-Minkowski $3$-space by 
$(\Lo^3;x,y,z)$, 
where the signature of $\Lo^3$ is taken as $(++-)$.
Let $f:U\to \Lo^3$ be a regular surface (i.e. an immersion).
A point on $U$ is said to be {\it space-like} (resp. {\it time-like}) 
if the pull-back of the
canonical Lorentzian metric of $\Lo^3$ 
(i.e. the  first fundamental form of $f$) 
induces a Riemannian metric (resp. a Lorentzian metric)
at the point.
In this paper, $\R^3$ denotes $\Lo^3$ or $\E^3$ ignoring the
canonical metrics.
Fix $\delta>0$ and an open interval $I$ on $\R$.

\begin{Definition}
Let $U$ be a domain of $\R^2$
and
$
f:U\to \R^3
$
a $C^\infty$-map. 
A  point $p\in U$ is called a {\it cuspidal edge singular point} of
$f$ if there exist local 
diffeomorphisms $\phi$ and $\Psi$
on $\R^2$ and $\R^3$ such that 
$\phi(p)=(0,0)$, $\Psi\circ f(p)=(0,0,0)$
and $\Psi\circ f\circ \phi^{-1}(s,t)=(s,t^2,t^3)$.
\end{Definition}

We fix  a regular curve
$
\Gamma:I\ni s \mapsto f(s,0)\in \R^3
$.

\begin{Definition}\label{def:217}
A $C^\infty$-map
$
f:I\times (-\delta,\delta)\to \R^3
$
is called a {\it cuspidal edge} 
along $\Gamma$
if 
$(s,0)$ is a cuspidal edge singular point of $f$
for each $s\in I$.
\end{Definition}

We next define \lq\lq generalized cuspidal edges":

\begin{Definition}\label{def:GE}
Fix $\delta>0$ and an open interval $I$ on $\R$.
A $C^\infty$-map
$$
f:I\times (-\delta,\delta)\ni (s,t)\mapsto f(s,t)\in \R^3
$$
is called a {\it generalized cuspidal edge} 
along the curve
$
\Gamma:I\ni s \mapsto f(s,0)\in \R^3
$ 
if $f(s,t)$ satisfies the following properties:
\begin{enumerate}
\item[(a)] $f_t(s,0)$ vanishes identically for each $s\in I$, and
\item[(b)] the vectors $f_{tt}(s,0)$ and
$\Gamma'(s)(=f_s(s,0))$ are linearly independent for each $s\in I$.
\end{enumerate}
Moreover, each point $(s,0)$ is called
a {\it generalized cuspidal edge singular point}.
\end{Definition}

Cuspidal edges are typical examples of generalized cuspidal edges 
(cf. Proposition \ref{prop:1182}).
For a given generalized cuspidal edge $f$, we set
\begin{equation}\label{eq:Domf2}
\mc U_f:=I\times (-\delta,\delta),\qquad 
\Sigma_f:=\{(s,0)\in U\,;\, s\in I\},
\end{equation}
which are the domain of definition  and
the singular set of $f$, respectively.
Then, 
the set $\mc U_f\setminus \Sigma_f$
of regular points of $f$ has two connected components. 
The purpose of this paper is
to investigate the behavior of the Gaussian curvature,
the mean curvature, the principal curvatures and umbilical points of $f$.
We are particularly interested in the behavior of 
mean curvature and umbilical points: 
\begin{itemize}
\item 
Although the mean curvature functions $H^E$ of cuspidal edges
in $\E^3$ diverge along their singular sets,
there are cuspidal edges in $\Lo^3$ whose
mean curvature functions are bounded. 
Moreover, it is also known that
cuspidal edge singular points 
appear as one of the most general 
singular points on zero-mean curvature (i.e. ZMC) 
surfaces in $\Lo^3$ (\cite{A, FSUY, UY}).
Compared to the case in $\E^3$,
it would be interesting to investigate when the 
mean curvature function $H^L$ around an arbitrarily given
cuspidal edge singular point in $\Lo^3$ becomes bounded.
\item
A regular point $p\in \mc U_f$ of $f$
is called an {\it umbilical point} (resp.
 {\it a quasi-umbilical point})
if it is a space-like or time-like point of $f$
at which the two principal curvatures coincide
and the Weingarten matrix (cf. \eqref{eq:Wf})
 is diagonal (resp. is non-diagonal).
On a given surface, 
the umbilical points with $\R^3=\Lo^3$ 
appear at a different location 
than the umbilical points with $\R^3=\E^3$, in general.
Moreover, quasi-umbilical points never appear on
regular surfaces in $\E^3$ and space-like regular surfaces in $\Lo^3$,
but often appear on time-like regular surfaces in $\Lo^3$.

In $\E^3$, umbilical points never accumulate at 
any cuspidal edge singular point (cf. \cite{F,MSUY,T}).
Also in $\Lo^3$, the same conclusion holds
as long as a cuspidal edge singular point $p$ is
time-like or space-like (see Definition~\ref{def:736} and
Proposition \ref{thm:4087} in the appendix).
On the other hand, a light-like cuspidal edge singular point 
(cf. Definition~\ref{def:causality712}) could be an accumulation 
point of umbilical points, and it is worth investigating this
possibility.
\end{itemize}
Therefore, the goal of this paper is to give answers to the above 
questions  as much as possible.
Honda, Izumiya, Saji and Teramoto \cite{HIST} gave normal forms of germs of  
cuspidal edges in $\Lo^3$ and investigated cuspidal edges at light-like points
from a different point of view than ours.

Fix  a generalized cuspidal edge $f:\mc U_f\to \R^3$ along $\Gamma$.
At each singular point $p:=(s,0)\in \Sigma_f$,
we consider the sign
\begin{equation}\label{eq:Ep258}
\sigma^C(p):=\op{sgn}(d^C(s))\in \{-1,0,1\}
\end{equation}
of the value
\begin{equation}\label{eq:Dp262}
d^C(s):=\det(\Gamma'(s),f_{tt}(s,0),\Gamma''(s)).
\end{equation}
By definition, the sign
$\sigma^C(p)$ at the singular point $p$ is common in $\E^3$ and $\Lo^3$.
If $\sigma^C(p)$ is positive (or negative, zero),
we say that the germ of the generalized cuspidal edge $f$ at $p$ is
{\it right-handed} (or {\it left-handed}, {\it neutral}).
The following three assertions hold (see Propositions
\ref{prop:GE} and \ref{prop:GL}):
\begin{enumerate}
\item $\sigma^C(p)$ vanishes if and only if
the osculating plane of $\Gamma$ at $p$
coincides with the limiting tangent plane 
(which is the plane in $\R^3$ passing through $f(p)$ obtained as the limit
of the tangent planes  at its regular points)
of $f$ at $p$.
\item  
The sign of $\sigma^C(p)$ coincides with that of
the Euclidean limiting normal curvature $\kappa^E_\nu(s)$ if we think $\R^3$
as $\E^3$.
\item  
If the normal vector of $f$ at $p$
points in a space-like or time-like direction,
then the 
sign of $\sigma^C(p)$ also coincides with that of
the Lorentzian limiting normal curvature $\kappa^L_\nu(s)$
if we think $\R^3$
as $\Lo^3$.
\end{enumerate}
Regarding the above facts, we give the following:

\begin{Definition}
A generalized cuspidal edge singular point $p$ of 
$f:\mc U_f\to \R^3$
is said to be {\it  generic} if
$\sigma^C(p)$ does not vanish.
\end{Definition}

We remark that the invariant $\sigma^C$ is generalized for an 
invariant of cuspidal edges in a Riemannian 3-manifold $(M^3,g)$
(cf.~\cite{SUY3}, where $\sigma^C$ is  denoted by $\sigma^C_g$).

A regular curve $\Gamma:I\to \Lo^3$ defined on an interval $I$
is called of {\it type $S$} (resp. {\it type $T$})
if the velocity vector field $\Gamma'(s)$ 
points in a space-like (resp. time-like) direction
for each $s\in I$.
If $\Gamma$ is neither type $S$ nor type $T$,
then there exists $s\in I$
such that $\Gamma'(s)$
points in a light-like direction.
We are interested in the special case that $\Gamma'(s)$ 
is a light-like direction for any $s\in I$.
Such a $\Gamma$ is called a {\it regular curve of type $L$}.

We next define the {\it order} of a singular point 
$p\,(=(s,0)\in \mc \Sigma_f)$ of a given generalized cuspidal edge 
$f:\mc U_f\to \Lo^3$,
denoted by $i_p$  (cf. Definition \ref{def:R}):
We denote by $\Delta^L(s,t)$
the determinant of the symmetric matrix associated
with the first fundamental form of $f$ (cf. \eqref{eq:Delta}).
Roughly speaking, the order $i_p$ is the minimal number $k$ 
such that $\partial^k \Delta^L(s,0)/\partial t^k$
does not vanish (cf. Definition \ref{def:R}).
The order is a concept that only makes sense for $f$ lying in $\Lo^3$
(when $f$ lying in $\E^3$, $i_p$ can be also defined
in the same manner but is always equal to $2$).
If the order $i_p$ ($p\in \Sigma_f$) is an even constant, 
then all regular points on a 
sufficiently small neighborhood $V_p$ of $p$
have the same causal type, in particular, they are space-like or
time-like on $V_p$.
On the other hand, if $i_p$ is an odd constant, then any regular points on a sufficiently small
neighborhood of $p$  is space-like on one side of the singular
curve $\Sigma_f$ and time-like on the opposite side.
As a special case, $i_p=\infty$ can occur (see \eqref{ex:IpInfty}).
The following result describes the generic behavior of
generalized cuspidal edges in $\Lo^3$ 
(as a consequence, most cuspidal edges in $\Lo^3$
have unbounded mean curvature functions):

\medskip
\noindent
{\bf Theorem A.}
{\it Let $f:\mc U_f\to \Lo^3$ be a generalized cuspidal edge along a
regular curve $\Gamma$ in $\Lo^3$.
If $\Gamma$ is of type $S$ or of type $T$,
then, for each singular point $p:=(s,0)\in \Sigma_f$,
there exists a sufficiently small neighborhood $V_p(\subset \mc U_f)$  of $p$
such that the following assertions hold $($the first three assertions are
about the causality of $f$ which is defined in Definition~\ref{def:causality712}$)$:
\begin{enumerate}
\item \label{item:A1}
 $f$ is space-like on $V_p \setminus \Sigma_f$
if $\Gamma$ is of type $S$ and the $\Lo^3$-cuspidal direction vector
$\mb D^L_f(s)$ is space-like $($cf. Definition \ref{def:CD617}$)$.
\item \label{item:A2}
$f$ is time-like on $V_p \setminus \Sigma_f$
if 
\begin{itemize}
\item  $\Gamma$ is of type $T$, or 
\item $\Gamma$ is of type $S$ and $\mb D^L_f(s)$ is a time-like vector.
\end{itemize}
\item \label{item:A3}
If $\Gamma$ is of type $S$ and $\mb D^L_f(s)$ is a light-like vector,
then the causal type of $f$ is different between
both sides of $V_p\setminus \Sigma_f$.
\item  \label{item:A4}
If $\mb D^L_f(s)$ is not a light-like vector,
then the limiting normal curvature
$\kappa^L_\nu(s)$ is defined $($cf. \eqref{eq:knSL}$)$,
and the sign of $\kappa^L_\nu(s)$ coincides with $\sigma^C(p)$.
\item \label{item:A5}
The order $i_p$
is equal to $2$ 
if  $\mb D^L_f(s)$ is not  a light-like vector at $p$.
On the other hand,
if  $\mb D^L_f(s)$ points in 
a light-like direction, 
then $i_p \ge 3$ holds.
Moreover, the equality holds
if and only if $p$ is a 
cuspidal edge singular point.
\item \label{item:A6}
If $p$ is a cuspidal edge singular point, then
the mean curvature $H^L$  of $f$ is unbounded on $V_p$.
In particular, if $p$ is an accumulation point of 
the set of cuspidal edge singular points, then $H^L$ is also
unbounded at $p$.
\item  \label{item:A7}
If $p$ is a generic 
$($i.e. $\sigma_C(p)\ne 0)$ cuspidal edge
 singular point satisfying $i_p=2$ 
$($resp. $i_p=3)$,
then the Gaussian curvature  $K^L$ of $f$
is unbounded and takes different signs $($resp. the same sign$)$
on each side $($resp. both sides$)$ of $\Sigma_f$ in $V_p$. 
\item \label{item:A8}
If $p$ is a cuspidal edge singular point
then  the two principal curvatures of $f$ 
are both real-valued on $V_p$.
Moreover, if $p$ is generic $($i.e. $\sigma_C(p)\ne 0)$,
then one of them is unbounded on $V_p$.
In this setting, the other one is bounded on $V_p$ unless $i_p=3$.
\item \label{item:A9}
The umbilical or quasi-umbilical points 
cannot accumulate at any cuspidal edge singular point.
\end{enumerate}
}

\medskip
The corresponding assertions for cuspidal edges in $\E^3$ are 
known (cf. \cite{MSUY,F,T}), which are summarized in Fact \ref{cor:E}.
In particular, for an arbitrarily given cuspidal edge in $\E^3$,
its mean curvature function $H^E$ is always unbounded,
and its Gaussian curvature function $K^E$
is bounded if and only if its limiting normal 
curvature $\kappa^E_\nu$ vanishes identically on $\Sigma_f$. 
So, Theorem~A contains an analogue of these facts
in $\Lo^3$. The strategy of the proof of Theorem~A is an improvement of
the corresponding result in $\E^3$ given by the first author \cite{F}. 
In fact, we need more case separations and computations of
higher-order derivatives. As a consequence of Theorem~A, we have the following:

\medskip
\noindent
{\bf Corollary B.}
{\it Let $f:\mc U_f\to \Lo^3$ be a cuspidal edge along a
regular curve $\Gamma$ in $\Lo^3$.
If the mean curvature $H^L$ of $f$ is bounded on $\mc U_f\setminus \Sigma_f$, 
then $\Gamma$ is of type $L$.}

\medskip
By this assertion, as long as $\Sigma_f$ consists of
cuspidal edge singular points,
the mean curvature $H^L$ of $f$ can be  bounded
only when $\Gamma$ is of type $L$. 
So we consider the case that
$\Gamma$ is of type $L$: 

\begin{Definition}
A $C^\infty$-map $f:\mc U_f\to \Lo^3$ 
is called a {\it light-like generalized cuspidal edge of general type}
along $\Gamma$ (see Definition \ref{def:Lcg2887}) if 
it is a generalized cuspidal edge such that
\begin{itemize}
\item $\Gamma$ is of type L,
\item the order $i_p$ of $f$ at each point $p\in \Sigma_f$
is equal to $2$, and 
\item $\Gamma''(s)$ never vanishes for each $(s,0)\in \Sigma_f$.
\end{itemize}
\end{Definition}

If $\Gamma$ is of type $L$, the possibility of
the normal direction of $f$ is either space-like or light-like.
More precisely, the following assertion holds:

\medskip
\noindent
{\bf Proposition C.}
{\it Let $\Gamma:I\to \Lo^3$ be a regular curve such that
$\Gamma'(s_0)$ $(s_0\in I)$
is a light-like vector.
If $f$ is a generalized cuspidal edge along $\Gamma$
in $\Lo^3$. Then the following two assertions
are equivalent:
\begin{enumerate}
\item  \label{item:C1}
The normal direction of $f$ at $p:=(s_0,0)$ points in a
space-like direction $($resp. a light-like direction$)$,
\item \label{item:C2}
the order $i_p$ of $p:=(s_0,0)$ is
equal to $2$ $($resp. greater than or equal to $4)$.
\end{enumerate}
In particular, $i_p>2$ implies $i_p\ge 4$.
Moreover, if $\Gamma$ is of type $L$ satisfying $\Gamma''(s_0)\ne \mb 0$,
then $(1)$ and $(2)$ hold if and only if
$\sigma^C(p)\ne 0$ $($resp. $\sigma^C(p)=0)$.}

\medskip
\noindent
In particular, the normal vectors of
light-like generalized cuspidal edges of general type
always point in space-like directions. 
Regarding this, we show the following:

\medskip
\noindent
{\bf Theorem~D.} {\it Let $f$ be a 
light-like generalized
cuspidal edge of general type
along $\Gamma$ in $\Lo^3$. 
Then, for each singular point $p:=(s,0)\in \Sigma_f$,
there exists a sufficiently small neighborhood 
$V_p(\subset \mc U_f)$  of $p$ such that the following assertions hold:
\begin{enumerate}
\item \label{item:D2}
The sign $\sigma^C(p)$ does not vanish for each $p\in \Sigma_f$.
\item \label{item:D1}
$f$ is time-like on $V_p\setminus \Sigma_f$.
\item \label{item:D3}
The mean curvature $H^L$ 
is bounded on $V_p\setminus \Sigma_f$. 
\item \label{item:D4}
If $p$ is a cuspidal edge singular point, then
$K^L$ is unbounded and takes different signs on each side of $\Sigma_f$ 
on $V_p$.
\item \label{item:D5}
If $p$ is a cuspidal edge singular point, then 
the two principal curvatures of $f$ are both unbounded on $V_p$, amd 
they are real-valued on one side of $\Sigma_f$ 
and are not real-valued on the other side on $V_p$
$($that is, the singular set $\Sigma_f$ 
behaves rather like the locus of quasi-umbilical points$)$. 
\item \label{item:D6}
Umbilical points and quasi-umbilical points of $f$ never accumulate 
at any cuspidal edge singular points of $f$.
\end{enumerate}
}

\medskip
In the setting of Theorem~D, 
the authors do not know of 
any light-like generalized
cuspidal edge of general type
along $\Gamma$ in $\Lo^3$ 
for which $H^L$ vanishes identically
(if it happens, by Proposition~\ref{prop:O4}, $f$ cannot be a minface).

The case that $\Gamma'(s)$ points in a light-like direction only at 
a point $s=s_0$ is also discussed in Section~4.
We note that \ref{item:A9} of Theorem~A and \ref{item:D6} of Theorem~D
are special cases of the following statement:

\medskip
\noindent
{\bf Proposition E.}
{\it Let $f$ be a  cuspidal edge along a regular 
curve $\Gamma$ in $\Lo^3$. If a given cuspidal edge singular point
$p:=(s,0)\in \Sigma_f$ is of order less than $4$ $($i.e. $i_p\le 3)$, 
then umbilical points of $f$ never accumulate at $p$.
Moreover, quasi-umbilical points of $f$ also cannot
accumulate at $p$ unless $p$ is a light-like point
$($if $p$ is an isolated light-like point,
quasi-umbilical points can accumulate at $p$,
see $(3)$ of Proposition~\ref{prop:LG}$)$.
}

\medskip
For a light-like cuspidal edge singular point $p$, 
the authors know of no example in which 
umbilical points accumulate at $p$.

\begin{Definition}\label{def:CC613}
Let $f$ be a generalized cuspidal edge along a regular curve $\Gamma$
of type $L$.  If $i_p=4$ holds for each $p\in \Sigma_f$,
we call $f$ a {\it generalized cuspidal edge of order four}.
\end{Definition}

If $f$ be a generalized cuspidal edge along a  regular curve $\Gamma$ 
of type $L$ in $\Lo^3$, then the Lorentzian singular curvature function $\kappa^L_s$
along $\Gamma$ cannot defined. In this case,  we use
the Euclidean singular curvature function $\kappa^E_s$
along $\Gamma$, instead. We focus on the property that
the sign of $\kappa^E_s$ is an identifier of whether the cuspidal edge $f$
looks convex or concave in human's eyes (cf. \cite[Section~5]{SUY2}):

\begin{figure}[hbt]
\begin{center} 
\includegraphics[height=3.6cm]{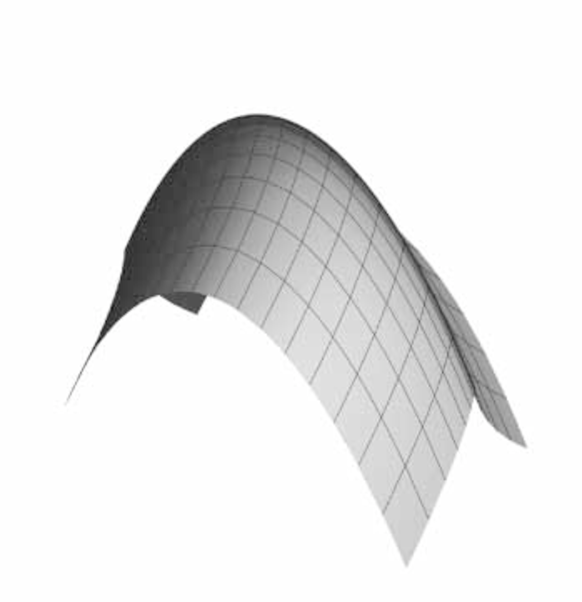}\qquad \qquad
\includegraphics[height=3.6cm]{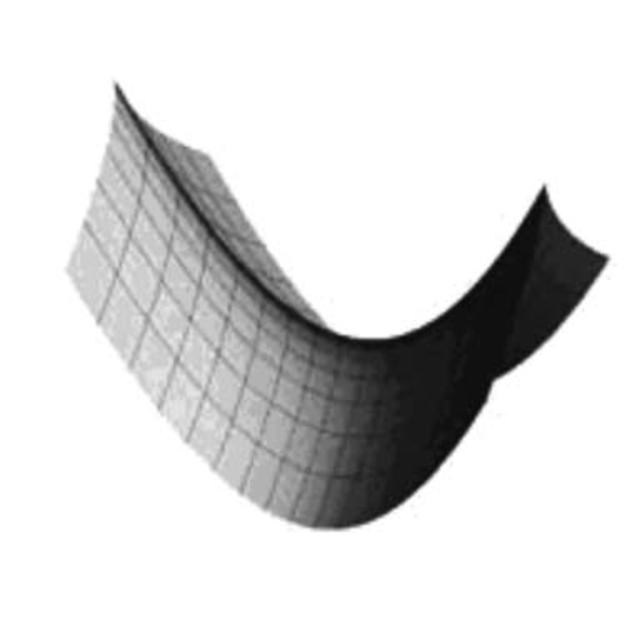}
\end{center}
\caption{
Cuspidal edges in $\E^3$ with positive (left) and 
negative (right) singular curvatures}\label{ex:C2}
\end{figure}

\begin{Definition}\label{def:CC613b}
Let $f$ be a generalized cuspidal edge of order four along a 
regular curve $\Gamma$ of type $L$. 
Then $f$
is said to be  of {\em convex type} $($resp. {\em concave type}$)$
at  a singular point $p=(s,0)\in \Sigma_f$
if $\kappa^E_s(s)$ is positive (resp. negative).
\end{Definition}

If $f$  is a  generalized cuspidal edge  of order four which is of 
convex $($resp.~concave$)$ type, then, for any time-like vector $\mb v$,
the image of the orthogonal projection of $f$ into the plane 
$\Pi^L_{\mb v}$ which is perpendicular to $\mb v$
is locally convex $($resp. locally concave$)$, see 
\cite[Proposition 2.19 and also Remark 2.20]{HFU} for details.

Let
\begin{equation}\label{eq:pi554}
\pi:\Lo^3\ni (x,y,z)\mapsto (x,y)\in \R^2
\end{equation} 
be the canonical projection,
and consider the curve $\gamma:=\pi\circ \Gamma$
in the space-like $xy$-plane. We denote by $P_s$ the plane in $\Lo^3$ 
passing through $\gamma(s)$, which is spanned 
by $\mb v_3:=(0,0,1)$ and $\mb n(s)$,
where $\mb n(s)$ is the normal vector of the curve $\gamma$ at $\gamma(s)$.
Then $P_s$ passes through $\Gamma(s)$ as well as $\gamma(s)$ 
(cf. Figure~\ref{fig:PS}). In this setting, we may
assume that $s$ is the arc-length parameter of $\gamma$. 
Let $\kappa(s)$ be the curvature  of $\gamma$
at $s\in I$, and $\mb c_s$ the section of the image of
$f$ by the plane $P_s$.
We denote by  $\mu(s,t)$ the function $($cf. Definition~\ref{def:mu}$)$ 
associated with the cuspidal curvature of $\mb c_s$,
which can be written as
\begin{equation}\label{eq:459}
\mu(s,t)=\mu_0(s)+\mu_1(s)t+\phi(s,t)t^2,
\end{equation}
where $\phi(s,t)$ is a certain $C^\infty$-function.
Then, we prove the following:

\medskip
\noindent
{\bf Proposition F.}
{\it Let $f$ be a cuspidal edge  
in $\Lo^3$ and $p$ its singular point satisfying $i_p\ge 4$.
Then $i_p>4$ holds if and only if $f$ is of concave type  at $p$
and 
\begin{equation}\label{eq:km476}
|\kappa|=\mu_0^2
\end{equation}
holds at the point $p$}.

\begin{figure}[h!]
\begin{center}
\includegraphics[height=6.5cm]{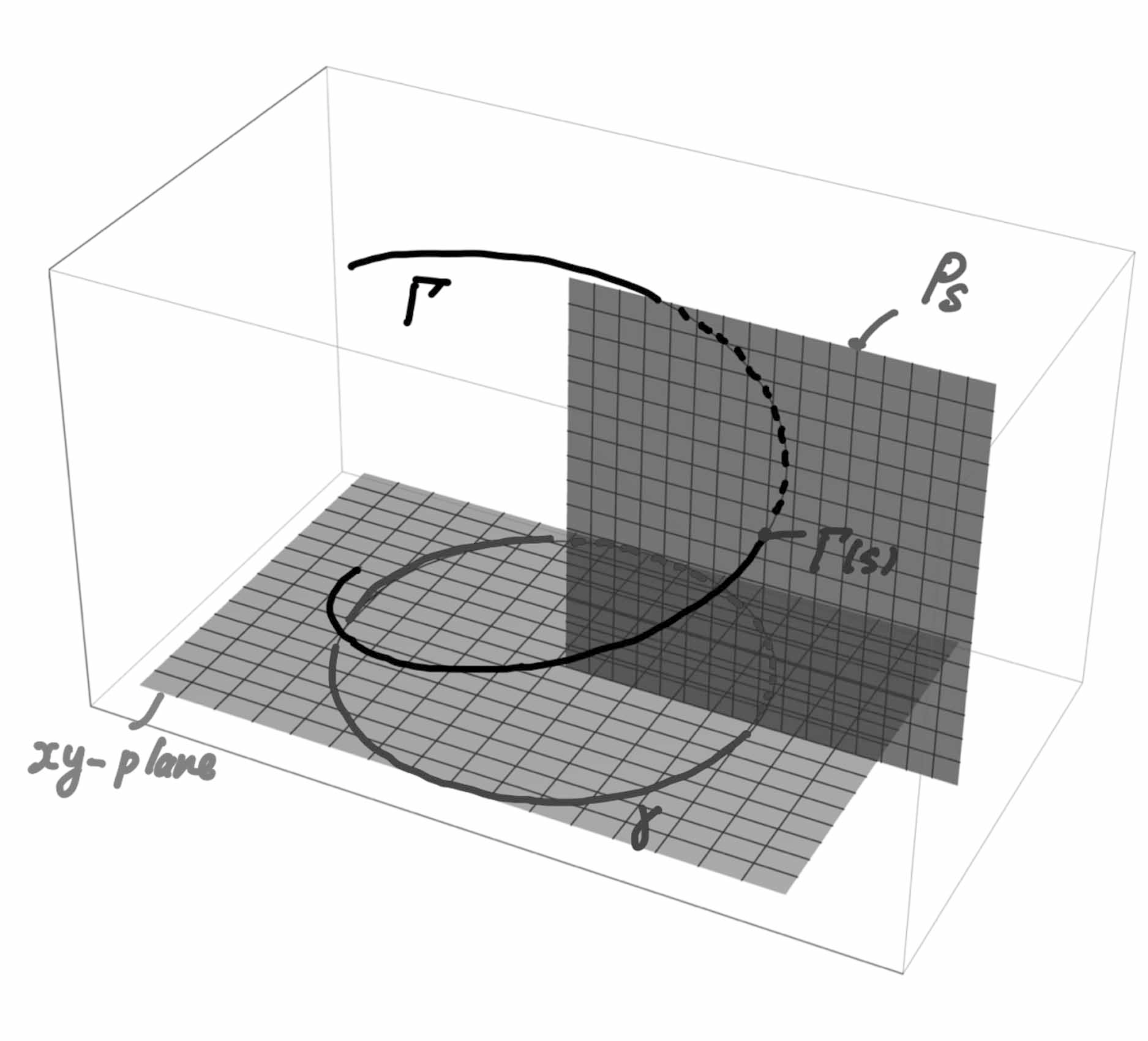}
\end{center}
\caption{The plane $P_s$ and the 
curves $\Gamma$ (a helix) and $\gamma$ (a circle)}
\label{fig:PS}
\end{figure}

\medskip
As nice classes of zero-mean curvature surfaces in $\Lo^3$,
``maxfaces" and ``minfaces" are known, and if they give 
generalized  cuspidal edges 
(in fact, cuspidal edges and cuspidal cross caps 
are typical examples of generalized 
cuspidal edges), 
then they are always of order four
 (cf. Proposition~\ref{prop:O4}).
We remark that  ``maxfaces" and ``minfaces" themselves may admit singular 
points which are not appeared on generalized cuspidal edges, 
like as swallowtails and cone-like singular points.
The following theorem summarizes the properties of the generalized 
cuspidal edges of order four, which is the deepest result in this paper:

\medskip
\noindent
{\bf Theorem~G.}
{\it Let $f$ be a generalized cuspidal edge of order four along 
a regular curve $\Gamma$ of type $L$. 
Suppose that  the regular curve
$\gamma(s):=\pi\circ \Gamma(s)$ $(s\in I)$ in the $xy$-plane
is parametrized by the arc-length 
and $\kappa(s)$ denotes the curvature function of $\gamma(s)$ as a plane curve.
Then, for each singular point $p:=(s_0,0)\in \Sigma_f$,
there exists a sufficiently small neighborhood $V_p(\subset \mc U_f)$  of $p$
such that the following assertions hold:
\begin{enumerate}
\item[(a)] %\label{item:Ga}
 The causal type of $f$  
is the same on both sides of the singular set $\Sigma_f$ on $V_p$.
\item[(b)] %\label{item:Gb}
The mean curvature function $H^L$ of $f$ is bounded on $V_p$
if and only if 
$($where the definition of the sign $\sigma$ is given in \eqref{3606}$)$
\begin{equation}\label{eq:518}
3\kappa\mu_1 =8 \mu_0 \mu'_0+3 \sigma  \kappa'
\end{equation}
holds on $(I\times \{0\})\cap V_p$, where $\mu_i$ $(i=1,2)$
are the coefficients of the function $\mu(s,t)$
as in \eqref{eq:459}.
\item[(c)] \label{item:Gd}
The generalized cuspdial edge singular point $p$
cannot be an accumulation point of umbilics of $f$.
\item[(d)]  \label{item:Gc}
The point $p$ is a cuspidal edge singular point
if and only if $\mu_0(s_0)\ne 0$.
\end{enumerate}
Moreover, if
$\Gamma''(s_0)$ does not vanish 
$($i.e.~$\kappa(s_0)\ne 0)$, 
then the following assertions hold:
\begin{enumerate}
\item \label{item:G1}
If $f$ is space-like on $V_p\setminus \Sigma_f$,
then $f$ is of concave type 
and $K^L$ is positive at each point on $V_p \setminus \Sigma_f$
$($cf. Figure~\ref{ex:L123}, right$)$.
\item \label{item:G2}
If $f$ is time-like on $V_p\setminus \Sigma_f$,
then the Gaussian curvature $K^L$ is positive 
$($resp. negative$)$
if and only if $f$ is of convex type $($resp. concave type$)$ at $p$, 
see Definition~\ref{def:CC613b}  and Figure~\ref{ex:L123}.
As a special case, if $p$ is not a cuspidal edge, then
the Gaussian curvature $K^L$ is positive and  $f$ is of convex type
$($cf. Corollary \ref{cor:3995}$)$.
\item \label{item:G3}
The Gaussian curvature $K^L$ diverges and 
takes the same sign on both sides of $\Sigma_f$, and
the two principal curvatures are both unbounded.
In this situation, if $K^L$ is positive $($resp. negative$)$,
then
the two principal curvatures both 
take values in $\R$ $($resp. $\C\setminus \R)$ 
on $V_p$.
\item \label{item:G4}
The quasi-umbilical points of $f$
cannot accumulate at $p$.
\end{enumerate}
}

\begin{figure}[htb]
\begin{center}
\includegraphics[height=4.0cm]{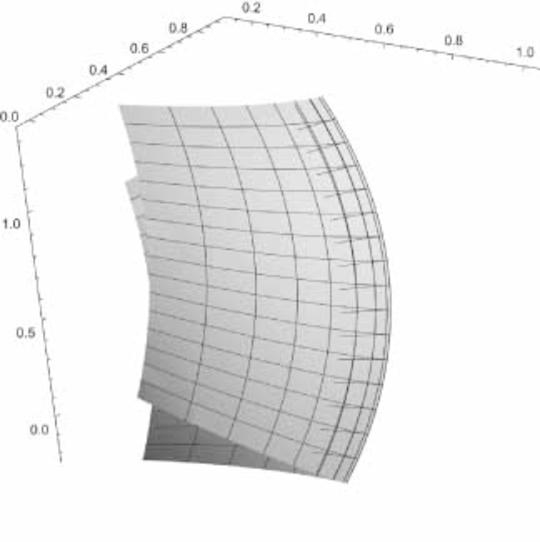}
\includegraphics[height=4.0cm]{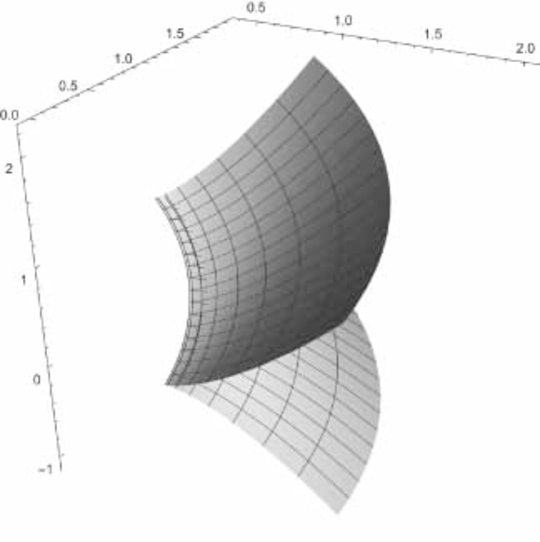}
\includegraphics[height=4.0cm]{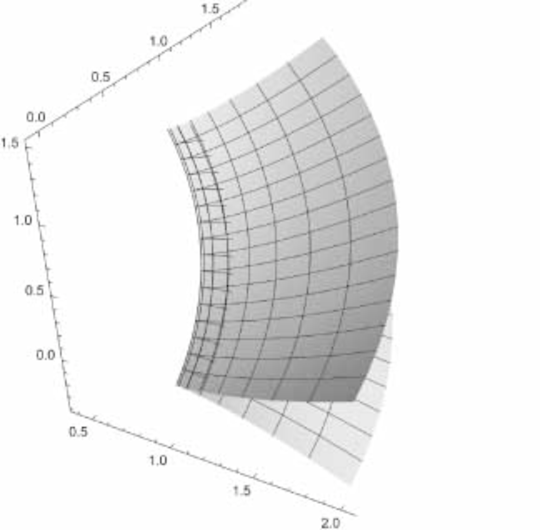}
\end{center}
\caption{Cuspidal edges of order four 
along a light-like helix, which
is time-like of convex type (left),
and time-like of concave type (center) and
space-like (right),
respectively. These are explained in
Example \ref{exa:4161}.}
\label{ex:L123}
\end{figure}

Theorem~G is not vacuous. 
In fact, there is a representation formula (cf. \eqref{eq:fT2499})
producing all generalized cuspidal edges of order four in $\Lo^3$, by which
we can construct examples satisfying the assumptions of Theorem~G.
At first glance, the conditions \eqref{eq:km476} and \eqref{eq:518}
seem to depend on the choice of an orthogonal coordinate system in $\Lo^3$.
However, these conditions are invariant under Lorentzian motions
belonging to the identity component of the isometry group of $\Lo^3$ 
(cf. Proposition~\ref{Cor:I1} and Corollary~\ref{Cor:I2}).
The assertion \ref{item:G4} of Theorem~G
cannot be expected when $\Gamma''(s)=\mb 0$. 
In fact, quasi-umbilical points may accumulate at the
singular point $(s,0)$ of $f$ satisfying $\Gamma''(s)=\mb 0$.
More precisely, such an example is 
constructed in  Akamine \cite[Figure 4]{A} (see Remark~\ref{rmk:DQ}). 
As stated in Theorems D and G, cuspidal edges of type L whose order are
 at most four cannot change their causal types. 
However, there exist cuspidal edges of type L with order five such that
they are space-like on one side of the singular set, and
time-like on the other side (see Example \ref{ex:34239}).
This means that cuspidal edges of type L also can change
their causal type as well as in the case \ref{item:A3} in Theorem~A.

In $\E^3$, minimal surfaces which are constructed using the 
Weierstrass formula do not admit cuspidal edges but only admit branch points
as isolated singular points.
As mentioned before Theorem G,
there are two canonical classes of
zero-mean curvature (``ZMC'' in short) surfaces in $\Lo^3$
without branch points, called ``maxfaces" and ``minfaces".
Roughly speaking, maxfaces (resp. minfaces)
are ZMC surfaces which can be represented as a holomorphic 
(resp. para-holomorphic) data using the Weierstrass-type representation formulas,
respectively. 

Akamine \cite{A} investigated the sign of the Gaussian curvature  $K^L$ 
near the singular points on a given time-like ZMC-surface (cf. \cite[Theorem~A]{A}.
In particular, at cuspidal edge points of a time-like surface $f$, 
he showed that the sign of $K^L$ depends on the convexity 
or concavity of $f$ (cf. \cite[Theorem~B]{A}), and also
investigated umbilical points and quasi-umbilical points
near the singular points. As a consequence, Akamine \cite{A} proved
(a), (d), (2), (3) and (4) of Theorem~G
for ``minfaces" ((1) for maxface was shown in \cite{UY}
except for the concavity),
and so, Theorem~G can be considered as generalizations of Akamine's results
(as we have already mentioned, 
generalized cuspidal edge singular points appeared on 
maxfaces and minfaces are all of order four).

Through Akamine's work, we can observe how the cuspidal edges are special 
amongst the non-degenerate singular points on zero-mean curvature surfaces.
Moreover, by Theorem~G, we are able to recognize that these 
interesting properties of generalized cuspidal edges having vanishing mean curvature
functions
attribute to the 
property that their order $i_p$ are equal to four 
along the singular curve. 
Also, we note that
the case where $\Gamma'(s)$ 
is a light-like vector only at a value $s=s_0$ is also 
investigated in Section 4.

We organize the paper as follows: In Section 1, we introduce the fundamental 
properties of generalized cuspidal edges in $\Lo^3$
and define \lq\lq singular curvature\rq\rq\ and \lq\lq
limit normal curvature\rq\rq\ along their singular sets. 
In Section 2 (resp. Section 3), we investigate the behaviors of
$H^L$ and $K^L$ for generalized cuspidal edges when $\Gamma$ is
time-like (resp. space-like), and prove Theorem~A and Corollary~B
in Sections 2 and 3. In Section 4, we consider the case that $\Gamma$ is light-like
and prove Theorem~D, Propositions C and E. 
In Section 5, we consider generalized cuspidal edges of order four,
and prove Proposition~F and Theorem~G. We have three appendices: In the first appendix,
a representation formula for cusps in Lorentz-Minkowski plane $\Lo^2$ is given.
In the second appendix, the existence of a certain parametrization 
of the generalized cuspidal edge is shown. In the third appendix, we discuss 
umbilical points on wave fronts in $\Lo^3$.

\section{Preliminaries}
In this section, we prepare fundamentals of the geometry of generalized
cuspidal edges in~$\Lo^3$ comparing to the Euclidean case.

\subsection{Regular surfaces in $\E^3$}

Let $U$ be a domain in the $uv$-plane $(\R^2;u,v)$.
A $C^\infty$-map $f:U\to \R^3$ is called
a {\it regular surface} if it is an immersion on $U$.
We denote by \lq\lq$\cdot$\rq\rq\
the canonical positive definite inner product on $\E^3$.
When the image of $f$ lies in $\E^3$,  the functions on $U$
$$
E:=f_u\cdot f_u,\quad
F:=f_u\cdot f_v, \quad 
G:=f_v\cdot f_v
$$
are called the {\it coefficients of the first fundamental form} $ds^2:=Edu^2+2Fdudv+Gdv^2$.
We set $\tilde \nu^E:=f_u \times f_v$,
which gives the normal vector field of $f$ on $U$,
where \lq\lq$\times $\rq\rq denotes the canonical vector product on $\E^3$.
By definition,  
\begin{equation}\label{eq:N}
(\Delta_E:=)\tilde \nu^E\cdot \tilde \nu^E =EG-F^2
\end{equation}
holds. We then set
\begin{align}
\label{tL}
\tilde L&:=f_{uu}\cdot \tilde \nu^E=\det(f_u,f_v,f_{uu}), \\
\label{tM}
\tilde M&:=f_{uv}\cdot \tilde \nu^E=\det(f_u,f_v,f_{uv}), \\
\label{tN}
\tilde N&:=f_{vv}\cdot \tilde \nu^E=\det(f_u,f_v,f_{vv}),
\end{align}
which  are not the usual coefficients 
of the second fundamental form,
because  $\tilde \nu^E$ may not be a unit vector field.
By setting
$$
L:=\frac{\tilde L}{\sqrt{\Delta_E}},\quad
M:=\frac{\tilde M}{\sqrt{\Delta_E}},\quad
N:=\frac{\tilde N}{\sqrt{'\Delta_E}},
$$
$Ldu^2+2Mdudv+Ndv^2$ gives the second fundamental form
of $f$ with respect to the unit normal vector field
$\nu^E:=\tilde \nu^E/|\tilde \nu^E|_E$,
where
$$
|\mb a|_E:=\sqrt{\mb a\cdot \mb a}\,\,(\mb a \in \E^3).
$$
We set
\begin{equation}\label{eq:WfE0}
\tilde W^E:=
\pmt{G & -F \\
          -F & E}\pmt{\tilde L & \tilde M \\
                              \tilde M & \tilde N}
\end{equation}
and
\begin{equation}\label{eq:WfE}
W^E:=\pmt{E & F \\ F & G}^{-1}\pmt{L & M \\ M & N}=
\frac{1}{\Delta_E^{3/2}}\tilde W^E.
\end{equation}
The matrix $W^E$ is called the {\it Weingarten matrix} or 
{\it the shape operator}.
Using this, 
\begin{align}\label{eq:KH584a}
K^E&:=\det(W^E)=\frac{\det(\tilde W^E)}{\Delta_E^3}
=\frac{\tilde L \tilde N-\tilde M^2}{\Delta_E^2}, \\ \label{eq:KH584aa}
H^E&:=\op{trace}(W^E)=
\frac{\op{trace}(\tilde W^E)}{2\Delta_E^{3/2}}
\end{align}
give the Gaussian curvature function and the mean curvature function of $f$, respectively.

\subsection{Regular surfaces in $\Lo^3$}
We next consider the case that $f$ is 
a regular surface (i.e.~an immersion)
 into  $\Lo^3$. For two vectors 
 $\mb a,\,\mb b\in \Lo^3$
 as column vectors,
the Lorentzian inner product and the vector product are defined by
\begin{equation}\label{eq:L-inn}
\inner{\mb a}{\mb b}:=\mb a^T E_3\mb a,\quad
\mb a \times_L \mb b:=E_3(\mb a\times \mb b),
\qquad E_3:=\pmt{1 & 0 & 0 \\
                 0 & 1 & 0 \\
                 0 & 0 &-1},
\end{equation}
where the symbol ${}^T$ denotes matrix transposition.
Then
\begin{equation}\label{eq:LMN}
\tilde \nu_L:=f_u\times_L f_v(=E_3 \tilde \nu_E)
\end{equation}
is a normal vector field of $f$ in $\Lo^3$.

\begin{Definition}\label{def:causality712}
Let $f:U\to \R^3$ be a regular surface.
A point $p\in U$ is said to be {\it space-like, time-like, light-like}
if the normal vector $\tilde \nu^L(p)$ is 
time-like, space-like, light-like in $\Lo^3$, respectively.
Moreover, $f$ is said to be {\it space-like} (resp. {\it time-like})
if all points of $U$ are space-like (resp. time-like).
\end{Definition}

Here, we consider the case that $f$ is a space-like or
time-like surface on $U$. We set
$$
E^L:=\inner{f_u}{f_u},\quad
F^L:=\inner{f_u}{f_v}, \quad 
G^L:=\inner{f_v}{f_v},
$$
and call the  function
\begin{equation}\label{eq:Delta}
\Delta_L:=E^LG^L-(F^L)^2
\end{equation}
the {\it identifier of the causality} of $f$
which depends on  the choice of coordinate system $(u,v)$ and
differs only by the multiplication of a
positive function under a coordinate change.
Since
$
\tilde \nu^L=E_3 \tilde \nu^E
$,
we have
\begin{align}\label{eq:inner387}
\inner{\tilde \nu^L}{\tilde \nu^L}
&=\inner{\tilde \nu^E}{\tilde \nu^E}
=(f_u\times f_v)^T E_3 (f_u\times f_v) \\
& \nonumber
=\det(E_3)(f_u\times f_v)\cdot (E_3 f_u\times E_3 f_v)\\  
\nonumber
&
=
-\Big((f_u\cdot E_3 f_u)(f_v\cdot E_3 f_v)-(f_u\cdot E_3 f_v)(f_v\cdot E_3 f_u) \Big) \\
\nonumber
&=
-\inner{f_u}{f_u}\inner{f_v}{f_v}+\inner{f_u}{f_v}^2
=-\Delta_L.
\end{align}
Moreover, since 
$
\inner{\mb v}{\tilde \nu^L}=\mb v\cdot \tilde \nu^E
$
holds for any vector $\mb v\in \R^3$, we have
(cf. \eqref{eq:LMN})
$$
\inner{f_{uu}}{\tilde \nu^L}=\tilde L,\qquad
\inner{f_{uv}}{\tilde \nu^L}=\tilde M,\qquad
\inner{f_{vv}}{\tilde \nu^L}=\tilde N.
$$
Regarding 
$$
\inner{\tilde \nu^L}{\tilde \nu^L}=\epsilon |\Delta_L|
\qquad 
\epsilon:=\op{sgn}\left(\inner{\tilde \nu^L}{\tilde \nu^L}\right),
$$
we set
$$
L^L:=\frac{\tilde L}{\sqrt{|\Delta_L}|},\quad
M^L:=\frac{\tilde M}{\sqrt{|\Delta_L}|},\quad
N^L:=\frac{\tilde N}{\sqrt{|\Delta_L}|}.
$$
Then the Gaussian curvature (i.e. the sectional curvature 
with respect to the first fundamental form)
$K^L$ and the mean curvature $H^L$ of $f$ are
defined by (cf. \cite[page 107 and page 101]{O})
\begin{equation}\label{eq:K1La}
K^L:=\epsilon\frac{L^L N^L-(M^L)^2}{\Delta_L}
\qquad
H^L:=\frac{E^L N^L-2F^LM^L+G^LL^L}{2\Delta_L}.
\end{equation}
Since 
$$
L^LN^L-(M^L)^2=\frac{\tilde L \tilde N-\tilde M^2}{|\Delta_L|}
=-\epsilon \frac{\tilde L \tilde N-\tilde M^2}{\Delta_L},
$$
we have that
\begin{equation}\label{eq:K1Lb}
K^L=
\frac{-(\tilde L \tilde N-\tilde M^2)}{\Delta_L^2},\qquad
H^L=\frac{E^L\tilde N-2F^L\tilde M+G^L\tilde L}{2\Delta_L\sqrt{|\Delta_L|}}.
\end{equation}
As an advantage of these expressions, we can use
$\tilde L$, $\tilde M$ and $\tilde N$ given in
\eqref{tL}, \eqref{tM}, \eqref{tN}
to compute $K^L$ and $H^L$.
About $H^L$, there might exist other definitions with different sign.
However, our results are related only  to
the absolute value of $H^L$,
and so this does not affect the latter discussions.
We set
\begin{align}\label{eq:Wf}
\tilde W^L&:=
\pmt{G^L & -F^L \\
          -F^L & E^L}\pmt{\tilde L & \tilde M \\
                              \tilde M & \tilde N}, \\
W^L&:=\pmt{E^L & F^L \\ F^L & G^L}^{-1}\pmt{L^L & M^L 
\\ M^{L} & N^L}=
\frac{1}{\Delta_L\sqrt{|\Delta_L}|}\tilde W^L.
\nonumber
\end{align}
Then $W_L$ is called the {\it Weingarten matrix} or {\it the shape operator}.
Using these two matrices, we have the following expressions:
\begin{equation}\label{eq:KH584b}
K^L=\epsilon \det(W^L)=-\frac{\det(\tilde W^L)}{\Delta_L^3},\qquad
H^L=\op{trace}(W^L)
=\frac{\op{trace}(\tilde W^L)}{2\Delta_L\sqrt{|\Delta_L|}}.
\end{equation}
In particular, the determinant of $W^L$ is $-K^L$ (resp. $K^L$) 
and the trace of $W^L$ is $2H^L$ when $f$ is space-like (resp. time-like).
Thus, the eigenvalues of $W^L$ are invariants of $f$, 
which are called the {\it  principal curvatures} of $f$.
Unlike the Euclidean case, the principal curvatures of $f$ may not
be real-valued if $\Delta_L<0$.

As mentioned in the introduction, the results in $\Lo^3$ 
sometimes exhibit different 
phenomena from those in $\E^3$, which is the motivation for this research.
In this paper, we will investigate the behavior of
the principal curvatures around 
singularities of surfaces in $\Lo^3$ as well as $K^L$ and $H^L$, 
keeping in mind the results obtained in 
the case of Euclidean geometry.

\begin{Definition}\label{def:umbilics}
The point $p$ on the domain of definition of a regular surface $f$ is called
an {\it umbilical point} (resp. a {\it quasi-umbilical point}) if
the two principal curvatures coincide 
and $W^L$ is a diagonal (resp.
not a diagonal) matrix at $p$.
\end{Definition}

\begin{Remark} \label{rmk:U}
If $p$ is a space-like regular point (i.e. $\Delta_L(p)>0$),
then there exists a regular matrix $P$ such that $P^{-1} \tilde W^L P$ is 
a symmetric matrix at $p$, and so quasi-umbilical points never appear like as 
the case of regular surfaces in $\E^3$. 
\end{Remark}

By \eqref{eq:KH584a}  and \eqref{eq:KH584b}, 
the sign of $K^L$ is opposite of that of $K^E$, that is,
we have (cf. \cite[Remark~4.6]{A})
\begin{equation}\label{eq:KEKL}
\op{sgn}(K^L)=-\op{sgn}(K^E).
\end{equation}

\subsection{Generalized cuspidal edges in $\R^3$ and $\E^3$}

In the introduction, we have defined generalized cuspidal edges. 
In this subsection, we show several important properties of them:

\begin{Definition}\label{def:AD880}
Let $f(s,t)$ be a smooth map satisfying 
$f_s(s,0)\ne \mb 0$ and $f_t(s,0)=\mb 0$ for each $s$.
For such $f$,  we consider a new local coordinate system 
$(u,v)$ satisfying
$$
t(u,0)=0,\quad  s_v(u,0)=0,\quad s_u(u,0)>0,\quad t_v(u,0)>0.
$$
In particular,  $f_v(u,0)=\mb 0$ holds for each $u$.
We call such a local coordinate 
$(u,v)$ an {\it admissible local coordinate system}
associated with $f$,
as well as the original coordinate system $(s,t)$.
\end{Definition}

If $f(s,t)$ is a generalized cuspidal edge,
then $f_{tt}(s,0)\ne \mb 0$ holds, 
which can be reproved from the next lemma (we omit the proof):

\begin{Lemma}\label{lem:860}
Let $(s,t)$ be an
admissible coordinate system of a
generalized cuspidal edge $f$.
Let $\phi(s,t)$ be a smooth 
$\R^l$ $(l\ge 1)$ valued function
satisfying 
$\phi_s(s,0)\ne \mb 0$ and $\phi_t(s,0)=\mb 0$ for each $s$.
Then, for a $C^\infty$-function $\phi(s,t)$,
the first non-vanishing order $k$ 
of the derivative $\partial^k \phi(s,0)/\partial t^k$
is independent of the choice of an admissible coordinate system of $f$.
Moreover, if $k$ is even, then
the sign of $\partial^k \phi(s,0)/\partial t^k$
is also independent of the choice of an admissible coordinate system.
\end{Lemma}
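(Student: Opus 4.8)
The claim is about a generalized cuspidal edge $f$ with admissible coordinates $(s,t)$: if $\phi(s,t)$ is a smooth $\R^l$-valued function with $\phi_t(s,0)=\mathbf 0$ for each $s$, then the first non-vanishing $t$-derivative order $k$ of $\phi$ along $\{t=0\}$ is independent of the admissible coordinate system, and if $k$ is even the sign of $\partial^k\phi(s,0)/\partial t^k$ is also independent. (The hypothesis $\phi_s(s,0)\ne\mathbf 0$ plays no role in the order/sign claim and is just carried along; the key hypothesis is $\phi_t(s,0)=\mathbf 0$.)

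\medskip

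\noindent\textbf{Proof proposal.}

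The plan is to relate any two admissible coordinate systems $(s,t)$ and $(u,v)$ by a coordinate change and track how $t$-derivatives transform. First I would record what the coordinate change looks like. By Definition~\ref{def:AD880}, both coordinate systems satisfy $t(u,0)=0$, $s_v(u,0)=0$, $s_u(u,0)>0$, $t_v(u,0)>0$; writing $s=s(u,v)$, $t=t(u,v)$, the relations $t(u,0)=0$ and $s_v(u,0)=0$ say that along $v=0$ we have $t=0$ and $\partial s/\partial v=0$, so there are smooth functions with
\begin{equation}\label{eq:cc-split}
s(u,v)=a(u)+v^2\,b(u,v),\qquad t(u,v)=v\,c(u,v),\qquad c(u,0)=t_v(u,0)>0,
\end{equation}
and $a'(u)=s_u(u,0)>0$. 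The key point is that the Jacobian $\partial t/\partial v=c(u,0)>0$ is nonzero on $v=0$, so $v$ vanishes to order exactly one in $t$ along the singular set. Define $\psi(u,v):=\phi(s(u,v),t(u,v))$; since $\phi_t(s,0)=\mathbf 0$ and $t(u,0)=0$, one checks directly that $\psi_v(u,0)=\phi_s(a(u),0)\,s_v(u,0)+\phi_t(a(u),0)\,t_v(u,0)=\mathbf 0$, consistent with $(u,v)$ being admissible for the same geometric object.

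Next I would prove the order is preserved. Suppose $k$ is the first non-vanishing order of $\phi$ in $t$ along $\{t=0\}$, i.e. $\partial^j\phi(s,0)/\partial t^j=\mathbf 0$ for $1\le j<k$ (and $j=0$ if $\phi(s,0)\equiv\mathbf 0$ — I would treat $\phi(s,0)\equiv\mathbf 0$ or not as two cases, or more cleanly just let $k$ be the smallest $j\ge 0$ with $\partial^j\phi/\partial t^j\ne\mathbf 0$ on $\{t=0\}$, noting $k\ge 2$ when $\phi(s,0)\equiv\mathbf 0$ because $\phi_t(s,0)=\mathbf 0$ is assumed). Then near the singular set we can Taylor-expand in $t$:
\begin{equation}\label{eq:taylor-t}
\phi(s,t)=\phi_k(s)\,t^k+O(t^{k+1}),\qquad \phi_k(s)=\frac{1}{k!}\frac{\partial^k\phi}{\partial t^k}(s,0)\ne\mathbf 0.
\end{equation}
Substituting $t=v\,c(u,v)$ from \eqref{eq:cc-split} and $s=a(u)+O(v^2)$, and using that $\phi_k$ is smooth, gives
\begin{equation}\label{eq:taylor-v}
\psi(u,v)=\phi_k(a(u))\,c(u,0)^k\,v^k+O(v^{k+1}),
\end{equation}
so $\partial^j\psi(u,0)/\partial v^j=\mathbf 0$ for $j<k$ and $\partial^k\psi(u,0)/\partial v^k=k!\,\phi_k(a(u))\,c(u,0)^k\ne\mathbf 0$ since $c(u,0)>0$. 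Hence the first non-vanishing order in the $(u,v)$-system is again $k$. (The case $\phi(s,0)\equiv\mathbf 0$, i.e. $k=0$ vacuously excluded, is subsumed: if $\phi$ vanishes identically on $\{t=0\}$ the same computation with the leading term being the first nonzero $\phi_k$, $k\ge 2$, applies; and if $\phi(s,0)\not\equiv\mathbf 0$ then $k=0$ is preserved trivially since $\psi(u,0)=\phi(a(u),0)$.)

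Finally, for the sign statement, suppose $k$ is even. Then by \eqref{eq:taylor-v},
\begin{equation}\label{eq:sign}
\frac{\partial^k\psi}{\partial v^k}(u,0)=k!\,\phi_k(a(u))\,c(u,0)^k=c(u,0)^k\cdot\frac{\partial^k\phi}{\partial t^k}\bigl(a(u),0\bigr),
\end{equation}
and since $k$ is even, $c(u,0)^k>0$; therefore $\partial^k\psi(u,0)/\partial v^k$ and $\partial^k\phi(s,0)/\partial t^k$ (at the corresponding point $s=a(u)$) are positive multiples of one another, hence have the same sign componentwise. This is exactly the asserted coordinate-independence of the sign. The only subtlety — and the main thing to be careful about rather than a genuine obstacle — is the bookkeeping in passing from \eqref{eq:taylor-t} to \eqref{eq:taylor-v}: one must note that substituting $s=a(u)+v^2 b(u,v)$ into $\phi_k(s)$ contributes only terms of order $v^k$ or higher times $v^2$, hence does not affect the coefficient of $v^k$, and that the $O(t^{k+1})$ remainder in \eqref{eq:taylor-t} becomes $O(v^{k+1})$ after substitution because $t=O(v)$. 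Both are immediate from smoothness, so the argument is complete; this also reproves $f_{tt}(s,0)\ne\mathbf 0$ for a generalized cuspidal edge by applying it to the geometric quantities appearing in Definition~\ref{def:GE}.
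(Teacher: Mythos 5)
The paper does not actually contain a proof of this lemma (it is stated with ``we omit the proof''), so your argument has to stand entirely on its own. Your setup is the natural one and is partly correct: the normal form $s=a(u)+v^2b(u,v)$, $t=v\,c(u,v)$ with $c(u,0)=t_v(u,0)>0$ for an admissible change, and the verification that $\psi_v(u,0)=\mb 0$, are both fine.

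There is, however, a genuine gap in the step from your expansion $\phi(s,t)=\phi_k(s)t^k+O(t^{k+1})$ to the conclusion $\psi(u,v)=\phi_k(a(u))c(u,0)^kv^k+O(v^{k+1})$. That expansion of $\phi$ is valid only if \emph{every} lower coefficient $\phi_j(s)=\tfrac1{j!}\partial^j\phi(s,0)/\partial t^j$, $0\le j<k$, vanishes \emph{identically in $s$} near the point in question, including the $j=0$ coefficient $\phi(s,0)$. The hypothesis $\phi_s(s,0)\ne\mb 0$ --- which you explicitly set aside as playing no role --- forces $\phi(s,0)$ to be non-constant, and substituting $s=a(u)+v^2b(u,v)$ into it contributes $v^2b(u,0)\,\phi_s(a(u),0)+O(v^3)$. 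Hence
\[
\frac{\partial^2\psi}{\partial v^2}(u,0)=t_v(u,0)^2\,\phi_{tt}(a(u),0)+s_{vv}(u,0)\,\phi_s(a(u),0),
\]
and the second term is missing from your formulas; since Definition~\ref{def:AD880} does not constrain $s_{vv}(u,0)=2b(u,0)$, it cannot be discarded. Concretely, for $l=1$ take $\phi(s,t)=s+t^2$ and the admissible change $s=u-v^2$, $t=v$: then $\psi(u,v)=u$, so the first non-vanishing $t$-derivative order jumps from $2$ to $\infty$. The same mechanism occurs at every order: a coefficient $\phi_j$ with $j\le k-2$ that vanishes at the chosen $s_0$ but not identically contributes at order $v^{j+2}\le v^k$. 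A proof that works must therefore either (i) assume the lower coefficients vanish identically in $s$ --- which is what actually holds in the paper's applications, e.g.\ $\phi=\Delta_L$ has $\phi(s,0)\equiv0$ and $\phi_t(s,0)\equiv0$ --- or (ii) for vector-valued $\phi$ such as $\phi=f$ itself, invoke the linear independence of $\phi_s(s,0)$ and $\partial^k\phi(s,0)/\partial t^k$, so that the extra terms, all of which are multiples of $\phi_s$, cannot cancel the leading one. Your write-up does neither, and in the scalar case with $\phi_s(s,0)\ne\mb 0$ the key displayed identity for $\partial^k\psi/\partial v^k$ is simply false; the dismissal of the $j=0$ term as ``preserved trivially'' conflates the value of $\phi$ on the singular curve with its contribution to the higher $v$-derivatives of the pullback.
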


We introduce here typical examples of  generalized cuspidal edges:

\begin{Proposition}\label{prop:1182}
A cuspidal edge along a regular curve $\Gamma$ in $\R^3$
as in Definition~\ref{def:217} is a generalized cuspidal edge.
\end{Proposition}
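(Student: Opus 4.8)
The plan is to recover the two conditions of Definition~\ref{def:GE} from the local normal form $g(u,v):=(u,v^{2},v^{3})$. The point requiring attention is that (a) and (b) are statements about the parametrization $(s,t)$ of the domain, whereas ``cuspidal edge singular point'' is coordinate-free; so I would first make $t$ run in the intrinsic null direction. Concretely: for each $s\in I$ the point $(s,0)$ is a cuspidal edge singular point, so $df_{(s,0)}$ has rank one with image $\R\,\Gamma'(s)$ (recall $\Gamma'(s)=f_{s}(s,0)\neq\mb 0$); hence $f_{t}(s,0)=\lambda(s)\,\Gamma'(s)$ for a unique $\lambda\in C^{\infty}(I)$, and $\tilde\eta(s):=\partial_{t}-\lambda(s)\,\partial_{s}$ spans $\ker df_{(s,0)}$ and is transverse to $\Sigma_{f}$. (One also checks from the normal form that $\Sigma_{f}=I\times\{0\}$ is exactly the singular set of $f$.)

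Next I would reparametrize. Put $\Phi(s,\tau):=(s-\tau\lambda(s),\,\tau)$; since $\det D\Phi|_{\tau=0}\equiv 1$, after shrinking $\delta$ this is a diffeomorphism of a neighborhood of $\Sigma_{f}$ onto a neighborhood of $\Sigma_{f}$, fixing $\Sigma_{f}$ pointwise, with $\Phi_{\tau}(s,0)=\tilde\eta(s)$ (and one may rescale $\tau$ fibrewise to restore a domain of the form $I\times(-\delta',\delta')$). Set $\hat f:=f\circ\Phi$. Then $\hat f(s,0)=\Gamma(s)$, $\hat f_{s}(s,0)=\Gamma'(s)\neq\mb 0$, and
\[
\hat f_{\tau}(s,0)=df_{(s,0)}\bigl(\tilde\eta(s)\bigr)=f_{t}(s,0)-\lambda(s)\,f_{s}(s,0)=\mb 0 ,
\]
which is (a); and if $f_{t}(s,0)\equiv\mb 0$ already, then $\lambda\equiv 0$, $\Phi=\mathrm{id}$, $\hat f=f$.

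For (b) I would fix an arbitrary $p_{0}:=(s_{0},0)$. As $\hat f$ is $f$ precomposed with a diffeomorphism fixing $p_{0}$, it still has a cuspidal edge singular point there, so near $p_{0}$ one can write $\hat f=\Xi^{-1}\circ g\circ\psi$ with $\psi=(P,Q)$ and $\Xi$ local diffeomorphisms. Since $\psi$ carries the singular set of $\hat f$ onto that of $g$, namely $\{v=0\}$, and $\Sigma_{f}$ consists of singular points of $\hat f$, we get $Q(s,0)\equiv 0$ near $s_{0}$; and the chain rule gives $\ker d\hat f_{(s,0)}=d\psi_{(s,0)}^{-1}\bigl(\ker dg_{\psi(s,0)}\bigr)$, whose left side is $\R\,\partial_{\tau}$ (previous paragraph) and whose factor $\ker dg_{\psi(s,0)}$ is $\R\,\partial_{v}$, so $d\psi_{(s,0)}(\partial_{\tau})\in\R\,\partial_{v}$, i.e.\ $P_{\tau}(s,0)\equiv 0$. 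Then $\det d\psi_{(s_{0},0)}=P_{s}Q_{\tau}-P_{\tau}Q_{s}=P_{s}Q_{\tau}$ at $(s_{0},0)$ is nonzero, so $P_{s}(s_{0},0)\neq 0\neq Q_{\tau}(s_{0},0)$. Differentiating $\hat f=\Xi^{-1}(P,Q^{2},Q^{3})$ and using $Q(s_{0},0)=P_{\tau}(s_{0},0)=0$ gives
\[
\hat f_{s}(s_{0},0)=D(\Xi^{-1})\cdot(P_{s},0,0),\qquad
\hat f_{\tau\tau}(s_{0},0)=D(\Xi^{-1})\cdot(P_{\tau\tau},\,2Q_{\tau}^{2},\,0),
\]
where $P_{s},Q_{\tau},P_{\tau\tau}$ are evaluated at $(s_{0},0)$ and $D(\Xi^{-1})$ at $\hat f(p_{0})$. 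As $D(\Xi^{-1})$ is a linear isomorphism and the two right-hand vectors are linearly independent (their middle entries are $0$ and $2Q_{\tau}^{2}\neq 0$, and $P_{s}\neq 0$), the vectors $\hat f_{\tau\tau}(s_{0},0)$ and $\Gamma'(s_{0})$ are linearly independent, which is (b). Hence $\hat f$ is a generalized cuspidal edge along $\Gamma$.

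The step I expect to be the main obstacle is precisely this reconciliation of the coordinate-free hypothesis with the coordinate-dependent conclusion: one must upgrade the merely local, structure-oblivious diffeomorphisms $\phi,\Psi$ furnished by the definition of a cuspidal edge singular point to a single reparametrization on a whole neighborhood of $\Sigma_{f}$ aligning $\partial_{\tau}$ with the null direction, and then keep track of how that null direction appears inside the local normal form (the identity $P_{\tau}(s,0)\equiv 0$). After that, (a) and (b) are routine differentiations of $(u,v)\mapsto(u,v^{2},v^{3})$.
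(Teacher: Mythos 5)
Your proof is correct, but it takes a genuinely different route from the paper's. The paper's argument is a two-line computation using wave-front machinery: a cuspidal edge carries a unit normal field $\nu$, and the non-degeneracy of cuspidal edge singular points (quoted from \cite{SUY2}) gives $d\lambda\neq 0$ along $\Sigma_f$ for $\lambda:=\det(f_s,f_t,\nu)$; since $\lambda$ vanishes on $\Sigma_f$ and $f_t(s,0)=\mb 0$, this reads $0\neq\lambda_t(s,0)=\det\bigl(f_s(s,0),f_{tt}(s,0),\nu(s,0)\bigr)$, which is exactly condition (b) of Definition~\ref{def:GE}. You instead unwind the normal form $(u,v^2,v^3)$ directly by the chain rule, which is more elementary and self-contained (no appeal to the front non-degeneracy criterion), and your computation of $\hat f_s(s_0,0)$ and $\hat f_{\tau\tau}(s_0,0)$ is correct. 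More substantively, you explicitly address a point the paper's proof glosses over: condition (a), $f_t(s,0)\equiv\mb 0$, is a statement about the parametrization and does not follow from Definition~\ref{def:217} as literally stated (e.g.\ $f(s,t)=(s+t,t^2,t^3)$ is a cuspidal edge along the $x$-axis with $f_t(s,0)\neq\mb 0$); the paper simply asserts $f_t(s,0)=\mb 0$, i.e.\ implicitly works in an adapted coordinate, whereas your reparametrization $\Phi$ constructs such an admissible coordinate and thus proves the proposition in the only reading under which it holds. What the paper's route buys is brevity and consistency with the objects ($\lambda$, $\nu$) used throughout the rest of the text; what yours buys is completeness and independence from the cited background fact.
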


\begin{proof}
Let $f(s,t)$ be a cuspidal edge along $\Gamma$
as in Definition \ref{def:217}.
Then, it has a unit normal vector field $\nu$ on $\mc U_f$, 
and if we set
$\lambda:=\det(f_s,f_t,\nu)$, then the
exterior derivative $d\lambda$ at $(s,0)$ does not vanish
for each $s\in I$ (\cite[Chapter 2]{SUY2}).
Since $\lambda_s(s,0)=0$ and $f_t(s,0)=\mb 0$, we have
$$
0\ne \lambda_t(s,0)=\det(f_s(s,t),f_t(s,t),\nu(s,t))_{t}^{}|_{t=0}
=\det(f_s(s,0),f_{tt}(s,0),\nu(s,0)),
$$ 
which implies that $f_s(s,0)$ and $f_{tt}(s,0)$ 
are linearly independent, proving the assertion.
\end{proof}

\begin{Example}\label{ex:1195}
The $C^\infty$-maps defined by
$$
f_1(s,t)=(s,t^2,0),\quad
f_2(s,t)=(s,t^2,st^3),\quad 
$$
give generalized cuspidal edges. 
The origin $(0,0)$ is a singular point of them, which is
called the {\it standard fold singular point}
and the {\it standard cuspidal cross cap singular point}, respectively.
\end{Example}

\begin{Definition}\label{def:736}
Fix a domain $U$ of $\R^2$. A $C^\infty$-map $f:U\to \E^3$ is called a {\it frontal} if
there exists a nowhere vanishing vector field  $\tilde \nu:U\to \E^3$ 
which is perpendicular to $f_u$ and $f_v$ at each point of $U$.
Let $P(\R^3)$ be the projective space associated with the
vector space $\R^3$ and $\R^3\ni \mb v\to [\mb v]\in P(\R^3)$ the canonical
projection. Then $f$ is called a {\it wave front} if
$U\ni p\mapsto (f(p),[\tilde \nu(p)])\in \R^3\times P(\R^3)$ is an immersion.
\end{Definition}

The maps $f_1$ and $f_2$ 
given in Example~\ref{ex:1195}
are frontals, but not are wave fronts.
Let $f:=f(s,t)$ be a generalized cuspidal edge along $\Gamma$,
and consider the vector field defined by
$
\mb v(s,t):={f_{t}(s,t)}/{t}\,\, (|t|<\delta),
$
which is $C^\infty$-differentiable at $t=0$ such that
$
f_{tt}(s,0)=\mb v(s,0)(\ne \mb 0).
$
By condition (c) of Definition \ref{def:GE},
\begin{equation}\label{tnuE}
\hat \nu^E:=f_s(s,t)\times \mb v(s,t)
\end{equation}
gives a normal vector field of $f$, 
which implies that $f$ is a frontal, that is,
$\hat \nu^E$ can be smoothly extended to the singular set.
Then, we have
\begin{equation}\label{tnuE0}
\hat \nu^E(s,0)=f_s(s,0)\times \mb v(s,0)=\Gamma'(s)\times f_{tt}(s,0).
\end{equation}
In this setting,
\begin{equation}\label{eq:nuE_unit}
\nu^E:=\frac{\hat \nu^E}{|\hat \nu^E|_E}
\end{equation}
gives a unit normal vector field of $f$ on $\mc U_f$
which can be smoothly extended  across the singular set $\Sigma_f$ of $f$.
As the converse of Proposition \ref{prop:1182},
the following is a well-known criterion 
for cuspidal edges (cf. \cite[Theorem 2.6.3]{SUY2}):

\begin{Fact}\label{frac:frontC}
A point $(s,0)$ of a generalized cuspidal edge is 
a cuspidal edge singular point if $f$ is a
wave front at $(s,0)$, that is, the map 
$
(s,t)\mapsto \left(f(s,t),\nu^E(s,t)\right)
$
is an immersion at $(s,0)$.
\end{Fact}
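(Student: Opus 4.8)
The plan is to exhibit a local diffeomorphism on the source that brings $f$ into the normal form $(s,t)\mapsto(s,t^2,t^3)$ up to a diffeomorphism on the target, under the wave-front hypothesis. First I would work in an admissible coordinate system $(s,t)$ associated with $f$ (cf. Definition~\ref{def:AD880}), so that $f_t(s,0)=\mb 0$ and $f_{tt}(s,0)\neq\mb 0$ with $f_s(s,0)$ and $f_{tt}(s,0)$ linearly independent by condition (b) of Definition~\ref{def:GE}. After composing with an affine motion of $\R^3$ depending smoothly on $s$, I may assume $f(s,0)$ lies along the first coordinate axis direction, $\Gamma'(s)$ is the first basis vector, $f_{tt}(s,0)$ points in the second coordinate direction, and the smoothly-extended unit normal $\nu^E(s,t)$ of \eqref{eq:nuE_unit} is the third basis vector at $t=0$; equivalently, pass to a coordinate system on the target in which these normalizations hold. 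In these coordinates, writing $f=(f^1,f^2,f^3)$, the frontal structure gives $f^2_t(s,0)=f^3_t(s,0)=0$ and $f^2_{tt}(s,0)\neq 0$, while $f^3_{tt}(s,0)=0$ since $f_{tt}(s,0)$ is tangent to the limiting tangent plane.

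The key step is to extract the cube-root behavior of the third component. Because $f^2_{tt}(s,0)\neq0$, the map $(s,t)\mapsto(s,f^2(s,t))$ is, after the reparametrization $t\mapsto t^2\cdot(\text{unit})$ — more precisely, $f^2(s,t)=t^2\,h(s,t)$ with $h(s,0)\neq0$ — a fold in the classical sense, so I can absorb the factor $h$ into $t$ and assume $f^2(s,t)=t^2$. Now the content is the order of vanishing of $f^3$: I claim $f^3(s,t)=t^3\,g(s,t)$ with $g(s,0)\neq 0$ precisely when $f$ is a wave front at $(s,0)$. One direction is routine: the wave front condition says $(s,t)\mapsto(f,\nu^E)$ is an immersion, and since $f_t(s,0)=\mb 0$, the $t$-derivative of the pair at $t=0$ is $(f_{tt}(s,0),\nu^E_t(s,0))/1$ after dividing by $t$ — concretely one checks that immersivity forces $\nu^E_t(s,0)$ to have nonzero third-or-other component tied to $f^3_{ttt}(s,0)\neq0$. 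I would compute $\nu^E$ from \eqref{tnuE}, \eqref{tnuE0} as $\hat\nu^E=f_s\times\mb v$ with $\mb v=f_t/t$, differentiate in $t$ at $t=0$, and read off that the wave-front condition is equivalent to $f^3_{ttt}(s,0)\neq 0$, i.e. $g(s,0)\neq 0$.

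Once $f^2=t^2$ and $f^3=t^3 g$ with $g(s,0)\neq0$, the finishing step is standard: the diffeomorphism $t\mapsto t\,g(s,t)^{1/3}$ of the source (well-defined near $t=0$ since $g(s,0)\neq0$, choosing the real cube root) turns $f^3$ into $t^3$ while changing $f^2=t^2$ into $t^2\,g(s,t)^{-2/3}$; a further change absorbing $f^1-s$ into the $s$-coordinate of the target and clearing the remaining unit in $f^2$ via a target diffeomorphism $(x,y,z)\mapsto(x,\,y\cdot u(x,z),\,z)$ brings $f$ to $(s,t^2,t^3)$, establishing that $(s,0)$ is a cuspidal edge singular point. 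The main obstacle I anticipate is the bookkeeping in the second step — carefully identifying the wave-front condition with $f^3_{ttt}(s,0)\neq0$ requires tracking how the $1/t$ in $\mb v$ interacts with the derivatives of $\hat\nu^E$, and one must be careful that the normalization of the target coordinates (making $\nu^E(s,0)$ the third basis vector) is done smoothly in $s$ so that the resulting diffeomorphisms are genuinely $C^\infty$; but all of this is classical wave-front analysis and should go through with the setup above.
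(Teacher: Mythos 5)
First, a point of comparison: the paper does not actually prove this Fact; it quotes it as the known criterion for cuspidal edges from \cite[Theorem 2.6.3]{SUY2}. So there is no internal proof to measure you against. That said, your overall strategy is sound, and your key step --- pass to tubular-neighborhood coordinates $f_0(s,t)=(x,y,s)$ with $x=t^2\hat x$, $y=t^3\hat y$, $\hat x(s_0,0)\ne 0$, and identify the wave-front condition with $\hat y(s_0,0)\ne 0$ --- is exactly the computation the paper performs in its proof of Proposition \ref{lem:ce708} (the normal field \eqref{eq:1283} and the subsequent rank computation). A small simplification there: since $f_t(s,0)=\mb 0$, the immersion condition for $(f,\nu^E)$ at $(s,0)$ reduces directly to $\nu^E_t(s,0)\ne\mb 0$, with no division by $t$ required. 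What you are really adding, then, is the remaining classification step: that $f^2=t^2$ and $f^3=t^3g$ with $g(s,0)\ne 0$ force $\mathcal{A}$-equivalence to $(s,t^2,t^3)$ --- i.e., you are re-proving the content of the cited criterion rather than invoking it.

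That final step is where your argument has a genuine gap. After the reparametrization $t\mapsto t\,g^{1/3}$ you obtain $f^3=t^3$ but $f^2=t^2\,w(s,t)$ with $w$ a unit depending on $t$, and you propose to clear $w$ by a target diffeomorphism $(x,y,z)\mapsto(x,\,y\,u(x,z),\,z)$. This cannot work: on the image one has $(x,z)=(s,t^3)$, and $t$ is not a smooth function of $t^3$, so a generic unit such as $w=1+t$ is not of the form $u(s,t^3)^{-1}$. The classical repair is to normalize in the opposite order: keep $f^2=t^2$, split $g(s,t)=g_0(s,t^2)+t\,g_1(s,t^2)$ by Whitney's lemma on even functions (with parameter $s$), note that $t^4g_1(s,t^2)=(f^2)^2g_1(s,f^2)$ is a smooth function of the target coordinates and can be removed by a shear, and only then reparametrize by $\tau:=t\,g_0(s,t^2)^{1/3}$, observing that $t^2$ is then a smooth function of $(s,\tau^2)$ so that $f^2$ can be straightened by a further target change. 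Without the even--odd decomposition your concluding move fails, and since this is precisely the step that converts the order-of-vanishing information into the normal form, the proof as written is incomplete --- although the missing ingredient is standard singularity theory, and the cleanest course in the context of this paper is simply to cite \cite[Theorem 2.6.3]{SUY2} as the authors do.
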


Using this, we prepare the following:

\begin{Proposition}\label{lem:ce708}
Let 
$f:\mc U_f\to \R^3$ be a generalized cuspidal edge
along $\Gamma$. Then $f$ is a frontal, that is, 
its normal vector field $($as $\R^3=\E^3$ or $\R^3=\Lo^3$ see
Remark~\ref{Rmk:1586}$)$ is defined at each singular point of $f$.
Moreover, let $P$ be a plane passing through $f(s_0,0)$ $(s_0\in I)$
which is transversal to the curve $\Gamma$ at $\Gamma(s_0)=f(s_0,0)$.
Then the section $\mb c$ of $f$ by $P$ is 
a generalized cusp in $P$ $($cf. Definition~\ref{def:GC1794}$)$,
and the point $p:=(s_0,0)$ is a cuspidal edge singular point 
if and only if $p$ is a cusp of $\mb c$ in the plane $P$.
\end{Proposition}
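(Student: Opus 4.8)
The plan is to establish the three assertions in order, reducing everything to the frontal structure already exhibited in \eqref{tnuE}--\eqref{eq:nuE_unit} together with Proposition~\ref{prop:1182} and Fact~\ref{frac:frontC}. The frontality of $f$ is essentially immediate: using $\mb v(s,t):=f_t(s,t)/t$, which extends $C^\infty$-differentiably across $t=0$ with $\mb v(s,0)=f_{tt}(s,0)$, condition (b) of Definition~\ref{def:GE} says $\mb v(s,0)$ and $\Gamma'(s)$ are linearly independent, so $\hat\nu(s,t):=f_s(s,t)\times\mb v(s,t)$ (resp. its Lorentzian counterpart $\hat\nu^L=E_3\hat\nu^E$, cf. \eqref{eq:LMN} and Remark~\ref{Rmk:1586}) is a nowhere-vanishing vector field perpendicular to $f_s$ and $f_t$, smoothly defined on all of $\mc U_f$; dividing by its norm gives the unit normal. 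This is just a restatement of \eqref{tnuE0}.

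Next I would handle the cross-section. Fix $s_0$ and choose coordinates on $P$; since $P$ is transversal to $\Gamma$ at $\Gamma(s_0)$, the implicit function theorem lets me write the section $\mb c$ as a curve $t\mapsto \mb c(t)$ obtained by restricting $f$ to the preimage of $P$, which near $(s_0,0)$ is a graph $s=s(t)$ with $s(0)=s_0$. Then $\mb c'(t)=f_s\,s'(t)+f_t$, and at $t=0$ this vanishes because $f_t(s_0,0)=\mb 0$ and — since the tangent line of $\Gamma$ at $\Gamma(s_0)$ is transversal to $P$ while $\mb c'(0)$ must lie in the direction space of $P$ — the coefficient $s'(0)$ must be zero as well. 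Differentiating once more, $\mb c''(0)=f_{tt}(s_0,0)+(\text{multiple of }f_s(s_0,0))$; I must check that $\mb c''(0)\ne\mb 0$, which follows because the $f_{tt}(s_0,0)$-component is nonzero modulo $\Gamma'(s_0)$ by condition (b) and the $\Gamma'(s_0)$-direction is exactly the direction transversal to $P$. Hence $\mb c$ has a singular point of the expected type at $t=0$, i.e. it is a generalized cusp in the sense of Definition~\ref{def:GC1794}.

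For the final equivalence, the cleanest route is to use Fact~\ref{frac:frontC}: $p=(s_0,0)$ is a cuspidal edge singular point of $f$ iff $(f,\nu^E)$ is an immersion at $p$, i.e. iff $d\lambda\ne 0$ at $p$ where $\lambda:=\det(f_s,f_t,\hat\nu^E)$ (equivalently $\lambda_t(s_0,0)=\det(\Gamma'(s_0),f_{tt}(s_0,0),\nu^E(s_0,0))\ne 0$, as in the proof of Proposition~\ref{prop:1182}). On the other hand $p$ is a cusp of $\mb c$ in $P$ iff the plane-curve criterion for a $3/2$-cusp holds, namely $\det_P(\mb c''(0),\mb c'''(0))\ne 0$ in the two-dimensional direction space of $P$. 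The plan is to show these two nonvanishing conditions are equivalent by expressing both in terms of the Taylor coefficients of $f$ at $(s_0,0)$: substituting $\mb c(t)=f(s(t),t)$ and using $s'(0)=0$, one computes $\mb c''(0)=f_{tt}+s''(0)f_s$ and $\mb c'''(0)=f_{ttt}+3s''(0)f_{st}+3s'''(0)\!\cdot\!(\text{lower})+\dots$ at $(s_0,0)$, and then checks that $\det_P(\mb c'',\mb c''')$ is, up to a nonzero factor coming from the transversality of $P$ to $\Gamma'(s_0)$, equal to the $3\times3$ determinant $\det(f_s,f_{tt},f_{ttt})(s_0,0)$, which is precisely (a nonzero multiple of) $\lambda_t(s_0,0)$. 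The main obstacle I anticipate is exactly this last bookkeeping step: one must be careful that the $s''(0),s'''(0)$ terms drop out of the relevant determinant (they are multiples of $f_s=\Gamma'(s_0)$, hence killed in $\det(\Gamma',f_{tt},\cdot)$ and, after projecting to $P$, contribute only in the direction transversal to $P$), and one must track the normalization factor relating the ambient $3\times3$ determinant to the $2\times2$ determinant inside $P$ to be sure it never vanishes. Once this is in place, both directions of the equivalence follow simultaneously, and the causal-type ($\E^3$ versus $\Lo^3$) distinction is irrelevant here since the criterion involves only the linear-independence of $\hat\nu$ from $f_s,f_t$, which is metric-free.
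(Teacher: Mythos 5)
Your argument for the first two assertions is fine and takes a genuinely different route from the paper: the paper pulls $f$ back through a tubular neighborhood of $\Gamma$ (Proposition \ref{prop:fstE}), writes $x=t^2\hat x$, $y=t^3\hat y$, exhibits an explicit normal vector field $N$ and expands the unit normal to test the wave-front condition, whereas you parametrize the section directly by solving $\ell(f(s,t))=0$ for $s=s(t)$ via the implicit function theorem. That part works: $s'(0)=0$ follows from transversality, $\mb c''(0)=f_{tt}+s''(0)f_s\ne\mb 0$ by condition (b), and (since $f_{st}(s_0,0)=\mb 0$) the cusp condition $\det_P(\mb c''(0),\mb c'''(0))\ne 0$ is indeed equivalent to $\det(f_s,f_{tt},f_{ttt})(s_0,0)\ne 0$.

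The final equivalence, however, contains a genuine error. You assert that $(f,\nu^E)$ being an immersion at $p$ is the same as $d\lambda\ne 0$ for $\lambda=\det(f_s,f_t,\hat\nu^E)$, and then that $\det(f_s,f_{tt},f_{ttt})$ is a nonzero multiple of $\lambda_t(s_0,0)$. Both identifications are false. The quantity $\lambda_t(s_0,0)=\det(f_s,f_{tt},\hat\nu^E)(s_0,0)=|f_s\times f_{tt}|_E^2$ is \emph{automatically} nonzero for every generalized cuspidal edge (this is the non-degeneracy of the singular point, not the front condition), so your chain of equivalences would prove that every generalized cuspidal edge singular point is a cuspidal edge — contradicted already by the fold $f_1(s,t)=(s,t^2,0)$ of Example \ref{ex:1195}, where $d\lambda=2\,dt\ne 0$ but $(f_1,\nu)$ is not an immersion and $\det(f_s,f_{tt},f_{ttt})=0$. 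The step you actually need is a translation of the front condition itself: with $\hat\nu^E=f_s\times \mb v$, $\mb v=f_t/t$, one has $\hat\nu^E_t(s,0)=\tfrac12 f_s\times f_{ttt}(s,0)$ (since $f_{st}(s,0)=\mb 0$), so $(f,[\hat\nu^E])$ is an immersion at $p$ if and only if $\hat\nu^E_t(p)$ is not parallel to $\hat\nu^E(p)$, i.e. if and only if $f_{ttt}(s_0,0)\notin\op{span}\{f_s(s_0,0),f_{tt}(s_0,0)\}$, i.e. $\det(f_s,f_{tt},f_{ttt})(s_0,0)\ne 0$. Inserting this computation in place of the appeal to $d\lambda$ repairs the proof and closes the loop with your section-side determinant; as written, the key step fails.
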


\begin{proof}
This statement is essentially the same statement as in
\cite[Lemma 3.2]{HNSUY2}. However, our proof is new and
is different from that in \cite{HNSUY2}:
Take a basis spanning the plane of $P$ at $f(s_0,0)$,
and extend it as a frame field $\{\mb a_1,\,\mb a_2\}$  along $\Gamma$.
By Proposition B.1 in the appendix, $f$ can be written in the form
$$
f(s,t)=\Gamma(s)+ x(s,t)\mb a_1(s)+y(s,t) \mb a_2(s),
$$
where $x(s,t)$ and $y(s,t)$ are smooth functions.
Since the map 
$$
(s,x,y)\mapsto \Gamma(s)+ x\mb a_1(s)+y\mb a_2(s)
$$
gives a tubular neighborhood of $\Gamma$ in $\R^3$,
$
f_0(s,t):=(x(s,t),y(s,t),s)
$
is a generalized cuspidal edge in $\R^3$
defined on a neighborhood of $(s_0,0)\in U$.
Without loss of generality, we can write
$$
x(s,t)=t^2 \hat x(s,t),\qquad y(s,t)=t^3\hat y(s,t),
$$
where $\hat x(s,t)$ and $\hat y(s,t)$ are smooth functions
satisfying $\hat x(s_0,0)\ne 0$. Since
the section $\mb c$ of $f$ by $P$ is a generalized cusp in $P$,
which can be identified with the curve
$
\mb c:t\mapsto (x(s_0,t),y(s_0,t))
$
in $\R^2$,  and the point $(s_0,0)$ corresponds to the 
singular point of $\mb c$. 
Moreover, $(s_0,0)$ is a cusp point of $\mb c$
if and only if $\hat y(s_0,0)\ne 0$.
Then, it can be  easily checked that
\begin{equation}\label{eq:1283}
N:=
\Big(-t \left(t \hat y_t+3 \hat y\right),t \hat x_t+
2 \hat x,
\big(\left(t \hat y_t+3 \hat y\right)\hat x_s -
\left(t \hat x_t+2 \hat x\right) \hat y_s\big)t^3 \Big)
\end{equation}
is a normal vector field of $f_0$ in the Euclidean 3-space $\E^3$.
So $f_0$ (and also $f$) is a frontal.  The unit normal vector field
$
\nu^E(s,t):={N(s,t)}/{|N(s,t)|_E}
$
of $f_0$ in $\E^3$ has the following asymptotic expansion
$$
\nu^E(s,t)=(0,1,0)+t \frac{3 \hat y(s,0)}{2\hat x(s,0)}\mb e_2+O(t^2)
\qquad (\mb e_2:=(0,1,0)),
$$
where $O(t^2)$ is the term such that $O(t^2)/|t|^2$ is a 
vector-valued bounded function of $(s,t)$ around  $(s_0,0)$.
So, the matrix
$$
\pmt{ 
(f_0)_s(s_0,0)& (f_0)_t(s_0,0) \\
\nu_s(s_0,0)&  \nu_t(s_0,0)
}
=
\pmt{ 
\mb e_3& 0 \\
* & \frac{3 \hat y(s_0,0)}{2\hat x(s_0,0)}\mb e_2
}
\qquad (\mb e_3:=(0,0,1))
$$
is of rank two if and only if $\hat y(s_0,0)\ne 0$. 
So $f_0$ (and also $f$) is a wave front 
on a sufficiently small neighborhood of $(s_0,0)$
if and only if $\hat y(s_0,0)\ne 0$ .
Since $(s_0,0)$ is a cusp point of $\mb c$
iff $\hat y(s_0,0)\ne 0$,
we obtain the last assertion by Fact \ref{frac:frontC}.
\end{proof}

\begin{Definition}
The vector field
\begin{equation}\label{eq:DEf}
\tilde{\mb D}^E_f(s):=f_{tt}(s,0)-\frac{f_{tt}(s,0)\cdot \Gamma'(s)}
{\Gamma'(s)\cdot \Gamma'(s)}\Gamma'(s)
\end{equation}
is called an {\it $\E^3$-cuspidal direction vector} of $f$ at $(s,0)$.
Then it is the projection of the vector $f_{tt}(s,0)$
to the normal plane of $\Gamma$ in $\E^3$.
By (c) of Definition \ref{def:GE},
we can set
\begin{equation}\label{eq:DE1101}
{\mb D}^E_f(s):=\frac{\tilde{\mb D}^E_f(s)}{|\tilde{\mb D}^E_f(s)|_E}
\end{equation}
and call it the {\it unit $\E^3$-cuspidal direction vector}
of $f$ at $s$.
\end{Definition}

By definition, $\tilde{\mb D}^E_f(s)$ is determined up to a
positive scalar multiplication if we take other admissible 
parametrizations of $f$ (cf. Definition \ref{def:AD880}).
Thus, ${\mb D}^E_f(s)$ is uniquely determined from $f$.

\begin{Proposition}\label{prop:1093}
$\Gamma'(s)\times {\mb D}^E_f(s)$ is a positive scalar 
multiplication of $\nu^E(s,0)$.
\end{Proposition}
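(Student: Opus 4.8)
The plan is to compute $\Gamma'(s)\times\tilde{\mb D}^E_f(s)$ directly from the definition and recognize it as $\hat\nu^E(s,0)$. Expanding \eqref{eq:DEf} and using the bilinearity of the Euclidean vector product together with the antisymmetry identity $\Gamma'(s)\times\Gamma'(s)=\mb 0$ yields
\[
\Gamma'(s)\times\tilde{\mb D}^E_f(s)
=\Gamma'(s)\times f_{tt}(s,0)
-\frac{f_{tt}(s,0)\cdot\Gamma'(s)}{\Gamma'(s)\cdot\Gamma'(s)}\,\bigl(\Gamma'(s)\times\Gamma'(s)\bigr)
=\Gamma'(s)\times f_{tt}(s,0),
\]
which equals $\hat\nu^E(s,0)$ by \eqref{tnuE0}.

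Next I would normalize: dividing the identity above by $|\tilde{\mb D}^E_f(s)|_E$ and using \eqref{eq:DE1101} together with \eqref{eq:nuE_unit} gives
\[
\Gamma'(s)\times{\mb D}^E_f(s)
=\frac{1}{|\tilde{\mb D}^E_f(s)|_E}\,\hat\nu^E(s,0)
=\frac{|\hat\nu^E(s,0)|_E}{|\tilde{\mb D}^E_f(s)|_E}\,\nu^E(s,0).
\]
It then only remains to verify that the coefficient is strictly positive. Since $\Gamma'(s)$ and $f_{tt}(s,0)$ are linearly independent (see Definition~\ref{def:GE}), the vector $\hat\nu^E(s,0)=\Gamma'(s)\times f_{tt}(s,0)$ is nonzero, so $|\hat\nu^E(s,0)|_E>0$; and the orthogonal projection $\tilde{\mb D}^E_f(s)$ of $f_{tt}(s,0)$ off the line $\R\,\Gamma'(s)$ is also nonzero, so $|\tilde{\mb D}^E_f(s)|_E>0$. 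Hence the coefficient $|\hat\nu^E(s,0)|_E/|\tilde{\mb D}^E_f(s)|_E$ is positive, which proves the assertion.

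There is essentially no obstacle here: the only fact used beyond the definitions is the antisymmetry of the vector product, and the positivity of the scalar is forced by the linear-independence condition built into the definition of a generalized cuspidal edge. One could additionally note that the statement is consistent with the coordinate-independence remarked upon just before the proposition, since passing to another admissible parametrization multiplies both $\tilde{\mb D}^E_f(s)$ and $\hat\nu^E(s,0)$ by positive functions.
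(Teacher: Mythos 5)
Your proof is correct and follows essentially the same route as the paper's: expand the cross product so that the $\Gamma'(s)$-component of $\tilde{\mb D}^E_f(s)$ drops out, identify $\Gamma'(s)\times f_{tt}(s,0)$ with $\hat\nu^E(s,0)$ via \eqref{tnuE0}, and observe that the normalizing factor $1/|\tilde{\mb D}^E_f(s)|_E$ is positive. You merely spell out the positivity check in more detail than the paper does.
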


\begin{proof}
In fact, it holds that (cf. \eqref{tnuE0})
$$
\Gamma'(s)\times {\mb D}^E_f(s)
=
\frac{\Gamma'(s)\times f_{tt}(s,0)}{|\tilde{\mb D}^E_f(s)|_E}=
\frac{\hat \nu^E(s)}{|\tilde{\mb D}^E_f(s)|_E},
$$
where $\nu^E$ is given in \eqref{eq:nuE_unit} and $\hat \nu^E=\nu^E(s,0)$.
\end{proof}

If we are thinking of $f:\mc U_f\to \R^3$ as a generalized cuspidal edge
in $\E^3$, the singular curvature function along $\Gamma$ is
defined by (cf.~\cite{F})
\begin{align}
\label{eq:ksE}
\kappa_s^E(s)&:=\frac{\Gamma''(s)\cdot \mb D^E_f(s)}
{\Gamma'(s)\cdot \Gamma'(s)}. 
\end{align}

\begin{Remark}
The original definition of the singular curvature $\kappa_s^E$ (cf. \cite{SUY,SUY2}) is
\begin{equation}\label{eq:1299}
\kappa_s^E:=\op{sgn}(\lambda_t) 
\frac{\det(\Gamma',\Gamma'',\hat \nu^E)}{|\Gamma'|_E^3}.
\end{equation}
We may assume $|\Gamma'|_E=1$. By setting   $\mb n:=\hat \nu^E\times \Gamma'$, 
we have
$$
\lambda_t=\det(\Gamma',f_{tt},\hat \nu^E)
=\mb n\cdot f_{tt}=\mb n\cdot D^E_f.
$$
Since $\mb n$ is a unit vector field,  we have $\mb n =\epsilon \mb D^E_f$,
where $\epsilon:=\op{sgn}(\lambda_t)$. Thus
$$
\kappa_s^E
=\epsilon \det(\Gamma',\Gamma'',\hat \nu^E)
=\epsilon (\hat \nu^E\times \Gamma')\cdot \Gamma''
=\epsilon \mb n \cdot \Gamma''=\mb D^E_f\cdot \Gamma''
$$
holds, and  \eqref{eq:ksE} is obtained.
\end{Remark}

This definition of $\kappa_s^E$ does not depend on the choice of
the parameter $(s,t)$ of $f$ and the sign ambiguity of
the normal vector field $\nu^E(s,t)$.
When $\kappa_s^E>0$ (resp. $\kappa_s^E<0$)
 at $p:=(s,0)$, then the orthogonal projection of the image of $f$
to the limiting tangent plane looks convex (resp. concave), 
see \cite[Fig. 5.2]{SUY2}. Regrading this, we give the following:

\begin{Definition}\label{def:convex}
A generalized cuspidal edge singular point $p:=(s,0)$ of $f$ 
is called of {\it convex type} (resp {\it concave type}) if the sign of $\kappa_s^E$ 
is positive (resp. negative), see Figure \ref{ex:C2}.
\end{Definition}

The limiting normal curvature function along the $s$-axis is defined by
\begin{equation}
\label{eq:knE}
\kappa^E_\nu(s):=\frac{\Gamma''(s)\cdot \nu^E(s,0)}{\Gamma'(s)\cdot\Gamma'(s)}.
\end{equation}
We denote by $\kappa^E(s)$ the curvature function of $\Gamma$
in $\E^3$. Then, it holds that
\begin{equation}\label{eq:pyE}
(\kappa^E)^2=(\kappa_s^E)^2+(\kappa_\nu^E)^2.
\end{equation}

\begin{Prop}\label{prop:GE}
For a singular point $p:=(s,0)$ of the generalized cuspidal edge $f$,
the sign of $\kappa_\nu^E(s)$ coincides with the sign $\sigma^C(s)$
$($cf. \eqref{eq:Ep258} and \eqref{eq:Dp262}$)$, that is,
\begin{equation}
\op{sgn}(\kappa_\nu^E(s))=\sigma^C(p)
\end{equation}
holds. Moreover,  $\kappa^E_\nu(s)=0$ $($i.e. $\sigma^C(p)=0)$ holds
if and only if $\mb D^E_f(s)$ lies in the osculating plane of $\Gamma$ at 
the point $\Gamma(s)$.
\end{Prop}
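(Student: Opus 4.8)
The plan is to relate the two signed quantities directly through their defining formulas. First I would recall that by Proposition \ref{prop:1093} we have $\Gamma'(s)\times \mb D^E_f(s) = c(s)\,\nu^E(s,0)$ for a positive scalar function $c(s)>0$, and that by \eqref{tnuE0} the unnormalized normal $\hat\nu^E(s)=\Gamma'(s)\times f_{tt}(s,0)$ is itself a positive scalar multiple of $\nu^E(s,0)$, since $\mb D^E_f(s)$ differs from $f_{tt}(s,0)$ only by subtracting a multiple of $\Gamma'(s)$ (which is killed by the cross product with $\Gamma'(s)$) and by rescaling with the positive factor $1/|\tilde{\mb D}^E_f(s)|_E$. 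Hence $\nu^E(s,0) = a(s)\,\Gamma'(s)\times f_{tt}(s,0)$ with $a(s)>0$.

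Next I would plug this into the numerator of $\kappa^E_\nu$ in \eqref{eq:knE}. Since the denominator $\Gamma'(s)\cdot\Gamma'(s)$ is positive, the sign of $\kappa^E_\nu(s)$ equals the sign of $\Gamma''(s)\cdot \nu^E(s,0)$, which equals the sign of
\begin{equation*}
\Gamma''(s)\cdot\bigl(\Gamma'(s)\times f_{tt}(s,0)\bigr)
= \det\bigl(\Gamma'(s), f_{tt}(s,0), \Gamma''(s)\bigr) = d^C(s),
\end{equation*}
using the standard identity relating the scalar triple product to a determinant (up to a cyclic reordering of the columns, which does not change the determinant) and the definition \eqref{eq:Dp262}. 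Therefore $\op{sgn}(\kappa^E_\nu(s)) = \op{sgn}(d^C(s)) = \sigma^C(p)$, which is the first assertion.

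For the second assertion, observe that $\kappa^E_\nu(s)=0$ is equivalent to $\sigma^C(p)=0$ by the first part, and by item (1) of the list of three assertions in the introduction (proved in Propositions \ref{prop:GE} and \ref{prop:GL}), $\sigma^C(p)=0$ means exactly that the osculating plane of $\Gamma$ at $\Gamma(s)$ coincides with the limiting tangent plane of $f$ at $p$. But the limiting tangent plane is spanned by $\Gamma'(s)$ and $\mb D^E_f(s)$ (equivalently by $\Gamma'(s)$ and $f_{tt}(s,0)$), while the osculating plane is spanned by $\Gamma'(s)$ and $\Gamma''(s)$; so the two planes agree if and only if $\mb D^E_f(s)$ lies in the osculating plane of $\Gamma$. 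Alternatively, and perhaps more cleanly, I would argue directly: $\mb D^E_f(s)$ lies in the plane spanned by $\Gamma'(s)$ and $\Gamma''(s)$ if and only if $\det(\Gamma'(s),\mb D^E_f(s),\Gamma''(s))=0$, and since $\mb D^E_f(s)$ is a positive multiple of $f_{tt}(s,0)$ modulo $\Gamma'(s)$, this determinant is a positive multiple of $d^C(s)$; thus it vanishes iff $\sigma^C(p)=0$ iff $\kappa^E_\nu(s)=0$. The only minor obstacle is bookkeeping the signs and the orientation of the triple product consistently with the convention used in \eqref{eq:Dp262}, but since all the auxiliary scalars ($1/|\tilde{\mb D}^E_f|_E$, the factor in Proposition \ref{prop:1093}, the denominator $\Gamma'\cdot\Gamma'$) are strictly positive, the sign comparison goes through without difficulty; one may also invoke \eqref{eq:pyE} as a sanity check, since it forces $(\kappa^E_\nu)^2 = (\kappa^E)^2 - (\kappa^E_s)^2$ and $\kappa^E_\nu=0$ exactly when $\Gamma''$ has no component along $\mb D^E_f$.
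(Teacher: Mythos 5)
Your proposal is correct and follows essentially the same route as the paper's proof: both express $\nu^E(s,0)$ as a positive multiple of $\Gamma'(s)\times f_{tt}(s,0)$ via \eqref{tnuE0}, identify the triple product $\Gamma''\cdot(\Gamma'\times f_{tt})$ with $d^C$ up to a cyclic (hence sign-preserving) permutation of columns, and settle the second assertion by the same determinant criterion for $\mb D^E_f(s)$ lying in the osculating plane. Your ``direct'' alternative for the second part is exactly what the paper does, and is preferable to the first variant you sketch, which leans on an introductory claim that is itself a consequence of this proposition.
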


\begin{proof}
Since $\nu^E(s,0)$ is a positive scalar multiplication of
$f_s(s,0)\times f_{tt}(s,0)$ by \eqref{tnuE0},
 the numerator  $\Gamma'(s)\cdot \nu^E(s,0)$
of \eqref{eq:knE} is a non-zero scalar multiple of
$$
\Gamma''(s)\cdot(f_s(s,0)\times f_{tt}(s,0))=\det(f_{ss}(s,0),f_s(s,0),f_{tt}(s,0))
=d^C(p),
$$
proving the assertion. By the definition, $\mb D^E_f(s)$
lies in the osculating plane of $\Gamma$ if and only if
$\det(f_{ss}(s,0),f_s(s,0),f_{tt}(s,0))$ vanishes. So the second assertion is obtained.
\end{proof}

\subsection{Generalized cuspidal edges in $\Lo^3$}

Fix a generalized cuspidal edge $f(s,t)$ along $\Gamma(s)$
in the Lorentz-Minkowski 3-space $(\Lo^3;x,y,z)$ of signature $(++-)$.
We define the causal types of the regular curve $\Gamma$ in $\Lo^3$:
 
\begin{Definition}
A regular curve $\Gamma:I\to \Lo^3$ is said to be of {\it type $S$} (resp. {\it type $T$})
if $\Gamma'(s)$ is a space-like  (resp.  time-like) vector for each $s\in I$.
Moreover, $\Gamma:I\to \Lo^3$ is said to be of type $L$
if $\Gamma'(s)$ is a light-like vector for each $s\in I$.
\end{Definition}

We consider the vector field defined by
\begin{equation}\label{eq:ve1522}
\mb v(s,t):=\frac{f_{t}(s,t)}{t} \qquad (|t|<\delta),
\end{equation}
which is $C^\infty$-differentiable at $t=0$ such that
$
f_{tt}(s,0)=\mb v(s,0)(\ne \mb 0).
$
By (c) of Definition \ref{def:GE},
\begin{equation}\label{tnuL}
\hat \nu^L(s,t):=f_s(s,t)\times_L \mb v(s,t)
\end{equation}
gives a normal vector field of $f$ in $\Lo^3$ for sufficiently small 
choice of $\delta$, which implies that $f$ is a frontal and
\begin{equation}\label{tnuL2}
\hat \nu^L(s,0)=f_s(s,0)\times_L \mb v(s,0)=\Gamma'(s)\times_L f_{tt}(s,0)
\end{equation}
holds.  Unless $\hat \nu^L(s,0)$ is not a light-like vector,
we can set (for sufficiently small $|t|$)
$$
\nu^L(s,t):=\frac{\hat \nu^L(s,t)}{|\hat \nu^L(s,t)|_L},
$$
which gives a unit normal vector field of $f$,
where
$$
|\mb a|_L:=\sqrt{|\inner{\mb a}{\mb a}|}\,\,(\mb a \in \Lo^3).
$$
\begin{Remark}\label{Rmk:1586}
Since $f_s\times_L \mb v$ 
coincides with $E_3 (f_s\times_E \mb v)$
(see \eqref{eq:L-inn} for the definition of $E_3$),
a smooth map $f$ defined on a domain in $\R^2$
into $\Lo^3$ admits a Lorentzian normal vector field
if and only if $f$ admits a Euclidean normal vector field.
\end{Remark}

Regarding this,  a smooth map $f:\mc U_f\to \Lo^3$ is called a {\it frontal}
if and only if it is a frontal as a map into $\E^3$.
Then, by Proposition \ref{lem:ce708}, a generalized cuspidal edge is a frontal.

\begin{Definition}\label{def:causality712b}
Let $f:\mc U_f\to \Lo^3$ be a generalized cuspidal edge.
A point $p\in \mc U_f$ is said to be {\it space-like, time-like, light-like}
if the normal vector $\hat \nu^L(p)$ is 
time-like, space-like, light-like in $\Lo^3$, respectively.
Moreover, $f$ is said to be {\it space-like} (resp. {\it time-like})
if all points of $\mc U_f$ are space-like (resp. time-like).
\end{Definition}

By definition,
$p\in \mc U_f$ is {\it space-like, time-like, light-like}
if so is the limiting tangent plane.
Similar to \eqref{eq:DEf}, we give the following definition,
under the assumption that $\Gamma$ is of type $T$ or of type $S$.

\begin{Definition}\label{def:CD617}
The vector
\begin{equation}\label{eq:DLf}
\tilde{\mb D}^L_f(s):=f_{tt}(s,0)-
\frac{\inner{f_{tt}(s,0)}{\Gamma'(s)}}{\inner{\Gamma'(s)}{\Gamma'(s)}}\Gamma'(s)
\end{equation}
is called the  {\it $\Lo^3$-cuspidal direction vector} of $f$ at $(s,0)$
if $\Gamma$ is of type $S$ (resp. type $T$) on $\mc U_f$.
\end{Definition}

By (c) of Definition \ref{def:GE}, $\tilde{\mb D}^L_f(s)$ never vanishes, 
and can be considered as the projection of the vector $f_{tt}(s,0)$
to the normal plane of $\Gamma$ at $\Gamma(s)$ in $\Lo^3$.
Modifying the proof of Proposition \ref{prop:1093}, we obtain the following assertion:

\begin{Proposition}\label{prop:1335}
$\Gamma'(s)\times \tilde{\mb D}^L_f(s)$
is a positive scalar multiple of $\nu^L(s,0)$.
\end{Proposition}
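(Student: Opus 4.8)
The proof should be a near-verbatim Lorentzian transcription of the proof of Proposition~\ref{prop:1093}, with the Euclidean vector product $\times$ replaced by the Lorentzian vector product $\times_L$ (the ``$\times$'' in the statement is to be read as $\times_L$, since $\hat\nu^L$ is defined via $\times_L$ in \eqref{tnuL}; see the last paragraph). The plan is as follows.

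First I would record that, because $\Gamma$ is assumed to be of type $S$ or of type $T$, one has $\inner{\Gamma'(s)}{\Gamma'(s)}\ne 0$, so that the scalar $c(s):=\inner{f_{tt}(s,0)}{\Gamma'(s)}/\inner{\Gamma'(s)}{\Gamma'(s)}$ appearing in \eqref{eq:DLf} is well defined and $\tilde{\mb D}^L_f(s)=f_{tt}(s,0)-c(s)\,\Gamma'(s)$. Next, using bilinearity of $\times_L$ together with the identity $\Gamma'(s)\times_L\Gamma'(s)=E_3\big(\Gamma'(s)\times\Gamma'(s)\big)=\mb 0$, I would compute
\[
\Gamma'(s)\times_L\tilde{\mb D}^L_f(s)=\Gamma'(s)\times_L f_{tt}(s,0)-c(s)\,\Gamma'(s)\times_L\Gamma'(s)=\Gamma'(s)\times_L f_{tt}(s,0),
\]
and identify the right-hand side with $\hat\nu^L(s,0)$ by \eqref{tnuL2}. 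Since $\hat\nu^L(s,0)\ne\mb 0$ (as $f_{tt}(s,0)$ and $\Gamma'(s)$ are linearly independent by condition~(b) of Definition~\ref{def:GE}) and is, by the standing hypothesis under which $\nu^L$ is defined, not light-like, we have $|\hat\nu^L(s,0)|_L>0$ and $\nu^L(s,0)=\hat\nu^L(s,0)/|\hat\nu^L(s,0)|_L$, whence
\[
\Gamma'(s)\times_L\tilde{\mb D}^L_f(s)=|\hat\nu^L(s,0)|_L\,\nu^L(s,0),
\]
the claimed positive scalar multiple of $\nu^L(s,0)$.

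I do not expect any serious obstacle here: the computation is a one-liner. The only matters requiring a word of care are (i) that $c(s)$ is finite, which is exactly what the type-$S$/type-$T$ assumption on $\Gamma$ provides and which is why Definition~\ref{def:CD617} is stated under that hypothesis, and (ii) the harmless fact that $\tilde{\mb D}^L_f(s)$ is only determined up to a positive scalar under an admissible change of coordinates, which does not affect the conclusion. Finally, one may note in passing that the same computation with the Euclidean product gives $\Gamma'(s)\times\tilde{\mb D}^L_f(s)=\Gamma'(s)\times f_{tt}(s,0)=\hat\nu^E(s,0)=E_3\,\hat\nu^L(s,0)$, which is in general not proportional to $\nu^L(s,0)$; this confirms that the product intended in the statement is $\times_L$.
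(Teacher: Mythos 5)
Your proof is correct and is essentially the argument the paper intends: the paper simply says the result follows by "modifying the proof of Proposition \ref{prop:1093}", and your one-line computation $\Gamma'(s)\times_L\tilde{\mb D}^L_f(s)=\Gamma'(s)\times_L f_{tt}(s,0)=\hat\nu^L(s,0)=|\hat\nu^L(s,0)|_L\,\nu^L(s,0)$ is exactly that modification. Your side remarks (finiteness of $c(s)$ under the type-$S$/type-$T$ hypothesis, non-light-likeness of $\hat\nu^L(s,0)$ so that $\nu^L$ is defined, and the reading of $\times$ as $\times_L$) are all consistent with the paper's standing assumptions.
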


In particular, the following assertion holds:

\begin{Corollary}
Let $f$ be a generalized cuspidal edge. Then, for each $s\in I$,
\begin{enumerate}
\item $\hat \nu^L(s,0)$ is a space-like vector 
if $\Gamma'(s)$ is time-like or $\tilde{\mb D}^L_f(s)$ is time-like,
\item $\hat \nu^L(s,0)$ is
a time-like vector if $\Gamma'(s)$ and $\tilde{\mb D}^L_f(s)$ are both 
space-like, and 
\item $\hat \nu^L(s,0)$ is light-like if  
$\Gamma'(s)$ is space-like and $\tilde{\mb D}^L_f(s)$ is light-like.
\end{enumerate}
\end{Corollary}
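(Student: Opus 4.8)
The plan is to reduce the whole statement to one algebraic identity for the Lorentzian vector product together with a short sign count. First I would rewrite the limiting normal vector in terms of the cuspidal direction vector: since $\tilde{\mb D}^L_f(s)$ differs from $f_{tt}(s,0)$ only by a scalar multiple of $\Gamma'(s)$ (see \eqref{eq:DLf}) and $\mb a\times_L\mb a=\mb 0$ for every $\mb a\in\Lo^3$, formula \eqref{tnuL2} gives
\[
\hat\nu^L(s,0)=\Gamma'(s)\times_L f_{tt}(s,0)=\Gamma'(s)\times_L\tilde{\mb D}^L_f(s).
\]
Here $\tilde{\mb D}^L_f(s)$ is meaningful at the value $s$ in all three cases, since there $\Gamma'(s)$ is non-null (time-like in the first alternative of (1), space-like in the remaining situations), so the projection \eqref{eq:DLf} is defined.

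Next I would apply the Lagrange-type identity already obtained in the computation \eqref{eq:inner387}, which, being purely algebraic in the entries of the two vectors, holds for arbitrary $\mb a,\mb b\in\Lo^3$:
\[
\inner{\mb a\times_L\mb b}{\mb a\times_L\mb b}=-\bigl(\inner{\mb a}{\mb a}\inner{\mb b}{\mb b}-\inner{\mb a}{\mb b}^2\bigr).
\]
Taking $\mb a=\Gamma'(s)$ and $\mb b=\tilde{\mb D}^L_f(s)$, and using that $\tilde{\mb D}^L_f(s)$ is $\inner{\,\cdot\,}{\,\cdot\,}$-orthogonal to $\Gamma'(s)$ (immediate from \eqref{eq:DLf}), this collapses to
\[
\inner{\hat\nu^L(s,0)}{\hat\nu^L(s,0)}=-\inner{\Gamma'(s)}{\Gamma'(s)}\,\inner{\tilde{\mb D}^L_f(s)}{\tilde{\mb D}^L_f(s)}.
\]

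The three assertions then drop out by inspecting signs, with two non-degeneracy facts kept in mind: $\tilde{\mb D}^L_f(s)\ne\mb 0$ by condition (c) of Definition~\ref{def:GE}, and $\hat\nu^L(s,0)=\Gamma'(s)\times_L f_{tt}(s,0)\ne\mb 0$ because $\Gamma'(s)$ and $f_{tt}(s,0)$ are linearly independent by condition (b). One should also record the elementary Lorentzian fact that, since $\tilde{\mb D}^L_f(s)$ is a nonzero vector orthogonal to $\Gamma'(s)$ in the $(++-)$-space, at most one of $\Gamma'(s)$ and $\tilde{\mb D}^L_f(s)$ is time-like, and then the other is space-like. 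With this in hand: if both are space-like, the right-hand side is negative, so $\hat\nu^L(s,0)$ is time-like, which is (2); if exactly one of them is time-like (the two alternatives appearing in (1)), the right-hand side is positive, so $\hat\nu^L(s,0)$ is space-like, which is (1); and if $\Gamma'(s)$ is space-like while $\tilde{\mb D}^L_f(s)$ is light-like, the right-hand side vanishes whereas $\hat\nu^L(s,0)\ne\mb 0$, so $\hat\nu^L(s,0)$ is light-like, which is (3). The only genuinely ``Lorentzian'' input beyond the cross-product identity is that orthogonal-splitting fact, so that is where I expect whatever small subtlety there is to sit; everything else is bookkeeping. (Alternatively, (1) and (2) could be read off from Proposition~\ref{prop:1335}, but the identity above disposes of all three cases, including the light-like one, uniformly.)
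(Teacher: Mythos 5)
Your proof is correct and follows essentially the route the paper intends: the corollary is stated as an immediate consequence of Proposition~\ref{prop:1335}, whose content is exactly the identity $\hat\nu^L(s,0)=\Gamma'(s)\times_L\tilde{\mb D}^L_f(s)$, and the sign analysis via the Lagrange-type identity is precisely the computation already carried out in \eqref{eq:inner387}. Your uniform treatment (which also covers the light-like case (3), where the unit normal $\nu^L$ of Proposition~\ref{prop:1335} is not defined) and the observation that orthogonality forbids both vectors from being time-like are exactly the right bookkeeping.
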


When $s$ is the arclength parameter of $\Gamma$,
the {\it curvature function} of $\Gamma$ in $\Lo^3$
is defined by
$
\kappa^L(s):=|\Gamma''(s)|_L.
$
The following assertion is obvious 
(cf.~Definition~\ref{def:causality712b}):

\begin{Proposition}
If the limiting normal vector $\hat \nu^L(s,0)$ is
space-like $($resp. time-like$)$,
then $f$ is time-like $($resp. space-like$)$ on
its regular set $\mc U_f\setminus \Sigma_f$ $($cf. \eqref{eq:Domf2}$)$.
\end{Proposition}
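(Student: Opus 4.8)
The plan is to reduce the statement to a continuity argument for the single scalar function
\[
g(s,t):=\inner{\hat\nu^L(s,t)}{\hat\nu^L(s,t)},
\]
where $\hat\nu^L$ is the normal vector field of \eqref{tnuL}. The essential point, already built into the notion of frontal, is that $\hat\nu^L$ extends $C^\infty$-smoothly across the singular set $\Sigma_f$ (cf.~\eqref{tnuL} and \eqref{tnuL2}), so $g$ is a continuous function on all of $\mc U_f$. By Definition~\ref{def:causality712b}, the hypothesis that $\hat\nu^L(s,0)$ be space-like (resp.~time-like) for every $s\in I$ is precisely the statement that $g(s,0)>0$ (resp.~$g(s,0)<0$) for all $s$.

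First I would record how $g$ reads off the causal type at a regular point: for $t\ne 0$ one has, from \eqref{eq:ve1522} and \eqref{tnuL}, that $\hat\nu^L(s,t)=t^{-1}\bigl(f_s(s,t)\times_L f_t(s,t)\bigr)$, a nonzero scalar multiple of the genuine normal vector $\tilde\nu^L(s,t)$. Hence the regular point $(s,t)$ is space-like, time-like, or light-like precisely when $\hat\nu^L(s,t)$ is time-like, space-like, or light-like, respectively; equivalently, by \eqref{eq:inner387}, $g(s,t)=-\Delta_L(s,t)/t^2$, so $\op{sgn}g(s,t)=-\op{sgn}\Delta_L(s,t)$. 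Consequently, on $\mc U_f\setminus\Sigma_f$, the condition ``$g>0$ everywhere'' says ``$f$ is time-like'' and ``$g<0$ everywhere'' says ``$f$ is space-like''.

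It then remains to propagate the sign of $g$ off the singular set. Since $g$ is continuous and $g(s,0)$ has constant sign along $\Sigma_f$, for each $s_0\in I$ there is a neighborhood of $(s_0,0)$ on which $g$ retains that sign; shrinking $\delta$ if necessary, as is the standing convention in this paper, $g$ keeps that constant sign on all of $\mc U_f$, and combining this with the previous step gives the assertion. There is no genuine obstacle here; the only points demanding care are keeping straight the convention of Definition~\ref{def:causality712b}, in which a regular point carries the causal name opposite to that of its normal vector, against the identity $\inner{\tilde\nu^L}{\tilde\nu^L}=-\Delta_L$ of \eqref{eq:inner387}, and observing that the conclusion is genuinely local in the $t$-direction, so that $\delta$ may have to be decreased.
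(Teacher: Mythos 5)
Your argument is correct and is precisely the argument the paper has in mind: the paper states this Proposition without proof, remarking only that it is ``obvious'' from Definition~\ref{def:causality712b}, and your reduction to the sign of $g=\inner{\hat\nu^L}{\hat\nu^L}=-\Delta_L/t^2$, propagated off $\Sigma_f$ by continuity of the smoothly extended normal $\hat\nu^L$, is the right way to make that remark precise. Your caveat that the conclusion is only guaranteed after possibly shrinking $\delta$ (the statement as written quantifies over all of $\mc U_f\setminus\Sigma_f$) is also well taken and consistent with the paper's standing conventions.
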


From now on, we consider the case
that 
$\nu^L(s,t):=\hat \nu^L/|\hat \nu^L|_L$
is well-defined (that is, $(s,0)$ is a space-like or a time-like point),
and assume that $\Gamma'(s)$ and  $\tilde{\mb D}^L_f(s)$ are not light-like vectors.
Then
\begin{equation}\label{eq:DL1386}
{\mb D}^L_f(s):=\frac{\tilde{\mb D}^L_f(s)}{|\tilde{\mb D}^L_f(s)|_L}
\end{equation}
is well-defined, which is called the {\it unit 
$\Lo^3$-cuspidal direction vector} of $f$ in $\Lo^3$.
We define the {\it singular curvature}  $\kappa^L_s(s)$
and the {\it limiting normal curvature} $\kappa^L_\nu(s)$ so that
\begin{equation}
\label{eq:knSL}
\Gamma''(s)=
\kappa^L_s(s) {\mb D}^L_f(s)
+
\kappa^L_\nu(s) \nu^L(s,0),
\end{equation}
where $s$ is the arc-length parameter of $\Gamma$.
Then the following assertion holds:

\begin{Prop}\label{prop:GL}
Suppose that $\Gamma''(s)$ does not vanish.
Then $\kappa_\nu^L(s)$ is equal to zero
if and only if so is $\kappa_\nu^E(s)$.
\end{Prop}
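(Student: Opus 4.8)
The plan is to show that both equalities $\kappa^E_\nu(s)=0$ and $\kappa^L_\nu(s)=0$ are equivalent to one and the same \emph{affine} (hence metric-independent) condition, namely that $\Gamma''(s)$ lies in the limiting tangent plane of $f$ at $p:=(s,0)$, i.e.\ in the linear span of $\Gamma'(s)$ and $f_{tt}(s,0)$. Concretely, this condition reads $d^C(s)=\det(\Gamma'(s),f_{tt}(s,0),\Gamma''(s))=0$ (cf.\ \eqref{eq:Dp262}), and since it refers to neither $\E^3$ nor $\Lo^3$, the desired equivalence drops out at once. Note that the hypothesis $\Gamma''(s)\ne\mb 0$ is not strictly needed for the bare equivalence — both curvatures vanish trivially when $\Gamma''(s)=\mb 0$ — but I would keep it, since it is exactly what makes the decomposition \eqref{eq:knSL} genuinely split $\Gamma''(s)$ into a tangential and a normal part.

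First I would dispose of the Euclidean side, which is essentially already contained in Proposition~\ref{prop:GE}: by \eqref{eq:knE} together with \eqref{tnuE0}, $\nu^E(s,0)$ is a positive multiple of $\hat\nu^E(s,0)=\Gamma'(s)\times f_{tt}(s,0)$, so
\[
\kappa^E_\nu(s)=0 \iff \Gamma''(s)\cdot\hat\nu^E(s,0)=0 \iff d^C(s)=0 .
\]
For the Lorentzian side, I would pair the defining relation \eqref{eq:knSL} with the limiting normal vector $\hat\nu^L(s,0)=\Gamma'(s)\times_L f_{tt}(s,0)$ of \eqref{tnuL2} using the Lorentzian inner product. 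Since $\mb D^L_f(s)$ is, by \eqref{eq:DLf} and \eqref{eq:DL1386}, a linear combination of $\Gamma'(s)$ and $f_{tt}(s,0)$, it lies in the limiting tangent plane, whence $\inner{\mb D^L_f(s)}{\hat\nu^L(s,0)}=0$; and $\inner{\nu^L(s,0)}{\hat\nu^L(s,0)}=\op{sgn}\bigl(\inner{\hat\nu^L(s,0)}{\hat\nu^L(s,0)}\bigr)\,|\hat\nu^L(s,0)|_L\ne 0$ because in the present setting $p$ is space-like or time-like. Hence \eqref{eq:knSL} yields $\kappa^L_\nu(s)=0\iff\inner{\Gamma''(s)}{\hat\nu^L(s,0)}=0$. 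Finally, using $\hat\nu^L(s,0)=E_3\,\hat\nu^E(s,0)$ (compare \eqref{tnuL2} with \eqref{tnuE0}) and $E_3^{2}=I_3$ (cf.\ \eqref{eq:L-inn}), one gets $\inner{\Gamma''(s)}{\hat\nu^L(s,0)}=\Gamma''(s)^{T}E_3E_3\,\hat\nu^E(s,0)=\Gamma''(s)\cdot\hat\nu^E(s,0)$, which vanishes precisely when $d^C(s)=0$. Stringing the three equivalences together gives $\kappa^L_\nu(s)=0\iff d^C(s)=0\iff\kappa^E_\nu(s)=0$, as required.

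I do not anticipate a real obstacle; the only points demanding a line of care are (i) that $\{\mb D^L_f(s),\nu^L(s,0)\}$ is a genuine basis of the normal plane of $\Gamma$ at $\Gamma(s)$ — so that \eqref{eq:knSL} determines $\kappa^L_\nu(s)$ unambiguously — which follows from $\mb D^L_f(s)$ being Lorentzian-orthogonal to $\nu^L(s,0)$ together with the fact that both are unit (in particular non-null) vectors, and (ii) the bookkeeping that $\hat\nu^L(s,0)$ is a nonzero, non-light-like vector, guaranteed by the standing assumption that $(s,0)$ is a space-like or time-like point. Everything else is the one-line determinant manipulation above.
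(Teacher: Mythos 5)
Your proof is correct and uses essentially the same ingredients as the paper's: the identity $\inner{\mb v}{\hat\nu^L}=\mb v\cdot\hat\nu^E$ coming from $\hat\nu^L=E_3\hat\nu^E$, the decomposition \eqref{eq:knSL}, and the non-vanishing of $\inner{\nu^L}{\nu^L}$ at a space-like or time-like point. The only cosmetic difference is that you route both vanishing conditions through the metric-independent quantity $d^C(s)$, whereas the paper computes the sign relation $\op{sgn}(\kappa^E_\nu)=\op{sgn}\bigl(a\inner{\nu^L}{\nu^L}\bigr)\op{sgn}(\kappa^L_\nu)$ directly.
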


\begin{proof}
We may assume that $s$ is the arc-length parameter of $\Gamma$.
By the definition of $\nu^L(s)$, we have 
$\inner{f_{tt}(s,0)}{\nu^L(s)} =\inner{\Gamma'(s)}{\nu^L(s)}=0$.
Since $\nu^E(s,0)$ points in the same direction as
$E_3 \nu^L(s,0)$, we can write
$\nu^E(s,0)=a(s)E_3 \nu^L(s,0)$, where $a(s)$ is a non-zero smooth
function.
By \eqref{eq:knSL}, we have 
\begin{align*}
\op{sgn}(\kappa^E_\nu(s))
&=\op{sgn}(\Gamma''(s)\cdot \nu^E)
=\op{sgn}(a(s)) \op{sgn}\Big(\inner{\Gamma''(s)}{\nu^L(s,0)}\Big) \\
&
=\op{sgn}\Big(a(s) \op{\inner{\nu^L(s,0)}{\nu^L(s,0)}}\Big)
\op{sgn}(\kappa^L_\nu(s)).
\end{align*}
Since $a(s)\ne 0$, the function $\inner{\nu^L(s,0)}{\nu^L(s,0)}$
does not vanish, proving the assertion.
\end{proof}

By imitating the proof of  Proposition \ref{prop:GE}
the following assertion is obtained:

\begin{Proposition}
The section of the image of $f$ by 
the normal plane $P_s$ of $\Gamma$ at $\Gamma(s)$
in $\Lo^3$ is a generalized cusp $($cf. Definition \ref{def:GC1794}$)$
whose cuspidal direction vector in the plane $P_s$
is $\tilde{\mb D}^L_f(s)$.
Moreover, $\tilde{\mb D}^L_f(s)$ lies in the osculating plane of $\Gamma$ at $\Gamma(s)$
if and only if $\kappa^L_\nu(s)=0$.
\end{Proposition}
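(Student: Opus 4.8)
The plan is to prove the two assertions separately: the first by combining Proposition~\ref{lem:ce708} with a brief computation of the $2$-jet of the planar section, and the second by imitating the proof of Proposition~\ref{prop:GE}.

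For the first assertion I would first observe that, since $\Gamma$ is of type $S$ or of type $T$, the velocity $\Gamma'(s)$ is non-null, so the normal plane $P_s$ of $\Gamma$ at $\Gamma(s)$ in $\Lo^3$ is a non-degenerate affine $2$-plane transversal to $\Gamma$ at $\Gamma(s)$; hence Proposition~\ref{lem:ce708} already gives that the section $\mb c$ of the image of $f$ by $P_s$ is a generalized cusp. To identify its cuspidal direction vector I would use a tubular chart adapted to $\Gamma$: pick a smooth frame field $\{\mb e_1(s),\mb e_2(s)\}$ along $\Gamma$ whose values span the Lorentzian normal plane (i.e.\ are Lorentz-orthogonal to $\Gamma'(s)$), and set $\Phi(s,x,y):=\Gamma(s)+x\,\mb e_1(s)+y\,\mb e_2(s)$, a local diffeomorphism near $(s_0,0,0)$ carrying $\{s=s_0\}$ onto $P_{s_0}$. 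Writing $\Phi^{-1}\!\circ f=(S,X,Y)$ and using $f(s,0)=\Gamma(s)$, $f_t(s,0)=\mb 0$, one reads off $S(s,0)=s$, $X(s,0)=Y(s,0)=0$ and $S_t(s,0)=X_t(s,0)=Y_t(s,0)=0$; by the implicit function theorem the section is parametrized by $t\mapsto f(\sigma(t),t)$ with $\sigma(0)=s_0$, $\sigma'(0)=0$. A short differentiation (using $X(\cdot,0)\equiv Y(\cdot,0)\equiv 0$) gives $\mb c'(0)=\mb 0$ and $\mb c''(0)=X_{tt}(s_0,0)\,\mb e_1(s_0)+Y_{tt}(s_0,0)\,\mb e_2(s_0)=f_{tt}(s_0,0)-S_{tt}(s_0,0)\,\Gamma'(s_0)$. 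Because the $\mb e_i(s_0)$ are Lorentz-orthogonal to $\Gamma'(s_0)$ and $\Gamma'(s_0)$ is non-null, the coefficient $S_{tt}(s_0,0)$ equals $\inner{f_{tt}(s_0,0)}{\Gamma'(s_0)}/\inner{\Gamma'(s_0)}{\Gamma'(s_0)}$, so $\mb c''(0)=\tilde{\mb D}^L_f(s_0)$ by \eqref{eq:DLf}; this is the cuspidal direction vector of $\mb c$ (up to a positive factor, in the sense of Definition~\ref{def:GC1794}).

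For the second assertion I would imitate the proof of Proposition~\ref{prop:GE}. Take $s$ to be the arc-length parameter of $\Gamma$, so that $\inner{\Gamma''(s)}{\Gamma'(s)}=0$. Since $\nu^L(s,0)$ is a non-zero multiple of $\hat\nu^L(s,0)=\Gamma'(s)\times_L f_{tt}(s,0)$ (cf.~\eqref{tnuL2} and Proposition~\ref{prop:1335}) and $\tilde{\mb D}^L_f(s)$ differs from $f_{tt}(s,0)$ by a multiple of $\Gamma'(s)$, two determinant identities give $\inner{\mb D^L_f(s)}{\nu^L(s,0)}=0$; pairing \eqref{eq:knSL} with $\nu^L(s,0)$ then shows that $\kappa^L_\nu(s)=0$ if and only if $\inner{\Gamma''(s)}{\nu^L(s,0)}=0$. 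By \eqref{eq:L-inn} one has $\inner{\mb a}{\mb b\times_L\mb c}=\det(\mb a,\mb b,\mb c)$, so $\inner{\Gamma''(s)}{\nu^L(s,0)}$ is a non-zero multiple of $\det(\Gamma''(s),\Gamma'(s),f_{tt}(s,0))=d^C(s)$ (cf.~\eqref{eq:Dp262}). Hence $\kappa^L_\nu(s)=0$ if and only if $d^C(s)=0$, i.e.\ if and only if $f_{tt}(s,0)\in\op{span}(\Gamma'(s),\Gamma''(s))$; and since $f_{tt}(s,0)$ and $\tilde{\mb D}^L_f(s)$ differ by a multiple of $\Gamma'(s)$, this is equivalent to $\tilde{\mb D}^L_f(s)$ lying in the osculating plane of $\Gamma$ at $\Gamma(s)$.

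The one step needing real care is the first assertion: one must be sure that the set-theoretic ``section of the image of $f$ by $P_s$'' genuinely is a smooth generalized cusp and that its $2$-jet is governed by $f_{tt}(s,0)$. This is exactly what the transversality of $P_s$ (ensured by $\Gamma$ being of type $S$ or $T$) and the parametrization of Proposition~B.1 in the appendix — already used in the proof of Proposition~\ref{lem:ce708} — deliver; given that parametrization, the identification $\mb c''(0)=\tilde{\mb D}^L_f(s)$ is forced by $P_s$ being the Lorentzian normal plane. I would also keep track of the standing hypotheses that $\Gamma'(s)$, $\tilde{\mb D}^L_f(s)$ and $\hat\nu^L(s,0)$ are non-null, which make the plane Lorentz-orthogonal to $\Gamma'(s)$ genuinely non-degenerate and $\{\mb D^L_f(s),\nu^L(s,0)\}$ a basis of it, and treat the degenerate case $\Gamma''(s)=\mb 0$ as understood (then the osculating plane is undefined and the second equivalence is vacuous, exactly as in Proposition~\ref{prop:GL}).
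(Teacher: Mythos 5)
Your proposal is correct and follows essentially the same route as the paper, which gives no explicit argument beyond the instruction to imitate the proof of Proposition~\ref{prop:GE}: the second assertion is exactly that imitation (replacing $\cdot$ and $\times$ by $\inner{\,}{\,}$ and $\times_L$ and using $d^C$), and the first assertion rests on Proposition~\ref{lem:ce708} together with the identification of the cuspidal direction as the projection of $f_{tt}(s,0)$ onto the Lorentzian normal plane, which the paper asserts after Definition~\ref{def:CD617} and which your tubular-chart computation verifies. Your explicit handling of the standing non-degeneracy hypotheses and of the case $\Gamma''(s)=\mb 0$ is more careful than the paper's, but introduces nothing essentially different.
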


In the introduction, we mentioned the order $i_p$ at each singular point $p$.
We now define this concept as follows:
Let $f:\mc U_f\to \Lo^3$ be a generalized cuspidal edge.
Since $f_t(s,0)=\mb 0$, the identifier  of causality can be expressed as
$$
\Delta_L(s,t)
(:=E^L(s,t)G^L(s,t)-F^L(s,t)^2)
=t^r \phi_s(t),
$$
where $r(\ge 2)$ is a positive integer and $\phi_s(t)$ is
a certain smooth function with respect to $t$ depending on $s$.

\begin{Definition}\label{def:R}
In the above setting, if $\phi_s(0)\ne 0$ ($p:=(s,0)\in \Sigma_f$), we set 
\begin{equation}\label{eq:Ip}
i_p:=r
\end{equation}
and call it the {\it order} of the generalized cuspidal edge $f$ at the singular point $p$.
If $\Delta_L(s,t)$ is a flat function with respect to $t$
(that is, $\partial^k \Delta_L(s,0)/\partial t^k$ vanish for all $k$), 
then we set $i_p:=\infty$.
\end{Definition}

It can be easily checked that the {\it order} $i_p$
is an invariant of $f$ for each $p\in \Sigma_f$
(if the image of $f$ lies in $\E^3$, the same concept
can be defined but it is always equal to $2$). 
In $\Lo^3$, the order $i_p$ is generically $2$, and 
might be greater than $2$ in special cases.
If $i_p$ is an odd number for each $p$, 
then $f$ changes its causal type along the 
the curve $t \mapsto f(s,t)$.

\begin{Proposition}\label{prop:1477}
The order $i_p$ $(p\in \Sigma_f)$
is independent of the choice of
the admissible coordinate system $($cf. Definition \ref{def:AD880}$)$ of $f$. 
\end{Proposition}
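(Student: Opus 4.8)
The plan is to track how $\Delta_L$ transforms under a change between two admissible coordinate systems and to read off the order directly; the only real work is bookkeeping with the chain rule, the key structural input being that $\Delta_L$ transforms by the square of the Jacobian determinant. Let $(s,t)$ be the given admissible coordinate system and $(u,v)$ another one, with transition map $s=s(u,v)$, $t=t(u,v)$; fix $p=(s_0,0)$, corresponding to $(u_0,0)$ with $s(u_0,0)=s_0$. First I would record the consequences of admissibility for the transition map. Both systems have $\Sigma_f$ as $\{t=0\}=\{v=0\}$, so $t(u,0)\equiv 0$, hence $t(u,v)=v\,b(u,v)$ for a smooth function $b$ with $b(u_0,0)=t_v(u_0,0)>0$; moreover $s_v(u_0,0)=0$ and $s_u(u_0,0)>0$, so the Jacobian determinant $J(u,v):=s_ut_v-s_vt_u$ is smooth and $J(u_0,0)=s_u(u_0,0)\,t_v(u_0,0)>0$, in particular $J$ is nonzero near $(u_0,0)$. (These identities for the transition map follow from the defining relations in Definition~\ref{def:AD880} by differentiating along $\Sigma_f$.)

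Next I would use that the symmetric matrix $\pmt{E^L & F^L \\ F^L & G^L}$ of the first fundamental form transforms by $A^{T}(\cdot)A$, where $A$ is the Jacobian matrix of $(s,t)$ with respect to $(u,v)$; taking determinants gives the transformation law
\begin{equation*}
\Delta_L^{(u,v)}(u,v)=J(u,v)^2\,\Delta_L^{(s,t)}\bigl(s(u,v),t(u,v)\bigr).
\end{equation*}
Write $r:=i_p$ as computed in the $(s,t)$ coordinates, so that $\Delta_L^{(s,t)}(s,t)=t^{r}\phi(s,t)$ with $\phi$ smooth near $(s_0,0)$ and $\phi(s_0,0)\neq 0$ (by continuity $\phi(s,0)\neq 0$ for $s$ near $s_0$, so this power $r$ is also the one factored out for nearby parameter values, and the factorization is genuinely smooth). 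Substituting $t(u,v)=v\,b(u,v)$ yields
\begin{equation*}
\Delta_L^{(u,v)}(u,v)=v^{r}\,\Psi(u,v),\qquad
\Psi(u,v):=J(u,v)^2\,b(u,v)^{r}\,\phi\bigl(s(u,v),v\,b(u,v)\bigr),
\end{equation*}
where $\Psi$ is smooth and $\Psi(u_0,0)=J(u_0,0)^2\,b(u_0,0)^{r}\,\phi(s_0,0)\neq 0$. Hence $\Delta_L^{(u,v)}$ has exactly the form $v^{r}\cdot(\text{smooth, nonzero at }p)$, so $i_p$ measured in the $(u,v)$ coordinates also equals $r$.

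Finally I would dispose of the case $i_p=\infty$: if $\Delta_L^{(s,t)}(s,\cdot)$ is flat at $t=0$, then composing with the smooth map $v\mapsto v\,b(u,v)$ keeps it flat in $v$ at $v=0$, and multiplying by the smooth factor $J(u,v)^2$ preserves flatness, so $\Delta_L^{(u,v)}(u,\cdot)$ is flat at $v=0$ as well. The main point requiring care — and the place where admissibility is genuinely used — is the bookkeeping of the first paragraph: admissibility is exactly what guarantees that $\Sigma_f$ is $\{v=0\}$ in the new coordinates and that $J(u_0,0)$ and $b(u_0,0)$ are nonzero, so that the monomial $v^{r}$ can be extracted cleanly from $\Delta_L^{(u,v)}$ with a smooth, nonvanishing cofactor; beyond this the argument is a routine application of the chain rule and of factoring out a vanishing variable.
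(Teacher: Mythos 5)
Your opening bookkeeping is correct and matches the paper's own proof: both arguments rest on the transformation law $\Delta_L^{(u,v)}=J^2\,\bigl(\Delta_L^{(s,t)}\circ(\text{transition})\bigr)$ together with $J(u_0,0)\neq 0$ and $t(u,v)=v\,b(u,v)$ with $b(u_0,0)>0$. The paper then delegates the remaining step --- that the first non-vanishing $t$-derivative order is preserved --- to Lemma~\ref{lem:860}, whereas you try to finish by hand, and it is exactly there that your argument has a gap. The factorization $\Delta_L^{(s,t)}(s,t)=t^{r}\phi(s,t)$ with $\phi$ \emph{jointly} smooth near $(s_0,0)$ and $\phi(s_0,0)\neq 0$ is not what Definition~\ref{def:R} provides: the order $i_p=r$ only controls the one-variable restriction $t\mapsto\Delta_L(s_0,t)$. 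A jointly smooth cofactor $\phi=\Delta_L/t^{r}$ exists only if $\partial_t^{k}\Delta_L(s,0)=0$ for all $k<r$ and all $s$ near $s_0$, i.e.\ only if $i_q\ge r$ at every nearby singular point $q$. That fails whenever the order is not locally constant, which the paper's own results show is the typical situation: $i_p=2$ is an open condition (the limiting normal is non-light-like), so at a point where the order jumps to $3$ or more the neighbouring points have order $2$ and $\Delta_L/t^{r}$ is not smooth across $t=0$ for nearby $s$. Your parenthetical ``by continuity $\phi(s,0)\neq 0$ for $s$ near $s_0$, so \dots{} the factorization is genuinely smooth'' is circular: it presupposes that $\phi$ is already defined and continuous on a two-dimensional neighbourhood, which is the very point at issue.

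The gap is not cosmetic, because the curve $v\mapsto(s(u_0,v),t(u_0,v))$ along which you must re-expand $\Delta_L$ is only \emph{tangent} to the line $s=s_0$: admissibility gives $s_v(u_0,0)=0$ but says nothing about $s_{vv}(u_0,0)$, so $s(u_0,v)=s_0+O(v^2)$ genuinely leaves that line, and the one-variable order does not control the contribution of the $s$-derivatives of $\Delta_L$ to the expansion in $v$. What can be salvaged cheaply: since $f_t(s,0)\equiv\mb 0$ one has $\tilde\nu^L=t\,\hat\nu^L$ with $\hat\nu^L$ smooth, hence $\Delta_L=t^{2}\Phi(s,t)$ with $\Phi$ jointly smooth; in the $v$-expansion of $\Delta_L\circ(\text{transition})$ every Taylor term of $\Phi$ carrying a factor $(s-s_0)^{a}$ with $a\ge 1$ then has order at least $2a+2\ge 4$, so the leading pure-$t$ term $v^{r}$ is uncontaminated whenever $r\le 3$ (and $r=2$ is immediate, with $\phi=\Phi$). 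For $r\ge 4$, however, the mixed derivative $\partial_s\partial_t^{2}\Delta_L(s_0,0)$ enters the coefficient of $v^{4}$ on the same footing as $\partial_t^{4}\Delta_L(s_0,0)$ and could in principle cancel it; ruling this out is precisely the content the paper buries in Lemma~\ref{lem:860}, and it is the part your proof is missing.
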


\begin{proof}
Let $(u,v)$ be an admissible coordinate change of $f(s,t)$, and let $J(s,t)$
be its Jacobian. Then the identifier of the causality of $f$ with respect to
$(u,v)$ is $J^2 \Delta_L$. Since $J(s,0)\ne 0$, 
the assertion is obtained by applying Lemma \ref{lem:860} with $\phi:=\Delta_L$.
\end{proof}

\begin{figure}[thb]
\begin{center}
\includegraphics[height=3.8cm]{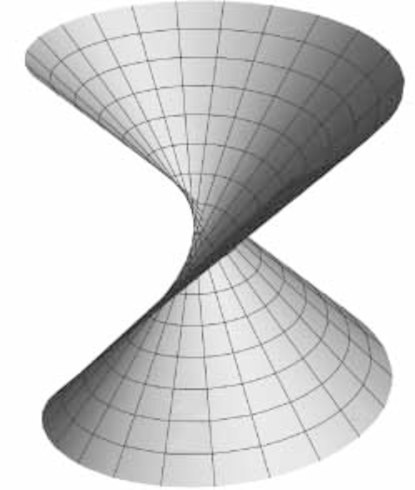}
\end{center}
\caption{The null surface in $\Lo^3$ associated with
a logarithmic spiral
}\label{fig:i-infty}
\end{figure}

We give here examples of cuspidal edges with $i_p=\infty$:

\begin{Example}[cf. \cite{AHUY}]\label{ex:IpInfty}
We let $\gamma(s)$ be a locally convex regular curve in the $xy$-plane
in $\Lo^3$ defined on an interval $I$ whose curvature 
function $\kappa$ and derivative $\kappa'=d\kappa/ds$ are both positive.
We assume that $s$ is the arc-length parameter of $\gamma$.
We then denote by $\mb n(s)$ the left-ward unit normal vector field
along $\gamma$, and consider a smooth map $f:I\times \R\to \Lo^3$ defined by
$$
f(s,t):=(\gamma(s),0)+t(\mb n(s), 1).
$$
As shown in \cite[Proposition 4.5]{AHUY}, $f$ gives a
null wave front in $\Lo^3$, and the singular set $\Sigma_f$ of $f$ is given by
$$
\Sigma_f=\Big\{(s,\frac{1}{\kappa(s)})\,;\, s\in I\Big\}(\subset I\times \R),
$$
which consists only of cuspidal edge singular points.
All points in the domain of definition of $f$ are light-like points
(that is, $\Delta_L(s,t)$ vanishes identically),
and so $i_p=\infty$ for each point $p\in \Sigma_f$.
Let $q$ be a regular point which is not space-like nor time-like
(in the domain of definition of $f$).
If the matrix $\tilde W^L$ vanishes at $q$, then such a point $q$
is called a  {\it light-like umbilical point} (cf. \cite{Tari})
which can be considered as ``fake umbilical points".  
By the above construction, all regular points of $f$
are fake umbilical points.
For example, if $\gamma$ is a logarithmic spiral,
then the image of $f$ is given as in Figure~\ref{fig:i-infty}.
\end{Example}

If a cuspidal edge singular point is not light-like,
then it cannot be an accumulation point of umbilical points
(cf. Appendix C). On the other hand, in the above example, we showed that
light-like umbilical points (i.e. fake umbilics) 
can accumulate at a cuspidal edge singular point.
Since our definition of umbilical points does not contain light-like points,
the question of 
whether umbilical points can accumulate
at a light-like cuspidal edge singular point naturally remains.
In fact, as mentioned in the introduction, the authors know of
any such examples.

\section{General calculations}

\subsection{General setting}
From now on, we will compute the Weingarten matrix of
a generalized cuspidal edge $f$ in $\E^3$ or $\Lo^3$.
Let $\Gamma:I\to \R^3$ be a regular curve
such that the origin $0\in \R$ belongs to the interval $I$.
Let $f:\mc U_f\to \R^3$ be a generalized cuspidal edge
defined on $\mc U_f:=I\times (-\delta,\delta)$,
where $\delta>0$. We will consider the following cases:
\begin{enumerate}
\item[(E)] $f$ lies in $\E^3$.
\item[(T)] $f$ lies in $\Lo^3$ and $\Gamma$ is time-like. 
\item[($\op{S}_s$)] $f$ lies in $\Lo^3$, $\Gamma$ is space-like 
and ${\mb D}_f^L$ is a space-like vector field along $\Gamma$. 
\item[($\op{S}_t$)] 
 $f$ lies in $\Lo^3$, $\Gamma$ is space-like, and ${\mb D}_f^L$ is a time-like vector
field along $\Gamma$. 
\item[($\op{S}_l$)] $f$ lies in $\Lo^3$, $\Gamma$ is space-like, 
and ${\mb D}_f^L(0)$ points in a light-like vector.
\item[($\op{L}_2$)] $f$ lies in $\Lo^3$, $\Gamma'(0)$ is light-like, 
$\Gamma''(0)\ne \mb 0$ and $i_{(0,0)}$ is equal to $2$.
\item[($\op{L}_4$)] $f$ lies in $\Lo^3$, $\Gamma$ is light-like, 
and the order $i_{(s,0)}$ is equal to $4$ for each $s\in I$.
\end{enumerate}

We first give a framework which will be useful to compute the matrix $\tilde W^L$
except for the cases ($\op{L}_2$) and ($\op{L}_4$).
We fix a symmetric bilinear form $(\,,\,)$ on $\R^3$
and let $\{\mb a_0(s),\mb a_1(s),\mb a_2(s)\}$ be a
frame field of $\R^3$ along $\Gamma(s)$ satisfying
\begin{equation}\label{eq:delta1424}
(\mb a_i,\mb a_j)=\epsilon_i \delta_{i,j} \qquad (i,j=0,1,2),
\end{equation}
where $\delta_{i,j}$ is Kronecker's delta, and 
$\epsilon_0,\epsilon_1,\epsilon_2\in \{-1,1\}$.
The cases  (E) and (T) are discussed in this section, and 
the cases  ($\op{S}_s$), ($\op{S}_t$) and ($\op{S}_l$) are discussed 
in the next section. Finally, the two remaining cases 
($\op{L}_2$) and ($\op{L}_4$) are discussed in Sections 4 and 5.

In the following calculation, we assume that $\Gamma(s)$
is parametrized by  arc-length with respect to
the bilinear form $(\,,\,)$, that is,
$$
(\Gamma'(s),\Gamma'(s))=\pm 1\qquad (s\in I)
$$
is assumed. Moreover, we set 
\begin{equation}\label{eq:A01960}
\mb a_0(s):=\Gamma'(s).
\end{equation}
There exists a $3\times 3$ matrix $\mc K$ satisfying
\begin{equation}\label{eq:a1434}
(\mb a'_0(s),\,\mb a'_1(s),\,\mb a'_2(s))=
(\mb a_0(s),\,\mb a_1(s),\,\mb a_2(s))\mc K(s),
\end{equation}
where $\Gamma'(s)=\mb a_0(s)$ for each $s\in I$
and $\mb a'_i:=d \mb a_i/d s$ ($i=0,1,2$).
By \eqref{eq:delta1424}, $\mc K$ can be written in the following form
\begin{equation}\label{eq:K1441}
\mc K:=\pmt{
0 & -\epsilon_0\epsilon_1 \kappa_1 & -\epsilon_0\epsilon_2\kappa_2 \\
\kappa_1 & 0 & -\epsilon_1\epsilon_2 \Omega \\
\kappa_2 & \Omega & 0},
\end{equation}
where $\kappa_1$, $\kappa_2$ and $\Omega$ are $C^\infty$-functions
on the interval $I$.
In fact, if we write $K=(k_{ij})_{i,j=0,1,2}$, then
$$
k_{ii}=(a'_i,a_i)=\frac{(a_i,a_i)'}{2}=\frac{\epsilon'_i}{2}=0
$$
holds for $i=1,2,3$, and we have
$$
\epsilon_j k_{ji}=(a'_i,a_j)=(a_i,a_j)'-(a_i,a'_j)=-(a_i,a'_j)=-\epsilon_i k_{ij}
$$
for $i\ne j$. In particular,  we have $-\epsilon_i\epsilon_j k_{ij}=k_{ji}$,
where $k_{10}:=\kappa_1$, $k_{20}:=\kappa_2$ and $k_{21}:=\Omega$.

Since $f$ is a generalized cuspidal edge along $\Gamma$,
we can write (cf. Proposition \ref{prop:fstE} in the appendix)
\begin{equation}\label{eq:f1452}
f(s,t)=\Gamma(s)+X(s,t)\mb a_1(s)+Y(s,t)\mb a_2(s).
\end{equation}
Then the plane curve defined by
$\mb c_s(t):=(X(s,t),Y(s,t))$
gives a cusp at $t=0$ for each $s\in I$. 
In particular,  $X(s,t)$ and $Y(s,t)$ can be written in the following form
\begin{align}\label{eq:X1464}
X&=\cos \theta A+\sin\theta B,\qquad Y=-\sin \theta A+\cos \theta B, \\
\label{eq:AB1468}
A(s,t)&=\sum_{i=0}^m \frac{\alpha_i(s)}{i!} t^i +O(t^{m+1}),\qquad
B(s,t)=\sum_{i=0}^m \frac{\beta_i(s)}{i!}t^i +O(t^{m+1}), 
\end{align}
where $O(t^{m})$ is a function written as $t^m\phi(s,t)$  
using a $C^\infty$-function $\phi$ defined on a neighborhood of $(s,0)$.
In this setting, 
\begin{align}
\label{eq:ab1474}
&\alpha_0(s)=\alpha_1(s)=0,\,\, \alpha_2(0)\ne 0,\\
\qquad 
&\beta_0(s)=\beta_1(s)=\beta_2(s)=0. \nonumber
\end{align}
We remark that 
the angular function
$\theta$ is a $C^\infty$-function of one variable 
which vanishes identically if $f$ is of type $\op{E}$, $\op{T}$, $\op{S}_s$ or $\op{S}_t$
like as Example \ref{exa:E1}.
We will use the following asymptotic expansion of
a given smooth function $\mu(s,t)$ with respect to $t$
\begin{equation}\label{eq:mu2029}
\mu(s,t)=\mu_0(s)+\mu_1(s)t+\frac{\mu_2(s)}{2!}t^2+\frac{\mu_3(s)}{3!}t^3+O(t^4) \qquad (s\in I),
\end{equation}
where $\mu_i(s)$ ($s=1,2,3$) are smooth functions on $I$ as follows:
If we think of $\mb c_s$ as lying in the Euclidean plane $\E^2$,
we can write (cf. Fact \ref{fact:SU})
\begin{equation}\label{eq:AB1498}
\pmt{A(s,t)\\
B(s,t)}
=
\int_0^t u\pmt{\cos \lambda(s,u)\\
\sin \lambda(s,u)} du,
\qquad
\lambda(s,t):=\int_0^t \mu(s,u)du.
\end{equation}
In this case,  we have
\begin{equation}\label{V1:1508}
\pmt{A \\
B}
=
\frac12\pmt{1\\ 0} t^2+
\frac13\pmt{0\\ \mu_0} t^3
+\frac18\pmt{-\mu_0^2\\ \mu_1} t^4
+\frac1{30}\pmt{-3 \mu_0\mu_1\\ -\mu_0^3 + \mu_2} t^5
+O(t^6)
\end{equation}
and
\begin{align}\label{eq:ab1519}
&\alpha_2=1,\quad \alpha_3=0,\quad \alpha_4=-3 \mu_0^2,\quad \alpha_5=-12 \mu_0\mu_1,  \\
&\beta_3=2\mu_0,\quad \beta_4=3\mu_1,\quad \beta_5=4(-\mu_0^3+\mu_2).  \nonumber
\end{align}
On the other hand, if we  think of $\mb c_s$ 
as lying in the Lorentz-Minkowski plane $\Lo^2$, we can write (cf. \eqref{prop:SU2})
\begin{equation}\label{eq:AB1525}
\pmt{A(s,t)\\
B(s,t)}
=
\int_0^t u\pmt{\cosh \lambda(s,u)\\
\sinh \lambda(s,u)} du,
\qquad
\lambda(s,t):=\int_0^t \mu(s,u)du.
\end{equation}
In this case,  we have
\begin{equation}\label{V1:1536}
\pmt{A \\
B}
=
\frac12\pmt{1\\ 0} t^2+
\frac13\pmt{0\\ \mu_0} t^3
+\frac18\pmt{\mu_0^2\\ \mu_1} t^4
+\frac1{30}\pmt{3 \mu_0\mu_1 \\ \mu_0^3 + \mu_2} t^5
+O(t^6),
\end{equation}
which implies that
\begin{align}\label{eq:ab1547}
&\alpha_2=1,\quad \alpha_3=0,\quad \alpha_4=3 \mu_0^2,\quad \alpha_5=12 \mu_0\mu_1,  \\
&\beta_3=2\mu_0,\quad \beta_4=3\mu_1,\quad \beta_5=4(\mu_0^3+\mu_2).  \nonumber
\end{align}
Thus, in both of the two cases (i.e. \eqref{eq:ab1519}
and  \eqref{eq:ab1547}),
we may assume
\begin{equation}\label{eq:AB1534}
\alpha_3(s)=0 \qquad (s\in I),
\end{equation}
which is useful for simplifying future calculations.
We have divided the expressions of $A$ and $B$ into two cases as above, 
where the difference between formulas 
\eqref{eq:ab1519} and \eqref{eq:ab1547}
is simply the sign of $\alpha_4$ and $\beta_5$.
However, the coefficients $\alpha_4$, $\alpha_5$ and $\beta_5$ are not important,
since they are not appear in the terms of asymptotic expansion of
the matrix $\tilde W^E$ or $\tilde W^L$ that we will use in future discussions.
Therefore, in the following discussion, we need to pay 
little attention to the above case separation of the definitions of $A$ and $B$.

\begin{Remark}\label{Rem:GI}
Let $P(s)$ be the plane passing through $f(s,0)$ which is
spanned by two vectors $\mb a_1(s),\mb a_2(s)\in T_{f(s,0)}\R^3$, where
$T_{f(s,0)}\R^3$ is the tangent space of $\R^3$ at $f(s,0)$.
Then the section $C_s(\subset \R^3)$ of the image of $f$ by the 
plane $P(s)$ is a generalized cusp in the plane $P(s)$,
by Proposition B.1,  and the
map $\mb c_s(t):t\mapsto (X(s,t),Y(s,t))$ can be identified with
a parametrization of $C_s$.
In our following representation formulas of generalized cuspidal edges
$f$ of type (E), (T), ($\op{S}_s$) and ($\op{S}_t$),
the frame field $\{\mb a_0(s),\mb a_1(s),\mb a_2(s)\}$
is uniquely constructed by the same method for all 
generalized cuspidal edge $f$ of the same type, and
the image of  $\hat{\mb c}_s:t\mapsto (A(s,t),B(s,t))$ is
congruent to $C_s$ in the plane $P(s)$.
Thus, $\mu_i(s)$ ($s=1,2,3$) can be considered as invariants of
$\mb c_s(t)$, and so they can be also considered as
invariants of generalized cuspidal edge $f$.
\end{Remark}

The following example illustrates our setting:

\begin{Example}\label{exa:E0}
Let $\Gamma(s)$ be a regular 
curve in $\E^3$
parametrized by arc-length.
If the curvature function $\kappa(s):=|\Gamma''(s)|_E$ does not vanish,
then the principal normal vector $\mb n(s)$
and the bi-normal vector $\mb b(s)$
are defined along $\Gamma$.
Any generalized cuspidal edges $f(s,t)$ 
along $\Gamma$ can be expressed as
(here $X,Y$ are given by \eqref{eq:X1464})
\begin{equation}\label{rep:T2}
f(s,t)=\Gamma(s)+(\mb n(s), \mb b(s))
\pmt{
\cos \theta(s) & \sin \theta(s) \\
-\sin \theta(s) & \cos \theta(s)
}
\pmt{A(s,t) \\
B(s,t)},
\end{equation}
where 
$$
\pmt{A(s,t)\\ B(s,t)}:=\int_0^t u\pmt{\cos \lambda(s,u)\\ \sin \lambda(s,u)}du,\quad
\lambda:=\int_0^t \mu(s,u)du,
$$
and $\theta(s), \mu(s,u)$ are 
arbitrarily given smooth functions
(cf. \eqref{eq:SU2}  in the appendix).
This is just the formula of the first author \cite{F},
which enables us to produce 
generalized cuspidal edges from two geometric 
data $\theta,\mu$ (the function $\theta(s)$ is called the
{\it cuspidal angle} which is the angle from $\mb n(s)$ to the cuspidal direction vector
$\mb D^E_f(s)$). Each coefficient of Taylor expansion of this formula can be
considered as geometric invariants of $f$, since
$s$ is the arc-length parameter of $\Gamma$
and $t$ is the normalized half-arc-length parameter of sectional cusps. 
We set 
\begin{align*}
\mb a_0:=\Gamma',\quad
\mb a_1:=\mb n,\quad \mb a_2:=\mb b,\quad
\epsilon_0=\epsilon_1=\epsilon_2=1,\quad
\kappa_1:=\kappa
,\,\, \kappa_2:=0,\,\,
\Omega:=\tau,
\end{align*}
where $\kappa$ is the curvature function and
$\tau$ is the torsion of the curve $\Gamma$.
Then, 
\eqref{eq:a1434} is reduced to the classical Frenet equation 
for regular space curves, and
the formula \eqref{rep:T2} coincides with
\eqref{eq:f1452}.
\end{Example}

The above formula \eqref{rep:T2} is quite useful to 
construct concrete examples without solving any ordinary differential equations. 
However, if the curvature function $\kappa(s)$ vanishes
for some $s$, it cannot make sense, since 
$\mb n(s)$ and $\mb b(s)$
are not defined along $\Gamma$.
In this case, we can give another formula
producing all of generalized cuspidal edges along $\Gamma$ as follows:

\begin{Example}\label{exa:E1}
Let $\Gamma(s)$ be a regular curve in $\E^3$
parametrized by the arc-length.
We let $f$ be a generalized cuspidal edge along $\Gamma$,
which is written as in \eqref{eq:f1452}.
Then we may set (cf. \eqref{eq:DE1101})
$
\mb a_1(s):=D^E_f(s,0),
$
which is a unit vector and is perpendicular to $\mb a_0:=\Gamma'$. 
By setting
$$
\mb a_2(s):=\mb a_0(s) \times \mb a_1(s) (=\nu^L(s,0)),
$$
$\mb a_0,\,\mb a_1,\mb a_2$ give an orthonormal frame field
along $\Gamma$. Moreover, since $f$ lies in $\E^3$
and $\mb a_1$ is the cuspidal direction vector, by setting
$\theta=0$, we can write
(in this setting $X=A$ and $Y=B$ hold by \eqref{eq:X1464})
\begin{equation}\label{eq:f1859}
f(s,t)=\Gamma(s)+A(s,t)\mb a_1(s)+B(s,t)\mb a_2(s)
\end{equation}
and
\begin{equation}\label{eq:XY1955}
\pmt{A(s,t)\\
B(s,t)}
=
\int_0^t \pmt{\cos \lambda(s,u)\\
\sin \lambda(s,u)} du,
\qquad
\lambda(s,t):=\int_0^t \mu(s,u)du,
\end{equation}
where the function $\mu(s,t)$ can be considered as a
geometric invariant of $f$. We set
\begin{align*}
\epsilon_0=\epsilon_1=\epsilon_2=1,\quad
\theta=0,\quad
\kappa_1:=\kappa_s^E=\Gamma''\cdot \mb a_1,
\quad \kappa_2:=\kappa_\nu^E=\Gamma''\cdot \mb a_2,\quad
\Omega:=-\omega^E.
\end{align*}
Then \eqref{eq:a1434}
can be written as
\begin{equation}\label{eq:2049}
\mathcal F'=\mathcal F\mathcal K, \qquad
\mathcal K=\pmt{
0 & -\kappa^E_s& -\kappa^E_\nu \\
\kappa^E_s & 0& \omega^E \\
\kappa^E_\nu & -\omega^E & 0
},
\qquad\mathcal F:=(\mb a_0,\,\mb a_1,\mb a_2).
\end{equation}
The function $\omega^E$
is called the {\it cusp-directional torsion} of $f$ in $\E^3$ (cf. \cite{MS}),
which is an invariant of a generalized cusp $f$
(in fact, $\omega^E=\tau-\theta'$ holds, where $\tau$
is the torsion of $\Gamma$ and $\theta$ is the cuspidal angle
defined in  Example \ref{exa:E0}).

Conversely, if one gives data consisting of four functions 
$$
\Big(\kappa_s^E(s),\kappa_\nu^E(s),\omega^E(s),\mu(s,t)\Big)
=
\Big(k_1(s),k_2(s),\omega(s),m(s,t)\Big)
$$
defined on an open interval containing $s=0$,
then, we solve the ordinary differential equation \eqref{eq:2049}
with $\mathcal F$ as an unknown matrix-valued function
under the initial condition that $\mathcal F(0)$ is the $3\times 3$ identity matrix.
By setting $\mathcal F(s)=(\mb a_0,\mb a_1,\mb a_2)$,
$\Gamma(s):=\int_{0}^s \mb a_0(u)du$ and  $A,B$ as in \eqref{eq:XY1955},
then the map $f$ given by \eqref{eq:f1859}
is a generalized cuspidal edge along the curve $\Gamma$  
whose singular curvature, limiting normal curvature,
cusp-directional torsion and the $\mu$-function are
$\kappa_s^E$, $\kappa_\nu^E$, $\omega^E$ and $\mu$,
respectively.
An advantage of this formula is that we can produce 
all of generalized cuspidal edges along $\Gamma$ even when the curvature 
function of $\Gamma$ admits zeros. 
\end{Example}

\subsection{Computation in the general setting}
For the sake of simplicity, we write
$$
S(s):=\sin \theta(s),\qquad C(s):=\cos \theta(s).
$$
By \eqref{eq:f1452}, we set
\begin{align}
f_s&=\mb a_0+\frac{
\epsilon_0(-\epsilon_1 \kappa_1 C+\epsilon_2 \kappa_2 S)\mb a_0
+(\epsilon_1\epsilon_2 \Omega-{\theta'})S\mb a_1
+(\Omega-{\theta'})C\mb a_2
}{2}t^2 \\ 
\nonumber
&\phantom{aaa}
-\frac{
\epsilon_0\beta_3
\left(\epsilon_1 \kappa_1 S+\epsilon_2 \kappa_2 C\right)\mb a_0}6 t^3 \\
&\phantom{aaaaaa}+ \nonumber
\frac{
\left(\beta_3(-\epsilon_1\epsilon_2\Omega+\theta')C+\beta'_3S\right)\mb a_1
+(\beta_3 (\Omega-\theta ')S+\beta'_3 C)\mb a_2
}6t^3  +O(t^4),\\
f_t&=(\mb a_1 C-S \mb a_2)t+\frac{\beta_3(S\mb a_1+C \mb a_2)}2 t^2 \\
&\phantom{aaaaaa}+ \nonumber
\frac{(\alpha_4C+\beta_4 S)\mb a_1+(-\alpha_4 S+\beta_4 C)\mb a_2}{6}t^3+O(t^4),
\end{align}
which imply that
\begin{align}\label{eq:E1577}
(f_s,f_s)&=\epsilon_0-(\epsilon_1 \kappa_1C-\epsilon_2 \kappa_2 S)t^2-
\frac{
\beta _3 
\left(\epsilon_1 \kappa_1 S+\epsilon_2 \kappa _2C\right)
}{3}t^3+O(t^4), \\
\label{eq:F1583}
(f_s,f_t)&=
\frac{1}{2} CS \big((-\epsilon_1+\epsilon_2)\theta'\big)t^3
+O\left(t^4\right), \\
\label{eq:G1588}
(f_t,f_t)&=
\left(\epsilon_1C^2+\epsilon_2 S^2 \right)t^2
+\left(\epsilon_0-\epsilon_2\right) \beta_3 C S t^3 +O\left(t^4\right).
\end{align}
If we set
\begin{equation}
\Delta:=(f_s,f_s)(f_t,f_t)-(f_s,f_t)^2,
\end{equation}
then, we have
\begin{equation}\label{eq:D1594}
\Delta=
\epsilon _0 \left(\epsilon _1 C^2+
\epsilon _2 S^2\right)t^2+
\epsilon _0 \left(\epsilon _1-\epsilon _2\right) \beta_3 C S t^3+O\left(t^4\right)
\end{equation}
and
\begin{equation}\label{eq:N1601}
\tilde \nu_E=f_s\times_E f_v=
(
S\mb a_1+C \mb a_2
)t
+
\frac{\beta_3}2(
-C \mb a_1+S \mb a_2
)t^2+O(t^3).
\end{equation}
We next compute the second derivatives of $f$. 
We  have
\begin{align}\label{eqA2149}
&f_{ss}
=\kappa _1\mb a_1 +\kappa _2 \mb a_2 
\\
\nonumber
&\phantom{aa}
-\frac{\epsilon_0}2 \biggl(
\Big(\epsilon_2 \kappa _2 \left(\Omega-2 \theta'\right)
+\epsilon_1 \kappa _1'\Big)C 
+
\Big(\kappa _1 (\epsilon_2 \Omega-2 \epsilon_1\theta')
-\epsilon_2 \kappa _2'\Big)S
\biggr)
\mb a_0t^2 \\ \nonumber
&\phantom{aa}
\small{
-\frac{
\Big(\epsilon_1\epsilon_2 \Omega \left(\Omega-2 \theta'\right)+\theta'^2
+\epsilon_0\epsilon_1 \kappa_1^2
\Big)C
+
\Big( 
\theta''-\epsilon_0\epsilon_2 \kappa_1 \kappa_2-\epsilon_1\epsilon_2 \Omega'
\Big)S
}2\mb a_1 t^2
} \\ \nonumber
&\phantom{aa}
\small{
+
\frac{
\Big(-\theta''-\epsilon_0\epsilon_1 \kappa_1 \kappa _2+\Omega'\Big)C
+\Big(
-2 \Omega \theta'+\theta'^2+\epsilon_0\epsilon_2 \kappa_2^2+\epsilon_1\epsilon_2 \Omega^2
\Big)S
}2
\mb a_2 t^2
}
\\ \nonumber
&\phantom{aaaaaaaaaaaaaaaaaaaaaaaaaaaaaaaaaaaaaaaaaaaaaaaaaaaaaa}
+O(t^3), 
\end{align}
and
\begin{align}
f_{st}&= \label{eqB2149}
\biggl(
\epsilon_0\Big(
-\epsilon_1 \kappa_1 C+\epsilon_2 \kappa_2 S
\Big)\mb a_0
+
\Big(
\epsilon_1\epsilon_2\Omega-\theta' 
\Big)S \mb a_1
+
\Big(
\Omega-\theta'
\Big)C \mb a_2
\biggr)t \\ \nonumber
&\phantom{aaa}+
\frac12\biggl(
-\epsilon_0\beta_3
\left(\epsilon_2 \kappa_2C+\epsilon_1 \kappa _1 S\right)\mb a_0
+
\Big(
\beta_3\left(\theta '-\epsilon_1\epsilon_2 \Omega\right)C
+\beta'_3S
\Big)\mb a_1 \\ \nonumber
&\phantom{aaasss}+
\Big (\beta'_3 C+\beta_3 (\Omega-\theta ')S\Big)\mb a_2
\biggr)t^2
+O(t^3), \\
f_{tt}&=\label{eqC2149}
C \mb a_1-S \mb a_2+
\beta_3(S \mb a_1+C \mb a_2)t \\
\nonumber
&\phantom{aaaa}+\frac12
\Big(
(\alpha_4C +\beta_4S)\mb a_1
+
(\beta_4 C-\alpha_4 S)\mb a_2
\Big)t^2+O(t^3).
\end{align}
By
\eqref{eq:Dp262}, we have
\begin{align}\label{eq:dp1752-0}
d^C((s,0))&=-\det(f_s,f_{ss},f_{tt})\Big |_{t=0} \\
\nonumber
&=-\det(
\mb a_0, \kappa_1 \mb a_1+\kappa_2 \mb a_2,
C\mb a_1-S \mb a_2
)=\kappa _1 S +\kappa _2 C.
\end{align}
Using 
\eqref{eq:N1601}
and \eqref{eqA2149},
we  have 
\begin{align}\label{eql:1760}
\tilde L&=(S \kappa_1+C \kappa_2)t
+\frac{\beta_3}2
\left(-\kappa_1 C+\kappa_2 S\right)t^2 \\ \nonumber
&\phantom{aaa}+
\frac16\biggl(
\left(-\beta_4\kappa_1+\alpha_4\kappa_2\right)C 
+\left(\alpha_4\kappa_1+\beta_4\kappa_2\right)S\biggr)t^3 \\ \nonumber
&\phantom{aaaaaaaa}
+\Big(
\left(\epsilon_1\epsilon_2-1\right) \Omega \theta '-\epsilon_0\epsilon_1 \kappa_1^2+
\epsilon_0\epsilon_2 \kappa_2^2
\Big)CS t^3 \\ \nonumber
&\,\phantom{aaassaaasaa}+\frac12\Big(
-\theta''-2\epsilon_0\epsilon_1 \kappa_1 \kappa_2+\Omega'
\Big)C^2 t^3 \\
&\phantom{aaaaaaaaaaaaaa}+ \nonumber
\frac12\Big(
-\theta''+2 \epsilon_0\epsilon_2 \kappa _1 \kappa _2+\epsilon_1\epsilon_2 \Omega '
\Big)S^2t^3. 
\end{align}
Similarly, using 
\eqref{eq:N1601}, \eqref{eqB2149}, \eqref{eqC2149}
with the relation $C^2+S^2=1$, we have
\begin{align}\label{eq:M1782}
\tilde M&=
\Big(
(\Omega(C^2+\epsilon_1 \epsilon_2S^2)-\theta'\Big) t^2 
+
\frac12\Big(
2(1-\epsilon_1\epsilon_2)\beta_3 \Omega CS+\beta'_3 
\Big)t^3, \\ \label{eq:N1789}
\tilde N&=
\frac{\beta_3}2 t^2+\frac{\beta_4}3 t^3+O(t^4).
\end{align}

Let $U$ be a domain of $\R^2$ and
$
f:U\to \R^3
$
a $C^\infty$-map.  A point $p\in U$ is called a {\it cuspidal cross cap singular point} of $f$
if there exist local  diffeomorphisms $\phi$ and $\Psi$
on $\R^2$ and $\R^3$ satisfying 
$$
\phi(p)=(0,0), \quad \Psi\circ f(p)=(0,0,0),
\quad\Psi\circ f\circ \phi^{-1}(s,t)=f_2,
$$
where $f_2$
is the standard map of cuspidal cross caps as in Example \ref{ex:1195}.
Cuspidal cross cap singular points are known as a typical singular points
appeared on generalized cuspidal edges as well as cuspidal edge singular points.
The following is an application of the previous calculations.

\begin{Lemma}\label{lem:1688}
The point $(s_0,0)$ $(s_0\in I)$ is a  cuspidal edge singular point 
$($resp. a cuspidal cross cap  singular point$)$ of
$f$ if and only if $\mu_0(s_0) \ne 0$ $($resp. $\mu_0(s_0)=0$ 
and $\mu'_0(s_0)\ne 0)$.
\end{Lemma}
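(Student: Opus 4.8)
The plan is to reduce the statement to a question about the planar section curve of $f$ and then invoke the recognition criteria for cuspidal edges and cuspidal cross caps. First I would use Proposition~\ref{lem:ce708} --- more precisely the tubular-neighbourhood argument in its proof, composed with the $s$-dependent rotation $R(\theta(s))$ occurring in \eqref{eq:X1464}, which is a diffeomorphism of $\R^3$ preserving the $s$-direction --- to see that the germ of $f$ at $p:=(s_0,0)$ is $\mc A$-equivalent to the germ at the origin of $f_0(s,t)=\big(s,A(s,t),B(s,t)\big)$, with $A,B$ as in \eqref{eq:AB1468}. By \eqref{eq:ab1474} and \eqref{eq:AB1534} I may write $A=t^2\hat A$, $B=t^3\hat B$ with $\hat A,\hat B$ smooth, and \eqref{eq:ab1519}--\eqref{eq:ab1547} (in particular $\alpha_2=1$ and $\beta_3=2\mu_0$) give $\hat A(s,0)=1/2\ne 0$ and $\hat B(s,0)=\beta_3(s)/6=\mu_0(s)/3$. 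Now Proposition~\ref{lem:ce708} says that $p$ is a cuspidal edge singular point if and only if the plane curve $t\mapsto(A(s_0,t),B(s_0,t))=\big(\tfrac12 t^2+O(t^4),\ \tfrac{\mu_0(s_0)}{3}t^3+O(t^4)\big)$ has an ordinary $3/2$-cusp at $t=0$; by the elementary criterion ($\mb c(t)$ with $\mb c'(0)=\mb 0$ has an ordinary cusp iff $\det(\mb c''(0),\mb c'''(0))\ne 0$) this holds exactly when $\mu_0(s_0)\ne 0$.

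For the cuspidal cross cap assertion, note that the singular set of $f_0$ is $\{t=0\}$, a regular curve (by \eqref{eq:N1601} the Euclidean normal of $f_0$ has a simple zero in $t$), with null direction $\partial_t$ transverse to it. Since $\hat A(s_0,0)\ne 0$, the source change $(s,t)\mapsto(s,\tilde t)$, $\tilde t:=t\sqrt{\hat A(s,t)}$, is a local diffeomorphism and brings $f_0$ to the form $\big(s,\tilde t^{\,2},\tilde t^{\,3}\tilde y(s,\tilde t)\big)$ with $\tilde y$ smooth and $\tilde y(s,0)=\hat B(s,0)\hat A(s,0)^{-3/2}$ a nonvanishing multiple of $\mu_0(s)$. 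I would then apply the recognition criterion for cuspidal cross caps (cf.~\cite{FSUY,SUY2}): such an $f_0$ has a cuspidal cross cap at the origin if and only if it fails to be a wave front there in the non-degenerate way, i.e. $\tilde y(s_0,0)=0$ and $\partial_s\tilde y(s_0,0)\ne 0$, which (using $\hat A(s_0,0)\ne 0$) is equivalent to $\mu_0(s_0)=0$ and $\mu_0'(s_0)\ne 0$. The two ``only if'' directions can alternatively be read off from Proposition~\ref{lem:ce708}: a cuspidal cross cap point is not a cuspidal edge point, so by the previous paragraph $\mu_0(s_0)=0$, and if moreover $\mu_0'(s_0)=0$ then $\tilde y(s_0,0)=\partial_s\tilde y(s_0,0)=0$ and the criterion rules out a cuspidal cross cap.

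The routine parts are the $\mc A$-equivalence $f\sim f_0$, the Taylor bookkeeping, and the plane-cusp criterion. The step requiring care is the cuspidal cross cap part: one must correctly match ``$f_0$ is not a front at $p$, non-degenerately'' with the pair of conditions $\hat B(s_0,0)=0$ and $\partial_s\hat B(s_0,0)\ne 0$, and check the hypotheses of the cuspidal cross cap criterion (the non-degeneracy $d\lambda_p\ne 0$ and the transversality of the null direction) in the present coordinates. As an alternative avoiding Proposition~\ref{lem:ce708}, one can argue directly from the expansions of Section~2.2: the wave-front condition of Fact~\ref{frac:frontC} reduces to $\hat\nu^E_t(s_0,0)=\tfrac{\beta_3(s_0)}{2}(-C\mb a_1+S\mb a_2)\ne\mb 0$, i.e. $\mu_0(s_0)\ne 0$, and the cuspidal cross cap criterion applied to $f$ itself gives the remaining equivalence.
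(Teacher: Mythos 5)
Your argument is correct and follows essentially the same route as the paper: the cuspidal-edge half is reduced to the planar section via Proposition~\ref{lem:ce708} together with the elementary cusp criterion (equivalently $\beta_3=2\mu_0$), and the cross-cap half rests on the recognition criterion of \cite{FSUY}. The only cosmetic difference is that you pass to the normal form $(s,\tilde t^{\,2},\tilde t^{\,3}\tilde y)$ before invoking that criterion, whereas the paper evaluates $\det(\Gamma',\check\nu_E,(\check\nu_E)_t)=\mu_0\psi$ directly from the expansion \eqref{eq:N1601}; indeed, the ``alternative'' you sketch at the end is exactly the paper's proof.
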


\begin{proof}
Let $P_0$ be the plane passing through $\Gamma(s_0)$
spanned by $\mb a_1(s_0)$ and $\mb a_2(s_0)$.
Then $\mu_0(s_0) \ne 0$ if and only if
the section of the image of $f$ by the plane $P_0$
is a cusp. So the conclusion follows from
Proposition \ref{lem:ce708}.

On the other hand $(s_0,0)$ ($s_0\in I$) is a cuspidal cross cap singular point
if and only if  
$\phi(s_0)=0$ and $\phi'(s_0)\ne0$ (cf. \cite{FSUY} or \cite[Section 2]{SUY2}),
where
$$
\phi(s):=\det(\Gamma'(s),\check \nu_E(s,0),(\check \nu_E)_t(s,0)) \qquad (s\in I)
$$
and (cf. \eqref{eq:N1601})
\begin{equation}\label{eq:nuE}
\check \nu_E:=\frac{\hat \nu_E}{t}=
(
S\mb a_1+C \mb a_2
)
+
\frac{\beta_3}2(
-C \mb a_1+S \mb a_2
)t+O(t^2)
\end{equation}
gives a non-vanishing normal vector field along $f$. We set 
$$
\mb w_1:=S\mb a_1+C \mb a_2,\qquad \mb w_2=-C \mb a_1+S \mb a_2.
$$
Since $\Gamma'=\mb a_0$, three vectors
$\Gamma'(s), \mb w_1(s)$ and $\mb w_2(s)$ are linearly independent
for each $s\in I$. So we have
\begin{align*}
\phi(s)&=
\frac{\beta_3(s)}{2}\det(\Gamma'(s),\mb w_1(s),\mb w_2(s))=
\mu_0(s) \psi(s), \\
\psi(s)&=\det(\Gamma'(s),\mb w_1(s),\mb w_2(s)).
\end{align*}
Since $\psi(s)>0$ for $s\in  I$,
the condition $\phi(s_0)=0$ is equivalent to $\mu_0(s_0)=0$. 
Moreover, if $\mu_0(s_0)=0$, then we have
$$
\phi'(s_0)=
\mu'_0(s_0)\psi(s_0),
$$
which proves the assertion for cuspidal cross caps.
\end{proof}

\subsection{Generalized cuspidal edges of type E}
Let $\Gamma:I\to \E^3$ 
be a regular curve parametrized by the arc-length.
We use the setting given in
Example \ref{exa:E1}, that is,
$$
\epsilon_0=\epsilon_1=\epsilon_2=1,\quad
\theta=0,\quad
\kappa_1=\kappa_s^E,\quad \kappa_2=\kappa_\nu^E.
$$
Then, we have $C=1$, $S=0$ and $(X,Y)=(A,B)$. 
Since $(A,B)$ satisfies \eqref{eq:ab1519},
the equations \eqref{eq:E1577}, \eqref{eq:F1583}, \eqref{eq:G1588} 
and \eqref{eq:D1594} imply
\begin{equation}\label{eq:EFG1821}
E=1-\kappa_s^E t^2-\frac{\beta_3\kappa_\nu^E}{3}t^3+O(t^4),\quad 
F=O(t^4),\quad G=t^2+O(t^4)
\end{equation}
and
\begin{equation}\label{eq:D1836}
\Delta_E(=\Delta)=t^2+O(t^4).
\end{equation}
Moreover,
\eqref{eq:dp1752-0} implies that (we set $p:=(s,0)$)
\begin{equation}\label{eq:dp1752}
\op{sgn}(d^C(p))=\op{sgn}(\kappa_\nu^E(s)). 
\end{equation}
On the other hand,
\eqref{eql:1760}, \eqref{eq:M1782} and
\eqref{eq:N1789} are reduced to
\begin{align}\label{eq:LMN1833}
\tilde L&=\kappa_\nu^E t-\frac{\beta_3\kappa_s^E}2 t^2+
\frac16(-\beta_4\kappa_s^E+\alpha_4 \kappa_\nu^E-6\kappa_s^E\kappa_\nu^E
-3(\omega^E)')t^3
+O(t^4),\\ \nonumber
\tilde M&=
-\omega^E t^2 +\frac{\beta'_3}{2} t^3 +O\left(t^4\right),\quad 
\tilde N=
\frac{1}{2} t^2 \beta_3+\frac{1}{3} t^3 \beta_4+O\left(t^4\right).
\end{align}
In this situation, we may assume $\alpha$ and $\beta$ can be
written as \eqref{eq:ab1519}. 
As pointed out in \cite{HNSUY} (see also Appendix B), 
$\mu_0(s),\mu_1(s)$ and $\mu_2(s)$ are geometric invariants 
of the generalized cuspidal edges in $\E^3$. 
By \eqref{eq:EFG1821} and \eqref{eq:LMN1833}, we have 
\begin{align*}
(\tilde W^E&:=)\, \pmt{G & -F \\ -F& E}\pmt{\tilde L & \tilde M \\ \tilde M & \tilde L} \\
&=
\begin{pmatrix} 
\kappa_{\nu}^E t^3 -\frac{\beta_3\kappa_{s}^E}{2} t^4 +O\left(t^5\right) &
 -\omega^E t^4 +O\left(t^5\right) \\
 -\omega^E t^2+O\left(t^3\right) &
 \frac{1}{2} t^2 \beta_3+\frac{1}{3} t^3 \beta_4+O\left(t^4\right) 
\end{pmatrix} \\
&=
\begin{pmatrix} 
\kappa_{\nu}^E t^3 -\kappa_s^E\mu_0 t^4 +O\left(t^5\right) 
& -\omega^E t^4 +O\left(t^5\right) 
\\
-\omega^E t^2 +O\left(t^3\right) 
& \mu_0 t^2+\mu_1 t^3+O\left(t^4\right) 
\end{pmatrix},
\end{align*}
and then asymptotic behaviors of the Gaussian curvature 
and the mean curvature are given by
\begin{equation}\label{eq:KET957}
K^E=
\frac{\kappa_\nu^E \mu_0}{t}-\kappa_s^E \mu_0^2+\kappa_\nu^E \mu_1-(\omega^E)^2+
O(t), \quad
H^E =\frac{\mu_0}{2t}+\frac{\mu_1+\kappa_\nu^E}{2}+O(t).
\end{equation}
Moreover, since $\tilde W^E$ is a triangle matrix modulo $O(t^4)$-term,
by \eqref{eq:D1836}, 
 the diagonal components of $\tilde W^E$ divided by $t^3$
give the following asymptotic expansions of the principal curvatures of $f$:
\begin{equation}\label{eq:2b}
\lambda_1=
\frac{\mu_0}{t}+\mu_1+O(t),\qquad
\lambda_2=
\kappa_{\nu}^E +O(t).
\end{equation}
Summarizing these computations, the following facts are obtained, which have been
already known (cf. \cite{F}, \cite{MSUY}, \cite{SUY} and \cite{MU}).

\begin{Fact}\label{cor:E}
Let $f:\mc U_f\to \E^3$ be a generalized cuspidal edge along a
regular curve $\Gamma$. Then the following assertions hold:
\begin{enumerate}
\item The sign $\sigma^C(p)$ $(p:=(s,0))$ coincides 
with the sign of $\kappa_\nu^E(s)$.
\item The mean curvature $H^E$ is unbounded near $p\in \Sigma_f$
if  $p$ is a cuspidal edge singular point.
\item If $p:=(s,0)$ is a cuspidal edge of non-vanishing limiting
normal curvature, then 
the Gaussian curvature  $K^E$
is unbounded near $p$ and takes 
different signs on each side of $\Sigma_f$.
\item The two principal curvatures  of $f$ 
are real-valued on $\mc U_f\setminus \Sigma_f$, and
one of them is bounded.
Moreover, if $p$ is a cuspidal edge singular point, the
other principal curvature is unbounded.
\item If $\mu_0(s)\ne 0$ $($that is, $p$ is a cuspidal edge singular point$)$,
the umbilical points of $f$ never accumulate at $p$. 
\end{enumerate}
\end{Fact}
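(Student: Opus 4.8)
The plan is to deduce all five assertions directly from the asymptotic expansions already obtained in this subsection, so that the proof reduces to interpreting those formulas together with Lemma~\ref{lem:1688}, which identifies the cuspidal edge singular points with the locus $\mu_0\ne 0$. Assertion~(1) is immediate from \eqref{eq:dp1752}, which specializes the general formula \eqref{eq:dp1752-0} to the present frame and gives $\op{sgn}(d^C(p))=\op{sgn}(\kappa_\nu^E(s))$, hence $\sigma^C(p)=\op{sgn}(\kappa_\nu^E(s))$; this recovers Proposition~\ref{prop:GE} in this parametrization.

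For (2) and (3) I would use the expansions \eqref{eq:KET957}. Fix a cuspidal edge singular point $p=(s_0,0)$; by Lemma~\ref{lem:1688} we have $\mu_0(s_0)\ne 0$, so by continuity $\mu_0$ has a fixed nonzero sign on a small neighborhood $V_p$ of $p$. Then $H^E=\mu_0/(2t)+O(1)$ on $V_p\setminus\Sigma_f$ has a simple pole along $\Sigma_f$ and is therefore unbounded on $V_p$, which is (2). If in addition $\kappa_\nu^E(s_0)\ne 0$, then after shrinking $V_p$ the coefficient $\kappa_\nu^E\mu_0$ of the leading term of $K^E$ in \eqref{eq:KET957} is of a fixed nonzero sign on $V_p$; hence $K^E=\kappa_\nu^E\mu_0/t+O(1)$ is unbounded on $V_p$ and takes opposite signs on the two components $\{t>0\}$ and $\{t<0\}$ of $V_p\setminus\Sigma_f$, which is (3).

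For (4) I would first observe that on $\mc U_f\setminus\Sigma_f$ the map $f$ is a regular surface in $\E^3$, so its first fundamental form is positive definite and $W^E$ is self-adjoint with respect to it; consequently the two principal curvatures are real-valued on $\mc U_f\setminus\Sigma_f$. Then I would invoke the expansions \eqref{eq:2b}, which are legitimate because $\tilde W^E$ is triangular modulo an off-diagonal entry of order $O(t^4)$ while its diagonal entries have orders $t^3$ and $t^2$, and $\Delta_E=t^2+O(t^4)$ by \eqref{eq:D1836}. From \eqref{eq:2b} we read off that $\lambda_2=\kappa_\nu^E+O(t)$ is bounded near every point of $\Sigma_f$, while $\lambda_1=\mu_0/t+\mu_1+O(t)$ is unbounded near $p$ exactly when $\mu_0(s_0)\ne 0$, i.e.\ exactly when $p$ is a cuspidal edge singular point. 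Finally, (5) follows from this dichotomy: if $\mu_0(s_0)\ne 0$, choose $V_p$ on which $|\mu_0|\ge c>0$ and $|\lambda_2|\le M$; then $|\lambda_1|\ge c/(2|t|)$ on $V_p\setminus\Sigma_f$, so $\lambda_1\ne\lambda_2$ once $|t|$ is small, and $f$ has no umbilical point in a punctured neighborhood of $p$.

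The substantive part of the work lies not in the bookkeeping above but in the expansions that feed it, namely \eqref{eq:EFG1821}, \eqref{eq:LMN1833}, \eqref{eq:KET957} and \eqref{eq:2b}; these rest on the canonical frame of Example~\ref{exa:E1}, the normalization $\alpha_3=0$ from \eqref{eq:AB1534}, and the identifications $\beta_3=2\mu_0$, $\beta_4=3\mu_1$ from \eqref{eq:ab1519}. The point I would watch most carefully is that the off-diagonal entries of $\tilde W^E$ are of strictly higher $t$-order than the diagonal ones; this is precisely what allows the principal curvatures to be read directly off the diagonal of $\tilde W^E$, and hence is what makes assertions~(4) and~(5) work.
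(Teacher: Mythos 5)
Your proposal is correct and follows the paper's own proof essentially verbatim: assertion (1) from \eqref{eq:dp1752}, (2) and (3) from the two expansions in \eqref{eq:KET957} together with Lemma~\ref{lem:1688}, and (4) and (5) from the principal-curvature expansions \eqref{eq:2b}. Your only slip is the closing remark that \emph{both} off-diagonal entries of $\tilde W^E$ have strictly higher $t$-order than the diagonal ones --- the $(2,1)$ entry is $O(t^2)$, the same order as the $(2,2)$ entry; what actually justifies \eqref{eq:2b} is (as you correctly state earlier) that the $(1,2)$ entry is $O(t^4)$, so the product of the off-diagonal entries is negligible against the square of the difference of the diagonal ones.
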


\begin{proof}
(1) follows from \eqref{eq:dp1752}.
Suppose that $(s,0)$ is a cuspidal edge singular point.
Since $\mu_0(s)\ne 0$ (cf. Lemma \ref{lem:1688}),
the second equation of \eqref{eq:KET957}
implies that  $H^E$ is unbounded, proving (2).
On the other hand, (3)  follows from 
the first equation of
\eqref{eq:KET957},
since 
$\mu_0\ne 0$ and $\kappa^L_\nu\ne0$.
Finally (4) and (5) follow from \eqref{eq:2b}. 
\end{proof}

\subsection{Generalized cuspidal edges of type T}
Consider a  time-like regular curve $\Gamma:I\to \Lo^3$.
Let $f$ be a generalized cuspidal edge along $\Gamma$.
Since $\Gamma$ is time-like, it can be parametrized by arc-length, and can set
(cf. \eqref{eq:DL1386})
\begin{equation}\label{eq:a0a1}
\mb a_0(s):=\Gamma'(s),\qquad
\mb a_1(s):={\mb D}^L_f(s,0),
\end{equation}
which are both unit vector fields. By setting
\begin{equation}\label{eq:a2}
\mb a_2(s):=\mb a_0(s) \times_L \mb a_1(s) (=\nu^L(s,0)),
\end{equation}
$(\mb a_0,\,\mb a_1,\mb a_2)$ is an orthonormal frame field of $\Lo^3$
along $\Gamma$.  By \eqref{eq:f1452} and \eqref{eq:X1464},
we can write
\begin{equation}\label{rep:T}
f(s,t)=\Gamma(s)+A(s,t)\mb a_1(s)+B(s,t)\mb a_2(s).
\end{equation}
Since $\mb a_1(s)$ points in the 
$\Lo^3$-cuspidal direction at $f(s,0)$, we may set
$\theta(s)=0$. Since the normal plane of $\Gamma$ is 
space-like, we can write
$$
\pmt{A(s,t)\\
B(s,t)}
=
\int_0^t u\pmt{\cos \lambda(s,u)\\
\sin \lambda(s,u)} du,
\qquad
\lambda(s,t):=\int_0^t \mu(s,u)du,
$$
and the function $\mu(s,t)$ can be considered as a
geometric invariant of $f$ (cf. Remark~\ref{Rem:GI}),
that is, 
\eqref{eq:ab1519}
 holds.
This corresponds to the case that 
$$
\epsilon_0=-1,\quad\epsilon_1=\epsilon_2=1,\quad 
C=1,\quad S=0.
$$
By \eqref{eq:knSL}, we have
$\kappa_1=\kappa_s^L$ and $\kappa_2=\kappa_\nu^L$.
The function
$
\omega^L:=-\Omega
$
should be called the {\it cusp-directional torsion} of $f$ in $\Lo^3$.
Then
\eqref{eq:a1434}
can be written as
\begin{equation}\label{eq:2449}
\mathcal F'=\mathcal F\mathcal K, \qquad
\mathcal K=\pmt{
0 & \kappa^L_s& \kappa^L_\nu \\
\kappa^L_s & 0& \omega^L \\
\kappa^L_\nu & -\omega^L & 0
},
\quad\mathcal F:=(\mb a_0,\,\mb a_1,\mb a_2).
\end{equation}

\begin{Remark}\label{rmk:rep1991}
If one gives data consisting of four functions 
$$
(\kappa^L_s,\kappa^L_\nu,\omega^L,\mu):=(k_1,k_2,\Omega,m)
$$
defined on an open interval containing $s=0$,
then, like as in Example \ref{exa:E1}, 
a generalized cuspidal edge of type $T$ is obtained
by solving the ordinary differential equation
\eqref{eq:2449}
with $\mathcal F$ as an unknown matrix-valued function.
\end{Remark}

Here,
\eqref{eq:E1577}, \eqref{eq:F1583} and \eqref{eq:G1588} are reduced to
\begin{equation}\label{eq:EFG1957}
E^L=-1-\kappa_s^L t^2-\frac{\beta_3\kappa_\nu^L}{3}+O(t^4),\quad 
F^L=O(t^4),\quad G^L=t^2+O(t^4)
\end{equation}
and \eqref{eq:D1594} and \eqref{eq:dp1752-0}
imply 
\begin{equation}\label{eq:dp1967}
\Delta_L=-t^2+O(t^4),\qquad 
d^C(p)=\kappa_\nu^L. 
\end{equation}
On the other hand,
\eqref{eql:1760},
\eqref{eq:M1782} and
\eqref{eq:N1789} are reduced to
\begin{align}\label{eq:LMN1974}
\tilde L&=\kappa_\nu^L t-\frac{\beta_3\kappa_s^L}2 t^2+
\frac16(-\beta_4\kappa_s^L+\alpha_4 \kappa_\nu^L
+6\kappa_s^L\kappa_\nu^L-3(\omega^L)')t^3
+O(t^4),\\ \nonumber
\tilde M&=
-\omega^L t^2 +\frac{\beta'_3}{2} t^3 +O\left(t^4\right),\quad 
\tilde N=
\frac{1}{2} t^2 \beta_3+\frac{1}{3} t^3 \beta_4+O\left(t^4\right).
\end{align}
In this situation, we may assume $\alpha$ and $\beta$ are
written as \eqref{eq:ab1547}. Here $\mu_0(s),\mu_1(s)$ 
and $\mu_2(s)$ are geometric invariants of 
generalized cuspidal edge $f$ (cf. Remark \ref{Rem:GI}). 
By \eqref{eq:EFG1957} and \eqref{eq:LMN1974},
we have 
\begin{align*}
\tilde W^L&=
\begin{pmatrix} 
\kappa_{\nu}^L t^3 -\frac{\beta_3\kappa_{s}^L}{2} t^4 +O\left(t^5\right) &
-\omega^L t^4 +O\left(t^5\right) \\
\omega^L t^2+O\left(t^3\right) &
-\frac{1}{2} t^2 \beta_3-\frac{1}{3} t^3 \beta_4+O\left(t^4\right) 
\end{pmatrix} \\
&=
\begin{pmatrix} 
\kappa_{\nu}^L t^3 -\kappa_s^L\mu_0 t^4 +O\left(t^5\right) 
& -\omega^L t^4 +O\left(t^5\right) 
\\
\omega^L t^2 +O\left(t^3\right) 
& -\mu_0 t^2-\mu_1 t^3+O\left(t^4\right) 
\end{pmatrix},
\end{align*}
and then the asymptotic behavior of
the Gaussian curvature and the
mean curvature are given by
\begin{align}\label{eq:KET2008b}
K^L&=
-\frac{\kappa_\nu^L \mu_0}{t}+\kappa_s^L \mu_0^2-\kappa_\nu^L \mu_1
-(\omega^L)^2+
O(t), \\
\label{eq:KET2008bb}
H^L &=\frac{\mu_0}{2t}+\frac{\mu_1-\kappa_\nu^L}{2}+O(t).
\end{align}
Moreover, since $\tilde W^L$ is a triangle matrix modulo $O(t^4)$-term,
the diagonal components of $\tilde W^L$ divided by $-t^3$
give the following asymptotic expansions of 
the two principal curvatures of $f$:
\begin{equation}\label{eq:LL2017}
\lambda_1=
\frac{\mu_0}{t}+\mu_1+O(t),\qquad
\lambda_2=
-\kappa_{\nu}^L +O(t).
\end{equation}
Summarizing these computations,  we obtain the following:

\begin{Thm}\label{thm:T}
Let $f:\mc U_f\to \Lo^3$ be a 
generalized cuspidal edge along a
regular curve $\Gamma$ of type $T$.
Then, for each $p:=(s,0)$ $(s\in I)$,
there exists a neighborhood $V_p$
satisfying the following properties:
\begin{enumerate}
\item $f$ is time-like at $p$.
\item The sign $\sigma^C(p)$ coincides with the sign of $\kappa_\nu^L(s)$,
and the order $i_p$ is $2$.
\item 
If $p$ is a cuspidal edge singular point with $\sigma^C(p)\ne 0$,
then the mean curvature  $H^L$ is unbounded on $V_p$.
Moreover, the Gaussian curvature $K^L$
is unbounded and takes 
different signs on each side of $\Sigma_f$ around $p$.
\item 
If $p$ is a cuspidal edge singular point,
then the two principal curvature functions of $f$ 
are real-valued on $V_p$, and
one of the principal curvature function is bounded.
\item
Moreover, if $p$ is a cuspidal edge singular point,
then the other principal curvature function is unbounded.
In particular, the umbilical points and quasi-umbilical points
never accumulate at $p$.
\end{enumerate}
\end{Thm}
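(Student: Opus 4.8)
The starting observation is that almost all of the analytic work is already in place in this subsection: we have the arc-length frame $\mb a_0=\Gamma'$, $\mb a_1=\mb D^L_f(s,0)$, $\mb a_2=\mb a_0\times_L\mb a_1=\nu^L(s,0)$ with $\epsilon_0=-1$, $\epsilon_1=\epsilon_2=1$, $\theta\equiv 0$, the expansions \eqref{eq:EFG1957}, \eqref{eq:dp1967}, \eqref{eq:LMN1974}, the explicit form of $\tilde W^L$ displayed above, and the resulting asymptotic formulas \eqref{eq:KET2008b} and \eqref{eq:KET2008bb} for $K^L$ and $H^L$ and \eqref{eq:LL2017} for the principal curvatures $\lambda_1,\lambda_2$. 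The plan is to read each of the five assertions off from these, the only step that is not pure bookkeeping being a continuity argument which, after shrinking the neighbourhood, makes the displayed leading terms genuinely dominant.

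Assertion (1) is immediate: $\hat\nu^L(s,0)$ is a positive multiple of $\mb a_2$, and $\inner{\mb a_2}{\mb a_2}=\epsilon_2=1$, so $\hat\nu^L(s,0)$ is space-like and $p$ is time-like by Definition~\ref{def:causality712b}. For (2), \eqref{eq:dp1967} gives $d^C(p)=\kappa^L_\nu(s)$, hence $\sigma^C(p)=\op{sgn}(\kappa^L_\nu(s))$, and $\Delta_L=-t^2+O(t^4)=t^2\big(-1+O(t^2)\big)$, so $i_p=2$ by Definition~\ref{def:R}.

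For (3)--(5) I would first fix the neighbourhood. If $p=(s_0,0)$ is a cuspidal edge singular point then $\mu_0(s_0)\ne 0$ by Lemma~\ref{lem:1688}, and if moreover $\sigma^C(p)\ne 0$ then also $\kappa^L_\nu(s_0)\ne 0$; by continuity these functions keep their signs on a closed $s$-interval $J\ni s_0$, on which the remainder terms in \eqref{eq:KET2008b}, \eqref{eq:KET2008bb} and \eqref{eq:LL2017} are bounded, so for $\delta'>0$ small the neighbourhood $V_p:=J^{\circ}\times(-\delta',\delta')$ makes the $1/t$-terms dominate on $V_p\setminus\Sigma_f$. Then $H^L$ is unbounded and $\op{sgn}\big(K^L(s,t)\big)=-\op{sgn}\big(\kappa^L_\nu(s)\mu_0(s)\big)\op{sgn}(t)$, which has opposite signs on the two components of $V_p\setminus\Sigma_f$; this is (3). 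For (4) and (5), writing $\tilde W^L=\pmt{w_{11}&w_{12}\\w_{21}&w_{22}}$, the triangular-modulo-$O(t^4)$ structure gives $w_{11}-w_{22}=\mu_0 t^2+O(t^3)$ and $w_{12}w_{21}=O(t^6)$, so the discriminant $(w_{11}-w_{22})^2+4w_{12}w_{21}=\mu_0^2 t^4+O(t^5)$ is positive for $0<|t|$ small; since $\Delta_L\sqrt{|\Delta_L|}<0$ there, $W^L$ is a negative real multiple of $\tilde W^L$, so the principal curvatures are real at every regular point of $V_p$. By \eqref{eq:LL2017}, $\lambda_2=-\kappa^L_\nu(s)+O(t)$ is bounded on $V_p\setminus\Sigma_f$ while $\lambda_1=\mu_0(s)/t+O(1)$ is unbounded; after a final shrinking with $|\lambda_1|>\sup_{V_p\setminus\Sigma_f}|\lambda_2|$ off $\Sigma_f$, no point of $V_p\setminus\Sigma_f$ satisfies $\lambda_1=\lambda_2$, so umbilical and quasi-umbilical points do not accumulate at $p$.

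The one genuinely delicate point, and essentially the only obstacle, is this uniform dominance/perturbation step: one has to be sure that the $O(t^{\ge 3})$ corrections to the entries of $\tilde W^L$ and to $K^L,H^L,\lambda_1,\lambda_2$ are negligible against the leading behaviour on a whole neighbourhood of $p$ — which is exactly where the continuity and non-vanishing of $\mu_0$ and $\kappa^L_\nu$ near $s_0$ are used. Everything else is substitution into identities already derived in this subsection.
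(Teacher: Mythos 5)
Your proposal is correct and follows essentially the same route as the paper: assertions (1)--(2) are read off from the frame construction and \eqref{eq:dp1967}, and (3)--(5) from the leading terms of \eqref{eq:KET2008b}, \eqref{eq:KET2008bb} and \eqref{eq:LL2017}, with the non-vanishing of $\mu_0$ (Lemma~\ref{lem:1688}) and of $\kappa^L_\nu$ doing the work. The only cosmetic difference is that for real-valuedness of the principal curvatures you compute the discriminant of $\tilde W^L$ directly, whereas the paper equivalently checks $(H^L)^2-K^L=\tfrac{1}{4t^2}(\mu_0^2+O(t))>0$.
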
 

\begin{proof}
Since $\Gamma'(s_0)$ is a time-like vector in $\Lo^3$, the limiting tangent plane
is time-like obviously, and (1) holds.
(2) follows from the second equation of
\eqref{eq:dp1967}.
We now assume that
$p(=(s,0))$ is a cuspidal edge singular point.
Since $\mu_0(s)\ne 0$ holds,
the second equation of \eqref{eq:KET2008b} implies that
$H^L$ is unbounded on a neighborhood of $p$.
By
\eqref{eq:KET2008b} and \eqref{eq:KET2008bb},
we have
$$
(H^L)^2-K^L=\frac{1}{4t^2}\Big(\mu_0^2+O(t)\Big).
$$ 
This with the fact $\mu_0(s)\ne 0$
implies that the two principal curvatures 
are real-valued. Moreover, if  
$\sigma^C(p)\ne 0$, 
then $\kappa_\nu^L(s)$ does not vanish,
which implies 
the coefficient $\kappa_\nu^L \mu_0$ of
the first term of \eqref{eq:KET2008b}
does not vanish.
So $K^L$ is unbounded and takes 
different signs on each side of $\Sigma_f$ around $p$.
We have proved  (3) and (4).
Finally (5) follows from  \eqref{eq:LL2017}.
\end{proof}

\section{Generalized cuspidal edges of type S}

In this section, we fix a generalized cuspidal edge in $\Lo^3$
along a space-like regular curve
$\Gamma:I\to \Lo^3$.
Let $f$ be a generalized cuspidal edge along  $\Gamma$.
Since $\Gamma$ is space-like, 
$\Gamma$ can be parametrized by the arc-length.

\subsection{Generalized cuspidal edge of types $\op{S}_s$ and $\op{S}_t$}

We first consider the case that ${\mb D}^L_f(s)$ does not point
in a light-like direction for any $s\in I$.
We then set
$$
\mb a_0(s):=\Gamma'(s),\qquad 
\mb a_1(s):={\mb D}^L_f(s,0)
$$
and
$$
\mb a_2(s):=\mb a_0(s) \times_L \mb a_1(s) (=\nu^L(s,0)).
$$
Then $\mb a_0,\,\mb a_1,\mb a_2$ give an orthonormal frame field in $\Lo^3$
along $\Gamma$.

Then \eqref{eq:f1452} and \eqref{eq:X1464} are reduced to
$$
f(s,t)=\Gamma(s)+A(s,t)\mb a_1(s)+B(s,t)\mb a_2(s).
$$
Since the image of 
$f$ lies in $\Lo^3$ and $\mb a_1$ points in the cuspidal direction, 
we may set
$\theta$ is identically zero.
Since the plane perpendicular to $\Gamma'$ is time-like, we can write
$$
\pmt{A(s,t)\\
B(s,t)}
=
\int_0^t u\pmt{\cosh \lambda(s,u)\\
\sinh \lambda(s,u)} du,
\qquad
\lambda(s,t):=\int_0^t \mu(s,u)du.
$$
In particular, 
\eqref{eq:ab1547}
 holds.
The function $\mu(s,t)$ can be considered as an
geometric invariant of $f$ (cf. Remark \ref{Rem:GI}).
Since ${\mb D}^L_f(s,0)$ points in a space-like 
(resp. time-like) direction, we can set
\begin{align*}
& \epsilon_0=1,\,\,\epsilon_1=\epsilon,\,\,
\epsilon_2=-\epsilon,\,\, C=1,\,\, S=0, \quad
\kappa_1=\kappa_s^L,\quad \kappa_2=\kappa_\nu^L,
\end{align*}
where
$$
\epsilon:=
\begin{cases}
1 & (\text{${\mb D}^L_f$ is space-like}), \\
-1 & (\text{${\mb D}_f$ is time-like}).
\end{cases}
$$
In particular, $\epsilon=1$ corresponds to the case $(\op{S}_s)$
and
$\epsilon=-1$ corresponds to the case 
$(\op{S}_t)$.
Thus $f$ is a generalized cuspidal edge of types 
$\op{S}_s$ (resp. $\op{S}_t$)
if $\epsilon>0$ (resp. $\epsilon<0$).
We set
$$
\omega^L:=-\Omega
$$
and call it the {\it torsional curvature} of $f$ 
along $\Sigma_f$.
Then
\eqref{eq:a1434}
is reduced to
\begin{equation}\label{eq:2640}
\mathcal F'=\mathcal F\mathcal K, \qquad
\mathcal K=\pmt{
0 & -\epsilon \kappa^L_s&  \epsilon\kappa^L_\nu \\
\kappa^L_s & 0&  \omega^L \\
\kappa^E_\nu & -\omega^L & 0},
\quad\mathcal F:=(\mb a_0,\,\mb a_1,\mb a_2).
\end{equation}

\begin{Remark}\label{rmk:rep2218}
If one gives data consisting of four functions 
$$
(\kappa^E_s,\kappa^E_\nu,\omega^E,\mu):=(k_1,k_2,\Omega,m)
$$
defined on an open interval containing $s=0$,
then, like as Example \ref{exa:E1},
solving the ordinary differential equation
\eqref{eq:2640} with $\mathcal F$ as the unknown matrix-valued function,
we can construct generalized cuspidal edges 
of type $\op{S}_s$ or $\op{S}_t$.
\end{Remark}

Here, \eqref{eq:E1577}, \eqref{eq:F1583} and \eqref{eq:G1588} are reduced to
\begin{equation}\label{eq:EFG1957b}
E^L=1-\epsilon \kappa_s^L t^2+\frac{\epsilon\beta_3\kappa_\nu^L}{3}t^3+O(t^4),\quad 
F^L=O(t^4),\quad G^L=\epsilon t^2+O(t^4)
\end{equation}
and \eqref{eq:D1594} implies
\begin{equation}\label{eq:2708}
\Delta_L=\epsilon t^2+O(t^4).
\end{equation}
Also,
\eqref{eq:dp1752-0}
implies 
\begin{equation}\label{eq:dp1967b}
\op{sgn}(d^C((s,0)))=\op{sgn}(\kappa_\nu^L(s,0)).
\end{equation}
On the other hand,
\eqref{eql:1760},
\eqref{eq:M1782} and
\eqref{eq:N1789} yield
\begin{align}\label{eq:LMN1974b}
\tilde L&=\kappa_\nu^L t-\frac{\beta_3\kappa_s^L}2 t^2+
\frac16\Big(-\beta_4\kappa_s^L+\alpha_4 \kappa_\nu^L-6\epsilon \kappa_s^L\kappa_\nu^L
-3(\omega^L)'\Big)t^3
+O(t^4),\\ \nonumber
\tilde M&=
-\omega^L t^2 +\frac{\beta'_3}{2} t^3 +O\left(t^4\right),\quad 
\tilde N=
\frac{1}{2} t^2 \beta_3+\frac{1}{3} t^3 \beta_4+O\left(t^4\right).
\end{align}
In this situation, we may assume $\alpha$ and $\beta$ can be
written as \eqref{eq:ab1547}. Then $\mu_0(s),\mu_1(s)$ 
and $\mu_2(s)$ are invariants of the generalized cuspidal edges in $\Lo^3$
(cf. Remark \ref{Rem:GI}). 
By \eqref{eq:EFG1957b} and \eqref{eq:LMN1974b},
we have 
\begin{align*}
\tilde W^L&=
\begin{pmatrix} 
\epsilon\kappa_{\nu}^L t^3 -\epsilon \frac{\beta_3\kappa_{s}^L}{2} t^4 +O\left(t^5\right) &
-\epsilon \omega^L t^4 +O\left(t^5\right) \\
-\omega^L t^2+O\left(t^3\right) &
\frac{1}{2} t^2 \beta_3+\frac{1}{3} t^3 \beta_4+O\left(t^4\right) 
\end{pmatrix} \\
&=
\begin{pmatrix} 
\epsilon\kappa_{\nu}^L t^3 -\epsilon\kappa_s^L\mu_0 t^4 +O\left(t^5\right) 
& -\epsilon \omega^L t^4 +O\left(t^5\right) 
\\
-\omega^L t^2 +O\left(t^3\right) 
& \mu_0 t^2+\mu_1 t^3+O\left(t^4\right)
\end{pmatrix}.
\end{align*}
In particular, the Gaussian curvature and the
mean curvature can be computed as
\begin{align}\label{eq:KET2008c}
K^L&=
-\frac{\kappa_\nu^L \mu_0}{t}+\Big(
\kappa_s^L \mu_0^2-\kappa_\nu^L \mu_1+(\omega^L)^2\Big)+
O(t), \\
\label{eq:KET2008cc}
H^L &=\frac{\epsilon \mu_0}{2t}+\frac{1}2\left(\epsilon \mu_1+\kappa^L_\nu\right)+O(t).
\end{align}
Since $\tilde W^L$ is a triangle matrix modulo $O(t^4)$-term,
the diagonal components of $\tilde W^L$ give the following
asymptotic expansion of the two principal curvatures of $f$;
\begin{equation}\label{eq:LL2017b}
\lambda_1(s,t)=
\frac{\epsilon \mu_0(s)}{t}+\epsilon \mu_1(s)+O(t),\qquad
\lambda_2(s,t)=
\kappa_{\nu}^L(s) +O(t).
\end{equation}

By Lemma \ref{lem:1688},
the singular point $(s,0)$ is a 
cuspidal edge singular point of
$f$ if and only if $\mu_0(s) \ne 0$. 
Summarizing these computations, we obtain the following:

\begin{Thm}\label{thm:S1}
Let $f:\mc U_f\to \Lo^3$ be a generalized cuspidal edge along $\Gamma$
of type $S$ such that its $\Lo^3$-cuspidal direction $\mb D^L_f$ never points 
in light-like directions along $\Gamma$.
Then, for each $p:=(s,0)$ $(s\in I)$, 
there exists a neighborhood $V_p(\subset \mc U_f)$
satisfying the following properties:
\begin{enumerate}
\item The sign $\sigma^C(p)$ coincides with the sign of $\kappa_\nu^L(s)$,
and the order $i_p$ is $2$.
\item
$f$ is space-like $($resp. time-like$)$ at $p$
if and only if $\mb D^L_f(s)$ $(s\in I)$
points in a space-like $($resp. time-like$)$
direction.
\item 
If $p$ is a cuspidal edge singular point,
then the mean curvature  $H^L$ is unbounded on $V_p$.
Moreover, 
if $\sigma^C(p)\ne 0$, then
the Gaussian curvature $K^L$
is unbounded and takes 
different signs on each side of $\Sigma_f$ around $p$.
\item 
If $p$ is a cuspidal edge singular point,
then the two principal curvature functions of $f$ 
are real-valued on $V_p$, and
one of the principal curvature function is bounded.
\item
Moreover, if $p$ is a cuspidal edge singular point,
then the other principal curvature function is unbounded.
In particular, the umbilical points and quasi-umbilical points
never accumulate at $p$.
\end{enumerate}
\end{Thm}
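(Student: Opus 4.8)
The plan is to argue exactly as in the proof of Theorem~\ref{thm:T}, substituting into the definitions of $\sigma^C$, $H^L$, $K^L$ and the principal curvatures the asymptotic expansions \eqref{eq:2708}, \eqref{eq:dp1967b}, \eqref{eq:KET2008c}, \eqref{eq:KET2008cc} and \eqref{eq:LL2017b} that were just derived from \eqref{eq:EFG1957b} and \eqref{eq:LMN1974b}. The one new feature compared with the type~$T$ situation is the sign $\epsilon\in\{-1,1\}$ distinguishing $\op{S}_s$ ($\epsilon=1$, $\tilde{\mb D}^L_f$ space-like) from $\op{S}_t$ ($\epsilon=-1$, $\tilde{\mb D}^L_f$ time-like); I would carry $\epsilon$ through the computation once and use $\epsilon^2=1$ to see that it drops out of every conclusion that matters. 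Here ``a neighborhood $V_p$'' of $p=(s_0,0)$ means any product neighborhood on which $|t|$ is small enough that the displayed leading terms dominate; in particular $\Delta_L=t^2(\epsilon+O(t^2))$ has constant sign $\epsilon$ on $V_p\setminus\Sigma_f$, so $V_p$ contains no light-like regular point.

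\emph{Assertions (1) and (2).} By \eqref{eq:2708} we have $\partial\Delta_L(s,0)/\partial t=0$ and $\partial^2\Delta_L(s,0)/\partial t^2=2\epsilon\ne0$, hence $i_p=2$ by Definition~\ref{def:R}. For $\sigma^C(p)$, combine \eqref{eq:Ep258}--\eqref{eq:Dp262} with \eqref{eq:dp1967b}, which gives $\op{sgn}(d^C((s,0)))=\op{sgn}(\kappa^L_\nu(s))$, so $\sigma^C(p)=\op{sgn}(\kappa^L_\nu(s))$. For~(2): since $p$ lies on $\Sigma_f$, its causal type is, by Definition~\ref{def:causality712b}, that of $\hat\nu^L(s,0)=\Gamma'(s)\times_L f_{tt}(s,0)$; as $\Gamma$ is of type~$S$, the Corollary following Proposition~\ref{prop:1335} says $\hat\nu^L(s,0)$ is time-like (equivalently $p$ is space-like) precisely when $\tilde{\mb D}^L_f(s)$ is space-like, and space-like (equivalently $p$ is time-like) precisely when $\tilde{\mb D}^L_f(s)$ is time-like, which is what is asserted; this is consistent with the sign of $\Delta_L=\epsilon t^2+O(t^4)$ on the regular set.

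\emph{Assertions (3), (4) and (5).} Assume now that $p$ is a cuspidal edge singular point, which by Lemma~\ref{lem:1688} is equivalent to $\mu_0(s_0)\ne0$. Then \eqref{eq:KET2008cc} gives $H^L=\epsilon\mu_0/(2t)+O(1)$, so $H^L$ is unbounded on $V_p$, while \eqref{eq:KET2008c} gives $K^L=-\kappa^L_\nu\mu_0/t+O(1)$, which is unbounded and, when $\sigma^C(p)\ne0$ (i.e. $\kappa^L_\nu(s_0)\ne0$), changes sign across $\Sigma_f$ on $V_p$; this proves~(3). For~(4), I would look at the $2\times2$ matrix $\tilde W^L$ from \eqref{eq:EFG1957b}--\eqref{eq:LMN1974b}: its diagonal entries are $\epsilon\kappa^L_\nu t^3+O(t^4)$ and $\mu_0 t^2+O(t^3)$, and the product of its two off-diagonal entries is of order $t^6$. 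Hence the discriminant of the characteristic polynomial of $\tilde W^L$---the square of the difference of the diagonal entries plus four times the product of the off-diagonal entries---equals $\mu_0^2 t^4+O(t^5)$, which is positive on $V_p\setminus\Sigma_f$; since $W^L=(\Delta_L\sqrt{|\Delta_L|})^{-1}\tilde W^L$ is a \emph{real} scalar multiple of $\tilde W^L$, its eigenvalues---the principal curvatures---are real-valued on $V_p\setminus\Sigma_f$, and by \eqref{eq:LL2017b} the eigenvalue $\lambda_2=\kappa^L_\nu(s)+O(t)$ is bounded. Finally, \eqref{eq:LL2017b} gives $\lambda_1=\epsilon\mu_0(s)/t+\epsilon\mu_1(s)+O(t)$, which is unbounded near $p$; in particular $\lambda_1\ne\lambda_2$ on a punctured neighborhood of $p$, so no umbilical or quasi-umbilical point of $f$ can accumulate at $p$. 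This proves~(5), completing the proof.

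\emph{Main obstacle.} The only point requiring genuine care is the real-valuedness in~(4) in the case $\op{S}_t$: there $\Delta_L<0$, the shape operator acts on a Lorentzian tangent plane, and its eigenvalues can in principle be non-real (this is exactly what occurs at quasi-umbilical points, cf.~Remark~\ref{rmk:U}), so it is not automatic. The resolution is the order count above---the off-diagonal coupling of $\tilde W^L$ is of order $t^6$, strictly smaller than the $t^4$ gap between its diagonal entries, which forces the discriminant to be positive near the cuspidal edge. Everything else is bookkeeping once \eqref{eq:EFG1957b}--\eqref{eq:LMN1974b} are in hand: keeping track of the single sign $\epsilon$ and shrinking $V_p$ so that the leading asymptotic terms control all the conclusions.
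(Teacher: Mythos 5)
Your proposal is correct and follows essentially the same route as the paper: the paper proves (1) from \eqref{eq:dp1967b}, (2) from the identification $\mb a_2=\nu^L(s,0)$, and reduces (3)--(5) to the asymptotic expansions \eqref{eq:KET2008c}, \eqref{eq:KET2008cc} and \eqref{eq:LL2017b} exactly as in Theorem~\ref{thm:T}. The only cosmetic difference is that for real-valuedness you compute the discriminant of $\tilde W^L$ directly (getting $\mu_0^2t^4+O(t^5)>0$), whereas the paper writes the equivalent fact as $(H^L)^2-K^L=\tfrac{1}{4t^2}\big(\mu_0^2+O(t)\big)$.
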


\begin{proof}
(1) follows from \eqref{eq:dp1967b}.
By \eqref{eq:a2}, it is obvious that
$\mb a_2(s)$ points in
the normal direction of $f$ at $p$
and is time-like $($resp. space-like$)$
if and only if $\mb D^L_f(s)$
points in a space-like $($resp. time-like$)$
direction. So (2) is obtained.
The assertions (3), (4) and (5)
follow from by the same reason in
the proofs of (3), (4) and (5)
of Theorem \ref{thm:T}.
\end{proof}

\subsection{Generalized cuspidal edge of types $\op{S}_l$}

Here we consider
the case that $\tilde {\mb D}^L_f(s_0)$  points in a light-like 
direction for some $s=s_0\in I$.
Since $\Gamma(s)$ is parametrized by the arc-length parameter,
$$
\mb a_0(s):=\Gamma'(s)\qquad (s\in I)
$$
is a unit space-like vector field along $\Gamma$. 
We choose a space-like vector field 
$\mb a_1$ and a time-like vector field $\mb a_2$ along $\Gamma$ 
so that 
$\{\mb a_0,\mb a_1,\mb a_2\}$ forms an orthonormal frame field
such that
$$
1=(\mb a_1,\mb a_1)_L=-(\mb a_2,\mb a_2)_L, \qquad  \mb a_2:=\mb a_0\times_L \mb a_1.
$$
Without loss of generality, we assume that $0\in I$ and $s_0=0$.
Then the $\Lo^3$-cuspidal direction $\tilde {\mb D}^L_f(0)$ points in
a light-like direction, 
which is the case that we have not discussed yet.
Since
$\tilde {\mb D}^L_f(0)$ is a light-like vector,
it is parallel to one of the following two
vectors
$$
\mb a_1(0)+ \mb a_2(0),\qquad \mb a_1(0)- \mb a_2(0).
$$
If the latter case occurs, by replacing $\Gamma(s)$ by $\Gamma(-s)$,
the three vector fields turn to be 
$-\Gamma'(-s)$, $\mb a_1(-s)$, $-\mb a_2(-s)$.
So we may assume that
$\tilde {\mb D}^L_f(0)$
is parallel to
$
\mb a_1(0)+ \mb a_2(0).
$
Moreover, since
$\Gamma', \mb a_1,\mb a_2$
give the same orientation as
$\Gamma', -\mb a_1,-\mb a_2$, we may also assume that
$$
\tilde {\mb D}^L_f(0)
=\frac{\mb a_1(0)+ \mb a_2(0)}{\sqrt{2}}.
$$
We then apply Proposition \ref{prop:fstE} in the appendix  for $(s,t)=(0,0)$,
and can assume that $f$ has the following expression
\begin{equation}\label{eq:fT1215}
f(s,t)=\Gamma(s)+ X(s,t)\mb a_1(t)+Y(s,t) \mb a_2(t),
\end{equation}
where $X(s,t)$ and $Y(s,t)$ are smooth functions
written as \eqref{eq:X1464}
and \eqref{eq:AB1468}.
We can write
$$
\pmt{A(s,t)\\
B(s,t)}
=
\int_0^t u\pmt{\cos \lambda(s,u)\\
\sin \lambda(s,u)} du,
\qquad
\lambda(s,t):=\int_0^t \mu(s,u)du,
$$
that is, 
\eqref{eq:ab1519}
 holds.
Unlike the previous cases
(E), (T), ($\op{S}_s$)
and ($\op{S}_t$), the function $\mu(s,t)$
cannot be considered as a geometric invariant of $f$.
One reason is that the two vector fields $\mb a_0$ and $\mb a_1$ are 
not uniquely determined from $f$, and the other reason is
that we use the pair $(\cos \lambda,\sin \lambda)$ not 
$(\cosh \lambda,\sinh \lambda)$.
We set
$
\mb a'_0=\kappa_1\mb a_1+\kappa_2\mb a_2.
$
Since $(\mb a_1,\mb a_1)=1$, we can write
$
\mb a'_1=-\kappa_1\mb a_1+\Omega \mb a_2,
$
where $\Omega$ is a smooth function.
In the setting of the previous section, this is the case that
$$
\epsilon_0=\epsilon_1=1,\,\,
\epsilon_2=-1, 
\quad \,\, C=\cos\theta,\,\, S=\sin\theta, \\
$$
and 
\begin{equation}
\theta(0)=\frac{\pi}4.
\end{equation}
Then
\eqref{eq:a1434}
can be written as
\begin{equation}\label{eq:2861}
\mathcal F'=\mathcal F\mathcal K, \qquad
\mathcal K=\pmt{
0 & -\kappa_1& \kappa_2 \\
\kappa_1 & 0& \Omega \\
\kappa_2 & \Omega & 0
},
\qquad\mathcal F:=(\mb a_0,\,\mb a_1,\mb a_2),
\end{equation}
which can be considered as an ordinary differential equation
with $\mathcal F$ as an unknown matrix-valued function,
and we can produce all of cuspidal edges of type $(S_l)$
along $\Gamma$ like as the previous cases.
If we set
$$
\delta_1:=\frac{\kappa_1(0)+\kappa_2(0)}{\sqrt{2}},\qquad 
\delta_2:=\frac{-\kappa_1(0)+\kappa_2(0)}{\sqrt{2}},
$$
then
\begin{align*}
&E^L(0,t)=1-\delta_1 t^2+\frac{\beta_3(0) \delta_2}3 t^3+O(t^4),\quad
F^L(0,t)=-\frac{\theta'(0)}2t^3+O(t^4), \\
&G^L(0,t)=\beta_3(0) t^3+O(t^4)
\end{align*}
and so
\begin{equation}\label{eq:2425}
\Delta_L(0,t)=\beta_3(0) t^3+O(t^4).
\end{equation}
Moreover, we have (cf. \eqref{eq:Dp262})
\begin{equation}\label{eq:2429}
d^C((0,0))=\delta_1
\end{equation}
and
\begin{align*}
\tilde L(0,t)&=\delta_1 t+\frac{\beta_3(0) \delta_2}{2}t^2 \\
&\phantom{aaaaa}+
\left(\frac{\beta_4(0)\delta_2+\alpha_4(0) \delta_1}{6}
-\delta_1^2-\Omega(0)\theta'(0)-\frac{\theta''(0)}{2}\right)t^3 +O(t^4), \\
\tilde M(0,t)&= -\theta'(0) t^2+\left(\Omega(0)\beta_3(0)
+\frac{\beta'_3(0)}2\right)t^3+O(t^4), \\
\tilde N(0,t)&= \frac12\beta_3(0) t^2+\frac{\beta_4(0)}3t^3+O(t^4). 
\end{align*}
By \eqref{eq:ab1547}, we have
\begin{align*}
\tilde W^L(0,t)
=
\pmt{
 \beta_3(0) \delta_1t^4 +O(t^5)
& \frac{-3\beta_3(0) \theta'(0)}{4}t^5 +O(t^6)  \\
-\theta'(0)t^2  +O(t^3)
& \frac{\beta_3(0)}{2} t^2 +\frac{\beta_4(0)}{3} t^3+O(t^4)} \\
=
\pmt{
2\mu_0(0) \delta_1t^4 +O(t^5)
& \frac{-3\mu_0(0) \theta'(0)}{2}t^5 +O(t^6)  \\
-\theta'(0) t^2  +O(t^3)
& \mu_0(0) t^2 +\mu_1(0) t^3+O(t^4)}
\end{align*}
and
\begin{align}\label{eq:2946a}
\det(\tilde W^L(0,t))&=2\mu_0(0)^2 \delta_1 t^6
+\frac{(4\mu_1(0) \delta_1-3\theta'(0)^2)\mu_0}{2}t^7+O(t^8), \\
\op{trace}(\tilde W^L(0,t))&=\mu_0(0) t^2+\mu_1(0)t^3+O(t^4). \label{eq:2946b}
\end{align}
Since $\tilde W^L(t,0)$ is a triangle matrix 
modulo $O(t^5)$-term,
the diagonal components of $\tilde W^L(t,0)$ give the following
asymptotic expansions of the two eigenvalues of $\tilde W^L$;
\begin{equation}\label{eq:LL2017c}
\tilde \lambda_1(0,t):=\mu_0(0) t^2+\mu_1(0)t^3+O(t^4),\quad
\tilde \lambda_2(0,t):=2\mu_0(0) \delta_1 t^4+O(t^5).
\end{equation}
We prove the following:

\begin{Thm}\label{thm:S3}
Let $f:\mc U_f\to \Lo^3$ be a generalized cuspidal edge along 
$\Gamma$ of type $S$ in $\Lo^3$.
Suppose that 
$p:=(s_0,0)$ is a 
cuspidal edge singular point
and
the $\Lo^3$-cuspidal direction $\tilde{\mb D}^L_f(s_0,0)$
points in a light-like direction at $p$.
Then the following assertions hold:
\begin{enumerate}
\item $i_p=3$ holds, and $f$ is light-like at $p$.
\item The sign $\sigma^C(p)$ vanishes if and only if
$\Gamma''(s_0)$ points in a light-like direction which is different from
the direction of $\tilde{\mb D}^L_f(s_0,0)$.
\item $H^L$ is unbounded near a sufficiently small
neighborhood of $p$.
\item $f$ changes its causal type from space-like
to time-like near $p$.
Moreover, if $\sigma^C(p)\ne 0$ then $K^L$ diverges near $p$ and
takes the different signs on each side of $\Sigma_f$ in $V_p$. 
\item 
The two principal curvatures of $f$ are real-valued and 
one of them is unbounded near $p$. 
Moreover, other one is also unbounded 
if and only if $\sigma^C(p)\ne 0$.
\item The set of umbilical or quasi-umbilical points of $f$ 
cannot accumulate at $p$.
\end{enumerate}
\end{Thm}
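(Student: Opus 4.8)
The overall strategy is to read off the six assertions from the $(\op{S}_l)$ expansions \eqref{eq:2425}, \eqref{eq:2429}, \eqref{eq:2946a}, \eqref{eq:2946b}, \eqref{eq:LL2017c}, using $\beta_3(0)=2\mu_0(0)$ (cf.\ \eqref{eq:ab1519}) and Lemma \ref{lem:1688}, which forces $\mu_0(s_0)\ne 0$ because $p$ is a cuspidal edge singular point. Assertion $(1)$ is immediate: \eqref{eq:2425} reads $\Delta_L(0,t)=2\mu_0(0)\,t^3+O(t^4)$, so $i_p=3$ by Definition \ref{def:R}; and $\inner{\hat\nu^L}{\hat\nu^L}=-\Delta_L/t^2\to 0$ as $t\to 0$ (cf.\ \eqref{eq:inner387}), so $\hat\nu^L(s_0,0)$ is light-like, i.e.\ $p$ is light-like. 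For $(2)$, \eqref{eq:2429} gives $\sigma^C(p)=\op{sgn}(\delta_1)$ with $\delta_1=\bigl(\kappa_1(0)+\kappa_2(0)\bigr)/\sqrt2$, while $\Gamma''(s_0)=\kappa_1(0)\mb a_1(0)+\kappa_2(0)\mb a_2(0)$ lies in $\op{span}\{\mb a_1(0),\mb a_2(0)\}$; since the light-like directions of that Lorentzian plane are exactly those of $\mb a_1(0)\pm\mb a_2(0)$, we get $\sigma^C(p)=0$ if and only if $\Gamma''(s_0)$ is a multiple of $\mb a_1(0)-\mb a_2(0)$, which (when $\Gamma''(s_0)\ne\mb 0$) is the light-like direction distinct from $\tilde{\mb D}^L_f(s_0,0)\parallel\mb a_1(0)+\mb a_2(0)$.

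Next I would substitute into $H^L=\op{trace}(\tilde W^L)/\bigl(2\Delta_L\sqrt{|\Delta_L|}\bigr)$, $K^L=-\det(\tilde W^L)/\Delta_L^3$ and $W^L=\tilde W^L/\bigl(\Delta_L\sqrt{|\Delta_L|}\bigr)$ (cf.\ \eqref{eq:K1Lb}, \eqref{eq:KH584b}), using $|\Delta_L(0,t)|^{3/2}=|2\mu_0(0)|^{3/2}|t|^{9/2}\bigl(1+O(t)\bigr)$. By \eqref{eq:2946b}, $H^L(0,t)=c\,\op{sgn}(t)\,|t|^{-5/2}\bigl(1+O(t)\bigr)$ with $c\ne 0$, so $H^L$ is unbounded, which is $(3)$; by \eqref{eq:LL2017c}, the eigenvalue $\lambda_1(0,t)=\tilde\lambda_1/\bigl(\Delta_L\sqrt{|\Delta_L|}\bigr)$ of $W^L$ behaves like $c'\op{sgn}(t)\,|t|^{-5/2}$, hence is unbounded, and both eigenvalues are real because $\op{trace}(\tilde W^L)^2-4\det(\tilde W^L)=\mu_0(0)^2t^4+O(t^5)>0$ for small $t\ne 0$. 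For the causality part of $(4)$: $i_p=3$ being odd, $\Delta_L$ changes sign across $\Sigma_f$, and since $\Delta_L>0$ is equivalent to $f$ being space-like, $f$ is space-like on one side of $\Sigma_f$ and time-like on the other near $p$. When $\delta_1=d^C(p)\ne 0$, \eqref{eq:2946a} gives $K^L(0,t)=-\dfrac{\delta_1}{4\mu_0(0)}\,t^{-3}\bigl(1+O(t)\bigr)$, which diverges and changes sign as $t$ crosses $0$; this completes $(4)$.

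The genuinely two-dimensional points, namely $(6)$ and the dichotomy for the second principal curvature $\lambda_2$ in $(5)$, I would obtain by re-running the general computations of Section~2 with the nonconstant angular function $\theta=\theta(s)$, $\theta(s_0)=\pi/4$, which yields the locally uniform expansions $\op{trace}\tilde W^L(s,t)=\mu_0(s)\,t^2+O(t^3)$, $\det\tilde W^L(s,t)=\cos(2\theta(s))\,d^C(s)\,\mu_0(s)\,t^5+O(t^6)$ and $\op{disc}\tilde W^L(s,t)=\mu_0(s)^2t^4+O(t^5)$; these interpolate between the order-$2$ behaviour at the singular points with $s\ne s_0$ (where $\cos(2\theta(s))\ne 0$) and the order-$3$ behaviour at $p$ (where \eqref{eq:2946a} takes over). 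Since $(\lambda_1-\lambda_2)^2=\op{disc}(\tilde W^L)/|\Delta_L|^3$ and $\mu_0(s_0)\ne 0$, this is positive on a punctured neighbourhood $V_p\setminus\Sigma_f$, so $\lambda_1\ne\lambda_2$ there and umbilical or quasi-umbilical points cannot accumulate at $p$, giving $(6)$. For $(5)$, one examines $\lambda_2=\tilde\lambda_2/\bigl(\Delta_L\sqrt{|\Delta_L|}\bigr)$ near $p$: along $s=s_0$ it behaves like $c''\delta_1\op{sgn}(t)\,|t|^{-1/2}$, and near the order-$2$ slices $|\lambda_2|$ is asymptotic to a constant multiple of $|d^C(s)|/|\cos(2\theta(s))|^{1/2}$, so $\lambda_2$ is unbounded near $p$ when $\delta_1=d^C(p)\ne 0$ and remains bounded (tending to $0$) when $d^C(p)=0$, because then $d^C(s)$ and $\cos(2\theta(s))$ vanish at compatible rates as $s\to s_0$.

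I expect the main obstacle to be precisely this last, two-dimensional analysis: one has to handle the transition where the order of $f$ drops from $3$ to $2$ as $s$ leaves $s_0$, uniformly enough to control $\lambda_2$ and the discriminant over a whole neighbourhood of $p$ (rather than merely along $s=s_0$), and in particular to verify that the limiting normal curvature of the nearby order-$2$ singular points stays bounded exactly when $\sigma^C(p)=0$. Everything else amounts to bookkeeping with the half-integer powers of $|t|$ and the sign factors $\op{sgn}(t)$ occurring in the various quotients.
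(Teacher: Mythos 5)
Your proposal reproduces the paper's proof of (1)--(4) and the first half of (5) essentially verbatim: everything is read off from \eqref{eq:2425}, \eqref{eq:2429}, \eqref{eq:2946a}, \eqref{eq:2946b} and \eqref{eq:LL2017c} together with $\beta_3=2\mu_0$ and Lemma \ref{lem:1688}, and your leading-order computations ($\Delta_L(0,t)=2\mu_0(0)t^3+O(t^4)$, $K^L(0,t)=-\delta_1 t^{-3}/(4\mu_0(0))+O(t^{-2})$, $H^L\sim c\,\op{sgn}(t)|t|^{-5/2}$, $\lambda_1\sim c'\op{sgn}(t)|t|^{-5/2}$, $\lambda_2\sim c''\delta_1\op{sgn}(t)|t|^{-1/2}$, discriminant $\mu_0^2t^4+O(t^5)$) all check out against the paper's.

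Where you diverge is in (5)--(6), and the comparison cuts both ways. For (6) you are actually more careful than the paper: the paper's argument that ``$\tilde\lambda_1$ and $\tilde\lambda_2$ have different orders in $t$'' is carried out only along the slice $s=s_0$, whereas ruling out accumulation requires control on a full two-dimensional neighborhood; your uniform estimate $\op{trace}(\tilde W^L(s,t))^2-4\det(\tilde W^L(s,t))=\mu_0(s)^2t^4+O(t^5)$ (which does follow from the general Section~2 formulas with $\epsilon_0=\epsilon_1=1$, $\epsilon_2=-1$ and variable $\theta(s)$, and likewise gives your formula $\det\tilde W^L=\cos(2\theta)\,d^C\,\mu_0\,t^5+O(t^6)$) is the right way to close that gap, and it simultaneously gives the real-valuedness in (5) on a whole neighborhood. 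On the other hand, your two-dimensional boundedness claim for $\lambda_2$ when $\sigma^C(p)=0$ rests on the assertion that $d^C(s)$ and $\cos(2\theta(s))$ ``vanish at compatible rates as $s\to s_0$''; these are independent pieces of data and there is no reason for their vanishing orders to be comparable (e.g.\ $d^C$ vanishing to first order while $\theta-\pi/4$ vanishes to third order would make $|d^C(s)|/|\cos 2\theta(s)|^{1/2}$ blow up), so as stated this step is not justified. The paper avoids the issue entirely by establishing (un)boundedness of $\lambda_2$ only along the normal slice $t\mapsto(s_0,t)$, where \eqref{eq:LL2017c} gives $\lambda_2(s_0,t)=O(|t|^{1/2})$ when $\delta_1=0$; if you want the stronger two-dimensional statement you would need a genuine argument for it, and otherwise you should retreat to the slice version, which is all the paper proves.
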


\begin{proof}
The fact that $f$ is light-like at $p(=(s_0,0))$ follows from the fact that
$\tilde{\mb D}^L_f(s_0,0)$
points in a light-like direction at $p$. 
We now assume $p$ is a cuspidal edge singular point (i.e. $\mu_0(s_0)\ne 0$).
Then $i_p=3$ follows from \eqref{eq:2425}, proving (1).
We now prove (2):
By \eqref{eq:2861}, 
$$
\Gamma''(s_0)=\mb a'_0(s_0)=\kappa_1(s_0) \mb a_1(s_0)+\kappa_2(s_0) \mb a_2(s_0).
$$
By \eqref{eq:2429}, $\sigma^C(p)=0$ if and only if $\kappa_1(s_0)=-\kappa_2(s_0)$,
which is equivalent to the fact that
$\Gamma''(s_0)=\kappa_1(s_0)(\mb a_1(s_0)-\mb a_2(s_0))$.
Since $\mb a_1(s_0)-\mb a_2(s_0)$ is a light-like vector, we obtain the conclusion.
On the other hand, (3) and (4)
follow from
\eqref{eq:2946a} and
\eqref{eq:2946b}
(cf. \eqref{eq:KH584b}).
Since $\lambda_i$ ($i=1,2$) are the same order as
$\tilde \lambda_i/|\Delta_L|^{3/2}$ with respect to $t$,
\eqref{eq:LL2017c} implies that $\lambda_1$ is unbounded 
(cf. \eqref{eq:LL2017c}).
Moreover, $\lambda_2$ is unbounded if and only if $\delta_1\ne 0$,
which happens only when $\sigma^C(p)\ne 0$, proving (5).
By \eqref{eq:LL2017c}, the two eigenvalues $\tilde \lambda_1$
and $\tilde \lambda_2$ have different orders with respect to $t$,
and (6) is obtained.
\end{proof}

\begin{Rmk}\label{rmk:3188}
In Theorem \ref{thm:S3}, we assumed that
$p$ is a cuspidal edge singular point.
If we remove this assumption,
that is, if $p$ is a  generalized
cuspidal edge singular point such that $\tilde{\mb D}^L_f(s_0,0)$
points in a light-like direction at $p$,
then \eqref{eq:2425}
implies 
the order $i_p$ is greater than or equal to $3$.
By \eqref{eq:2425},
$i_p=3$ if and only if
$\beta_3(0)(=2\mu_0)$ does not vanish.
So $i_p=3$ holds only when
$p$ is a cuspidal edge singular point.
\end{Rmk}

\begin{figure}[htb]
\begin{center}
\includegraphics[height=3.6cm]{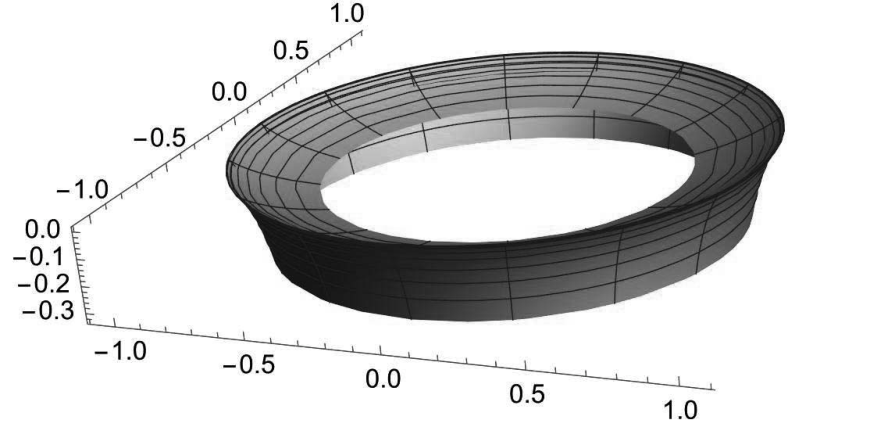}
\end{center}
\caption{A cuspidal edge of order three along a circle}\label{ex:L0}
\end{figure}

\begin{Example}
We set
$
\Gamma(s):=(\cos s,\sin s,0)
$
and $x(t):={t^2}/{2},\,\,y(t):={t^3}/{3}$.
Noticing that $\Gamma''=-\Gamma$ is
the normal vector of the plane curve $\Gamma$,
consider the cuspidal edge given by
$$
f(s,t):=(1-x(t)-y(t))\Gamma(s)+(-x(t)+y(t))\mb e_3
\qquad (\mb e_3:=(0,0,1)).
$$
Since
the coefficients of the first fundamental form are
computed as
\begin{equation}\label{eq:2999a}
E^L=\frac{1}{36} \left(2 t^3+3 t^2-6\right)^2,\quad F^L=0,\quad G^L=4t^3.
\end{equation}
Since the coefficient of $t^2$ for $E^L$ does not vanish,
we have $\sigma^C\ne 0$.
Moreover, we have 
\begin{equation}\label{eq:2999b}
\Delta_L=\frac{1}{9} t^3 \left(6-3 t^2-2t^3\right)^2,
\end{equation}
which is of order three at each point $(s,0)$ of $\Sigma_f$ (see Figure \ref{ex:L0}).
The functions $\tilde L, \tilde M,\tilde N$
are given by (cf. \eqref{tL}, \eqref{tM} and \eqref{tN})
\begin{equation}\label{eq:3006}
\tilde L=\frac{t(1-t)\left(6-3 t^2-2t^3\right)^2}{36},\quad \tilde M=0,\quad
\tilde N=\frac{t^2 }{3} \left(6-3 t^2-2t^3\right).
\end{equation}
By \eqref{eq:K1Lb}, \eqref{eq:2999a}, \eqref{eq:2999b} and \eqref{eq:3006},
the Gaussian curvature and the mean curvature do not depend on  $s$ 
and are computed as
$$
K^L=\frac{-3(1-t)}{4t^3(6-3t^2-2t^3)},\qquad
\pm H^L=\frac{6+9t^2-14t^3}{8 |t|^{5/2} (6-3t^2-2t^3)}.
$$
In particular, the principal curvatures are 
both unbounded.
\end{Example}

\begin{proof}[Proof of Theorem A and Corollary B]
Summarizing the assertions of Theorems \ref{thm:T}, 
\ref{thm:S1} and \ref{thm:S3}, we obtain Theorem~A.
We remark that
the second assertion of 
\ref{item:A5} follows from Remark \ref{rmk:3188}.

We next prove Corollary B:
If the mean curvature function of $f$ is bounded,
then \ref{item:A6} of Theorem~A implies that $\Gamma$
is a regular curve of type $L$, that is, $\Gamma'$ always
points in a light-like direction, proving Corollary ~B. 
\end{proof}

\section{The case that $\Gamma'$ points in a light-like direction}

In this section, we consider the case that 
$\Gamma'(s)$ points in a light-like direction for some $s$.
The case $(\op{L}_g)$ ($g=2,4$)
given in Subsection 2.1 is contained in this setting.

\subsection{Computations in the case that  $\Gamma'(0)$ is a light-like vector}
Fix an open interval $I$ containing $0\in \R$.
Let $\mathcal E^2$ be a space-like plane in $\Lo^3$.
Without loss of generality, we may assume that
$\mathcal E^2$ is the $xy$-plane.
The following fact can be proved easily.

\begin{Fact}\label{2995}
Let $\Gamma:I\to \Lo^3$ be a regular curve of 
type $L$. 
If we set $\Gamma(s)=(x(s),y(s),z(s))$
and assume that $s$ is the arc-length parameter of 
the curve $\gamma(s):=(x(s),y(s))$ in the $xy$-plane,
then $\Gamma$  has the expression
$
\Gamma(s)=(\gamma(s),\pm s)
$
for  $s\in I$.
\end{Fact}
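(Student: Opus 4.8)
The plan is to read off the conclusion directly from the light-like condition, so almost nothing is needed beyond a one-line substitution. First I would write $\Gamma(s)=(x(s),y(s),z(s))$ and record that, since the signature of $\Lo^3$ is $(++-)$, the hypothesis that $\Gamma$ is of type $L$ means $\inner{\Gamma'(s)}{\Gamma'(s)}=x'(s)^2+y'(s)^2-z'(s)^2=0$ for every $s\in I$.

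Next I would use that $s$ is the arc-length parameter of $\gamma(s)=(x(s),y(s))$ in the Euclidean $xy$-plane, i.e.\ $x'(s)^2+y'(s)^2=1$ for all $s$. Substituting this into the previous identity gives $z'(s)^2=1$, hence $z'(s)=\pm 1$ for each $s\in I$. Since $z'$ is continuous on the connected interval $I$ and never vanishes, it has constant sign, so there is a fixed $\epsilon\in\{-1,+1\}$ with $z'(s)=\epsilon$ on $I$. Integrating yields $z(s)=z(s_0)+\epsilon(s-s_0)$ for any chosen base point $s_0\in I$.

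Finally I would normalize by composing $\Gamma$ with the translation of $\Lo^3$ along the $z$-axis sending $z(s_0)$ to $0$; this is a Lorentzian isometry and so preserves all the hypotheses (in particular it does not change $\gamma$). After this normalization $z(s)=\epsilon s=\pm s$, and therefore $\Gamma(s)=(\gamma(s),\pm s)$ for all $s\in I$, which is the asserted expression.

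The main obstacle, such as it is, is purely bookkeeping: one must note that $z'(s)$ has globally constant sign (using continuity of $z'$ and connectedness of $I$, not merely that $z'(s)\in\{-1,+1\}$ pointwise), and one should state explicitly that the $z$-translation used to kill the integration constant is harmless because it is an isometry of $\Lo^3$ fixing the $xy$-direction. Beyond these two remarks the proof is immediate.
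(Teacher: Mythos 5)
Your argument is correct and is exactly the intended one: the paper labels this a Fact that "can be proved easily" and omits the proof, and the computation $z'(s)^2 = (x'(s)^2+y'(s)^2) - \inner{\Gamma'(s)}{\Gamma'(s)} = 1$, together with constancy of the sign of $z'$ on the connected interval $I$, is the whole content. Your observation that an additive constant in $z$ must be removed by a translation (i.e.\ that the stated formula holds only after such a normalization, or with $s$ replaced by $s-s_0$) is a fair point of precision that the paper glosses over.
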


Without loss of generality, we may assume that
\underline{$\Gamma(s)$ is future-pointing} at $s=0$.
Regarding this, 
we set
\begin{equation}\label{eq:G2754}
\Gamma(s):=(\gamma(s),\phi(s)) \qquad (s\in I),
\end{equation}
where $\gamma(s)$ is a regular curve in $\mathcal E^2$
parametrized by the arc-length and
$\phi(s)$ is a smooth function satisfying
\begin{equation}\label{eq:gc2116}
\phi'(0)=1,
\end{equation}
which implies that $\Gamma'(0)$ points in a light-like direction.
If $\Gamma$ is of type $L$, then 
$\phi$ is the identity map, that is, $\phi$ 
satisfies
\begin{equation}\label{eq:gc2121}
\phi(s)=s \qquad (s\in I).
\end{equation}
We 
set 
$
\mb e(s):=(\gamma'(s),0).
$
We denote by
$\mb n(s)\in \Lo^3$ 
the left-ward unit normal vector $\gamma'(s)$ in the $xy$-plane.
For each $s\in I$, three vectors
$$
\mb e(s),\quad \mb n(s),\quad 
\mb v:=(0,0,1)
$$
form an orthonormal basis of $\Lo^3$ for each $s\in I$, and
$\mb v$ is a time-like vector.
We fix our general setting as
$$
\mb a_0:=\mb e(s),\quad \mb a_1:=\mb n(s),\quad \mb a_2:=\mb v,\quad 
\epsilon_0=\epsilon_1=1,\quad \epsilon_2=-1.
$$
Then, 
$$
\Gamma'(s)=\mb e+\phi'(s) \mb v
$$
holds. Let $f$ be a generalized cuspidal edge along $\Gamma$.
By applying Proposition \ref{prop:fstE} in the appendix for $(s,t)=(0,0)$,
like as the general setting given in Section 2,
we may  assume that $f$ has the following expression
\begin{equation}\label{eq:fT1215b}
f(s,t)=\Gamma(s)+ X(s,t)\mb n(s)+Y(s,t)\mb v,
\end{equation}
where $X(s,t)$ and $Y(s,t)$ are smooth functions of the form
\begin{align}
\label{eq:A}
X(s,t)&=A(s,t)\cos\theta(s)+B(s,t)\sin \theta(s), \\
\nonumber
Y(s,t)&=B(s,t)\cos\theta(s)-A(s,t)\sin \theta(s)
\end{align}
and
\begin{align*}
&A(s,t)=\int_0^t u \cos \lambda(s,u)du,\qquad
B(s,t)=\int_0^t u \sin \lambda(s,u)du, \\
&\lambda(s,t):=\int_0^t \mu(s,u)du.
\end{align*}
The vector field $\cos \theta(s) \mb n(s)+\sin \theta(s) \mb v$ 
gives the direction of the cusp for the section of the image of $f$,
and the plane passing through $\Gamma(s)$ is
spanned by $\mb n(s)$ and $\mb v_3$.
Using the expansion of $\mu$ given in \eqref{eq:mu2029},
we have (cf. \eqref{eq:AB1468})
$$
\pmt{A \\
B}
=
\frac12\pmt{1\\ 0} t^2+
\frac13\pmt{0\\ \mu_0} t^3
+\frac18\pmt{-\mu_0^2\\ \mu_1} t^4
+\frac1{30}\pmt{-3 \mu_0\mu_1\\ -\mu_0^3 + \mu_2} t^5
+O(t^6)
$$
and
\begin{align}\label{eq:ab1519b}
&\alpha_2=1,\quad \alpha_3=0,\quad \alpha_4=-3 \mu_0^2,\quad \alpha_5=-12 \mu_0\mu_1,  \\
&\beta_3=2\mu_0,\quad \beta_4=3\mu_1,\quad \beta_5=4(-\mu_0^3+\mu_2).  \nonumber
\end{align}
Let $\kappa(s)$ be the curvature 
function of the curve $\gamma(s)$
in the $xy$-plane (as $\mc E^2$).
Then, we have (cf. \eqref{eq:K1441})
\begin{equation}\label{eq:FrameL1653}
\mathcal K=
\pmt{0 & -\kappa & 0 \\
     \kappa & 0 & 0 \\
     0 & 0 & 0}.
\end{equation}
For the sake of simplicity, we set
$$
C(s):=\cos \theta(s),\quad S(s):=\sin \theta(s),\quad
C_2(s):=\cos 2\theta(s),\quad S_2(s):=\sin 2\theta(s).
$$
Since $\Gamma'(s)$ is different from $\mb a_0(s)$ (cf. \eqref{eq:A01960}),  we cannot apply
the general computations given in Section 2, and 
must begin with the computations of
$f_s$ and $f_t$ as follows:
\begin{align}\label{eq:L2-2695}
f_s&=\mb e+\varphi'\mb v 
-\frac{\kappa C\mb e+\theta' S \mb n+\theta' C\mb v}{2}t^2 \\\nonumber
&\phantom{aaaa}+\frac{-\kappa \beta_3 S \mb e+(\theta'\beta_3C+\beta'_3S)\mb n
+(-\theta' \beta_3 S+\beta'_3C)\mb v}6 t^3
+O(t^4), \\
\nonumber
f_t&=(C \mb n-S \mb v)t+\frac{\beta_3S \mb n+\beta_3C \mb v}2 t^2 \\
&\phantom{aaaa}+\frac{(\alpha_4C+\beta_4S)\mb n+(-\alpha_4S+\beta_4C)\mb v}6 t^3 
\nonumber
+O(t^4).
\end{align}
Then, we have
\begin{align*}
E^L&=1-\varphi'^2+ \left(-\kappa+\theta'\varphi'\right)Ct^2+O(t^3),\\
F^L&=\varphi 'S t-\frac{\beta _3\varphi'C}{2} t^2 +O(t^3),\quad 
G^L=C_2t^2+ \beta_3S_2t^3 +O\left(t^4\right)
\end{align*}
and
\begin{equation}\label{eq:D2671}
\Delta_L=(C^2(1-\phi'^2)-S^2)t^2+\beta_3 CS(2-{\varphi'}^2)t^3+O(t^4).
\end{equation}
Moreover,
\begin{equation}\label{eq:3425}
\tilde \nu^E=(-\phi'C\mb e+S \mb n+C\mb v)t
-\frac{\beta_3}2(\phi'S\mb e+C\mb n-S\mb v) t^2+O(t^3)
\end{equation}
and
\begin{align}\label{eq:fss3090}
f_{ss}&=\kappa\mb n+\phi'' \mb v \\ \nonumber
&\phantom{a}+\frac{
(2\kappa \theta' S-\kappa'C)\mb e-((\kappa^2+\theta'^2)C+\theta''S)\mb n
+(\theta'^2S-\theta''C)\mb e
}2t^2 
\\ \nonumber
&\phantom{aaaaaaaaaaaaaaaaaaaaaaaaaaaaaaaaaaaaaaaaaaaaaa} +O(t^3),
\\ \nonumber
f_{st}&=-\Big(\kappa C \mb e+\theta'(S\mb n+C\mb v)\Big)t \\
&\phantom{aaa}\nonumber
+\frac{-\kappa \beta_3 S \mb e+(\theta'\beta_3C+\beta'_3S)\mb n
+(-\theta'\beta_3S+\beta'_3C)\mb v
}{2}t^2 +O(t^3), \\ \nonumber
f_{tt}&=C\mb n-S\mb v+(S\mb n+C\mb v)t \\
\nonumber
&\phantom{aaaaaaaaaa}
+\frac{(\alpha_4C+\beta_4S)\mb n
+(-\alpha_4S+\beta_4C)\mb v}2t^2+O(t^3).
\end{align}
In particular, we have
\begin{equation}\label{eq:ds0}
d^C((s,0)):=-\det(f_{s},f_{ss},f_{tt})|_{t=0}=S\kappa+C\phi''
\end{equation}
and
\begin{align*}
\tilde L&=
\left(\kappa S+\varphi ''C\right)t-\frac{
\beta_3(s)}{2}(\kappa C-\varphi '' S)t^2 +O\left(t^3\right), \\
\tilde M&=
\left(\kappa \varphi'C^2-\theta '\right)t^2+O\left(t^3\right),\quad
\tilde N=\frac{\beta _3}{2}t^2+O\left(t^3\right).
\end{align*}
If we set
\begin{equation}
(\tilde W^L
=)
\pmt{ w_{1,1} & w_{1,2} \\
w_{2,1} & w_{2,2}
}:=
\pmt{G^L & -F^L \\ -F^L & E^L}\pmt{\tilde L & \tilde M \\ \tilde M & \tilde N},
\end{equation}
then 
\begin{align}\label{eq:w12}
w_{1,1}&=\Big(S\theta'\phi'+\kappa S(C_2-C^2\phi'^2)+\phi''CC_2\Big)t^3
+O(t^4), \\ \nonumber
w_{1,2}&=
-\frac{1}{2} \varphi'\beta _3 St^3 +O\left(t^4\right), \\ \nonumber
w_{2,1}&=
\Big(
(\theta'-\kappa \phi'C^2)(-1+\phi'^2)-\phi'S(\kappa S+\phi''C)
\Big)t^2 
+O(t^3), \\ \nonumber
w_{2,2}&=
-\frac{\beta_3}{2} \left(\varphi'^2-1\right)t^2+
\frac{1}3
\Big(
3\phi' S(\theta'-\phi' \kappa C^2)+\beta_4(1-\phi'^2)
\Big)t^3+O(t^4)
\end{align}
hold. 
In particular,  the square of the difference of the two eigenvalues of
$\tilde W^L$ satisfies
\begin{equation}\label{eq:F2759}
(\op{trace}\,\tilde W^L(s,t))^2-4\det(\tilde W^L(s,t))=
\frac{\beta _3^2 \left(\varphi'^2-1\right)^2}{4}t^4 +O(t^5).
\end{equation}
Substituting $\phi'(0)=1$, we have
\begin{align}
\label{eq:K2759}
\det(\tilde W^L(0,t))
&=-\frac{\beta _3(0) S_{0}^2}{2}
\Big(\kappa (0)S_0+\varphi ''(0)C_0\Big)t^5+O\left(t^6\right),\\
\label{eq:W2764}
\op{trace}(\tilde W^L(0,t))
&=
\frac12 \Big (2S_0(-\kappa(0)+2\theta'(0))+
(C_0+C_{3,0})\phi''(0)\Big )t^3+O(t^4),
\end{align}
where
$$
C_0:=\cos \theta(0),\quad S_0:=\sin \theta(0),\quad
C_{3,0}=\cos 3\theta(0).
$$

Using the above computations,
we give the proof of Proposition C in the introduction:

\begin{proof}[Proof of Proposition C]
Without loss of generality, we may assume that $s_0=0$ and $p=(0,0)$.
By \eqref{eq:3425}, 
$$
\tilde \nu^L(p)=-C_0\mb e+S_0 \mb n-C_0\mb v
$$
is a normal vector of $f$ at $p$.
Since $\Gamma'(0)$ points in a light-like direction,
$\tilde \nu^L(p)$ is space-like or light-like.
Since
$$
1=C_0^2+S_0^2\ge C_0^2,
$$
the normal vector $\tilde \nu^L(p)$ is space-like (resp. light-like)
if and only if
$S_0\ne 0$ (resp. $S_0=0$).
By \eqref{eq:Delta2804b},
this shows the equivalency of \ref{item:C1} and \ref{item:C2}.

Since $\Gamma'(0)$ points in a light-like direction, we have $\phi'(0)=1$.
So \eqref{eq:D2671} implies
\begin{equation}\label{eq:Delta2804b}
\Delta_L(0,t)=-S_0^2 t^2+O(t^3).
\end{equation}
So, if $i_p$ is greater than $2$,
then $S_0=0$ holds, which implies $i_p\ge 4$.
In particular, either $i_p=2$ or $i_p\ge 4$ holds.

We now prove the last statement: 
So we assume that $\Gamma$ is of type $L$ 
satisfying $\Gamma''(0)\ne \mb 0$.
In this case $\phi''(0)=0$ and
\eqref{eq:ds0} yields that
\begin{equation}\label{eq:ds3}
d^C(p):=S_0\kappa(0).
\end{equation}
Since $\Gamma''(0)\ne \mb 0$ and $\phi''(0)=0$,
the first equation of
\eqref{eq:fss3090} implies $\kappa(0)\ne 0$.
Since the sign of $d^C(p)$ coincides with $\sigma^C(p)$,
\eqref{eq:ds3} implies the equivalency of \ref{item:C2} and 
the condition $\sigma^C(p)\ne 0$ (resp. $\sigma^C(p)=0$).
\end{proof}

The following is an analogue of Lemma~\ref{lem:1688}:

\begin{Lemma}\label{lem:ccrL}
The point $(s_0,0)$ $(s_0\in I)$
is a  cuspidal edge singular point 
$($resp. a cuspidal cross cap  singular point$)$ of
$f$ if and only if $\mu_0(s_0) \ne 0$ $($resp. $\mu_0(s_0)=0$ 
and $\mu'_0(s_0)\ne 0)$. 
\end{Lemma}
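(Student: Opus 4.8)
The plan is to follow the proof of Lemma~\ref{lem:1688} almost verbatim, the only difference being that the moving frame is now the light-like one $\{\mb e(s),\mb n(s),\mb v\}$ and that $f$ is written as in \eqref{eq:fT1215b} rather than as in \eqref{eq:f1452}.

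For the statement about cuspidal edge singular points I would take $P$ to be the plane through $f(s_0,0)=\Gamma(s_0)$ spanned by $\mb n(s_0)$ and $\mb v$. Since $\Gamma'(s_0)=\mb e(s_0)+\phi'(s_0)\mb v$ has non-zero $\mb e(s_0)$-component and $\mb e(s_0)$ does not lie in the plane spanned by $\mb n(s_0)$ and $\mb v$, the plane $P$ is transversal to $\Gamma$ at $\Gamma(s_0)$. Hence, by Proposition~\ref{lem:ce708} together with Remark~\ref{Rem:GI}, the section of $f$ by $P$ is a generalized cusp which can be identified with $\mb c_{s_0}(t)=(X(s_0,t),Y(s_0,t))$, and $(s_0,0)$ is a cuspidal edge singular point if and only if $\mb c_{s_0}$ has an ordinary cusp at $t=0$. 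By \eqref{eq:A}, the curve $t\mapsto(X(s_0,t),Y(s_0,t))$ is the image of $t\mapsto(A(s_0,t),B(s_0,t))$ under a Euclidean rotation, hence a diffeomorphic copy of it, so this holds exactly when $t\mapsto(A(s_0,t),B(s_0,t))$ has an ordinary cusp at $t=0$. Finally, from the expansion of $(A,B)$ displayed just before \eqref{eq:ab1519b}, namely $(A,B)=\tfrac12(1,0)t^2+\tfrac13(0,\mu_0)t^3+O(t^4)$, an ordinary cusp occurs precisely when $\mu_0(s_0)\ne 0$.

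For the statement about cuspidal cross cap singular points I would use the criterion (cf.~\cite{FSUY}, \cite[Section~2]{SUY2}) that $(s_0,0)$ is a cuspidal cross cap singular point if and only if $\rho(s_0)=0$ and $\rho'(s_0)\ne 0$, where $\rho(s):=\det\bigl(\Gamma'(s),\check\nu_E(s,0),(\check\nu_E)_t(s,0)\bigr)$ and $\check\nu_E:=\tilde\nu^E/t$ is the nowhere-vanishing Euclidean normal vector field of $f$ along $\Sigma_f$ (cf.~\eqref{eq:nuE}). By \eqref{eq:3425} one reads off $\check\nu_E(s,0)=\mb w_1(s)$ and $(\check\nu_E)_t(s,0)=-\tfrac{\beta_3(s)}{2}\mb w_2(s)$, where $\mb w_1(s):=-\phi' C\,\mb e+S\,\mb n+C\,\mb v$ and $\mb w_2(s):=\phi' S\,\mb e+C\,\mb n-S\,\mb v$ (with $C=\cos\theta(s)$, $S=\sin\theta(s)$). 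A direct computation in the orthonormal basis $\{\mb e,\mb n,\mb v\}$ gives $\det(\Gamma',\mb w_1,\mb w_2)=-(1+\phi'^2)$, and since $\beta_3=2\mu_0$ (cf.~\eqref{eq:ab1519b}) we obtain $\rho(s)=\mu_0(s)\bigl(1+\phi'(s)^2\bigr)$, that is, $\rho$ is $\mu_0$ times a positive smooth function. Therefore $\rho(s_0)=0$ if and only if $\mu_0(s_0)=0$, and when $\mu_0(s_0)=0$ one has $\rho'(s_0)=\mu_0'(s_0)\bigl(1+\phi'(s_0)^2\bigr)$, so $\rho'(s_0)\ne 0$ if and only if $\mu_0'(s_0)\ne 0$. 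This proves the assertion for cuspidal cross caps.

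The computations involved are routine; the only spot where the argument genuinely departs from the proof of Lemma~\ref{lem:1688} is that the auxiliary determinant $\det(\Gamma',\mb w_1,\mb w_2)$ now equals $-(1+\phi'^2)$ rather than a positive constant. It is still nowhere vanishing — which is precisely the manifestation of the transversality of the section plane to the light-like curve $\Gamma$ — so I do not anticipate any substantive obstacle beyond this bookkeeping.
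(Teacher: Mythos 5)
Your proposal is correct and follows essentially the same route as the paper: the cuspidal edge part is the section-by-a-transversal-plane argument via Proposition~\ref{lem:ce708} (with the extra transversality check needed because $\Gamma'\ne\mb a_0$ here), and the cuspidal cross cap part uses the same criterion with $\check\nu_E=\hat\nu_E/t$, arriving at the identical formula $\rho(s)=\mu_0(s)\bigl(1+\phi'(s)^2\bigr)$. The computation $\det(\Gamma',\mb w_1,\mb w_2)=-(1+\phi'^2)$ checks out, so no gaps.
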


\begin{proof}
The statement for cuspidal edge singular points
can be proved by
imitating the proof
of Lemma~\ref{lem:1688}.
To prove the assertion for cuspidal cross caps,
we set
$$
\psi(s):=\det(\Gamma'(s),\check \nu_E(s,0),(\check \nu_E)_t(s,0))
$$
and (cf. \eqref{eq:3425})
$$
\check \nu_E:=\frac{\hat\nu_E}{t}=
(-\phi'C\mb e+S \mb n+C\mb v)
-\frac{\beta_3}2(\phi'S\mb e+C\mb n-S\mb v) t+O(t^2).
$$
By \eqref{eq:L2-2695}, we have
$\Gamma'=f_s=\mb e+\varphi'\mb v$.
Using the relation $\beta_3=2\mu_0$, we have
$$
\psi(s)=\mu_0(s)(1+\phi'(s)^2).
$$
So $\psi(s_0)=0$ if and only if $\mu_0(s_0)=0$.
If $\mu_0(s_0)=0$, then $\psi'(s_0)=0$ holds if and only if
$\mu'_0(s_0)=0$. 
So imitating the proof of  Lemma \ref{lem:1688},
we obtain the assertion for cuspidal cross caps.
\end{proof}

We next prepare the following:

\begin{Lemma}\label{lem:2775}
Let $\Gamma:I\to \Lo^3$ be a regular curve 
such that $\Gamma'(0)$ is a light-like vector, where
we assume $0\in I$.
Let $f(s,t)$ be a generalized cuspidal edge along $\Gamma$
and $p:=(0,0)$ a cuspidal edge singular point of $f$. 
If the order $i_p$ is equal to $2$,
then umbilical points of $f$ never accumulate at 
$p$.
\end{Lemma}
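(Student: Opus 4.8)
The plan is to read off, from the explicit asymptotic expansion of the unnormalized Weingarten matrix $\tilde W^L$ in the light-like setting of Subsection~4.1, that one fixed off-diagonal entry of $\tilde W^L$ is nonzero at every regular point in a small neighbourhood of $p$. By Definition~\ref{def:umbilics}, a regular point is umbilical only if $W^L$ is a diagonal matrix there; on the regular set $W^L$ equals $\tilde W^L$ up to the nonzero scalar factor $1/(\Delta_L\sqrt{|\Delta_L|})$, so at a regular point where $\tilde W^L$ has a nonzero off-diagonal entry the matrix $W^L$ is not diagonal, hence the point is not umbilical. Thus it suffices to produce such an entry near $p$.

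First I would put $f$ in the normal form \eqref{eq:fT1215b} (legitimate since $\Gamma'(0)$ is light-like, so $\phi'(0)=1$), making \eqref{eq:w12} applicable. The hypothesis $i_p=2$ forces $S_0:=\sin\theta(0)\neq 0$ by \eqref{eq:Delta2804b} (equivalently by Proposition~C), so $\Delta_L$ is nonzero off $\Sigma_f$ near $p$ and $f$ is time-like there. Since $p$ is a cuspidal edge singular point, Lemma~\ref{lem:ccrL} gives $\mu_0(0)\neq 0$, whence $\beta_3(0)=2\mu_0(0)\neq 0$ by \eqref{eq:ab1519b}. The entry to use is $w_{1,2}$: by \eqref{eq:w12}, $w_{1,2}(s,t)=-\tfrac12\,\phi'(s)\beta_3(s)\sin\theta(s)\,t^3+O(t^4)$, so writing $w_{1,2}(s,t)=t^3\bigl(g(s)+t\,h(s,t)\bigr)$ with $g(s):=-\tfrac12\phi'(s)\beta_3(s)\sin\theta(s)$ and $h$ smooth, one has $g(0)=-\mu_0(0)S_0\neq 0$. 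By continuity $g$ is nonzero for $s$ near $0$, and after shrinking the $t$-interval, $w_{1,2}(s,t)\neq 0$ for all $(s,t)$ in a neighbourhood $V_p$ of $p$ with $t\neq 0$.

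To conclude: since $\Sigma_f=\{t=0\}$, every regular point of $f$ near $p$ satisfies $t\neq 0$, hence $w_{1,2}\neq 0$ there, so $W^L$ is not diagonal; therefore $V_p$ contains no umbilical point of $f$ off $\Sigma_f$, and in particular umbilical points cannot accumulate at $p$. I do not anticipate a genuine obstacle, but two points deserve care. The first is the choice of off-diagonal entry: the entry $w_{2,1}$ from \eqref{eq:w12} has leading $t^2$-coefficient equal to $-S_0\,d^C(p)$ at $s=0$, which degenerates exactly in the neutral case $\sigma^C(p)=0$, so one is forced to use $w_{1,2}$, whose leading coefficient $-\mu_0(0)S_0$ is governed solely by the cuspidal-edge and $i_p=2$ hypotheses. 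The second is the routine but necessary step of upgrading nonvanishing of a Taylor coefficient at $s=0$ to nonvanishing of the function on a two-dimensional neighbourhood, carried out by the factorization $w_{1,2}=t^3(g+th)$ above. (The argument deliberately says nothing about quasi-umbilics --- points where the two principal curvatures coincide but $W^L$ is not diagonal --- which is consistent with Proposition~E, since such points can accumulate at light-like singular points.)
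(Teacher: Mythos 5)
Your proposal is correct and follows essentially the same route as the paper's proof: both arguments hinge on the expansion $w_{1,2}=-\phi'\mu_0\sin\theta\,t^3+O(t^4)$ from \eqref{eq:w12}, with $\sin\theta(0)\ne 0$ forced by $i_p=2$ and $\mu_0(0)\ne 0$ by the cuspidal edge hypothesis, so that the off-diagonal entry of $\tilde W^L$ cannot vanish at nearby regular points. The paper phrases this as a contradiction against a sequence of umbilics converging to $p$, whereas you factor $w_{1,2}=t^3(g+th)$ to get nonvanishing on a punctured neighbourhood directly; the two are equivalent.
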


\begin{proof}
Suppose $\{(s_k,t_k)\}_{k=1}^\infty$ is a sequence of umbilical
 points of $f$
which  converges to $p:=(0,0)$. 
Then we have $w_{12}(s_k,t_k)=0$.
Since $\beta_3=2\mu_0\ne 0$
and $\phi'(0)=1$, 
the second equation of \eqref{eq:w12} implies
$$
0=w_{12}(s_k,t_k)=-\mu_0(s_k) \sin \theta(s_k) t_k^3+O(t_k^4).
$$
However, since $\sin\theta(0)\ne 0$ and $\mu_0(0)\ne 0$,
the right-hand side does not vanish when $k$ is 
sufficiently large, a contradiction.
\end{proof}

We fixed a space-like plane $\mathcal E^2$
in $\Lo^3$ and is regarding it as the $xy$-plane.
If we replace another space-like plane $\tilde{\mathcal E}^2$,
then we have the two orthogonal projections:
$$
\pi:\Lo^3\to {\mathcal E}^2, \qquad
\tilde \pi:\Lo^3\to \tilde{\mathcal E}^2.
$$

\begin{Proposition}\label{Cor:I1}
Even if we replace the
orthogonal projection $\pi$
by $\tilde \pi$,
the condition that $\sin \theta(s)=0$ 
does not change, that is,
this condition
is invariant under the choice of
the orthogonal projection to a space-like plane in $\Lo^3$.
\end{Proposition}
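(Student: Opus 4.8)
The plan is to show that the condition $\sin\theta(s)=0$ is equivalent to the \emph{intrinsic} condition that the singular point $(s,0)$ is light-like (Definition~\ref{def:causality712b}), a statement which does not refer to the auxiliary space-like plane $\mathcal E^2$ at all. The computational input is the identity
\begin{equation*}
\inner{\Gamma'(s)}{f_{tt}(s,0)}=\phi'(s)\sin\theta(s).
\end{equation*}
This drops out of the formulas already assembled in this subsection: at $t=0$ one has $f_s=\mb e+\phi'\mb v$ by \eqref{eq:L2-2695} and $f_{tt}=C\mb n-S\mb v$ by \eqref{eq:fss3090}, and since $\{\mb e,\mb n,\mb v\}$ is orthonormal with $\mb v$ time-like, all cross terms cancel except $-S\phi'\inner{\mb v}{\mb v}=\phi' S$. (Equivalently, it is the coefficient of $t$ in the expansion of $F^L$ displayed near \eqref{eq:D2671}, since $F^L_t(s,0)=\inner{\Gamma'(s)}{f_{tt}(s,0)}$.) In the setting where the proposition is to be read, $\Gamma'(s)$ points in a light-like direction, so $\phi'(s)=\pm1\neq0$ (indeed $\phi'\equiv1$ for a curve of type $L$, and $\phi'(0)=1$ by \eqref{eq:gc2116}); hence we may cancel $\phi'(s)$ and obtain $\sin\theta(s)=0\iff\inner{\Gamma'(s)}{f_{tt}(s,0)}=0$.

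Next I would pass from this pairing to the limiting normal. By \eqref{tnuL2} we have $\hat\nu^L(s,0)=\Gamma'(s)\times_L f_{tt}(s,0)$, and the Lorentzian Lagrange identity — the analogue of the computation in \eqref{eq:inner387} — gives
\begin{align*}
\inner{\hat\nu^L(s,0)}{\hat\nu^L(s,0)}
&=-\Big(\inner{\Gamma'(s)}{\Gamma'(s)}\inner{f_{tt}(s,0)}{f_{tt}(s,0)}-\inner{\Gamma'(s)}{f_{tt}(s,0)}^2\Big)\\
&=\inner{\Gamma'(s)}{f_{tt}(s,0)}^2,
\end{align*}
the first product vanishing because $\Gamma'(s)$ is light-like. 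Thus $\sin\theta(s)=0$ precisely when $\hat\nu^L(s,0)$ is a light-like vector, i.e.\ when the singular point is light-like; the same conclusion is already recorded in the proof of Proposition~C, where the Lorentzian square-norm of the limiting normal comes out to be $\sin^2\theta$ at a light-like point of $\Gamma$. Now being a light-like singular point is a property of the map $f$ and the metric of $\Lo^3$ alone — it is the statement that the normal line of the frontal $f$ there is light-like — so it is insensitive to the space-like plane used to build the representation \eqref{eq:fT1215b}. Running the same argument with $\tilde\pi$ in place of $\pi$ gives ``$\sin\tilde\theta=0$ at a point $\iff$ that point is light-like'', and comparing the two characterizations yields $\sin\theta(s)=0$ exactly where $\sin\tilde\theta=0$, which is the assertion.

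The first step is a routine unwinding of formulas, so the substantive content is the reinterpretation in the second step. The one place to be attentive is that the identity $\inner{\Gamma'(s)}{f_{tt}(s,0)}=\phi'(s)\sin\theta(s)$ carries no information where $\phi'(s)=0$, which is why the statement belongs to curves of type $L$ (or to a neighborhood of a light-like point of $\Gamma$) — and this is not an accident, since it is exactly light-likeness of $\Gamma'(s)$ that makes the Lagrange identity collapse to a perfect square. A minor book-keeping point is that $\pi$ and $\tilde\pi$ equip $\Gamma$ with different arc-length parameters, so ``$\sin\theta(s)=0$'' should be read as a condition on points of the image curve $\Gamma(I)$ rather than on parameter values; once phrased this way the light-likeness characterization makes the independence of the projection immediate, with no further estimate needed.
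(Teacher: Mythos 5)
Your argument is correct, but it is not the route the paper takes. The paper's proof is a two-line reduction: by Proposition \ref{prop:1477} the order $i_p$ is an invariant of the generalized cuspidal edge (this in turn rests on Lemma \ref{lem:860}), and the proof of Proposition C shows that, at a light-like point of $\Gamma$, $i_p\ge 3$ holds if and only if $\sin\theta=0$; since $i_p$ does not see the projection, neither does the vanishing of $\sin\theta$. You instead identify $\sin\theta(s)=0$ with light-likeness of the limiting normal $\hat\nu^L(s,0)=\Gamma'(s)\times_L f_{tt}(s,0)$, via the identity $\inner{\Gamma'(s)}{f_{tt}(s,0)}=\phi'(s)\sin\theta(s)$ and the Lorentzian Lagrange identity (the computation of \eqref{eq:inner387}), which collapses to $\inner{\hat\nu^L}{\hat\nu^L}=\inner{\Gamma'}{f_{tt}}^2$ precisely because $\Gamma'$ is light-like. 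Both proofs ultimately lean on the same computation recorded in the proof of Proposition C (there the paper finds $\inner{\tilde\nu^L(p)}{\tilde\nu^L(p)}=S_0^2$), but your invariant --- the causal type of the normal direction --- is manifestly independent of the coordinate system, so you do not need the order-invariance machinery of Proposition \ref{prop:1477} and Lemma \ref{lem:860} at all; that makes your argument slightly more self-contained. Two things you handle well that the paper leaves implicit: the restriction to (neighborhoods of) light-like points of $\Gamma$, where $\phi'\ne 0$ and where the Lagrange identity degenerates to a perfect square, and the remark that $\pi$ and $\tilde\pi$ induce different arc-length parameters, so the condition must be compared pointwise on the image of $\Gamma$. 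Both caveats apply equally to the paper's own proof, whose appeal to ``the proof of Proposition C'' is likewise only valid at light-like points of $\Gamma$.
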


\begin{proof}
As shown in
Proposition \ref{prop:1477}, the order $i_p$ is an invariant of
generalized cuspidal edges, and $i_p\ge 3$ holds if and only if
$\sin \theta=0$ at the point $p$, 
as shown in the proof of Proposition C.
So we obtain the conclusion.
\end{proof}

Using this, we can prove Proposition E in the introduction:

\begin{proof}[Proof of Proposition E]
Let $f:\mc U_f\to \Lo^3$ be a generalized cuspidal edge along a
regular curve $\Gamma:I\to \Lo^3$.
Suppose that $p\in \mc U_f$ is a cuspidal edge singular point of $f$.
Without loss of generality, we may assume that $0\in I$ 
and $p:=(0,0)$.
If $\Gamma'(0)$ is not a light-like vector,
$p$ is not an accumulation point of the set of umbilics 
nor of the set of quasi-umbilics by \ref{item:A9} of Theorem A.
So we may assume that
$\Gamma'(0)$ is a light-like vector.
In this case, umbilical points cannot accumulate at $p$ by Lemma~\ref{lem:2775}.
\end{proof}

We next prove the following:

\begin{Proposition}\label{prop:LG}
Let $p:=(0,0)$ $(0\in I)$ be a 
cuspidal edge singular point of $f$ of order $2$
such that $\Gamma'(0)$ points in a 
light-like direction. 
Then there exists a neighborhood $V_p$ of $p$
such that the following assertions hold:  
\begin{enumerate}
\item The normal vector of  $f$ at $p$
is time-like.
\item If $\sigma^C(p)\ne 0$, 
then $K^L$ is unbounded on $V_p$. 
Moreover, on one side of $\Sigma_f\cap V_p$, the
principal curvatures of $f$ are real-valued,
and on the other side they are non-real-valued.
In this situation, the two principal curvatures are 
both unbounded on $V_p$.
\item If $\sigma^C(p)\ne 0$ and also
$\inner{\Gamma'(0)}{\Gamma''(0)}\ne 0$ $($that is, $\phi''(0)\ne 0)$, 
then the set of quasi-umbilical points of $f$
accumulates at $p$ and
lie in one side of $V_p\setminus \Sigma_f$.
\end{enumerate}
\end{Proposition}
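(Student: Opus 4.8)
The plan is to extract everything from the expansions \eqref{eq:L2-2695}--\eqref{eq:W2764} already computed in Section~4.1, specialised at $\phi'(0)=1$; note that the general formulas of Section~2 do not apply here, since $\Gamma'(0)=\mb e(0)+\mb v\neq\mb a_0(0)$. The recurring point will be that $\phi'(0)=1$ kills several ``generic'' leading coefficients: at $s=0$ the $t^2$-coefficient $-\tfrac12\beta_3(\phi'^2-1)$ of $w_{2,2}$ and the $t^4$-coefficient $\tfrac14\beta_3^2(\phi'^2-1)^2$ of $g:=(\op{trace}\tilde W^L)^2-4\det\tilde W^L$ both vanish, so the controlling behaviour drops one order in $t$. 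We may take $p=(0,0)$; throughout $\beta_3(0)=2\mu_0(0)\neq0$ since $p$ is a cuspidal edge singular point (Lemma~\ref{lem:ccrL}), and $i_p=2$ forces $\sin\theta(0)\neq0$ exactly as in the proof of Proposition~C (via \eqref{eq:Delta2804b}). For (1) I would read off from \eqref{eq:3425} and $\phi'(0)=1$ that the limiting normal is $\hat\nu^L(p)=-\cos\theta(0)\,\mb e(0)+\sin\theta(0)\,\mb n(0)-\cos\theta(0)\,\mb v$, whence $\inner{\hat\nu^L(p)}{\hat\nu^L(p)}=\sin^2\theta(0)>0$; so the limiting tangent plane, hence $f$, is time-like at $p$. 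Since the $t^2$-coefficient of $\Delta_L$ in \eqref{eq:D2671} equals $\cos^2\theta(0)(1-\phi'(0)^2)-\sin^2\theta(0)=-\sin^2\theta(0)\neq0$, after shrinking $V_p$ one has $\Delta_L<0$, $\epsilon=+1$, and $K^L=-\det\tilde W^L/\Delta_L^{3}$, $2H^L=\op{trace}(\tilde W^L)/(\Delta_L\sqrt{|\Delta_L|})$ on $V_p\setminus\Sigma_f$.

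For (2): from \eqref{eq:w12} one gets $\det\tilde W^L(s,t)=c_5(s)t^5+O(t^6)$, where by \eqref{eq:K2759} and \eqref{eq:ds0} $c_5(0)=-\tfrac12\beta_3(0)\sin^2\theta(0)\,d^C(p)$; since $\sigma^C(p)\neq0$ gives $d^C(p)\neq0$, we get $c_5(0)\neq0$, hence $c_5(s)\neq0$ for $s$ near $0$, and together with the $t^2$-coefficient of $\Delta_L$ being nonzero this shows $K^L(s,t)$ has a nonvanishing $1/t$-term; thus $K^L$ is unbounded on $V_p$ (and changes sign across $\Sigma_f$). For the reality dichotomy, the sign of $(H^L)^2-K^L$ equals the sign of $g$; since $g$ vanishes to order $4$ in $t$ we may write $g=t^4G$ with $G$ smooth, $G(s,0)=\tfrac14\beta_3(s)^2(\phi'(s)^2-1)^2\geq0$, and $G_t(0,0)=-4c_5(0)=2\beta_3(0)\sin^2\theta(0)\,d^C(p)\neq0$. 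Near $p$, $G<0$ forces $G_t(0,0)\,t<0$ (because $G(s,0)\geq0$ and $G_t$ is continuous), i.e. a fixed sign of $t$, so $\{g<0\}$ lies in one component of $V_p\setminus\Sigma_f$ and $g>0$ off $\Sigma_f$ on the other; hence the principal curvatures are real-valued on one side and non-real (with $K^L>0$) on a region accumulating at $p$ in the other. Finally $\op{trace}\tilde W^L(0,t)=O(t^3)$ by \eqref{eq:W2764} while $\Delta_L\sqrt{|\Delta_L|}$ has order $t^2|t|$, so $H^L$ is bounded near $p$; as $\lambda_1\lambda_2=K^L$ is unbounded but $\lambda_1+\lambda_2=2H^L$ is bounded, both principal curvatures must be unbounded on $V_p$.

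For (3): a time-like regular point near $p$ is umbilical or quasi-umbilical iff $g=0$, i.e. iff $G=0$, and umbilical iff in addition $\tilde W^L$ is scalar. At $(0,0)$ one has $G(0,0)=0$, $G_t(0,0)\neq0$, and $G_s(0,0)=0$ because $\partial_s[(\phi'^2-1)^2]=4(\phi'^2-1)\phi'\phi''$ vanishes at $s=0$. By the implicit function theorem $\{G=0\}$ is a smooth curve $t=h(s)$ near $(0,0)$ with $h(0)=h'(0)=0$ and $h''(0)=-G_{ss}(0,0)/G_t(0,0)$; since $\phi''(0)\neq0$, $G_{ss}(0,0)=2\beta_3(0)^2\phi''(0)^2>0$, so $h''(0)=-\beta_3(0)\phi''(0)^2/(\sin^2\theta(0)\,d^C(p))\neq0$ has a fixed sign and $h(s)=\tfrac12h''(0)s^2+O(s^3)$. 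Thus $t=h(s)$ accumulates at $p$ and lies entirely on one side of $\Sigma_f$, and on it the two principal curvatures coincide; by Lemma~\ref{lem:2775} (which applies since $i_p=2$) we may shrink $V_p$ so that it contains no umbilical point, so $\tilde W^L$ is never scalar there, whence every point of $t=h(s)$ in $V_p\setminus\Sigma_f$ is quasi-umbilical, giving (3).

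The hard part will be the careful bookkeeping of exactly which leading $t$-coefficients degenerate at $s=0$ because of $\phi'(0)=1$, and --- for (3) --- checking that $\{G=0\}$ is a genuine one-sided parabola. This is precisely where \emph{both} hypotheses are used: $\sigma^C(p)\neq0$ through $G_t(0,0)\neq0$, and $\phi''(0)\neq0$ through $G_{ss}(0,0)>0$ and the fixed sign of $h''(0)$. Dropping either one makes (3) false; e.g.\ for $\Gamma$ of type $L$ one has $\phi'\equiv1$, so $G(s,0)\equiv0$, $\{G=0\}\setminus\Sigma_f=\varnothing$ near $p$, and no quasi-umbilics accumulate at $p$, consistently with Theorem~D.
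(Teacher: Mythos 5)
Your proposal is correct and follows essentially the same route as the paper: (1) via the computation of $\hat \nu^L(p)$ from \eqref{eq:3425} (this is exactly the argument in the paper's proof of Proposition~C, which the paper simply cites here); (2) via the leading coefficient of $\det(\tilde W^L(0,t))$ obtained from \eqref{eq:K2759} and \eqref{eq:ds0}; and (3) via the implicit function theorem applied to $G:=\bigl((\op{trace}\tilde W^L)^2-4\det \tilde W^L\bigr)/t^4$, which is precisely the paper's function $F$ in \eqref{eq:F2868}. Your values $G_t(0,0)=2\beta_3(0)\sin^2\theta(0)\,d^C(p)$, $G_s(0,0)=0$ and $G_{ss}(0,0)=2\beta_3(0)^2\phi''(0)^2$ agree with the paper's, and your use of Lemma~\ref{lem:2775} to rule out umbilics on the zero locus is the same final step.

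Two small caveats. First, your intermediate claim that ``$H^L$ is bounded near $p$'' is false on a two-dimensional neighbourhood: for $s\ne 0$ with $\phi'(s)^2\ne 1$ the $t^2$-coefficient of $w_{2,2}$ in \eqref{eq:w12} does not vanish, so $H^L(s,\cdot)$ blows up like $c(s)/|t|$ with $c(s)\ne 0$. What you actually need, and what \eqref{eq:W2764} gives, is only that $H^L(0,t)$ stays bounded as $t\to 0$; the trace/determinant argument along the slice $s=0$ already forces both principal curvatures to be unbounded on $V_p$, so the conclusion survives once the claim is restricted to $s=0$. Second, your conclusion in (1) --- that $\inner{\hat\nu^L(p)}{\hat\nu^L(p)}=\sin^2\theta(0)>0$, i.e.\ the normal is space-like and the point $p$ is time-like --- contradicts the literal wording of assertion (1) of the Proposition, but it agrees with Proposition~C (order $2$ is equivalent to a space-like normal direction), with Theorem~D~\ref{item:D1}, and with the paper's own proof, which just invokes Proposition~C; the wording of (1) appears to be a misprint, and your version is the correct one.
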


\begin{proof}
(1) follows from Proposition C.
We assume $\sigma^C(p)\ne 0$. 
By \eqref{eq:ds0}, we have 
$$
0\ne d^C(p)=S_0\kappa(0)+C_0\phi''(0).
$$
Since $\beta_3(0)=2\mu_0(0)$,
substituting $\phi'(0)=1$
to \eqref{eq:K2759}, 
we have
\begin{equation}\label{eq:2888}
\det(\tilde W^L(0,t))=\mu_0(0)S^2_0 d^C(p) t^5+O(t^6),
\end{equation}
which implies 
\begin{equation}\label{eq:K2897}
K^L(0,t)=\frac1{S_0^4 t}\Big(\mu_0(0)d^C(p)+O(t)\Big).
\end{equation}
Since $p$ is a cuspidal edge singular point, we have $\mu_0(0) \ne 0$.
Since $\sigma^C(p)\ne 0$, the unboundedness of $K^L$ near the point $p$
follows from \eqref{eq:K2897}.
Finally, we prove (3):
We set
$$
F(s,t):=\frac{
(\op{trace}\tilde W^L(s,t))^2-4\det(\tilde W^L(s,t))}{t^4}.
$$
By \eqref{eq:F2759}, $F$ is a smooth function satisfying
\begin{equation}\label{eq:F2868}
F(s,t)=
\frac{\beta _3^2 \left(\varphi'^2-1\right)^2}{4} +O(t).
\end{equation}
Since $(\op{trace}(\tilde W^L(0,t))^2$ belongs to the class $O(t^6)$,
\eqref{eq:2888} implies that
\begin{equation}\label{eq:F3706}
F(0,t)=-4\frac{\det(\tilde W^L(0,t))}{t^4}+O(t)=4S^2_0 \mu_0(0)d^C((0,0)) t+O(t^2).
\end{equation}
In particular, the two eigenvalues of $\tilde W^L(0,t)$
are 
\begin{itemize}
\item real numbers if $\mu_0(0)d^C((0,0)) t\ge 0$, and
\item non-real numbers if $\mu_0(0)d^C((0,0)) t<0$.
\end{itemize}
By \eqref{eq:K2759}
and \eqref{eq:W2764},
the  
two principal curvatures $\lambda_1,\lambda_2$ have the same order
with respect to $t$.
Then, \eqref{eq:K2897} implies
$\lambda_1$ and $\lambda_2$ are both unbounded,
so (2) is proved.
By \eqref{eq:F3706},
we have
$$
F_t(0,0)=4\mu_0(0)S^2_0 d^C((0,0))(\ne 0).
$$
By the implicit function theorem, there exists a $C^\infty$-function of
$\psi(s)$ near $s=0$ such that 
$F(s,\psi(s))=0$ and $\psi(0)=0$.
Then we have
$$
F_s(0,0)+F_t(0,0)\psi'(0)=0.
$$
In particular, if $(s,\psi(s))$ is not a singular point,
it is a quasi-umbilical  point or an umbilical point.
By \eqref{eq:F2868} with $\phi'(0)=1$, 
we have $F_s(0,0)=0$, and so $\psi'(0)=0$ holds.
So, we have
\begin{equation}
\label{eq:FSS3728}
F_{ss}(0,0)+F_t(0,0)\psi''(0)=0.
\end{equation}
Again, by \eqref{eq:F2868} with $\phi'(0)=1$ and $\beta_3=2\mu_0\ne 0$,
we have 
$$
F_{ss}(0,0)=8\mu_0(0)^2 \varphi''(0)^2\ne 0.
$$
Since $F_t(0,0)\ne 0$, 
\eqref{eq:FSS3728} implies
$\psi''(0)\ne 0$.
Thus, for each $s(\ne 0)$, $(s,\psi(s))$ 
is not a singular point, and so, it
is a quasi-umbilical point or umbilical point.
Since we have shown that (cf. Lemma~\ref{lem:2775})
umbilical points of $f$ never accumulate at $(s,0)$,
the point $(s,\psi(s))$ must be a
quasi-umbilical point lying on
the side of the $s$-axis where the two principal curvatures
are not real.
\end{proof}

\subsection{Light-like cuspidal edges of general type}

\begin{Definition}\label{def:Lcg2887}
Let 
$
f:I\times (-\delta,\delta)\to \R^3
$
be a generalized cuspidal edge
along a regular curve $\Gamma$ of type $L$.
Then $f$ is 
called a {\it light-like cuspidal edge of general type}
if $i_p=2$ 
and $\Gamma''(s)\ne 0$ at each $p:=(s,0)$ ($s\in I$).
\end{Definition}

We prove the following assertion:

\begin{Proposition}\label{prop:2991}
If $f$ is a light-like cuspidal edge of general type, 
then $\sigma^C(p)\ne 0$ for each $p\in \Sigma_f$
and the curvature function $\kappa$ of $\gamma:=\pi\circ \Gamma$
 does not vanish everywhere, that is, $\gamma$ 
has no inflection points in the $xy$-plane.
\end{Proposition}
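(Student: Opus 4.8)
The plan is to read off everything from the normal form and expansions already established in Subsection~4.1. Since $\Gamma$ is of type $L$, we have $\phi(s)=s$ by \eqref{eq:gc2121}, hence $\phi'\equiv 1$ and $\phi''\equiv 0$ on all of $I$; this is the one point where a little care is needed, because in Subsection~4.1 one only assumes $\phi'(0)=1$ a priori, and for a general curve merely tangent to a null direction at $s=0$ the conclusion would fail.

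First I would treat the claim about $\kappa$. Since $\Gamma(s)=f(s,0)$, the first line of \eqref{eq:fss3090} gives $\Gamma''(s)=f_{ss}(s,0)=\kappa(s)\mb n(s)+\phi''(s)\mb v=\kappa(s)\mb n(s)$, using $\phi''\equiv 0$. As $\mb n(s)$ is a unit vector, $\Gamma''(s)\ne\mb 0$ is equivalent to $\kappa(s)\ne 0$. By the definition of a light-like cuspidal edge of general type, $\Gamma''(s)\ne\mb 0$ for every $s\in I$; hence $\kappa(s)\ne 0$ for all $s\in I$, i.e.\ $\gamma$ has no inflection points in the $xy$-plane.

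Next I would handle $\sigma^C$. Writing $S:=\sin\theta$, the expansion \eqref{eq:D2671} with $\phi'=1$ becomes $\Delta_L(s,t)=-S(s)^2 t^2+O(t^3)$, so that the order $i_p=2$ at $p=(s,0)$ is equivalent to $S(s)\ne 0$. Again by the definition of a light-like cuspidal edge of general type, $i_p=2$ at every $p\in\Sigma_f$, hence $\sin\theta(s)\ne 0$ for all $s\in I$. On the other hand \eqref{eq:ds0} with $\phi''=0$ gives $d^C((s,0))=\sin\theta(s)\,\kappa(s)$, a product of two nowhere-vanishing functions by the previous paragraph. Therefore $\sigma^C(p)=\op{sgn}(d^C(s))\ne 0$ for every $p\in\Sigma_f$, which finishes the proof.

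I do not anticipate any genuine obstacle: the proposition is essentially a bookkeeping consequence of the frame field \eqref{eq:FrameL1653}, the normal form \eqref{eq:fT1215b}, and the two derivative computations \eqref{eq:D2671} and \eqref{eq:ds0}, together with $\Gamma''(s)=f_{ss}(s,0)$ via \eqref{eq:fss3090}. The only substantive input is the identity $\phi'\equiv 1$ coming from $\Gamma$ being of type $L$ (equation \eqref{eq:gc2121}); everything else is immediate substitution.
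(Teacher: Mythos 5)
Your proposal is correct and follows essentially the same route as the paper: use $\phi''\equiv 0$ (type $L$) to reduce \eqref{eq:ds0} to $d^C((s,0))=S\kappa$, get $S\ne 0$ from $i_p=2$ via the leading term of $\Delta_L$, and get $\kappa\ne 0$ from $\Gamma''\ne\mb 0$ via the first line of \eqref{eq:fss3090}. The only difference is that you spell out the $\Delta_L=-S^2t^2+O(t^3)$ justification for ``$i_p=2\Rightarrow S\ne 0$'' explicitly, which the paper leaves implicit.
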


\begin{proof}
Since $\Gamma$ is of type $L$, we have $\phi''(s)=0$.
Then,
\eqref{eq:ds0} is reduced to
$d^C((s,0))=S\kappa$.
Since $i_p=2$, we have $S\ne 0$.
Since  $f_{ss}(s,0)=\Gamma''(s)\ne \mb 0$, 
the first equation of
\eqref{eq:fss3090} implies
$\kappa(s)\ne 0$.
\end{proof}

We now prove Theorem~D in the introduction:

\begin{proof}[Proof of Theorem D]
\ref{item:D2} follows from
Proposition \ref{prop:2991}, and
\ref{item:D1} follows from 
Propositions C.
Since $\Gamma$ is of type $L$,
$\phi''(0)=0$ holds.
Then, \eqref{eq:W2764} implies 
$$
|H^L(0,t)|
=
\frac1{2S_0^2}\Big|\kappa(0)-2\theta'(0)\Big|+O(|t|).
$$
Since $\Gamma'(s)$ ($s\in I$)
points in a light-like direction,
$0(\in I)$ can be replaced by any $s\in I$, we have
$$
|H^L(s,t)|
=
\frac1{2S^2}\Big|\kappa(s)-2\theta'(s)\Big|+O(|t|),
$$
where $S=\sin \theta(s)$.
So we obtain \ref{item:D3}.
Using the same method to obtain the expression of
$H^L(s,t)$ from the expression of $H^L(0,t)$, 
the expression $K^L(0,t)$
given in \eqref{eq:K2897} induces
$$
K^L(s,t)=\frac1t\Big(\mu_0S^{-4} d^C((s,0))+O(t)\Big),
$$
which proves \ref{item:D4}.
Since $H_L$ is bounded and $K^L$ is unbounded,
the two principal curvatures $\lambda_i$ ($i=1,2$)
are both unbounded.
Since $\Gamma'(s)$ ($s\in I$)
always points in a light-like direction,
we have (using $\beta_3=2\mu_0$)
$$
\det(\tilde W^L)
=
-{\mu_0\kappa S^3}t^5+O\left(t^6\right),\qquad
\op{trace}(\tilde W^L)
=
-(\kappa-2\theta')S t^3+O(t^4).
$$
Thus, the square of the difference of the two eigenvalues of
$\tilde W^L$ satisfies
$$
(\op{trace}\,\tilde W^L)^2-4\det(\tilde W^L)
=-4\det(\tilde W^L)t^5 +O(t^6)=
2\kappa \mu_0 S^3 t^5+O\left(t^6\right).
$$
Since $f$ is a
light-like cuspidal edge of general type,
$\kappa \mu_0 S^3$ never vanishes on $I$, 
and so \ref{item:D5} and \ref{item:D6} are obtained.
\end{proof}

\section{Proof of Proposition F and Theorem G}

Let $\Gamma:I\to \Lo^3$ 
be a regular curve of type $L$,
and set $\phi(s):=s$ for each $s\in I$.
Then \eqref{eq:G2754} is reduced to
$
\Gamma(s):=(\gamma(s),s)\,\, (s\in I),
$
where $\gamma(s)$ is a regular curve in the $xy$-plane
parametrized by the arc-length.
We let 
$
f:I\times (-\delta,\delta)\to \R^3
$
be a generalized cuspidal edge
satisfying $f(s,0)=\Gamma(s)$ ($s\in I$)
whose singular points are of order four,
that is, $\sin \theta$ is identically zero in the setting of
the previous section.
So, we set
\begin{equation}\label{3606}
\sigma:=\cos \theta \in \{1,-1\}
\end{equation}
and
\begin{equation}\label{eq:fT2499}
f(s,t)=\Gamma(s)+ \sigma\, x(s,t)\mb n(s)+\sigma\, y(s,t)\mb v,
\end{equation}
where
$$
x(s,t)=\int_0^t u \cos \lambda(s,u)du,\quad
y(s,t)=\int_0^t u \sin \lambda(s,u)du,\quad
\lambda(s,t):=\int_0^t \mu(s,u)du.
$$
The formula \eqref{eq:fT2499}
produces all generalized cuspidal edges of order 
greater than or equal to four.
By Lemma~\ref{lem:ccrL},
the singular point $(s,0)$ is a cuspidal edge if and only if $\mu_0(s)\ne  0$.
To obtain information on terms of higher order, we 
start with the computations of $f_s$ and $f_t$;
\begin{align*}
f_s&=\left(\mb e+\mb v\right)-\frac{\sigma \kappa \mb e}{2} t^2 
+\frac{\sigma \beta'_3}{6} \mb v t^3
+\frac{
-\sigma  \kappa \alpha_4\mb e+\sigma  \alpha'_4\mb a_1+
\sigma  \beta'_4\mb v}{24} t^4 +O\left(t^5\right),\\
f_t&=
\sigma\mb n t
+\frac{\sigma \beta _3}{2} \mb v t^2
+\frac{\sigma \alpha _4\mb n+\sigma  \beta_4\mb v}{6} t^3 
+
\frac{\sigma}{24} t^4 
\left(\alpha_5\mb n+\beta _5(s,0)\mb v\right)+O\left(t^5\right).
\end{align*}
The first three terms of the above expansions of $f_t$
and $f_s$ coincide with
those of \eqref{eq:L2-2695} by substituting $\theta'=0$, 
$C=1$ and $S=0$.
We  have
$$
E^L=-\sigma \kappa t^2 -\frac{\sigma  \beta'_3}{3} t^3 
+O\left(t^4\right), \quad
F^L=-\frac{\sigma  \beta_3}{2} t^2 
-\frac{\sigma  \beta_4}{6} t^3 +O\left(t^4\right),\quad
G^L=t^2+O\left(t^4\right)
$$
and
\begin{equation}\label{eq:Delta3152}
\Delta_L=-\frac{\beta_3^2+4\sigma\kappa}{4} t^4 +O\left(t^5\right)
=-(\mu_0^2+\sigma\kappa)t^4+O\left(t^5\right).
\end{equation}
Moreover, we have
$$
\tilde \nu^E=\sigma (-\mb e+\mb v)t-\frac{\sigma \beta_3}2t^2
+\frac{-\sigma\alpha_4\mb e+ \sigma \beta_4 \mb n+(-3\kappa+\sigma\alpha_4)\mb v
}6t^3+O(t^4)
$$
and
\begin{align*}
f_{ss}&=\kappa \mb n
-\frac{\sigma}{2} \left(\kappa '\mb e+\kappa^2\mb n\right)t^2
+\frac{\sigma\beta''_3}{6} t^3 \mb v +O\left(t^4\right), \\
f_{st}&=
-\sigma \kappa\mb e t+\frac{\sigma \beta'_3}{2} \mb v t^2+
\frac{\sigma}{6}   \left(-\kappa \alpha_4\mb e+\alpha'_4\mb n+\beta _4'\mb v\right)t^3
+O(t^4), \\
f_{tt}&=
\sigma  \mb n+\sigma \beta_3\mb v t+\frac{\sigma}2(\alpha_4 \mb n+\beta_4 \mb v)
+\frac{\sigma}6(\alpha_5 \mb n+\beta_5 \mb v)t^3+O(t^4).
\end{align*}
Then we have
\begin{align*}
\tilde L&=
-\frac{\sigma \kappa \beta_3}{2} t^2
+\frac{\sigma }{6}  \left(3 \sigma  \kappa'-\kappa\beta_4\right)t^3+O\left(t^4\right),\\
\tilde M&=
\kappa t^2+\frac{\beta'_3}{2} t^3 +O\left(t^4\right),\quad
\tilde N=
\frac{\beta _3}{2} t^2 +\frac{\beta _4}{3} t^3 +O\left(t^4\right).
\end{align*}
If we set
$
\tilde W^L=(w_{i,j})_{i,j=1,2}$,
then
\begin{align*}
w_{1,1}&=\frac{\sigma}{4} \left(\beta _3\beta _3'+2 \kappa '\right)t^5+O\left(t^6\right), \\
w_{1,2}&=\frac{\sigma}{4} \left(\beta _3^2+4 \sigma\kappa\right)t^4
+\frac{\sigma}{4}\left(2 \sigma  \beta'_3+\beta_3 \beta_4\right)t^5+O\left(t^6\right),\\
w_{2,1}&=
\frac{- \kappa}{4} \left(\beta_3^2+4\sigma \kappa\right)t^4
+
\frac{1}{12}\Big(3 \sigma \beta_3\kappa'-2 \kappa \left(5 \sigma \beta _3'+\beta _3 \beta _4\right)
\Big)t^5+O\left(t^6\right),\\
w_{2,2}&=\frac{\sigma}{12} \left(\beta _3\beta _3'-2\beta _4\kappa\right)t^5 +O\left(t^6\right)
\end{align*}
hold. Using the fact that $\beta_3=2\mu_0$ and $\beta_4=3\mu_1$ 
(cf.~\eqref{eq:ab1519b}), we have
\begin{align}\label{eq:3200}
\det(\tilde W^L)&= 
\sigma \kappa(\mu_0^2+\sigma  \kappa)^2 t^8+O(t^9), \\
\label{eq:3203}
\op{trace}(\tilde W^L)&=
\frac{\sigma}{6} \left(-3\kappa\mu_1 +8 \mu_0 \mu'_0+3 \sigma  \kappa'\right)t^5+O(t^6).
\end{align}
In particular, we have
\begin{align*}\label{eq:3301}
(\op{trace}\,\tilde W^L)^2-4\det(\tilde W^L)
&=-4\det(\tilde W^L)t^8+O(t^9) \\
\nonumber
&= -4\sigma \kappa\left(\mu_0^2+\sigma\kappa\right)^2 t^8+O(t^9).
\end{align*}

\begin{proof}[Proof of Proposition F]
Let $f:\mc U_f\to \Lo^3$ be a cuspidal edge along a curve 
$\Gamma$ of type $L$.
We fix a singular point $p=(s,0)$ ($s\in I$)
arbitrarily and assume $i_p>2$.
Then, by Lemma \ref{lem:2775},
we have $i_p\ge 4$.
By \eqref{eq:Delta3152}, $i_p>4$ if and only if
$\beta_3(s)^2+4\sigma\kappa(s)=0$.
Without loss of generality, we may assume  $\kappa(s)\ge 0$
(if necessary, we can reverse the orientation of the curve).
Since $p$ is a cuspidal edge singular point,
$\beta_3(s)=2\mu_0(s)\ne 0$ (cf. Lemma~\ref{lem:ccrL}), 
and so $\sigma \kappa(s)<0$.
Then we have $\kappa(s)>0$ and $\sigma<0$ follows. 
Since $\kappa_s=\sigma \kappa$,
the map $f$ is of concave type at $p$
and $|\kappa(s)|=\mu_0(s)^2$ holds.
\end{proof}

\begin{proof}[Proof of Theorem G]
Since $(s,0)$ is of order four, $\Delta(s,t)$ does not change sign at $t=0$.
So (a) holds. 
By \eqref{eq:3203}, we have
$$
|H^L|=\frac1{12|\mu_0^2+\sigma \kappa|^{3/2}|t|}
\left|-3\kappa\mu_1 +8 \mu_0 \mu'_0+3 \sigma  \kappa'\right|+O(1),
$$
which implies (b).
By Lemma \ref{lem:ccrL}, $f$ has cuspidal edge singular point
at $p:=(s_0,0)$ if and only if $\mu_0(s_0)\ne 0$, proving (d).
We next consider the behavior of umbilical points of $f$:
Since $f$ is of order four, we have
$
\mu_0^2+\sigma\kappa\ne 0
$
and the facts
$$
w_{1,2}=\sigma \left(\mu_0^2+\sigma\kappa\right)t^4+O(t^5)
=-\sigma \Delta_L +O(t^5)
$$
implies that matrix $\tilde W^L$ cannot be a diagonal matrix at $p$,
and so $p$ cannot be an accumulation point of
a sequence of umbilical points $\{(s_k,t_k)\}_{k=1}^\infty$. 
If not, $t_k\ne 0$ and $\tilde W^L(s_k,t_k)$ is diagonal for each $k$, 
however it is impossible,  because 
$\mu_0^2+\sigma\kappa\ne 0$ at $s=0$ by
\eqref{eq:Delta3152} and $w_{1,2}(0,0)\ne 0$, proving (c).

We now assume that $\Gamma''\ne \mb 0$ on $I$, which implies
the curvature function $\kappa$ of the plane curve $\gamma$
does not vanish. Without loss of generality, we may assume that $\kappa>0$.
By \eqref{eq:3200}, we have
\begin{equation}\label{eq:kl3298}
K^L=
\op{sgn}(\mu_0^2+\sigma  \kappa)
\frac{\sigma \kappa}{|\mu_0^2+\sigma  \kappa|t^4}+O(t^{-3})=
\frac{\sigma \kappa}{(\mu_0^2+\sigma  \kappa)t^4}+O(t^{-3}).
\end{equation}
If $f$ is space-like, then $\mu_0^2+\sigma  \kappa$ 
is negative, so $\sigma  \kappa<0$.
In particular, $K^L$ is positive,
and
$f$ is of concave type, proving \ref{item:G1}.
On the other hand, 
if $f$ is time-like, then $\mu_0^2+\sigma  \kappa$ 
is positive, so the sign of $K^L$ coincides with
that of $\sigma \kappa$.
Since
$\mu_0^2+\sigma\kappa$ and $\kappa$ are positive, 
it happens either
$\sigma>0$ or ``$\sigma<0$ and 
$\mu_0^2>\kappa$". So we obtain the first part of \ref{item:G2}
(the second part of \ref{item:G2} is the case that $(s,0)$ 
is not a cuspidal edge singular point, which is discussed 
in the proof of Corollary \ref{cor:3995} below).
We prove \ref{item:G3}: The fact $K^L$ diverges is a
consequence of \eqref{eq:kl3298} since $\mu_0^2+\sigma  \kappa\ne 0$.
Since $|H^L|$ is of order $1/|t|$, 
by \eqref{eq:3200} and
\eqref{eq:3203},
the absolute values  $|\lambda_i|$ ($i=1,2$) of two principal curvatures
are of order $1/|t|$, and are unbounded. 
Finally, we prove \ref{item:G4}:
Since $\beta_3=2\mu_0$, \eqref{eq:Delta3152} implies
\begin{equation}\label{eq:3928}
\Delta_L=-(\mu_0^2+\sigma\kappa) t^4 +O\left(t^5\right).
\end{equation}
Since $f$ is of order four, we have
$
\mu_0^2+\sigma\kappa\ne 0
$
and so $\op{trace}(\tilde W^L)^2-4\det(\tilde W^L)$ never vanishes near $t=0$,
which implies \ref{item:G4}.
\end{proof}

\begin{Corollary}\label{Cor:I2}
The sign of the function
$
\mu_0^2+\sigma\kappa
$
and
the zero set of
the function
$-3\kappa\mu_1 +8 \mu_0 \mu'_0+3 \sigma  \kappa'$
are both independent of
the choice of the projection of $\Gamma$ to a space-like plane $\mathcal E^2$.
\end{Corollary}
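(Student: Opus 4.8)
The plan is to reduce both assertions to intrinsic data of $f$ near its singular set: the causal type of $f$ on the regular set, and the precise rate at which $H^L$ blows up along $\Sigma_f$; neither of these refers to any auxiliary plane. Fix $\Gamma$ of type $L$ and a generalized cuspidal edge $f$ of order four along it. Replacing the space-like plane $\mathcal E^2$ by another one $\tilde{\mathcal E}^2$ and reparametrizing by the arc-length of $\tilde\gamma:=\tilde\pi\circ\Gamma$ yields a second expression of the \emph{same} map $f$ in the form \eqref{eq:fT2499}, with data $\tilde\kappa,\tilde\mu_0,\tilde\mu_1,\tilde\sigma$; this is legitimate because the order $i_p=4$ is an invariant of $f$ (Proposition~\ref{prop:1477}), whence $i_p\ge3$ and so $\sin\theta\equiv0$ (cf.~Proposition~\ref{Cor:I1}), which is exactly the hypothesis behind \eqref{eq:fT2499}. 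The two parameter systems $(s,t)$ and $(\tilde s,\tilde t)$ on the domain of $f$ are related by an admissible coordinate change (Definition~\ref{def:AD880}): on $\Sigma_f$ one has a reparametrization $\tilde s=\tilde s(s)$, and $\tilde t=a(s)\,t+O(t^2)$ with $a(s)\ne0$. Under such a change $\Delta_L$ is multiplied by the square of the Jacobian, and $H^L$ is preserved up to its sign ambiguity, so $(H^L)^2$ is unchanged.

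For the sign of $\mu_0^2+\sigma\kappa$: by \eqref{eq:Delta3152} we have $\Delta_L(s,t)=-(\mu_0^2+\sigma\kappa)t^4+O(t^5)$, so on both sides of $\Sigma_f$ near a point $p$ the identifier $\Delta_L$ is nonzero with common sign $-\op{sgn}(\mu_0^2+\sigma\kappa)$, using that $i_p=4$ is even. But the sign of $\Delta_L$ on the regular set is exactly the indicator of the causal type of $f$ (space-like where $\Delta_L>0$, time-like where $\Delta_L<0$, cf.~\eqref{eq:inner387}), an intrinsic property of $f$ which by assertion (a) of Theorem~G is common to both sides. Hence $\op{sgn}(\mu_0^2+\sigma\kappa)=\op{sgn}(\tilde\mu_0^2+\tilde\sigma\tilde\kappa)$. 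Equivalently, one may apply Lemma~\ref{lem:860} to $\phi:=\Delta_L$: its vanishing order ($=4$) and, this being even, the sign of $\partial_t^4\Delta_L(s,0)=-24(\mu_0^2+\sigma\kappa)$ are independent of the admissible coordinate system.

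For the zero set of $g:=-3\kappa\mu_1+8\mu_0\mu'_0+3\sigma\kappa'$: substituting \eqref{eq:3200}, \eqref{eq:3203} and \eqref{eq:Delta3152} into $H^L=\op{trace}(\tilde W^L)/(2\Delta_L\sqrt{|\Delta_L|})$ (cf.~\eqref{eq:KH584b}) gives
\[
\bigl(H^L(s,t)\bigr)^2=\frac{g(s)^2}{144\,|\mu_0^2+\sigma\kappa|^{3}\,t^2}+O\bigl(|t|^{-1}\bigr),
\]
so that $\rho(s):=\lim_{t\to0}t^2\bigl(H^L(s,t)\bigr)^2=g(s)^2/\bigl(144\,|\mu_0^2+\sigma\kappa|^{3}\bigr)$ is well defined and vanishes exactly on the zero set of $g$. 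Since $(H^L)^2$ is intrinsic and $\tilde t=a(s)\,t+O(t^2)$ with $a(s)\ne0$, the analogous limit computed from $\tilde{\mathcal E}^2$ satisfies $\tilde\rho(\tilde s)=a(s)^2\rho(s)$; as $a(s)^2>0$ everywhere, $\rho$ and $\tilde\rho$ vanish at corresponding parameter values, so $g$ and $\tilde g$ have the same zero set once $\Sigma_f$ is identified via $s\leftrightarrow\tilde s$. One can alternatively observe that assertion (b) of Theorem~G already identifies the interior of $\{g=0\}$ with the locus where $H^L$ is bounded near $\Sigma_f$, and use $\rho$ only to handle the remaining (possibly isolated) zeros of $g$.

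The main obstacle is the structural input used throughout: that two choices of space-like plane produce expressions \eqref{eq:fT2499} of $f$ related by an admissible coordinate change with $\tilde t=a(s)\,t+O(t^2)$, $a(s)\ne0$. This rests on the uniqueness inherent in the normal form of Appendix~B (Proposition~\ref{prop:fstE}): the half-arc-length parameter of the planar sections cut by one plane depends smoothly on that cut by the other, vanishes to first order in $t$, and has nonvanishing first $t$-derivative along $\Sigma_f$. Granted this, the rest is a matter of reading off leading coefficients from the asymptotic expansions already derived in this section.
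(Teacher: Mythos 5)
Your proof is correct and follows essentially the same route as the paper: the paper also argues that replacing the space-like plane amounts to an admissible coordinate change and that the leading coefficients of the asymptotic expansions of the relevant invariant quantities along $\Sigma_f$ are unchanged under such a change. The only (harmless) variation is that you extract the sign of $\mu_0^2+\sigma\kappa$ from the leading term of $\Delta_L$ (equivalently, the causal type, via Lemma~\ref{lem:860}) rather than from $K^L$, which if anything is the cleaner choice since the top coefficient of $K^L$ mixes $\op{sgn}(\sigma\kappa)$ with $\op{sgn}(\mu_0^2+\sigma\kappa)$.
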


\begin{proof}
In fact, the top-terms of the asymptotic expansions of 
$K^L(s,t)$ and $|H^L(s,t)|$ along the curve $\Gamma$ 
with respect to the parameter $t$ do not depend on
the choice of an admissible coordinate change (cf. 
Definition \ref{def:GE} and
Lemma \ref{lem:860}),
and replacement of the space-like plane is
obtained only by
such an admissible coordinate change.
So, we obtain the conclusion.
\end{proof}

If $f(s,t)$ does not give cuspidal edge at $(s,t)=(s_0,0)$,
the following assertion holds:

\begin{Corollary}\label{cor:3995}
Let $f$ be a generalized cuspidal edge 
of order four along a regular curve $\Gamma$
of type $L$. 
If $p:=(s_0,0)$ $(s_0\in I)$ is not a cuspidal edge singular point
and $\Gamma''(s_0)$ does not vanish $($i.e.~$\kappa(s_0)\ne 0)$, 
then  
\begin{enumerate}
\item 
$f$ is space-like $($resp. time-like$)$ near $p$ if
and only if
$f$ is of concave type 
$($resp. convex type, see 
Definition~\ref{def:CC613b}$)$,
that is, 
$\sigma \kappa(s_0)<0$ $($resp. $\sigma \kappa(s_0)>0)$ holds,
\item
the Gaussian curvature $K^L$ diverges to $\infty$
when $(s,t)$ tends to $p$,  and
\item umbilical points and quasi-umbilical points never accumulate at $p$.
\end{enumerate}
\end{Corollary}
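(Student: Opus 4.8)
The plan is to feed the hypotheses into the order-four expansions already obtained in this section. First I would note that, by Lemma~\ref{lem:ccrL}, the assumption that $p:=(s_0,0)$ is not a cuspidal edge singular point means exactly $\mu_0(s_0)=0$. Since $f$ is of order four, \eqref{eq:Delta3152} gives $\mu_0(s_0)^2+\sigma\kappa(s_0)\ne 0$, and together with $\mu_0(s_0)=0$ this says $\sigma\kappa(s_0)\ne 0$ (consistent with the standing hypothesis $\kappa(s_0)\ne 0$). Also, because $i_p=4<\infty$, the leading term $-(\mu_0^2+\sigma\kappa)t^4$ of $\Delta_L$ does not vanish for $t\ne 0$ near $p$, so every regular point of $f$ in a small neighborhood of $p$ is space-like or time-like; in particular $K^L$, $H^L$, $W^L$ and the principal curvatures are defined there.

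For (1), I would read off the causal type from \eqref{eq:Delta3152}: by (a) of Theorem~G the sign of $\Delta_L$ is the same on both sides of $\Sigma_f$ near $p$, and it equals the sign of $-(\mu_0^2+\sigma\kappa)$ on that neighborhood; since $\mu_0(s_0)=0$, after shrinking the neighborhood this is the sign of $-\sigma\kappa(s_0)$. Hence $f$ is space-like (resp. time-like) near $p$ exactly when $\sigma\kappa(s_0)<0$ (resp. $\sigma\kappa(s_0)>0$). By Definition~\ref{def:CC613b} these are the conditions that $\kappa_s^E(s_0)<0$ (resp. $>0$), i.e. that $f$ be of concave (resp. convex) type at $p$, because $\kappa_s^E$ has the same sign as $\sigma\kappa$ in the present set-up (cf. the proof of Proposition~F). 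This proves (1).

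For (2), I would rewrite \eqref{eq:kl3298} as $K^L=t^{-4}\bigl(\tfrac{\sigma\kappa}{\mu_0^2+\sigma\kappa}+O(t)\bigr)$ and observe that, as $(s,t)\to p$, the factor $\tfrac{\sigma\kappa}{\mu_0^2+\sigma\kappa}$ tends to $\tfrac{\sigma\kappa(s_0)}{\sigma\kappa(s_0)}=1>0$, so $K^L\to+\infty$. For (3), the off-diagonal entry satisfies $w_{1,2}=\sigma(\mu_0^2+\sigma\kappa)t^4+O(t^5)$, whose coefficient of $t^4$ tends to $\sigma\cdot\sigma\kappa(s_0)=\kappa(s_0)\ne 0$; thus $w_{1,2}\ne 0$ off $\Sigma_f$ near $p$, so $\tilde W^L$ is never diagonal there and umbilical points cannot accumulate at $p$. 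Similarly $(\op{trace}\,\tilde W^L)^2-4\det(\tilde W^L)=-4\sigma\kappa(\mu_0^2+\sigma\kappa)^2t^8+O(t^9)$, whose coefficient of $t^8$ tends to $-(\sigma\kappa(s_0))^3\ne 0$, so the two principal curvatures stay distinct off $\Sigma_f$ near $p$, ruling out accumulation of quasi-umbilical points as well.

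I do not expect a genuine obstacle here: all of the asymptotics are already in hand. The only point requiring care is that the limits above are to be taken as $(s,t)\to p$, not merely along the $t$-axis; this is legitimate because in each of these expansions the coefficients are $C^\infty$ functions of $s$ and the remainders are uniform on a fixed neighborhood of $p$. One therefore first restricts to a neighborhood on which the functions $\mu_0^2+\sigma\kappa$, $K^L t^4$, $w_{1,2}\,t^{-4}$ and $t^{-8}\bigl((\op{trace}\,\tilde W^L)^2-4\det(\tilde W^L)\bigr)$ all keep the nonzero sign they take at $p$, and then each assertion follows immediately.
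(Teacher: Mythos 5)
Your proposal is correct and follows essentially the same route as the paper: it identifies $\mu_0(s_0)=0$ via Lemma~\ref{lem:ccrL}, reads the causal type off the leading term $-\sigma\kappa t^4$ of $\Delta_L$ in \eqref{eq:Delta3152}, and uses the same expansions of $K^L$, $w_{1,2}$ and the discriminant $(\op{trace}\tilde W^L)^2-4\det(\tilde W^L)$. The only difference is that you inline the computations which the paper delegates to parts (1), (2), (c) and \ref{item:G4} of Theorem~G, and you make the (legitimate and correctly handled) uniformity-in-$s$ of the remainders explicit.
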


\begin{proof}
Since
$p=(s_0,0)$ ($s_0\in I$) is not a cuspidal edge singular point,
$\mu_0(s_0)$ vanishes (cf. Lemma~\ref{lem:ccrL}). 
So, \eqref{eq:3928} is reduced to
$$
\Delta_L=-\sigma\kappa t^4 +O\left(t^5\right).
$$
This with 
\ref{item:G1} and \ref{item:G2} of Theorem~G imply (1).
On the other hand,
since  $\Gamma''(s_0)$ does not vanish, 
$\kappa(s_0)\ne 0$ and as long as $s$ is close to $s_0$, 
\eqref{eq:kl3298} can be written as
$$
K^L(s_0,t)
=\op{sgn}(\sigma  \kappa(s_0))
\frac{\sigma \kappa(s_0)}{|\sigma  \kappa(s_0)|t^4}+O(t^{-3})
=\frac{1}{t^4}+O(t^{-3}),
$$
which implies (2). 
The last assertion (4) follows from
(c) and \ref{item:G4} of Theorem~G.
\end{proof}

\begin{Remark}
This corollary is a generalization of the corresponding assertion
in Akamine \cite[(3) of Theorem A]{A} for time-like zero mean curvature surfaces,
where the assumption $\Gamma''(s_0)\ne 0$ is dropped.
However, if $f$ is a maxface or a minface, the order of $f$ at a
generalized cuspidal edge singular point is equal to four.
Then $\kappa(s_0)\ne 0$, that is, $\Gamma''(s_0)\ne 0$ holds.
\end{Remark}

\begin{Remark}\label{rmk:DQ}
By Theorem~G,  umbilical points never
accumulate at any cuspidal singular points of $f$. 
However, the quasi-umbilical points of $f$ can accumulate, in general.
A concrete example of a cuspidal edge of order four with zero  
mean curvature function whose quasi-umbilical points 
accumulate at a cuspidal edge singular point
is given in Akamine \cite[Figure~4]{A}.
\end{Remark}

We give a family  of cuspidal edges with order four
whose mean curvatures are bounded:

\begin{Example}\label{exa:4161}
We fix a real number $a$, and set
\begin{align*}
&\Gamma(s)
:=\left(\cos s,\sin s,s\right),\\  
&\mb n(s):=-(\cos s,\sin s,0) \qquad (s\in \R), \\
&
x(t):=\frac{\sigma t^2}{2},\quad
y(t):=\frac{\sigma a t^3}{3}\qquad (\sigma\in \{1,-1\},\,\,
\mb v:=(0,0,1)\,),
\end{align*}
where $|t|$ is a sufficiently small number.
Then
$
f(s,t):=\Gamma(s)+x(t) \mb n(s)+y(t)\mb v
$
gives a light-like cuspidal edge along $\Gamma$,
which satisfies
\begin{equation}\label{eq:3951}
E^L=\frac{t^2}{4} \left(t^2-4 \sigma\right), \quad F^L=-a \sigma  t^2,\quad
G^L=t^2-a^2 t^4
\end{equation}
and
\begin{equation}\label{eq:3956}
\Delta_L=
t^4 \left(-a^2-\sigma \right)+t^6 \left(a^2 \sigma +\frac{1}{4}\right)-\frac{a^2 t^8}{4}.
\end{equation}
Moreover, we have (cf .\eqref{tL}, \eqref{tM} and \eqref{tN})
\begin{equation}\label{eq:3960}
\tilde L=-\frac{a \sigma  t^2 \left(\sigma  t^2-2\right)^2}{4},\qquad
\tilde M=t^2,\qquad
\tilde N=-\frac{a t^2 \left(\sigma  t^2-2\right)}{2}.
\end{equation}
In particular, $f$ is of order four if $a^2+\sigma\ne 0$.
By \eqref{eq:K1Lb}, 
\eqref{eq:3951}, \eqref{eq:3956} and \eqref{eq:3960},
the mean curvature function $H^L$ and
the Gaussian curvature are given by
\begin{align*}
K^L(s,t)&=
\frac{t^4}{8\Delta_L^2} \Big(
8\left(a^2 \sigma +1\right)-12 a^2 t^2+6 a^2 \sigma  t^4-a^2 t^6\Big), \\
H^L(s,t)&=\pm 
\frac{at^6}{8|\Delta_L|^{3/2}}\Big(
\left(8 a^2+14 \sigma \right)-t^2 \left(8 a^2 \sigma +3 \right)+2 a^2 t^4\Big).
\end{align*}
Consequently, the Gaussian curvatrue is unbounded and
the mean curvature function is bounded. Moreover,
if $\sigma=1$, then $f$ is of convex type and time-like at $(s,0)$ satisfying $K^L>0$
(cf. Figure~\ref{ex:L123}, left).
If $\sigma=-1$ and $a^2+\sigma$ is positive
(cf. Figure~\ref{ex:L123}, center),
then $f$ around $t=0$ is of concave type and time-like satisfying $K^L<0$.
If $a^2+\sigma$ is negative, $f$ is space-like,
and $\sigma=-1$, that is,
$f$ is of concave type (cf. Figure~\ref{ex:L123}, right).
In this case, the Gaussian curvature of $f$ is positive.
\end{Example}

By the above discussions, we can observe that
cuspidal edges along a light-like curve
cannot change their causal type if their order is less than 
or equal to four. However, if the order is greater than four,
such a cuspidal edge exists:

\begin{Example}\label{ex:34239}
We fix a  real number $\beta$, and set
$$
\Gamma(s):=(\cos s,\sin s,s),\quad
\mb n(s):=-(\cos s,\sin s,0)\qquad (s\in \R).
$$ 
Consider the cuspidal edge given by
$
f(s,t):=\Gamma(s)-x(t) \mb n(s)-y(t)\mb v,
$
where 
$$
x(t):=\frac{t^2}{2},\qquad
y(t):=\frac{\beta t^3}{6}-\frac{t^4}8,\qquad
\mb v=(0,0,1)
$$
for sufficiently small $|t|(>0)$.
The coefficients of the first fundamental form of $f$ are
computed as
\begin{align}\label{eq:4014}
&E^L=\frac{1}{4} t^2 \left(t^2+4\right),\quad
F^L=\frac{1}{2} t^2 (\beta-t), \\
\nonumber
&G^L=\frac{1}{4} t^2 \left(4-\beta^2t^2-2\beta t^3+t^4\right).
\end{align}
Since
$$
\Delta_L=(1-\frac{\beta^2}4)t^4+\frac{\beta}{2} t^5+O(t^6),
$$
by setting  $\beta=2$, $f$ gives a cuspidal edge of order 
$5$ at each point of $\Gamma$. This $f$ changes its causal type along $\Gamma$.
Moreover, we have
\begin{equation}\label{eq:4032}
\tilde L=\frac{-t^2}8 (t+2)(t^2+2)^2,\quad
\tilde M=t^2,\quad
\tilde N=\frac{-t^2}2 (t+1)(t^2+2).
\end{equation}
Using\eqref{eq:K1Lb},
\eqref{eq:4014} and \eqref{eq:4032},
we obtain
\begin{align*}
K^L&=
-\frac{16 (-24 + 32 t- 36 t^2 + 24 t^3 - 18 t^4 + 8 t^5 - 3 t^6 + 
   t^7)}{t^5 (-16 + 16 t - 16 t^2 + 8 t^3 - 4 t^4 + t^5)^2},\\
H^L&=
\pm \frac{-16 + 24 t + 8 t^2 - 44 t^3 + 44 t^4 - 32 t^5 + 16 t^6 - 6 t^7 + t^8}
{2|t|^{5/2}|-16 + 16 t - 16 t^2 + 8 t^3 - 4 t^4 + t^5|^{3/2}}.
\end{align*}
In particular, $K^L$ and $H^L$ are both unbounded near 
$(s,t)=(s,0)$. 
\end{Example}

In a joint work with Yamada, 
the fourth author \cite{UY}
introduced the concept of \lq\lq maxfaces" as 
a canonical class of space-like zero mean 
curvature surfaces with admissible 
singular points. Similarly, Takahashi 
\cite{Ta} and Akamine \cite{A} 
defined \lq\lq minfaces'' as a class of time-like 
zero mean curvature surfaces with 
admissible singular points.
In addition, Akamine \cite{A} showed
essentially the same assertion as in Theorem~G for
cuspidal edges on maxfaces and minfaces.
The following assertion implies 
Theorem~G is a generalization of it.

\begin{Proposition}\label{prop:O4}
Generalized cuspidal edges on maxfaces or on minfaces are 
light-like cuspidal edges of order four.
\end{Proposition}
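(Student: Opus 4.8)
The plan is to show that a generalized cuspidal edge lying on a maxface or minface is automatically of type $L$ (its singular curve has light-like velocity) and that its order $i_p$ equals $4$ at every singular point. Recall from the general theory of maxfaces and minfaces that such a surface is a zero-mean-curvature frontal: away from its singular set it is a space-like (maxface) or time-like (minface) ZMC immersion, and it is obtained from holomorphic (resp.\ para-holomorphic) Weierstrass-type data. A generalized cuspidal edge singular point $p=(s,0)\in\Sigma_f$ is by definition a point where $f_t$ vanishes and $f_{tt}$ is linearly independent from $\Gamma'$; in the maxface/minface picture these are exactly the points where the induced metric degenerates, i.e.\ where the Weierstrass data force $ds^2\to 0$. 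The first step is therefore to argue that along $\Sigma_f$ the limiting tangent plane of $f$ cannot be space-like or time-like: if it were, Theorem~A (cases (T), $(\mathrm S_s)$, $(\mathrm S_t)$) would give $i_p=2$ and, for a cuspidal edge point, an unbounded mean curvature $H^L$, contradicting $H^L\equiv 0$. Hence every singular point is light-like, and by the Corollary following Proposition~\ref{prop:1335} this forces $\Gamma$ to be of type $L$ (since $\hat\nu^L(s,0)=\Gamma'(s)\times_L f_{tt}(s,0)$ can be light-like only in the listed ways, and along a ZMC surface the relevant alternative that persists is $\Gamma'$ light-like).

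Second, having placed ourselves in the type-$L$ setting, I would invoke Proposition~C: for a generalized cuspidal edge along a curve with $\Gamma'(s_0)$ light-like, the normal direction at $p=(s_0,0)$ is space-like if and only if $i_p=2$, and light-like if and only if $i_p\ge 4$, and in the latter case $i_p>2$ already implies $i_p\ge 4$. Since $H^L\equiv 0$ rules out $i_p=2$ (again by \ref{item:A5} and \ref{item:A6} of Theorem~A, or directly by Theorem~D\,\ref{item:D3} and its proof, where $i_p=2$ produces a nonzero bounded $H^L$ of the form $\tfrac{1}{2S^2}|\kappa-2\theta'|+O(|t|)$ that cannot vanish identically unless $\kappa\equiv 2\theta'$, which is incompatible with being a maxface/minface), we conclude $i_p\ge 4$. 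To pin down $i_p=4$ exactly, I would use the explicit expansion \eqref{eq:Delta3152}: in the order-$\ge 4$ normal form \eqref{eq:fT2499} one has $\Delta_L=-(\mu_0^2+\sigma\kappa)t^4+O(t^5)$, so $i_p=4$ unless $\mu_0^2+\sigma\kappa=0$ at $p$. Here I would bring in the Weierstrass data once more: for a maxface/minface the coefficient $\mu_0$ (the leading cuspidal-curvature datum of the planar section) and the curvature $\kappa$ of $\gamma=\pi\circ\Gamma$ are both determined by the holomorphic/para-holomorphic data, and the ZMC condition prevents the degenerate relation $\mu_0^2+\sigma\kappa=0$ from holding — equivalently, by Proposition~F, $i_p>4$ would force $f$ to be of concave type with $|\kappa|=\mu_0^2$ at $p$, and one checks this cannot occur for a minface (and a maxface is automatically space-like, hence $\mu_0^2+\sigma\kappa<0$ by Theorem~G\,\ref{item:G1}, so in particular nonzero).

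The main obstacle I anticipate is the very last point: cleanly excluding $\mu_0^2+\sigma\kappa=0$ for minfaces without re-deriving the para-holomorphic representation. The maxface case is easy — a maxface is space-like on its regular set, so $\Delta_L>0$ there, forcing $\mu_0^2+\sigma\kappa<0\ne 0$ and hence $i_p=4$. For minfaces one must actually use the structure of the para-holomorphic Weierstrass data to see that the leading behavior of the metric along a cuspidal-edge-type degeneracy is genuinely of order $4$ and not higher; the cleanest route is probably to compute, from the minface representation formula, the induced metric in the admissible coordinates near such a singular point and read off that $\Delta_L$ vanishes to order exactly $4$, which is precisely the statement that ``the generalized cuspidal edge singular points appearing on maxfaces and minfaces are all of order four'' that the introduction attributes to Akamine \cite{A} and Umehara--Yamada \cite{UY}. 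Modulo that computation (which I would carry out in the para-holomorphic coordinate, where $f$, $f_t$, $f_{tt}$ have standard expansions analogous to \eqref{eq:AB1498}--\eqref{V1:1508}), the rest of the argument is a direct assembly of Proposition~C, Proposition~F, and Theorem~A, together with the observation that cuspidal edges and cuspidal cross caps on frontal ZMC surfaces are generalized cuspidal edges (Proposition~\ref{prop:1182} and Example~\ref{ex:1195}), which is what makes the statement about ``generalized cuspidal edges on maxfaces or minfaces'' meaningful.
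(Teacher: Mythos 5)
Your proposal has a genuine gap, and it is exactly the one you flag yourself: for minfaces you never exclude the degenerate relation $\mu_0^2+\sigma\kappa=0$ (equivalently, via Proposition~F, concave type with $|\kappa|=\mu_0^2$), so you only obtain $i_p\ge 4$, not $i_p=4$. Saying ``one checks this cannot occur for a minface'' is not a proof --- nothing in the general theory of generalized cuspidal edges of type $L$ forbids order $>4$ (indeed Proposition~F characterizes precisely when it happens), so the exclusion \emph{must} come from the zero-mean-curvature structure, and deferring that computation is deferring the entire content of the proposition. There is also a secondary gap at the start: your route to ``every singular point is light-like'' goes through the unboundedness of $H^L$ in Theorem~A~(6), which is only asserted at genuine cuspidal edge points ($\mu_0\ne 0$); at a generalized cuspidal edge point that is, say, a cuspidal cross cap, the leading term $\mu_0/(2t)$ of $H^L$ vanishes and boundedness of $H^L$ gives no contradiction, so types $T$, $S_s$, $S_t$ are not ruled out there by your argument. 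The proposition is stated for all generalized cuspidal edges, so this case cannot be skipped.

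The paper's proof avoids both issues by working directly with the Weierstrass data, and it is short: for a maxface the first fundamental form is $(1-|g|^2)^2|\omega|^2$, so $\Delta_L$ is a positive function times $(1-|g|^2)^4$; for a minface one computes $f_u\times_L f_v$ from the para-holomorphic data and finds
\[
\Delta_L=\inner{f_u\times_L f_v}{f_u\times_L f_v}
=\frac{\hat\omega_1^2\hat\omega_2^2}{4}\,(1-g_1g_2)^4 .
\]
In both cases $\Delta_L$ is a positive function multiple of $(\lambda^E)^4$, where $\lambda^E$ is the (Euclidean) area density. Since at a generalized cuspidal edge singular point $(\lambda^E)_t\ne 0$ in an admissible coordinate system, $\Delta_L$ vanishes to order exactly $4$ in $t$, i.e.\ $i_p=4$, with no case analysis on the causal type of $\Gamma$ and no appeal to Theorems~A, C, D or F. If you want to salvage your approach, the one computation you must actually carry out is precisely this fourth-power identity for $\Delta_L$; once you have it, everything else you assembled becomes unnecessary.
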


\begin{proof}
Let $f$ be a maxface (resp. a minface).
As shown in the proof of \cite[Theorem 2.4]{FSUY}
(resp. the two 
line before the equation (8) in \cite{A}),
the (Euclidean) area density function $\lambda^E$
of a maxface (resp. a minface) 
is
a positive function multiple of 
\begin{equation}\label{eq:4181}
1-|g|^2 \qquad (\text{resp. } 1-g_1g_2),
\end{equation}
where $g$ (resp. $g_1$ and $g_2$)
is one of a pair of Weierstrass data
of $f$, which corresponds (resp. correspond)
to the Gauss map of $f$. 
We set $\Delta_L:=E^LG^L-(F^L)^2$.
If $f$ is a maxface, its first fundamental form is
given by (cf. \cite[(2.1)]{FSUY})
$ 
ds^2=(1-|g|^2)^2|\omega|^2.
$
Thus 
$\Delta_L$ 
is
a positive function multiple of 
$(1-|g|^2)^4$.
On the other hand, if $f$ is a minface,
it holds that (cf. \cite[Proof of Fact~4.2]{A} where
$f_u\times_E f_v$ is given)
$$
f_u\times_E f_v=\frac{\hat \omega_1\hat \omega_2}2
(1-g_1g_2)(g_1-g_2,-1-g_1g_2,-g_1-g_2),
$$
where we remark that
the signature of $\Lo^3$ in \cite{A} is $(-++)$, which is
different from the one in this paper.
So, we have
$$
f_u\times_L f_v=\frac{\hat \omega_1\hat \omega_2}2
(1-g_1g_2)(g_1-g_2,-1-g_1g_2,g_1+g_2),
$$
which implies 
\begin{align*}
\Delta_L&=\inner{f_u\times_L f_v}{f_u\times_L f_v} \\
&=\frac{\hat \omega_1^2\hat \omega_2^2}4(1-g_1g_2)^2\Big(
(g_1-g_2)^2+(1+g_1g_2)^2-(g_1+g_2)^2
\Big) \\
&=
\frac{\hat \omega_1^2\hat \omega_2^2}4(1-g_1g_2)^4.
\end{align*}
So
$\Delta_L$ 
is
a positive function multiple of 
$(1-g_1g_2)^4$.
Regarding \eqref{eq:4181}, in both of two cases,
$\Delta_L$ is  a positive function multiple of $(\lambda^E)^4$.
On the other hand, if $(s,t)$ is an admissible coordinate system of $f$, we have 
$(\lambda^E)_t(p)\ne 0$, since $f$ is a cuspidal edge singular point.
So we obtain the conclusion.
\end{proof}

\appendix
\section{A representation formula for cusps}

\begin{Definition}\label{def:GC1794}
Let $\mb c:I\to \R^2$ be a
$C^\infty$-map defined on an open interval $I$ in $\R$ 
containing the origin $0\in \R$.
We say that $\mb c(t)$ has a {\it generalized cusp} at $t=0$
if $\mb c$ satisfies $\mb c'(0)=\mb 0$ and $\mb c''(0)
\ne \mb 0$. We remark that
$\mb c''(0)$ is determined up to a positive constant if
we change the parameter of $\mb c$. Moreover,
$\mb c''(0)$ points in the {\it cuspidal direction},
that is, it points in the direction 
where the image of $\mb c$ exists.
We say that $\mb c(t)$ has a {\it cusp} at $t=0$
if $\mb c''(0)$ and $\mb c'''(0)$ are linearly independent.
\end{Definition}

We set
$$
\tau^E:=\frac{\det(\mb c''(0),\mb c'''(0))}{|\mb c''(0)|^{5/2}_E},
$$
which is called the {\it cuspidal curvature} of $\mb c$ at $t=0$,
where
$$
|\mb a|_E:=\sqrt{\mb a\cdot \mb a}\qquad (\mb a \in \E^2).
$$
The (Euclidean) curvature $\kappa^E(t)$  of $\mb c$  at $t\ne 0$ 
is given by
$$
\kappa^E(t)=\frac{\det(\mb c'(0),\mb c''(0))}{|\mb c'(0)|^3_E}.
$$
The function 
$$
s^E(t):=\int_0^t |\mb c'(0)|_E dt
$$
gives the signed-arc-length of $\mb c(t)$ from $t=0$.
As in \cite{SU},  it holds that
\begin{equation}
\tau^E=2 \sqrt{2} \lim_{t\to 0}\kappa^E(t) \sqrt{|s^E(t)|}.
\end{equation}
Moreover, the following formula is known:

\begin{Fact}[Shiba-Umehara \cite{SU}]
\label{fact:SU}
Let $\mu(t)$ be a $C^\infty$-function defined on an open interval $I$
containing $0\in \R$. If we set
\begin{equation}\label{eq:SU2}
\mb c(t):=\int_0^t u(\cos \lambda(u),\sin \lambda(u))du,
\qquad \lambda(t):=\int_0^t \mu(u)du,
\end{equation}
then $\mb c$ is a generalized cusp at $t=0$ in $\R^2$.
In particular, $t=0$ is a cusp if and only if
$\mu(0)\ne 0$. 
Conversely, any map germs of a generalized cusp in $\R^2$  
can be obtained by \eqref{eq:SU2}
up to motions in $\E^2$.
Moreover, the coefficients $\{\mu_i\}_{i=0}^\infty$ of the
Taylor expansion of $\mu$ satisfying
$$
\mu(t)\approx \mu_0+\mu_1 t+\frac{\mu_2}{2!}t^2
+\frac{\mu_3}{3!}t^3+\cdots
$$
gives the series of invariants for generalized cusps in $\E^2$. 
In particular, $2\mu_0$ coincides with $\tau_E$.
\end{Fact}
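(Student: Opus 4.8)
The Fact has three components, and the plan is to treat them in turn: the \emph{direct} assertion that the prescribed $\mb c$ is always a generalized cusp, and a genuine cusp precisely when $\mu(0)\ne 0$; the \emph{normal-form} assertion that every generalized cusp germ is, after a reparametrization and a rigid motion, of the form \eqref{eq:SU2}; and the \emph{invariance} assertion that the Taylor coefficients $\mu_i$ constitute a complete system of congruence invariants, with $2\mu_0=\tau^E$. The direct part is a short differentiation. Since $\lambda(0)=0$, one gets $\mb c'(t)=t(\cos\lambda(t),\sin\lambda(t))$, so $\mb c'(0)=\mb 0$, and $\mb c''(t)=(\cos\lambda,\sin\lambda)+t\lambda'(t)(-\sin\lambda,\cos\lambda)$, so $\mb c''(0)=(1,0)\ne\mb 0$; thus $t=0$ is a generalized cusp with cuspidal direction $(1,0)$. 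Differentiating once more gives $\mb c'''(0)=2\lambda'(0)(0,1)=2\mu_0(0,1)$, so $\det(\mb c''(0),\mb c'''(0))=2\mu_0$ while $|\mb c''(0)|_E=1$; hence $\mb c''(0)$ and $\mb c'''(0)$ are linearly independent if and only if $\mu_0=\mu(0)\ne 0$, which is the cusp condition, and $\tau^E=2\mu_0$ straight from the definition of the cuspidal curvature. This already settles the last sentence.

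For the normal form, let $\mb c$ be any generalized cusp germ, so $\mb c'(0)=\mb 0$ and $\mb c''(0)\ne\mb 0$. By Hadamard's lemma $\mb c'(t)=t\,\mb w(t)$ with $\mb w$ smooth and $\mb w(0)=\mb c''(0)$; after a translation and a rotation we may assume $\mb c(0)=\mb 0$ and $\mb w(0)$ positively proportional to $(1,0)$, and then write $\mb w(t)=r(t)(\cos\psi(t),\sin\psi(t))$ with $r>0$ smooth and $\psi$ a smooth angle function satisfying $\psi(0)=0$. The heart of the argument is to reparametrize by the normalized half-arc-length: put $\rho(t):=\int_0^t u\,r(u)\,du$; since $\rho(0)=\rho'(0)=0$ and $\rho''(0)=r(0)>0$, Hadamard's lemma gives $\rho(t)=t^2h(t)$ with $h$ smooth and $h(0)=r(0)/2>0$, so that $\tau:=\op{sgn}(t)\sqrt{2\rho(t)}=t\sqrt{2h(t)}$ is a smooth function of $t$ with $\tau'(0)=\sqrt{r(0)}\ne 0$, hence a local diffeomorphism. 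Writing $t(\tau)$ for the inverse and setting $\lambda(\tau):=\psi(t(\tau))$ and $\mu(\tau):=\lambda'(\tau)$, the relation $\tau\,d\tau=t\,r(t)\,dt$ yields $\tfrac{d}{d\tau}\mb c(t(\tau))=\tau(\cos\lambda(\tau),\sin\lambda(\tau))$, and integrating — using $\lambda(0)=0$ and $\mb c(0)=\mb 0$ — produces exactly \eqref{eq:SU2}.

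For invariance, observe that the reduction above is canonical up to the residual freedom $t\mapsto-t$ (accompanied by a reflection of $\E^2$): the translation is forced by sending the cusp point to the origin, the rotation is forced by normalizing the cuspidal direction $\mb c''(0)$, and the parameter $\tau$ is intrinsically pinned down by $\bigl|\tfrac{d}{d\tau}\mb c\bigr|_E=|\tau|$. Hence congruent generalized cusp germs yield the same function $\mu$, so the $\mu_i$ are invariants; conversely, if two germs have all $\mu_i$ equal, then they share the same $\mu$ as a formal power series and therefore, through \eqref{eq:SU2}, the same germ up to congruence. The identity $2\mu_0=\tau^E$ was checked in the first paragraph. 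The main obstacle is the normal-form step, and within it the one subtle point — that $\tau=\op{sgn}(t)\sqrt{2\rho(t)}$ is genuinely $C^\infty$ across $t=0$, not merely continuous; once this half-arc-length substitution is in hand, everything else is chain-rule bookkeeping.
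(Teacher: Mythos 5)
Your argument is correct and, in substance, follows the route the paper relies on: the paper does not prove Fact~\ref{fact:SU} itself (it defers to \cite{HNSUY} and \cite{SUY2}), but its detailed proof of the Lorentzian analogue, Proposition~\ref{prop:SU2}, hinges on exactly the point you isolate — that the normalized half-arc-length $\op{sgn}(t)\sqrt{2|s(t)|}$ is a genuine $C^\infty$ reparametrization with nonvanishing derivative at $t=0$; your factorization $\rho(t)=t^2h(t)$ with $h(0)=r(0)/2>0$ is the same Hadamard-type device the paper imports from \cite[Proposition A.4]{SUY2}. Where you diverge is the endgame of the converse: the paper builds the adapted frame $(\mb e,\mb n)$ with the sign flip \eqref{eq:es}, derives the Frenet-type system, and invokes uniqueness of ODE solutions, whereas you write $\mb c'(t)=t\,r(t)(\cos\psi(t),\sin\psi(t))$ and simply integrate $\tfrac{d}{d\tau}\mb c=\tau(\cos\lambda(\tau),\sin\lambda(\tau))$; this is cleaner in the Euclidean setting because the polar decomposition of the nonvanishing field $\mb w=\mb c'/t$ absorbs the sign change of the unit tangent across $t=0$ automatically. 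One caveat: your closing claim that equality of all $\mu_i$ forces congruence of the germs does not follow — two distinct smooth functions $\mu$ can share the same Taylor series — but the Fact only asserts that the $\mu_i$ \emph{are} invariants, which your canonicity argument (modulo the acknowledged $t\mapsto -t$ ambiguity, under which $\mu_i$ changes by $(-1)^{i+1}$) does establish, so nothing that the statement requires is lost.
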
 

This formula is slightly different from the original one in \cite{SU} 
but is essentially 
the same,
and the proof of the formula \eqref{eq:SU2} is 
given in the appendix of \cite{HNSUY} and in the book \cite{SUY2}.

\begin{Remark}
Let $\kappa^E(t)$ be a $C^\infty$-function defined on an open interval $I$
containing $0\in \R$. If we set
\begin{equation}\label{eq:SU3}
\mb c(t):=\int_0^t (\cos \lambda(u),\sin \lambda(u))du,
\qquad \theta(t):=\int_0^t \kappa^E(u)du,
\end{equation}
then $\mb c$ is a regular curve with arc-length parameter at $t=0$ in $\E^2$
whose curvature function is $\kappa^E$.
Conversely, a regular curve with arc-length parameter  
can be obtained by this formula up to motions in $\E^2$.
So 
Fact \ref{fact:SU}
can be considered as an analogue of this classical formula in $\E^2$.
The parameter $t$ of $\mb c$ in the formula
\eqref{eq:SU3}
is called a
{\it normalized half-arc-length parameter} of $\mb c$
(see \cite[Appendix B]{SUY2} for details).
\end{Remark}

In the plane $\Lo^2$, 
we can construct a similar representation formula as follows:
We denote by $\Inner{\,}{\,}$ the Lorentzian inner product of $\Lo^2$.

\begin{Definition}
A generalized cusp of $\mb c(t)$ at $t=0$ is said to be
{\it space-like} (resp. {\it time-like})
if 
$\mb c''(0)$ is space-like, that is,
$\inner{\mb c''(0)}{\mb c''(0)}>0$
(resp. time-like, that is,
$\inner{\mb c''(0)}{\mb c''(0)}<0$).
\end{Definition}

If $\mb c$ is a space-like (resp. time-like)  cusp,
then $\mb c'(t)$ ($t\ne0$) is a space-like (resp. time-like) vector
for each sufficiently small $|t|(>0)$.
By the transformation $(x,y)\mapsto (y,x)$, the
generalized space-like cusps become generalized time-like cusps,
so we only consider here generalized space-like cusps:

We set
$$
\tau^L:=\frac{\det(\mb c''(0),\mb c'''(0))}{|\mb c''(0)|^{5/2}_L},
$$
which is called the {\it cuspidal curvature} of $\mb c$ at $t=0$,
where
$$
|\mb a|_L:=\sqrt{\inner{\mb a}{\mb a}}\qquad (\mb a \in \Lo^2).
$$
The (Lorentzian) curvature $\kappa^L(t)$  of $\mb c$  at $t\ne 0$ 
is given by
$$
\kappa^L(t)=\frac{\det(\mb c'(0),\mb c''(0))}{|\mb c'(0)|^3_L}.
$$
The function 
$$
s^L(t):=\int_0^t |\mb c'(0)|_L dt
$$
gives the signed-arc-length of $\mb c(t)$ from $t=0$ in $\Lo^2$.
By imitating the argument in \cite{SU},  it holds that
\begin{equation}\label{eq:4567}
\tau^L=2 \sqrt{2} \lim_{t\to 0}\kappa^L(t) \sqrt{|s^L(t)|}.
\end{equation}

\begin{Proposition}\label{prop:SU2}
Let $\mu(t)$ be a $C^\infty$-function defined on an open interval $I$
containing $0\in \R$. If we set
\begin{equation}\label{eq:SU4}
\mb c(t):=\int_0^t u(\cosh \lambda(u),\sinh \lambda(u))du,
\qquad \lambda(t):=\int_0^t \mu(u)du,
\end{equation}
then $\mb c$ is a space-like generalized cusp at $t=0$ in $\Lo^2$.
In this situation, $t=0$ is a cusp if and only if
$\mu(0)\ne 0$.
Conversely, any map germs of space-like generalized cusp  
can be obtained by this formula up to motions in $\Lo^2$.
In particular, the coefficients $\{\mu_i\}_{i=0}^\infty$ of the 
Taylor expansion of $\mu$ satisfying
$$
\mu(t)\approx \mu_0+\mu_1 t
+\frac{\mu_2}{2!}t^2+\frac{\mu_3}{3!}t^3+\cdots
$$
gives
the series of geometric invariants of the generalized cusps in $\Lo^2$. 
In particular, $2\mu_0$ coincides with $\tau_L$.
\end{Proposition}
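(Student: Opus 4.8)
The proof splits into the two implications of the formula, plus the invariance claim. \emph{For the direct part}, given $\mb c(t)$ as in \eqref{eq:SU4} I would simply differentiate. From $\mb c'(t)=t(\cosh\lambda(t),\sinh\lambda(t))$ we get $\mb c'(0)=\mb 0$ and
\[
\mb c''(t)=(\cosh\lambda,\sinh\lambda)+t\,\mu(t)\,(\sinh\lambda,\cosh\lambda),
\]
so $\mb c''(0)=(1,0)$, which is a unit space-like vector of $\Lo^2$; hence $\mb c$ has a space-like generalized cusp at $t=0$. One more differentiation, together with $\lambda(0)=0$, gives $\mb c'''(0)=(0,2\mu_0)$, so $\mb c''(0)$ and $\mb c'''(0)$ are linearly independent precisely when $\mu_0\neq 0$; this is the stated criterion for a cusp. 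Finally $\det(\mb c''(0),\mb c'''(0))=2\mu_0$ together with $|\mb c''(0)|_L=1$ yields $\tau^L=2\mu_0$.

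\emph{For the converse}, let $\mb c$ be an arbitrary space-like generalized cusp, so $\mb c'(0)=\mb 0$ and $\mb c''(0)$ is space-like. By Hadamard's lemma $\mb c'(t)=t\,\mb v(t)$ with $\mb v$ smooth and $\mb v(0)=\mb c''(0)\neq\mb 0$. After a Lorentzian motion of $\Lo^2$ I may assume $\mb c''(0)$ is a positive multiple of $(1,0)$, so $\mb v(t)$ stays in a single connected component of the (open) cone of space-like vectors for $|t|$ small, and hence admits a representation $\mb v(t)=\rho(t)(\cosh\lambda(t),\sinh\lambda(t))$ with $\rho>0$ and $\lambda$ smooth, $\lambda(0)=0$. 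The remaining point is to pass to the \emph{normalized half-arc-length parameter} $s=\sigma(t)$ characterised by $\sigma(t)^2=2\int_0^t u\,\rho(u)\,du$; writing the right-hand side as $t^2 h(t)$ with $h$ smooth and $h(0)=\rho(0)>0$ shows $\sigma(t)=t\sqrt{h(t)}$ is a genuine local $C^\infty$-diffeomorphism fixing the origin, and in the parameter $s$ one computes $d\mb c/ds=s(\cosh\tilde\lambda(s),\sinh\tilde\lambda(s))$ for a smooth $\tilde\lambda$ with $\tilde\lambda(0)=0$. Setting $\mu:=\tilde\lambda'$, translating so that $\mb c(0)=\mb 0$, and integrating recovers \eqref{eq:SU4}.

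\emph{For the invariance statement}, every choice made in the converse is canonical up to the obvious discrete ambiguities (the sign $t\mapsto -t$ of the half-arc-length parameter, and the reflection of $\Lo^2$ fixing the cuspidal direction), exactly as for the Euclidean formula of Shiba--Umehara; hence each Taylor coefficient $\mu_i$ of $\mu$ is a geometric invariant of the germ of $\mb c$, and $\mu_0$ reproduces $\tau^L/2$ by the first part, consistently with \eqref{eq:4567}. The computational steps are routine and the overall scheme copies the Euclidean proof in \cite{HNSUY,SUY2}; \emph{the one delicate point} is the smoothness through $t=0$ of the half-arc-length reparametrization $\sigma(t)=\op{sgn}(t)\sqrt{2\int_0^t u\,\rho(u)\,du}$ and the global single-valuedness of the hyperbolic angle $\lambda$, which is precisely why one first normalizes $\mb c''(0)$ to lie in a fixed component of the space-like cone.
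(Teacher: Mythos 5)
Your proof is correct and follows essentially the same route as the paper: a direct differentiation for the forward implication, and for the converse the smooth normalized half-arc-length reparametrization (your $\sigma(t)=t\sqrt{h(t)}$ is exactly the paper's $v(t)=t\sqrt{\Psi(t)}$) combined with a hyperbolic-angle representation of the unit tangent. The only cosmetic differences are that you obtain $\tau^L=2\mu_0$ by computing $\mb c'''(0)$ directly rather than via the curvature-limit formula \eqref{eq:4567}, and you close the converse by direct integration instead of the paper's Frenet-type ODE and its uniqueness of solutions.
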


\begin{proof}
We denote by $\inner{}{}$, which is
the Lorentzian inner product on $\Lo^2$.
For a vector $\mb a\in \R^2$, we set
$|\mb a|_L:=\sqrt{|\inner{\mb a}{\mb a}|}$.
Let $\mb c(t)$ be a curve given in the formula of
Proposition \ref{prop:SU2}.
If we set
$$
\mb e(t):=\pmt{\cosh \theta(t)\\ \sinh \theta(t)},
$$
then we have
$
\mb c'(t)=t \mb e(t)
$.
Thus
$$
\mb n(t):=\pmt{\sinh \theta(t)\\ \cosh \theta(t)}
$$
gives a smooth unit normal vector field along $\mb c$.
So $\mb c$ gives a generalized cusp at $t=0$.
The (Lorentzian) arc-length parameter of $\mb c$ satisfies
$$
s(t):=\int_0^t |\mb c'(u)|_Ldu=\int_0^t |u| du=\op{sgn}(t)\frac{t^2}{2}.
$$
In particular, we have 
\begin{equation}\label{eq:4413}
t=\op{sgn}(s)\sqrt{2|s|}.
\end{equation}
Then,
$
\mb c''(t)=\mb e(t)+ t \mu(t) \mb n(t)
$
and the curvature function satisfies
\begin{equation}\label{eq:4719}
\kappa^L(t)=\frac{\det(\mb c'(t),\mb c''(t))}{|\mb c'(t)|^3_L}=
\frac{t^2 \mu(t) \det(\mb e(t),\mb n(t))}{|t|^3}=\frac{\mu(t)}{|t|}.
\end{equation}
By this with \eqref{eq:4413},
we have
\begin{equation}\label{eq:4430}
\mu(t)=\kappa^L(t)\sqrt{2|s(t)|} \qquad (\text{if $t\ne 0 $}).
\end{equation}
By \eqref{eq:4567},  we have
$
\mu^L(0):={\tau^L}/2
$.
Thus, $t=0$ is a cusp if and only if
$\mu^L(0)\ne 0$.

Conversely,
let $\mb c(t)$ ($t\in I$)
be a 
space-like generalized cusp at $t=0$ in $\Lo^2$
defined on an open interval $I$ containing $0\in \R$.
Since $t=0$ is a cusp,
the arc-length 
$
s(t):=\int_0^t |\mb c'(t)|_Ldt
$
of the curve $\mb c$ is not smooth at $t=0$.
However (cf \cite[Proposition B.2.1]{SUY2})
\begin{equation}\label{eq:tau}
v(t):=\op{sgn}(t)\sqrt{|2s(t)|}
\end{equation}
is a $C^\infty$-function on $I$ satisfying $v'(t)>0$.
In fact, 
$
\mb w(t):=(1/t)\mb c'(t)
$
is a vector-valued $C^\infty$-function of $t$ even at $t=0$.
Since $\mb w(0)\ne \mb 0$,
the absolute value 
$\psi(t):=|\mb w(t)|_L$
is also $C^\infty$-differentiable. 
By \cite[(B.8) and Proposition A.4]{SUY2},
we have 
$
v(t)=t \sqrt{\Psi(t)},
$
where $\Psi$ is a $C^\infty$-function of $t$ given by
$
\Psi(t):=\int_0^1 u \psi(t u)du.
$
Since $\Psi(0)\ne 0$ (cf. \cite[(A.2)]{SUY2}), 
$v(t)$ is $C^\infty$-differentiable at $t=0$.
So we can use $v$ as a new parameter of $\mb c$ around the cusp, 
which is called the {\it normalized half-arc-length parameter} of $\mb c$.
By  L'Hopital's law, we have
\begin{align*}
v'(0)&=\lim_{t\to 0}\op{sgn}(t)\frac{\sqrt{2|s(t)|}}{t}
=\lim_{t\to 0}\left|\frac{s(t)}{t}\right| \\
&=
\left|\lim_{t\to 0}\frac{s(t)}{t^2}\right|=
\left|\lim_{t\to 0}\frac{|\mb c'(t)|_L}{2t}\right|
=
\frac{|\mb c''(0)|_L}{2}>0,
\end{align*}
and so $v$ can be taken as a new parameter of $\mb c$.
Since $\mb c(v)$ has a cusp at $v=0$,
there exists a smooth unit normal vector field
$\mb n(t)$ along $\mb c$.
We the take a smooth normal vector field 
$\mb e(v)$ along $\mb c$
such that
$(\mb e(v),\mb n(v))$ gives an orthonormal frame field
satisfying $\det(\mb e(v),\mb n(v))=1$ for each $v\in I$.

Since $\mb c(t)$ has a cusp at $t=0$,
the direction of the unit normal vector $\mb n(t)$ changes from the 
right side of the curve $\mb c(t)$ to the left side, 
or from the left side of $\mb c(t)$ to the right side, just at t=0.
There exists a smooth unit tangent vector field
$\mb e(t)$ along $\mb c$, which change direction with respect to $\mb c'$
at $t=0$.
So, by changing $\mb n(v)$ by $-\mb n(v)$,
we may assume that $\mb e$ satisfies
\begin{equation}\label{eq:es}
\mb e(s)=
\begin{cases}
\mb c_s(s) &\text{if $s>0$},\\
-\mb c_s(s) &\text{if $s<0$},
\end{cases} 
\end{equation}
where $\mb c_s:={d\mb c}/{ds}$.
Since $v^2/2=\op{sgn}(s)s$, we have
$$
\mb c_v(v)=
\mb c_s(v) s_v(v)= (\op{sgn}(v) \mb e(v))
(\op{sgn}(v)(v^2/2)_v)=v\mb e(v).
$$
By \eqref{eq:es}, $\mb n(v)$ is the left-ward unit normal vector field of $\mb c$
if $v>0$. By the Frenet equation, we have
$$
\mb e_v(v)
=\mb e_s(v) s_v(v)=
 \Big(\op{sgn}(v)\kappa(v) \mb n(v)\Big)\Big(\op{sgn}(v)v^2/2\Big)_v 
=\kappa(v) v \mb n(v).
$$
We set $\mu(v):=\kappa(v) v$,
which can be extended as a smooth function of $v$.
By definition, 
$\mb e_v=\mu \mb n$ holds, and
$\mu(v)$ coincides with the function defined by
\eqref{eq:4719} (by setting $t:=v$).
Since $\mb n_v$ is perpendicular to $\mb n$,
we can write $\mb n_v(v)=a(v) \mb e(v)$, and
$$
a=\mb n_v\cdot \mb e=(\mb n\cdot \mb e)_v
-\mb n\cdot \mb e_v=-\mb n\cdot \mb e_v=\mu.
$$
Consequently, we obtain the formula
 $$
\pmt{\mb c(v),\mb e(v),\mb n(v)}_v
=\pmt{\mb c(v),\mb e(v),\mb n(v)}
\pmt{
0 & 0 & 0 \\
v & 0 & \mu(v) \\
0 & \mu(v) & 0
},
$$
which can be considered as a linear ordinary differential equation
when we think $\mu(v)$ is a known function and
$\mb c(v),\mb e(v),\mb n(v)$ are unknown vector valued functions.
By replacing $v$ by $t$,
the curve given in the formula of
Proposition~\ref{prop:SU2}
gives a solution of this equation.
Then, the uniqueness of the solution with an initial value condition
implies that any generalized cusp can be represented as the formula
in Proposition~\ref{prop:SU2} up to a 
orientation preserving motion in $\Lo^2$. 
\end{proof}

\begin{Definition}\label{def:mu}
In this paper, we call
the function $\mu(t)$ appeared in the
formulas \eqref{eq:SU2} and \eqref{eq:SU4}
the {\it $\mu$-function}
associated with the germ of cusp $\mb c$.
\end{Definition}

\section{Special coordinate systems 
for generalized cuspidal edges}

In this section, we show the following:

\begin{Proposition}\label{prop:fstE}
Let $f(s,t)$ $(s\in I,\, |t|<\epsilon)$
be a generalized cuspidal edge 
along a regular curve $\Gamma:I\to \R^3$, where $I$ is an open
interval. Suppose that there exist vector fields $\mb a_1(s)$ and
$\mb a_2(s)$ along $\Gamma$ such that
$
\Gamma'(s),\,\, \mb a_1(s),\,\, \mb a_2(s)
$
are linearly independent in $\R^3$ for each $s\in I$.
Then, for each $s_0\in I$,
there exist  a local diffeomorphism germ $(u,v)\mapsto (s(u,v),t(u,v))$ 
defined on a sufficiently small neighborhood $(s_0,0)$
such that 
\begin{enumerate}
\item $s(u,0)=u$ and $t(u,0)=0$ for each $u$,
\item $f$ is written in the form
$$
f\circ \phi(u,v)=\Gamma(u)+ x(u,v)\mb a_1(u)+y(u,v)\mb a_2(u),
$$
where $x(u,v)$ and $y(u,v)$ are $C^\infty$-functions
defined on a neighborhood of $(s_0,0)$, and
\item the map $v\mapsto (x(u,v),y(u,v))$ 
is a generalized cusp as defined in Appendix~A.
\end{enumerate}
\end{Proposition}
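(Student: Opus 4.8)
The plan is to build a tubular-type neighborhood of $\Gamma$ out of the frame $\{\Gamma',\mb a_1,\mb a_2\}$, express $f$ in those coordinates, and then straighten the $s$-direction so that the curve parameter $u$ corresponds exactly to the point $\Gamma(u)$.

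First I would consider the map
$\Phi(s,\xi,\eta):=\Gamma(s)+\xi\,\mb a_1(s)+\eta\,\mb a_2(s)$,
defined on a neighborhood of $(s_0,0,0)$. Its Jacobian at $(s_0,0,0)$ has columns $\Gamma'(s_0),\mb a_1(s_0),\mb a_2(s_0)$, which are linearly independent by hypothesis, so $\Phi$ is a local diffeomorphism onto a neighborhood $W$ of $f(s_0,0)=\Gamma(s_0)$. Since $f$ is continuous and $f(s,0)=\Gamma(s)=\Phi(s,0,0)\in W$ for $s$ near $s_0$, the map $f$ takes values in $W$ on some neighborhood of $(s_0,0)$, and there we may write $\Phi^{-1}\circ f(s,t)=\bigl(\sigma(s,t),\xi(s,t),\eta(s,t)\bigr)$ with $\sigma(s,0)=s$ and $\xi(s,0)=\eta(s,0)=0$. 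Differentiating $f=\Phi(\sigma,\xi,\eta)$ once in $t$, evaluating at $t=0$ (where $\xi=\eta=0$, so $\Phi_\sigma=\Gamma'(s)$, $\Phi_\xi=\mb a_1(s)$, $\Phi_\eta=\mb a_2(s)$), and using $f_t(s,0)=\mb 0$ together with the linear independence of $\Gamma'(s),\mb a_1(s),\mb a_2(s)$, I obtain $\sigma_t(s,0)=\xi_t(s,0)=\eta_t(s,0)=0$.

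Next I would perform the coordinate change $G(s,t):=(\sigma(s,t),t)$. Since $G$ is block-triangular, its Jacobian determinant at $(s_0,0)$ equals $\sigma_s(s_0,0)=1$ (because $\sigma(\cdot,0)=\mathrm{id}$), so $G$ is a local diffeomorphism near $(s_0,0)$. Put $\phi:=G^{-1}$, so that $\phi(u,v)=(s(u,v),v)$, where $s(u,v)$ is the unique solution of $\sigma(s(u,v),v)=u$; at $v=0$ this forces $s(u,0)=u$, so (1) holds (with $t(u,v)=v$). Then $f\circ\phi(u,v)=\Phi\bigl(u,\xi(s(u,v),v),\eta(s(u,v),v)\bigr)$, which is exactly the form required in (2), with $x(u,v):=\xi(s(u,v),v)$ and $y(u,v):=\eta(s(u,v),v)$.

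It remains to check (3). Implicit differentiation of $\sigma(s(u,v),v)=u$ in $v$, evaluated at $v=0$, gives $s_v(u,0)=-\sigma_t(u,0)/\sigma_s(u,0)=0$. Combining this with $\xi(s,0)\equiv\eta(s,0)\equiv 0$ (hence $\xi_s(s,0)=\eta_s(s,0)=0$) and $\xi_t(s,0)=\eta_t(s,0)=0$, the chain rule yields $x_v(u,0)=y_v(u,0)=0$ and, to second order, $x_{vv}(u,0)=\xi_{tt}(u,0)$, $y_{vv}(u,0)=\eta_{tt}(u,0)$. On the other hand, differentiating $f=\Phi(\sigma,\xi,\eta)$ twice in $t$ and using the vanishing of all first $t$-derivatives of $\sigma,\xi,\eta$ at $t=0$, I get $f_{tt}(s,0)=\sigma_{tt}(s,0)\Gamma'(s)+\xi_{tt}(s,0)\mb a_1(s)+\eta_{tt}(s,0)\mb a_2(s)$. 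By condition (b) in the definition of a generalized cuspidal edge, $f_{tt}(s,0)$ is not a scalar multiple of $\Gamma'(s)$, so $(\xi_{tt}(s,0),\eta_{tt}(s,0))\neq(0,0)$; therefore $(x_{vv}(u,0),y_{vv}(u,0))\neq\mb 0$ while $(x_v(u,0),y_v(u,0))=\mb 0$, i.e.\ $v\mapsto(x(u,v),y(u,v))$ is a generalized cusp. The argument is essentially chain-rule bookkeeping; the only points needing care are that all the local inverses and the implicit solution $s(u,v)$ can be taken on one common neighborhood of $(s_0,0)$ (a routine shrinking), and that the identity $x_{vv}(u,0)=\xi_{tt}(u,0)$ uses only the already-established first-order flatness of $\sigma$, $\xi$, $\eta$ and $s(u,\cdot)$ in the transverse variable.
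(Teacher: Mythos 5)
Your proof is correct. The core idea is the same as the paper's: both start from the tubular-neighborhood map $\Phi(s,\xi,\eta)=\Gamma(s)+\xi\,\mb a_1(s)+\eta\,\mb a_2(s)$, whose Jacobian along $t=0$ is the frame $(\Gamma',\mb a_1,\mb a_2)$, and pull $f$ back through $\Phi^{-1}$. Where you diverge is in how the normal form is extracted afterwards. The paper at this point invokes the admissible-coordinate argument of \cite[page 105]{SUY2}, which produces coordinates in which the transverse components factor as $x=v^2\alpha(u,v)$, $y=v^2\beta(u,v)$; the generalized-cusp condition then reads off from $f_{vv}(u,0)=2\alpha(u,0)\mb a_1+2\beta(u,0)\mb a_2\ne\mb 0$. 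You instead carry out the straightening explicitly and self-containedly: you show the first $t$-derivatives of $(\sigma,\xi,\eta)$ vanish on $\{t=0\}$ (using $f_t(s,0)=\mb 0$ and the independence of the frame), invert the block-triangular map $(s,t)\mapsto(\sigma(s,t),t)$, and verify $\mb c'(0)=\mb 0$, $\mb c''(0)\ne\mb 0$ directly by the chain rule together with condition (b) of Definition \ref{def:GE}. Your route avoids the external citation and the $v^2$-factorization (a division-lemma step), at the cost of not producing the slightly stronger normal form $x=v^2\alpha$, $y=v^2\beta$; since the statement only asks for a generalized cusp in the sense of Definition \ref{def:GC1794}, your second-order computation is enough, and all the chain-rule identities you use ($s_v(u,0)=0$, $x_{vv}(u,0)=\xi_{tt}(u,0)$, and the expression for $f_{tt}(s,0)$ in the frame) check out.
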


If $\{\mb a_1(s),\,\, \mb a_2(s)\}$ spans the plane 
orthogonal to $\Gamma'(s)$ in $\E^3$, 
almost the same result was proved in \cite[Lemma 3.2]{HNSUY}.  
The above statement can be seen as a generalization of this, 
but the proof is different.

\begin{proof}
Without loss of generality, we may assume that $I$ contains 
$0$ and set $s_0:=0$.
Consider the map defined by
$$
\Phi:\R^2 \times I\ni (x,y,z)\mapsto \Gamma(z)+x\mb a_1(z)+y\mb a_2(z)\in \R^3.
$$
Since 
$
\Gamma'(0),\, \mb a_1(0),\, \mb a_2(0)
$
are linearly independent, 
$\Phi$ gives a diffeomorphism defined on a neighborhood
of the origin in $\R^3$.
Since $\Gamma(s)=f(s,0)$ holds,
$g(s,t):=\Phi^{-1}\circ f(s,t)$ satisfies $g(s,0)=(0,0,s)$ for sufficiently small $|s|$.
Then we can apply the argument given in \cite[Page 105]{SUY2}
(at which only the fact that $f$ is a generalized cuspidal edge is
needed until the final argument given in \cite[Page 106]{SUY2}).
We can take an admissible local coordinate system
(see Definition \ref{def:AD880}) 
$(u,v)\mapsto (s(u,v),t(u,v))$ such that
$s(0,0)=t(0,0)=0$ and 
$$
g(u,v)=(x(u,v),y(u,v), u), \qquad
x(u,v)=v^2\alpha(u,v),\quad y(u,v)=v^2 \beta(u,v),
$$
where $\alpha$ and $\beta$ are $C^\infty$-functions defined on
a neighborhood of the origin in $\R^2$.
So we have
$$
f(u,v)=\Phi(v^2 \alpha(u,v),v^2 \beta(u,v), u)
=\Gamma(u)+v^2 \alpha(u,v) \mb a_1(u)
+v^2 \beta(u,v)\mb a_2(u).
$$
Since $f$ is a generalized cuspidal edge,
we have
$$
\mb 0\ne f_{vv}(u,0)=2\alpha(u,0)\mb a_1(u)+2\beta(u,0)\mb a_2(u),
$$
which implies that $(x_{vv}(u,0),y_{vv}(u,0))\ne \mb 0$,
and $v\mapsto (x(u,v),y(u,v))$
gives a generalized cusp for sufficiently small $|v|$.
\end{proof}

\section{Umbilic points on a wave front in $\Lo^3$}

Consider a wave front $f:U\to \R^3$, where $U$ is a domain of $(\R^2;u,v)$.
Since cuspidal edge singular points appear on wave fronts,
this fits the setting of this paper.
If we think $\R^3=\E^3$,
umbilical points never accumulate at
cuspidal edge singular points 
of $f$ (cf. \cite[Corollary 5.3.3]{SUY2}).
As an analogue of this fact, for $\R^3=\Lo^3$,
we show the following:

\begin{Proposition}\label{thm:4087}
If $p$ is a  space-like or time-like cuspidal edge singular point,
then umbilical points of $f$ never accumulate at $p$.
\end{Proposition}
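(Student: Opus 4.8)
The plan is to reduce the statement to the local theory of cuspidal edges along a regular curve developed in Sections~2--4, and then to run a short case analysis according to the causal type of the singular curve. First, by the normal form that defines a cuspidal edge singular point, on a suitable neighbourhood $V$ of $p$ the map $f$ is, after a reparametrisation of the source (which changes neither the Lorentzian geometry nor the location of umbilical points), a generalized cuspidal edge along the regular curve $\Gamma(s):=f(s,0)$ in the sense of Definition~\ref{def:GE}: writing $f=\Psi^{-1}(s,t^2,t^3)$ in such coordinates, condition~(b) there holds because the differential of the target diffeomorphism $\Psi$ is invertible. The umbilical points of Definition~\ref{def:umbilics} are exactly those of this generalized cuspidal edge, both being read off from the Weingarten matrix $W^L$ on the common regular set $V\setminus\Sigma_f$, so it is enough to show that umbilical points of the generalized cuspidal edge do not accumulate at $p=(s_0,0)$.

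Next I would split into three cases according to the causal type of $\Gamma'(s_0)$, using that $p$ is space-like or time-like, that is, that the limiting normal direction $\hat\nu^L(p)=\Gamma'(s_0)\times_L f_{tt}(s_0,0)$ is not light-like (cf.\ Definition~\ref{def:causality712b}). (a)~If $\Gamma'(s_0)$ is time-like, then $\Gamma$ is of type $T$ near $s_0$, and assertion~(5) of Theorem~\ref{thm:T} gives the conclusion. (b)~If $\Gamma'(s_0)$ is space-like, then $\Gamma$ is of type $S$ near $s_0$; since $\hat\nu^L(p)$ is not light-like, the corollary following Proposition~\ref{prop:1335} (which reads the causal type of $\hat\nu^L$ off those of $\Gamma'$ and $\tilde{\mb D}^L_f$) forces $\tilde{\mb D}^L_f(s_0)$, hence $\tilde{\mb D}^L_f$ on a neighbourhood of $s_0$, to be non-light-like, so assertion~(5) of Theorem~\ref{thm:S1} applies. (c)~If $\Gamma'(s_0)$ is light-like, then by Proposition~C the limiting normal of $f$ at $p$ is space-like or light-like; being non-light-like here it is space-like, whence $i_p=2$, and Lemma~\ref{lem:2775} yields the conclusion directly. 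In every case umbilical points do not accumulate at $p$, which proves the proposition.

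The main obstacle is conceptual rather than computational: the causal type of a cuspidal edge singular point does not determine the causal type of the underlying singular curve $\Gamma$, so the proposition is not a formal corollary of Theorem~A. The delicate branch is case~(c): a time-like cuspidal edge singular point may lie on a curve $\Gamma$ with $\Gamma'(s_0)$ light-like (necessarily with $i_p=2$; such points occur on the light-like generalized cuspidal edges of general type of Theorem~D), and this case is outside the scope of Theorem~A and must be treated separately via Lemma~\ref{lem:2775}. Once the case split is set up, each branch is an immediate citation, so the only genuine verification is the (routine) reduction to a cuspidal edge along a regular curve and the identification there of the two notions of causal type and of umbilical point.
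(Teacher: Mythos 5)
Your argument is correct, but it is genuinely different from the one the paper gives. The paper proves Proposition~\ref{thm:4087} entirely inside Appendix~C by the parallel-surface trick: it establishes the Lorentzian Weingarten formula $(\nu^L_u,\nu^L_v)=-(f_u,f_v)W$, shows (Lemma~\ref{lem:4158}) that a space-like or time-like singular point of a wave front becomes a \emph{regular} point of the parallel surface $f^t=f+t\nu^L$ for small $t\neq0$, notes that umbilics of $f$ persist as umbilics of $f^t$, and then derives a contradiction: if umbilics accumulated at $p$, then $p$ would be an umbilic of $f^t$ with $W_{f^t}(p)=\tfrac1t I$, forcing $(f_u,f_v)=((f^t)_u,(f^t)_v)(I-tW_{f^t})$ to vanish at $p$ and contradicting the rank-one condition at a cuspidal edge. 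That proof uses only that $p$ is a corank-one, space-like or time-like singular point of a wave front; it needs neither the reduction to a cuspidal edge along a regular curve nor any of the asymptotic expansions of Sections~2--4. Your route instead reduces to the generalized-cuspidal-edge framework and splits on the causal type of $\Gamma'(s_0)$, citing Theorem~\ref{thm:T}~(5), Theorem~\ref{thm:S1}~(5) (after using the corollary to Proposition~\ref{prop:1335} to rule out a light-like $\tilde{\mb D}^L_f(s_0)$), and Lemma~\ref{lem:2775} combined with Proposition~C for the branch where $\Gamma'(s_0)$ is light-like. All three branches are correctly identified and correctly supported (in particular you are right that the light-like-$\Gamma'$ branch forces the normal to be space-like, hence $i_p=2$, and is outside the scope of Theorem~A), so the proof stands. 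What you gain is an explicit mechanism (one principal curvature blows up while the other stays bounded, resp.\ the off-diagonal entry of $\tilde W^L$ cannot vanish); what the paper's argument buys is independence from the computational body of the paper and applicability beyond cuspidal edges. One small point worth making explicit in your write-up: the identification of umbilics across the source reparametrisation is legitimate because $W^L$ transforms by conjugation, so the condition $W^L=\lambda I$ is coordinate-independent.
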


As we have mentioned at the end of Section 1,
we do not know whether umbilical points can accumulate
at a light-like cuspidal edge singular point
or not.  To prove the proposition, we first consider an immersion $f:U\to \Lo^3$
and assume that $U$ consists only of space-like points
or only of time-like points.
Then we can take a unit normal vector field
$
\nu^L:U\to \Lo^3
$
of $f$.
Define two $3\times 3$ matrices by
$P_1:=(f_u,f_v,\nu^L)$ and $P_2:=(\nu_u,\nu_v,\nu^L)$.
By setting,
$$
I:=\pmt{
E^L & F^L \\
F^L & G^L
},
\qquad
I\! I:=\pmt{
L^L & M^L \\
M^L & N^L
},
$$
$
W:=I^{-1}I\! I
$
coincides with the matrix given 
in \eqref{eq:Wf}.
Then, by \eqref{eq:L-inn},
we have that
\begin{align*}
-(P_1^{-1})^T P_2
&=-(P_1^T E_3 P_1)^{-1}(P_1^T E_3 P_2) \\
&=
\pmt{I & 0 \\
       0 & \pm 1}^{-1}
\pmt{I\!I & 0 \\
       0 & \pm 1} 
=
\pmt{I^{-1}I\!I & 0 \\
        0      & \pm 1}
=
\pmt{W & 0 \\
        0      & \pm 1}.
\end{align*}
So we obtain the following Lorentzian version of the
Weingarten formula
\begin{equation}\label{eq:WL4156}
(\nu^L_u,\nu^L_v)=-(f_u,f_v)W.
\end{equation}

\begin{Lemma}\label{lem:4158}
Let $f:U\to \Lo^3$ be a wave front, and $p\in U$ its singular point
at which $f$ is space-like or time-like.
Consider the family of parallel surfaces of $f$ given by
$f^t:=f+t\nu^L$ $(t\in \R)$,
where $\nu^L$ is the unit normal vector field of $f$.
Then for sufficiently small $t\ne 0$, $p$ is a regular point of $f^t$.
\end{Lemma}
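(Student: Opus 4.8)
The plan is to reduce the statement to a non-vanishing property of the signed area density of the parallel surfaces. Since $f$ is a wave front it is in particular a frontal, and since $f$ is space-like or time-like at $p$ the smooth unit normal $\nu^L$ is non-null in a neighbourhood of $p$. The key elementary observation is that $\nu^L$ is simultaneously a normal field for every parallel surface $f^t=f+t\nu^L$: because $\inner{\nu^L}{\nu^L}$ is constant we get $\inner{f^t_u}{\nu^L}=\inner{f_u}{\nu^L}+t\inner{\nu^L_u}{\nu^L}=0$, and likewise $\inner{f^t_v}{\nu^L}=0$. As $\nu^L$ is non-null, it cannot lie in the span of two vectors to which it is Lorentz-orthogonal; hence $f^t$ is an immersion at a point $q$ exactly when
$\lambda^t(q):=\det\bigl(f^t_u(q),f^t_v(q),\nu^L(q)\bigr)\neq 0$.

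First I would expand $\lambda^t$ in powers of $t$. From $f^t_u=f_u+t\nu^L_u$, $f^t_v=f_v+t\nu^L_v$ and multilinearity of the determinant, $\lambda^t=a_0+a_1t+a_2t^2$ with $a_0=\det(f_u,f_v,\nu^L)$ the area density of $f$ itself, $a_1=\det(\nu^L_u,f_v,\nu^L)+\det(f_u,\nu^L_v,\nu^L)$, and $a_2=\det(\nu^L_u,\nu^L_v,\nu^L)$. At the singular point, $a_0(p)=0$, so $\lambda^t(p)=t\bigl(a_1(p)+t\,a_2(p)\bigr)$. Therefore it is enough to show $(a_1(p),a_2(p))\neq(0,0)$: then $t\mapsto\lambda^t(p)$ is a nonzero polynomial, it has only finitely many roots, and for all sufficiently small $t\neq 0$ we get $\lambda^t(p)\neq 0$, i.e. $p$ is a regular point of $f^t$.

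To verify $(a_1(p),a_2(p))\neq(0,0)$ I would split on $\op{rank} df_p$. If $\op{rank} df_p=0$, then $f_u(p)=f_v(p)=\mb 0$, so $a_1(p)=0$, while the wave-front condition forces $\op{rank} d\nu^L_p=2$; since $\nu^L_u(p),\nu^L_v(p)$ then span $(\nu^L(p))^\perp$ and $\nu^L(p)\notin(\nu^L(p))^\perp$ (non-nullity), we get $a_2(p)=\det(\nu^L_u(p),\nu^L_v(p),\nu^L(p))\neq 0$. If $\op{rank} df_p=1$, choose coordinates with $\partial_v|_p$ spanning $\ker df_p$, so $f_v(p)=\mb 0$, $f_u(p)\neq\mb 0$, and the wave-front condition becomes $\nu^L_v(p)\neq\mb 0$; then $a_1(p)=\det(f_u(p),\nu^L_v(p),\nu^L(p))$ and $a_2(p)=\det(\nu^L_u(p),\nu^L_v(p),\nu^L(p))$. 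Differentiating $\inner{f_u}{\nu^L}=0$ in $v$ and $\inner{f_v}{\nu^L}=0$ in $u$ and subtracting gives the symmetry $\inner{f_u}{\nu^L_v}=\inner{f_v}{\nu^L_u}$, which at $p$ yields $\inner{f_u(p)}{\nu^L_v(p)}=0$. Thus $f_u(p)$ and $\nu^L_v(p)$ are nonzero, mutually Lorentz-orthogonal vectors of the non-degenerate $2$-plane $(\nu^L(p))^\perp$. If they are linearly independent they span $(\nu^L(p))^\perp$, so $a_1(p)=\det(f_u(p),\nu^L_v(p),\nu^L(p))\neq 0$. If they are linearly dependent, orthogonality forces $\inner{\nu^L_v(p)}{\nu^L_v(p)}=0$, i.e. $\nu^L_v(p)$ is a null vector of $(\nu^L(p))^\perp$.

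This degenerate configuration is the main obstacle, and it is exactly where the causality hypothesis is used. When $\nu^L(p)$ is time-like (that is, $f$ is space-like at $p$), the plane $(\nu^L(p))^\perp$ is positive definite and contains no null vector, a contradiction, so $a_1(p)\neq 0$ and we are done. When $\nu^L(p)$ is space-like ($f$ time-like at $p$) the plane $(\nu^L(p))^\perp$ is Lorentzian and this argument does not by itself close; here I would bring in the finer local structure — for the application in Proposition~\ref{thm:4087} one invokes the cuspidal-edge normal form of Proposition~\ref{lem:ce708} together with the order $i_p$ of $f$ (Definition~\ref{def:R}), and computes the coefficients $a_1(p),a_2(p)$ directly from the Taylor expansions of the fundamental quantities in Section~2 (which are already organised case by case, $\op{T}$, $\op{S}_s$, $\op{S}_t$, $\op{S}_l$, $\op{L}_2$, $\op{L}_4$), checking that $a_1(p)$ and $a_2(p)$ cannot vanish simultaneously. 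Once $(a_1(p),a_2(p))\neq(0,0)$ is established, the polynomial argument of the second paragraph gives the conclusion.
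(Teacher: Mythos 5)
Your expansion $\lambda^t=a_0+a_1t+a_2t^2$ and your treatment of the rank-zero case, the space-like case, and the time-like case with $f_u(p)$ non-null are all correct; this is precisely the Euclidean argument of \cite[page 65]{SUY2} to which the paper's one-line proof appeals, transplanted to $\Lo^3$. The genuine problem is the case you leave open at the end: $f$ time-like at $p$ with $f_u(p)$ a null vector of the Lorentzian plane $(\nu^L(p))^{\perp}$. Your proposal to close it by verifying $(a_1(p),a_2(p))\neq(0,0)$ from the expansions of Sections 2--4 cannot succeed, because both coefficients can vanish. Indeed, in that configuration $\nu^L_v(p)\in\R f_u(p)$ forces $a_1(p)=0$, and then $a_2(p)$ is a nonzero multiple of $\det(\nu^L_u,f_u,\nu^L)(p)$, which vanishes exactly when $\inner{\nu^L_u(p)}{f_u(p)}=-\inner{\nu^L(p)}{f_{uu}(p)}=0$; for a generalized cuspidal edge the quantity $\inner{\nu^L(p)}{f_{uu}(p)}$ is a positive multiple of $d^C(p)$, so the needed non-degeneracy is $\sigma^C(p)\neq 0$, and this is not implied by the wave front hypothesis.

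A concrete counterexample to the lemma as stated: put $A(t)=\int_0^t u\cos u\,du$, $B(t)=\int_0^t u\sin u\,du$ and $f(s,t)=(s,\,B(t),\,s-A(t))$, a cuspidal-edge cylinder over the null line $\Gamma(s)=(s,0,s)$. One checks that $f$ is a wave front, that $(s,0)$ is a time-like cuspidal edge singular point, and that the unit normal is $\nu^L=(-\tan t,\,1,\,-\tan t)$, whence $\nu^L_s\equiv\mb 0$ and $\nu^L_t(s,0)=-f_s(s,0)$. Therefore $f^r_t(s,0)=-r\,f^r_s(s,0)$ and $(s,0)$ is a singular point of \emph{every} parallel surface $f^r$. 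So the deferred case is not merely unproved in your write-up: the statement itself fails there, and an additional hypothesis is required (for instance that $\op{Im}\,df_p$ is not a null line, or that $\sigma^C(p)\neq 0$ when it is). This caveat applies equally to the paper's citation of the Euclidean proof; note, however, that in the example above the nearby regular points are quasi-umbilical rather than umbilical, so Proposition \ref{thm:4087}, the only place where the lemma is invoked, is not directly contradicted.
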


\begin{proof}
The proof is parallel to the case of $\R^3=\E^3$, see
\cite[page 65]{SUY2}.
\end{proof}

\begin{Proposition}
Let $f:U\to \Lo^3$ be a wave front and $p\in U$ a regular point
which is space-like or time-like.
Then the parallel surface $f^t$ $(t\ne 0)$
has a singular point at $p$
if and only if $1/t$ coincides with the principal curvature of $f$ at $p$.
\end{Proposition}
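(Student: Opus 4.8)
The plan is to argue directly from the Weingarten-type formula \eqref{eq:WL4156} for the parallel family $f^t=f+t\nu^L$. Differentiating, we get $f^t_u=f_u+t\nu^L_u$ and $f^t_v=f_v+t\nu^L_v$, and substituting \eqref{eq:WL4156} yields
\begin{equation}
(f^t_u,f^t_v)=(f_u,f_v)(\op{id}-tW),
\end{equation}
where $\op{id}$ is the $2\times 2$ identity matrix and $W=W^L$ is the Weingarten matrix of $f$ at $p$. Since $p$ is a space-like or time-like regular point of $f$, the three vectors $f_u(p),f_v(p),\nu^L(p)$ form a basis of $\R^3$; in particular $f_u(p)$ and $f_v(p)$ are linearly independent. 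Therefore $f^t_u(p)$ and $f^t_v(p)$ are linearly independent---equivalently, $p$ is a regular point of $f^t$---if and only if the $2\times 2$ matrix $\op{id}-tW$ is invertible at $p$.

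The next step is to translate invertibility of $\op{id}-tW$ into a statement about principal curvatures. For $t\neq 0$ we have $\det(\op{id}-tW)=t^2\det\bigl(\tfrac1t\op{id}-W\bigr)$, so $\op{id}-tW$ fails to be invertible precisely when $1/t$ is an eigenvalue of $W$, i.e. a principal curvature of $f$ at $p$ (recall that by the discussion after \eqref{eq:KH584b} the eigenvalues of $W^L$ are by definition the principal curvatures). Hence $p$ is a singular point of $f^t$ if and only if $1/t$ is a principal curvature of $f$ at $p$, which is exactly the claimed equivalence. One should note that for $t\neq 0$ the map $f^t$ is still a wave front with the same unit normal $\nu^L$ (the projectivized Legendrian lift is unchanged up to the diffeomorphism of the unit cotangent bundle given by the geodesic flow), so ``singular point of $f^t$'' is literally the condition $\det(f^t_u,f^t_v,\nu^L)=0$, matching the computation above; this is the only point requiring a word of care.

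There is essentially no hard obstacle here---the argument is a direct linear-algebra consequence of \eqref{eq:WL4156}---but the one place to be careful is the sign/normalization conventions: $W^L$ in \eqref{eq:Wf} was defined via $\tilde W^L/(\Delta_L\sqrt{|\Delta_L|})$, and one must make sure that the Weingarten-type identity \eqref{eq:WL4156}, which was derived in the preceding paragraphs of Appendix C for an \emph{immersion}, is being applied on the open regular set near $p$ (where $f$ is an immersion by hypothesis) before taking the limit of the condition ``$f^t$ singular'' as one varies $t$. Concretely I would first establish the displayed identity $(f^t_u,f^t_v)=(f_u,f_v)(\op{id}-tW)$ at the regular point $p$, then observe that the rank of $(f^t_u(p),f^t_v(p))$ drops exactly when $\det(\op{id}-tW)(p)=0$, and finally read off the principal-curvature condition as above. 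Since $W$ has real or complex-conjugate-pair eigenvalues and we only care about real $t$, the phrase ``$1/t$ coincides with the principal curvature of $f$ at $p$'' is understood to mean $1/t$ is a real eigenvalue of $W^L(p)$, which completes the proof.
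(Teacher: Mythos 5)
Your proposal is correct and takes essentially the same route as the paper: both differentiate $f^t=f+t\nu^L$, substitute the Weingarten formula \eqref{eq:WL4156} to get $(f^t_u,f^t_v)=(f_u,f_v)(\op{id}\mp tW)$, and read off that $p$ is singular for $f^t$ exactly when $1/t$ is an eigenvalue of $W$. Your sign $(\op{id}-tW)$ is in fact the one consistent with the minus sign in \eqref{eq:WL4156} and with the stated conclusion, and your extra remarks (regularity of $(f_u,f_v,\nu^L)$ at a space-like or time-like point, realness of the eigenvalue) are harmless refinements of the same argument.
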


\begin{proof}
Since $\nu^L$ is a unit vector field,
$(\nu^L)_u$ and $(\nu^L)_v$ are perpendicular to
$\nu^L$. So $\nu^L$ is a common 
unit normal vector field of $f^t$ ($t\in \R$).
By \eqref{eq:WL4156}, we have
$$
(f^t)_u=f_u+ t \nu^L_u=f_u (I+t W),\qquad
(f^t)_v=f_v+ t \nu^L_v=f_v (I+t W)
$$
and 
\begin{equation}\label{eq:ftuftv}
\Big((f^t)_u,(f^t)_v\Big)=(f_u,f_v)(I+t W).
\end{equation}
Thus, $p$ is a singular point of $f^t$ if and only if
$\det(I+t W)=0$, that is, $1/t$ is an eigenvalue of the matrix $W$,
proving the assertion.
\end{proof}

\begin{Corollary}
If $p$ is an umbilical point of $f$, then
it is also an umbilical point of $f^t$ unless $1/t$ coincides 
with the principal curvature of $f$.
\end{Corollary}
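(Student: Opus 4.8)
The plan is to compute the shape operator $W^t$ of the parallel surface $f^t=f+t\nu^L$ at $p$ in terms of the shape operator $W$ of $f$, and then to observe that at an umbilical point this formula forces $W^t$ to be a scalar matrix as well. First I would assemble the ingredients already available: since $p$ is an umbilical point of $f$, Definition~\ref{def:umbilics} tells us it is a space-like or time-like regular point at which $W$ is a diagonal $2\times2$ matrix with equal eigenvalues, i.e. $W=\lambda I$ for the common principal curvature $\lambda$ (here $I$ denotes the $2\times2$ identity as in \eqref{eq:ftuftv}); by \eqref{eq:ftuftv} one has $\big((f^t)_u,(f^t)_v\big)=(f_u,f_v)(I+tW)$; the unit normal $\nu^L$ of $f$ is at the same time a unit normal of $f^t$, since $\nu^L_u,\nu^L_v\perp\nu^L$; and, by the preceding Proposition, the hypothesis that $1/t$ is not a principal curvature of $f$ means exactly that $\det(I+tW)\ne0$ at $p$, so that $f^t$ is regular at $p$ (with the same causal type as $f$ nearby) and $I+tW$ is invertible there.

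Next I would apply the Lorentzian Weingarten formula \eqref{eq:WL4156} to both surfaces with respect to their common unit normal $\nu^L$. For $f$ it reads $(\nu^L_u,\nu^L_v)=-(f_u,f_v)W$, and for the regular surface $f^t$ it reads $(\nu^L_u,\nu^L_v)=-\big((f^t)_u,(f^t)_v\big)W^t=-(f_u,f_v)(I+tW)W^t$. Equating the two expressions and cancelling the left factor $(f_u,f_v)$—which is legitimate because $f$ is an immersion at $p$, so this $3\times2$ matrix has a left inverse—gives $W=(I+tW)W^t$, and hence
\[
W^t=(I+tW)^{-1}W\qquad\text{at }p ,
\]
the inverse existing by the previous paragraph.

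Finally I would substitute $W=\lambda I$ into this formula, obtaining $W^t=(1+t\lambda)^{-1}\lambda\,I$, a scalar multiple of the identity (the factor $1+t\lambda$ being nonzero because $I+tW$ is invertible at $p$); in particular $W^t$ is a diagonal matrix with equal eigenvalues at $p$, so $p$ is an umbilical point of $f^t$. I do not anticipate a genuine obstacle: the two points that deserve explicit mention are the cancellation of the non-square matrix $(f_u,f_v)$, which is valid precisely because $p$ is an immersion point of $f$, and the invertibility of $I+tW$ at $p$, which is exactly the hypothesis of the Corollary translated through the preceding Proposition.
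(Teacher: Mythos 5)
Your argument is correct. The paper states this corollary without proof (it is treated as an immediate consequence of the preceding proposition), and your derivation $W^t=(I+tW)^{-1}W$ from the shared unit normal, \eqref{eq:ftuftv} and the Weingarten formula \eqref{eq:WL4156}, followed by substituting $W=\lambda I$, is exactly the intended route; the only caveat, which you inherit from the paper's own proposition, is the harmless sign slip identifying $\det(I+tW)=0$ with $1/t$ (rather than $-1/t$) being a principal curvature of $f$.
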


\begin{proof}[Proof of Proposition \ref{thm:4087}]
Suppose that there exists a sequence of umbilics 
$\{p_k\}_{k=1}^\infty$ of $f$ on $U$ converging to 
$p$. By Lemma \ref{lem:4158},
we can choose $t\ne 0$  so that $p$ is a regular point of
$f^t$. Then there exists a positive integer $N$
so that each $p_k$ ($k\ge N$)
is an umbilical point of $f^t$.
Since $p$ is the limit of $\{p_k\}_{k=1}^\infty$, 
$p$ itself is an umbilical point of $f^t$.
Since $f=(f^t)^{-t}$,
the value $-1/t$ must coincides with one of the two principal curvature of $f^t$ at $p$.
Moreover, by \eqref{eq:ftuftv}, 
$(f_u,f_v)=((f^{t})^{-t}_u,(f^{t})^{-t}_v)$ 
vanishes at $p$, which contradicts the fact that 
the Jacobi matrix is of rank one at $p$.
\end{proof}

If $f$ is a generalized cuspidal edge, then
umbilical points may accumulate at a singular point: 

\begin{Exa}
We set $\Gamma(u):=(u,u^2,u^4)$, and consider a map
$
f(u,v)=\Gamma(u)+v\Gamma'(u)\,\, (u,v\in \R),
$
which is the map given in
\cite[Example 1.13]{FSUY}.
As pointed out in \cite{FSUY}, $o:=(0,0)$ is a cuspidal cross cap 
singular point of $f$.
In $\Lo^3$, the origin $o$ is a space-like singular point,
and the second fundamental form vanishes along
the $v$-axis. Since $(0,v)$ ($v\ne 0$) are
regular points of $f$, they are umbilics of $f$
which accumulate at the singular point $o$.
\end{Exa}

\begin{acknowledgements}
The authors thank the reviewer,
Shintaro Akamine and
Atsufumi Honda for fruitful discussions and
Wayne Rossman for valuable comments.
\end{acknowledgements}

\end{document}